\newtheorem{theorem}{Theorem}
\newtheorem{lemma}{Lemma}
\newtheorem{corollary}{Corollary}
\newtheorem{proposition}{Proposition}
\newtheorem{remark}{Remark}
\newtheorem*{theorem*}{Theorem}
\theoremstyle{definition}
\def\De{\Delta}
\def\<{\langle}
\def\>{\rangle}
\DeclareMathOperator{\cpct}{Cap}
\DeclareMathOperator{\ccpct}{cap}
\DeclareMathOperator{\supp}{supp}
\DeclareMathOperator{\diverg}{div}
\DeclareMathOperator{\diam}{diam}
\DeclareMathOperator{\dist}{dist}
\DeclareMathOperator{\lin}{span}
\DeclareMathOperator{\Lip}{Lip}
\begin{document}
\title[Removable sets and $L^p$-uniqueness]{Removable sets and $L^p$-uniqueness on manifolds and metric measure spaces}

\author{M. Hinz$^1$, J. Masamune$^2$, K. Suzuki$^3$}
\address{$^1$ Department of Mathematics,
Bielefeld University, 33501 Bielefeld, Germany}
\email{mhinz@math.uni-bielefeld.de}
\address{$^2$Department of Mathematics,  Tohoku University,
6-3 Aramaki Aza-Aoba, Aoba-ku, Sendai 980-8578, Japan}
\email{jun.masamune.c3@tohoku.ac.jp}
\address{$^3$ Department of Mathematics,
Bielefeld University, 33501 Bielefeld, Germany}
\email{ksuzuki@math.uni-bielefeld.de}
\thanks{$^1$ Research supported in part by the DFG IRTG 2235: 'Searching for the regular in the irregular: Analysis of singular and random systems' and by the DFG CRC 1283, `Taming uncertainty and profiting from randomness and low regularity in analysis, stochastics and their applications'.}
\thanks{$^3$ Research supported by the Alexander von Humboldt Foundation}

\begin{abstract}
We study symmetric diffusion operators on metric measure spaces. Our main question is whether or not the restriction of the operator to a suitable core continues to be essentially self-adjoint or $L^p$-unique if a small closed set is removed from the space. The effect depends on how large the removed set is, and we provide characterizations of the critical size in terms of capacities and Hausdorff dimension. As a key tool we prove a truncation result for potentials of nonnegative functions. We apply our results to Laplace operators on Riemannian and sub-Riemannian manifolds and on metric measure spaces satisfying curvature dimension conditions. For non-collapsing Ricci limit spaces with two-sided Ricci curvature bounds we observe that the self-adjoint Laplacian is already fully determined by the classical Laplacian on the regular part. 
\end{abstract}

\keywords{Essential self-adjointness, $L^p$-uniqueness, capacities, truncations of potentials, Hausdorff measures}
\subjclass[2010]{28A78, 31C12, 31C15, 31C25, 47B25, 47D07, 58J05, 58J35}

\maketitle

\setcounter{tocdepth}{1}
\tableofcontents

\section{Introduction}

We consider generators $\mathcal{L}$ of symmetric diffusion semigroups on metric measure spaces $M$ and investigate whether or not the removal of a small closed subset $\Sigma\subset M$ leads to a loss of essential self-adjointness or $L^p$-uniqueness.
Recall that in general an operator, given on a dense a priori domain $\mathcal{A}$ in $L^2(M)$, may have different self-adjoint extensions, and each such extension encodes a specific type of boundary condition. We assume that the generator $\mathcal{L}$ itself, together with a suitable a priori domain $\mathcal{A}$, is essentially self-adjoint, i.e., that it has a unique self-adjoint extension. This property may be viewed as the uniqueness of the quantum system determined by $\mathcal L|_{\mathcal A}$, \cite[Section X.1 p.\ 135]{RS80v2}. Since in general no classical notion of differentiability is available, we introduce a number of abstract conditions the a priori domain $\mathcal{A}$ should satisfy. Cutting out a small closed set $\Sigma$ from $M$, one can restrict $\mathcal{L}$ to the space $\mathcal{A}(M\setminus \Sigma)$ of those elements of $\mathcal{A}$ whose support lies in the complement of $\Sigma$. If $\Sigma$ is small enough, it is `ignored by the operator' and $\mathcal{L}|_{\mathcal{A}(M\setminus \Sigma)}$ has the same unique self-adjoint extension $\mathcal{L}$. In this case we say that $\Sigma$ is \emph{removable} (from the point of view of operator extensions). If $\Sigma$ is too large, the operator perceives it as a boundary at which different boundary conditions can be imposed. The critical size of $\Sigma$ can be characterized in terms of capacities and Hausdorff (co-)dimension. A very similar logics applies to $L^p$-uniqueness, that is, the question whether the extension to the generator of a strongly continuous semigroup on $L^p$ is unique.

We first provide a general result, Theorem \ref{T:esa}, it characterizes the critical size of $\Sigma$ in terms of capacities based on $\mathcal{A}$, cf. Section \ref{S:caps}. We then prove the equivalence of these capacities and capacities based on resolvent operators as commonly used in potential theory, these capacities are more {\it amenable} objects in connection to Hausdorff measures. Our key tool for this comparison of capacities is an estimate for truncations 
of potentials with respect to the graph norm of the generator, Theorem \ref{T:Mazja}. After making the link to Hausdorff measures in Sections \ref{S:Hausdorfftocap} and \ref{S:captoHausdorff}, we apply our results to Riemannian manifolds, sub-Riemannian manifolds and {\rm RCD}$^\ast(K,N)$ spaces in Sections \ref{S:manifolds}--\ref{S:RCD}.

For Sobolev spaces and elliptic operators on Euclidean domains the connection between capacities and removable sets is a  classical topic, \cite{AP73, HarveyPolking73, Maz'ya63, Maz'ya70, Maz'ya72, Maz'ya73, Maz'ya75, Maz'yabook}. In this context removability had typically referred to the extendability of solutions, see \cite[Section 2.7]{AH96}. The uniqueness or non-uniqueness of self-adjoint extension of Schr\"odinger operators after the removal of a single point had been addressed in \cite[Theorem X.11]{RS80v2}. The uniqueness of Markovian self-adjoint extensions of operators had been studied in \cite{Takeda92}, see also \cite{Eberle99} and \cite{FOT94}; this type of uniqueness is the correct notion to guarantee the uniqueness of associated symmetric diffusion processes. Clearly essential self-adjointness implies the uniqueness of Markovian self-adjoint extensions. For Laplacians and their powers on Euclidean spaces the connection between uniqueness of self-adjoint extensions, capacities of removed sets and Hausdorff codimension were spelled out in more detail in \cite{AGMST13} and \cite{HiKaMa}. In \cite{HiKa} related $L^p$-uniqueness results were established on infinite dimensional spaces. First results on essential self-adjointness and removable sets for the Laplacian on Riemannian manifolds were provided in \cite{CdV82}, where $\Sigma$ was assumed to be a single point set, and in \cite{Masamune1999}, where $\Sigma$ was assumed to be a smooth submanifold.

Similarly as the mentioned results for the Euclidean cases, our results are {\it structure-free} in the sense that we do not assume $\Sigma$ to have any specific structure except being {\it closed}. For instance, if $(M, g)$ is a complete Riemannian manifold of dimension $d\geq 4$, $\mu$ denotes the Riemannian volume and $\Delta_\mu$ the classical Laplacian, then we observe that for any closed set $\Sigma\subset M$ we have the following:
\begin{enumerate}
\item[(i)] If $\mu(\Sigma)=0$ and $\Delta_\mu|_{C_c^\infty(M\setminus \Sigma)}$ is essentially self-adjoint, then $\mathcal{H}^{d-4+\varepsilon}(\Sigma)=0$ for all $\varepsilon>0$ (if $d>4$) respectively $\mathcal{H}^h(\Sigma)=0$ for any Hausdorff function $h$ satisfying $\int_0^1 (-\log r)dh(r)<+\infty$ (if $d=4$);
\item[(ii)] If $\mathcal{H}^{d-4}(\Sigma)<+\infty$ (if $d>4$) respectively $\mathcal{H}^{h_2}(\Sigma)<+\infty$ (if $d=4$), then $\Delta_\mu|_{C_c^\infty(M\setminus \Sigma)}$ is essentially self-adjoint. 
\end{enumerate}
Here $\mathcal{H}^s$ denotes the $s$-dimensional Hausdorff measure, $\mathcal{H}^h$ a generalized Hausdorff measure with Hausdorff function $h$, and $h_2(r):=(1+\log_+ \frac{1}{r})^{-1}$, $r>0$. See Theorem \ref{T:Hausdorff} in Section \ref{S:manifolds}. This {\it structure-free} type of characterization works well also for metric measure spaces $M$. Indeed our method is robust enough to deal with the singular sets that arise in {\it non-collapsing Ricci limit spaces with two-sided Ricci curvature bounds}, see Theorem \ref{T:codim4}. 

We proceed as follows. In Section \ref{S:caps} we introduce related $(2,p)$-capacities $\ccpct_{2,p}^\mathcal{A}$ of a similar type as in \cite{HarveyPolking73, Maz'ya75} and use them to characterize the critical size of $\Sigma$ at which a loss of $L^p$-uniqueness occurs, see Theorem \ref{T:esa} in Section \ref{S:esa}. This includes the discussion about the essential self-adjointness of $\mathcal{L}|_{\mathcal{A}(M\setminus \Sigma)}$ as the special case $p=2$. Under the assumption that the associated Markov semigroup $(P_t)_{t>0}$ is strong Feller, we then introduce a different well-known type of $(2,p)$-capacities $\cpct_{2,p}$ in Section \ref{S:second}, now based on associated resolvent operators $G_\lambda$, \cite[Section 2.3]{AH96}. These capacities are more suitable to discuss connections to Hausdorff measures and dimensions later on, so it is desirable to show they are equivalent to the capacities $\ccpct_{2,p}^\mathcal{A}$. The dominance of $\ccpct_{2,p}^\mathcal{A}$ over $\cpct_{2,p}$ is easy to see, Corollary \ref{C:firstcomparison}, but the opposite inequality is not at all automatic. It can be proved (Corollary \ref{C:secondcomparison}) if one has a truncation property on the level of the operator domain $\mathcal{D}(\mathcal{L})$; for $p=2$ it is of form 
\[\left\|F\circ G_\lambda f\right\|_{\mathcal{D}(\mathcal{L})}\leq c\:\left\|f\right\|_{L^2(M)},\quad f\in C_c(M),\quad f\geq 0,\]
where $F$ is a suitable function in $C^2(\mathbb{R})$. In Theorem \ref{T:Mazja} in Section \ref{S:second} we establish such a truncation property under the assumption that the semigroup satisfies 
\[\sqrt{\Gamma(P_t f)}\leq \frac{c_1\:e^{c_2t}}{\sqrt{t}}\:P_{\alpha t} f\] 
for $f$ as above and with $1\leq \alpha<2$; we refer to this condition as \eqref{E:log}. Rewritten in terms of heat kernels, it is seen to be a kind of \emph{logarithmic gradient estimate}, see \eqref{E:loghk} in Section \ref{S:second}. The truncation result in Theorem \ref{T:Mazja} is not at all trivial - recall that even in the Euclidean case Sobolev spaces $W^{2,p}$ are not stable under compositions with smooth bounded functions, \cite[Theorem 3.3.2]{AH96}. It is a partial generalization of a well-known truncation property for Bessel-potentials, \cite[Theorem 3.3.3]{AH96}, which follows by a method due to Maz'ya, \cite{Maz'ya72}, combined with a multiplicative estimate for derivatives of potentials in terms of maximal functions. 
The proof for the Euclidean case employs gradient estimates for resolvents and the Hardy-Littlewood maximal inequality, see \cite[Lemma 1]{A76}, \cite[Proposition 3.1.8]{AH96}, \cite{Hedberg72}. For manifolds and metric measure spaces estimates for semigroups or heat kernels and their gradients are well-studied and widely used. This motivates us to formulate a proof of Theorem \ref{T:Mazja} using \eqref{E:log} and the semigroup maximal inequality, see Section \ref{S:truncation}. In Sections \ref{S:Hausdorfftocap} and \ref{S:captoHausdorff} we connect the capacities $\cpct_{2,p}$
to Hausdorff measures and dimensions, provided that volume measure and resolvent densities admit suitable asymptotics respectively estimates, Lemma \ref{L:resolventlower} and Lemma \ref{L:resolventupper}. We provide applications to complete Riemannian manifolds $M$ in Section \ref{S:manifolds}; in this case $\mathcal{A}=C_c^\infty(M)$ is a natural choice. For $p=2$ a characterization of the critical size of $\Sigma$ in terms of $\ccpct_{2,2}^{C_c^\infty(M)}$ is valid without further assumptions, Theorem \ref{T:mfdccpct}. For $p\neq 2$ and for characterizations in terms of the capacities $\cpct_{2,p}$ we assume volume doubling and a gradient estimate, see Theorems \ref{T:mfdcpct}, \ref{T:Lpuni} and \ref{T:mfdcpctp}. For a characterization of essential self-adjointness in terms of Hausdorff measures we can again drop all additional assumptions, see Theorem \ref{T:Hausdorff}. This result is proved using localizations to small enough balls, and as mentioned before, it generalizes the former results in \cite{CdV82} and \cite{Masamune1999} in the sense that $\Sigma$ may now be an arbitrary closed set. 
In the case of manifolds of dimension $4$ the size of $\Sigma$ on a logarithmic scale determines whether essential self-adjointness is lost or not, and one can find positive and negative examples, see Remark \ref{R:mayormaynot}. Applications to sub-Riemannian manifolds are discussed in Section \ref{S:subRiemann}, see Theorems \ref{T:mfdccpctsub} and \ref{T:Carnot}, links to \cite{ABFP21} are pointed out in Remark \ref{R:Heisenberg}. In Section \ref{S:RCD} we discuss the natural Laplacian $\mathcal{L}$ on RCD$^*(K,N)$ spaces and its restriction to an abstract core $\mathcal{A}$, see \eqref{E:alternativeA1} and Proposition \ref{P:Aisnice}. We obtain characterizations of the critical size of $\Sigma$ for essential self-adjointness and $L^p$-uniqueness in terms of capacities, Theorem \ref{T:RCDLpuni}, and for the case of (Ahlfors) regular measures or {\rm CAT}$(0)$-spaces also in terms of Hausdorff measures, Theorems \ref{T:HausdorffRCD} and \ref{T:HausdorffRCDCAT}. An interesting application appears for Gromov-Hausdorff limits of non-collapsing manifolds, \cite{ChC97,ChN15}: Since by the results in \cite{JN21} the singular part $\mathcal{S}$ of the limit space $M$ is known to have finite Hausdorff measure of codimension four, the operator $\mathcal{L}|_{\mathcal{A}_c( \mathcal{R})}$ on the regular part $\mathcal{R}=M\setminus \mathcal{S}$ is essentially self-adjoint with unique self-adjoint extension $\mathcal{L}$. Since $\mathcal{L}|_{\mathcal{A}_c( \mathcal{R})}$ extends the 
classical Laplacian $\Delta_\mu|_{C_c^\infty(\mathcal{R})}$ on the regular part and $C_c^\infty(\mathcal{R})$ is seen to be dense in $\mathcal{A}_c( \mathcal{R})$ with respect to the graph norm, it follows that also $\Delta_\mu|_{C_c^\infty(\mathcal{R})}$ is essentially self adjoint on $L^2(M)$ with unique self-adjoint extension $\mathcal{L}$.

\section*{Acknowledgements} 

We thank Hiroaki Aikawa, Maria Gordina, Alexander Grigor'yan, Wolfhard Hansen, Seunghyun Kang, Martin Kolb, Takashi Kumagai and Qi S. Zhang for helpful discussions and comments.

\section{Basic setup and notation}\label{S:Setup}

Let $(M,\varrho)$ be a locally compact separable metric space and let $\mu$ a nonnegative Radon measure on $M$ with full support. We write $L^p(M):=L^p(M,\mu)$, $1\leq p \leq +\infty$ for the $L^p$-spaces of $\mu$-classes of $p$-integrable functions on $M$ with respect to $\mu$ and similarly, $L^p(A):=L^p(A,\mu|_{A})$ if $A$ is a Borel subset of $M$. 

Let $(\mathcal{L},\mathcal{D}(\mathcal{L}))$ be a non-positive definite densely defined self-adjoint operator on $L^2(M)$ and let $(\mathcal{E},\mathcal{D}(\mathcal{E}))$ be its quadratic form, i.e. the unique densely defined closed quadratic form on $L^2(M)$ satisfying
\begin{equation}\label{E:GG}
\mathcal{E}(f,g)=-\left\langle \mathcal{L}f,g\right\rangle_{L^2(M)}, \quad f\in\mathcal{D}(\mathcal{L}),\ g\in\mathcal{D}(\mathcal{E}).
\end{equation}
Endowed with the norm
\[\left\| f\right\|_{\mathcal{D}(\mathcal{E})}:=\Big(\mathcal{E}(f,f)+\left\| f\right\|_{L^2(M)}^2\Big)^{1/2}\]
the form domain $\mathcal{D}(\mathcal{E})$ is a Hilbert space. Given $\lambda>0$, we equip the operator domain $\mathcal{D}(\mathcal{L})$ with the Hilbert space norm 
\begin{equation}\label{E:DLnorm}
\left\|f\right\|_{\mathcal{D}(\mathcal{L})}:=\left\|(\lambda-\mathcal{L})f\right\|_{L^2(M)}.
\end{equation}
From (\ref{E:GG}) and the Cauchy-Schwarz inequality for $\mathcal{E}$ it is immediate that this norm is equivalent to the graph norm of $\mathcal{L}$. The parameter $\lambda$ will always remain fixed, we therefore suppress it from notation; suitable choices will be addressed later.

We also make use of the variational definition for $\mathcal{L}$. Let $(\mathcal{D}(\mathcal{E}))^\ast$ be the topological dual of $\mathcal{D}(\mathcal{E})$. For any $f\in\mathcal{D}(\mathcal{E})$ we can define $\mathcal{L}f$ as a member of $(\mathcal{D}(\mathcal{E}))^\ast$ by 
\begin{equation}\label{E:vardef}
\mathcal{L}f(g):=-\mathcal{E}(f,g),\quad g\in \mathcal{D}(\mathcal{E}).
\end{equation}
We then observe by a simple application of the Riesz representation theorem and the density of $\mathcal{D}(\mathcal{E})$ in $L^2(M)$ that
\begin{equation}\label{E:domaininfo}
\mathcal{D}(\mathcal{L})=\left\lbrace f\in\mathcal{D}(\mathcal{E}): \mathcal{L}f\in L^2(M)\right\rbrace.
\end{equation}

We assume that $(\mathcal{E},\mathcal{D}(\mathcal{E}))$ is a Dirichlet form on $L^2(M)$, \cite[Section 3.2]{FOT94}, and that it admits a carr\'e du champ, in other words, that there is a bilinear nonnegative definite map $\Gamma$ from $\mathcal{D}(\mathcal{E})\cap L^\infty(M)\times\mathcal{D}(\mathcal{E})\cap L^\infty(M)$ into $ L^1(M)$ such that 
\[\frac12\left\lbrace\mathcal{E}(fh,g)+\mathcal{E}(f,gh)-\mathcal{E}(fg,h)\right\rbrace=\int_M h\:\Gamma(f,g)\:d\mu,\quad f,g,h\in\mathcal{D}(\mathcal{E})\cap L^\infty(M),\]
\cite[Chapter I, Definition 4.1.2 and Theorem 4.2.1]{BH91}; recall that by the Markov property the space $
\mathcal{D}(\mathcal{E})\cap L^\infty(M)$ is an algebra, \cite[Chapter I, Corollary 3.3.2]{BH91}. Using truncations and approximation one can naturally extend $\Gamma$ to a bilinear map from $\mathcal{D}(\mathcal{E})\times\mathcal{D}(\mathcal{E})$ into $L^1(M)$.

We further assume that  $(\mathcal{E},\mathcal{D}(\mathcal{E}))$ is regular and strongly local, \cite[Section 3.2]{FOT94}. One then refers to its generator $(\mathcal{L},\mathcal{D}(\mathcal{L}))$ as \emph{symmetric diffusion operator}. Strong locality implies that if $f\in\mathcal{D}(\mathcal{L})$ is constant on an open set $U\subset M$ then $\mathcal{L}f=0$ $\mu$-a.e. on $U$. It also implies that if $f\in\mathcal{D}(\mathcal{E})\cap C_c(M)$ and its support $\supp f$ is contained in an open set $U$, then $\Gamma(f,f)=0$ $\mu$-a.e. on $U^c$. In particular, we have $\mathcal{E}(f,g)=\int_M\Gamma(f,g)\:d\mu$ for all $f,g\in \mathcal{D}(\mathcal{E})\cap C_c(M)$. Moreover, by the Markov property and strong locality, $\mathcal{D}(\mathcal{E})$ is stable under taking compositions $F(f_1,...f_n)$ of elements $f_i\in\mathcal{D}(\mathcal{E})$ with functions $F\in C^1(\mathbb{R}^n)$ satisfying $F(0)=0$ and having uniformly bounded first derivatives. The carr\'e obeys the chain rule
\begin{equation}\label{E:chain}
\Gamma(F(f_1, ..., f_n),g)=\sum_{i=1}^n \frac{\partial F}{\partial x_i}(f_1,...,f_n)\Gamma(f_i,g)\quad \text{$\mu$-a.e.}
\end{equation}
for any $f_1,...,f_n,g\in \mathcal{D}(\mathcal{E})$ and $F$ as stated, see \cite[Chapter I, Proposition 3.3.1 and Corollary 6.1.3]{BH91} or \cite[Theorem 3.2.2]{FOT94}. For $n=1$ this remains true for Lipschitz $F$ with $F(0)=0$, see \cite[Chapter I, Corollary 7.1.2]{BH91}.

Let $(P_t)_{t>0}$ be the unique symmetric Markov semigroup generated by $(\mathcal{L},\mathcal{D}(\mathcal{L}))$, \cite{BH91, Da89, FOT94}, also referred to as \emph{symmetric diffusion semigroup}. 
The restriction of $(P_t)_{t>0}$ to $L^1(M)\cap L^\infty(M)$ extends to a contraction semigroup $(P^{(p)}_t)_{t>0}$ on each $L^p(M)$, $1\leq p\leq +\infty$, strongly continuous for $1\leq p<+\infty$, \cite[Theorem 1.4.1]{Da89}. Clearly $P_t^{(2)}=P_t$.
For any $1\leq p<+\infty$ the generator $(\mathcal{L}^{(p)},\mathcal{D}(\mathcal{L}^{(p)}))$ of $(P^{(p)}_t)_{t>0}$
 on $L^p(M)$ is the smallest closed extension of the restriction of $\mathcal{L}$ to the a priori domain
 \begin{equation}\label{E:aprioridomain}
\mathcal{D}_p:= \left\lbrace f\in\mathcal{D}(\mathcal{L})\cap L^p(M): \mathcal{L}f\in L^p(M)\right\rbrace .
\end{equation} 
Clearly $\mathcal{L}^{(2)}=\mathcal{L}$. Also for $p\neq 2$ we endow each $\mathcal{D}(\mathcal{L}^{(p)})$ with the norm 
\begin{equation}\label{E:DLpnorm}
\left\|f\right\|_{\mathcal{D}(\mathcal{L}^{(p)})}:=\left\|(\lambda-\mathcal{L}^{(p)})f\right\|_{L^p(M)}.
\end{equation}

The space $\mathcal{D}(\mathcal{L}^{(1)})\cap L^\infty(M)$ is an algebra, and we have 
\begin{equation}\label{E:Gamma}
\Gamma(f,g)=\frac12 \left\lbrace  \mathcal{L}^{(1)}(fg)-f\mathcal{L}^{(1)}g-g\mathcal{L}^{(1)}f \right\rbrace
\end{equation}
for any $f,g\in \mathcal{D}(\mathcal{L}^{(1)})\cap L^\infty(M)$, seen as an $L^1(M)$-identity, \cite[Chapter I, Theorem 4.2.1]{BH91}. 

If $\mathcal{A}$ is a vector space of real-valued Borel functions on $M$ and $U\subset M$ is an open set, then we write $\mathcal{A}(U)$ to denote the subspace of $\mathcal{A}$ consisting of functions with support contained in $U$. Clearly $\mathcal{A}(M)=\mathcal{A}$.  We write $\mathcal{A}_c$ and $\mathcal{A}_c(U)$ for the subspaces of $\mathcal{A}$ respectively $\mathcal{A}(U)$ consisting of compactly supported functions.

We write $L_+^p(M)$ for the cone of nonnegative elements in $L^p(M)$. By $L^0(M)$ we denote the space of $\mu$-equivalence classes of Borel functions on $M$ and by $\mathcal{B}(M)$ (respectively $\mathcal{B}_b(M)$) the space of Borel functions (respectively bounded Borel functions) on $M$. If $\mathcal{S}$ is a vector space of $\mu$-classes of functions on $M$, we write $f\in \mathcal{B}(M)\cap \mathcal{S}$ (respectively $f\in \mathcal{B}_b(M)\cap \mathcal{S}$) to say that the $\mu$-class of $f$ is in $\mathcal{S}$. Set inclusions and other statements involving functions and classes are silently understood in a similar manner. We use the shortcut notation $\Gamma(f):=\Gamma(f,f)$, similarly for other symmetric bilinear quantities.

\section{Capacities based on spaces of functions}\label{S:caps}

Let $\mathcal{A}$ be a vector space of real-valued Borel functions on $M$. Given a compact set $K\subset M$ we write $\omega_K^{\mathcal{A}}$ for set of functions $u \in \mathcal{A}$ such that $u=1$ on an open neighborhood of $K$. 

Suppose that $1<p<\infty$ and $\mathcal{A}\subset \mathcal{D}(\mathcal{L}^{(p)})$. For any compact set $K\subset M$ we define the \emph{$(2,p)$-capacity $\ccpct_{2,p}^{\mathcal{A}}(K)$ of $K$ with respect to $\mathcal{A}$} by 
\begin{equation}\label{E:cap}
\ccpct_{2,p}^\mathcal{A}(K) := \inf\big\lbrace \|u \|_{\mathcal{D}(\mathcal{L}^{(p)})}^p: u \in \omega_K^{\mathcal{A}}\big\rbrace
\end{equation}
with $\ccpct_{2,p}^\mathcal{A}(K):=+\infty$ if $\omega_K^{\mathcal{A}}=\emptyset$. For general sets $E\subset M$ we then set
\begin{equation}\label{E:innerreg}
\ccpct_{2,p}^{\mathcal{A}}(E)=\sup\left\lbrace \ccpct_{2,p}^{\mathcal{A}}(K): K\subset E, \text{ $K\subset M$ compact}\right\rbrace.
\end{equation}
Early references on definition (\ref{E:cap}) on Euclidean spaces are \cite{HarveyPolking73, Maz'ya63, Maz'ya70, Maz'ya73, Maz'yabook}, detailed comments can be found in \cite[Section 2.9]{AH96}. In comparison with \cite{HarveyPolking73} identity 
\eqref{E:cap} means that we use \cite[Theorem 2.1]{HarveyPolking73} as a definition; the identity in \cite[Definition 1.1]{HarveyPolking73} then follows as a result, see Proposition \ref{P:propertiesadd}. Only Proposition \ref{P:properties} (i) and condition \eqref{A:bump} below will be used in the following sections. Proposition \ref{P:properties} (ii) and Proposition \ref{P:propertiesadd} are stated to make clear that \eqref{E:cap} and \eqref{E:innerreg} are in line with \cite{HarveyPolking73}. 

\begin{proposition}\label{P:properties} Suppose that $1<p<\infty$ and $\mathcal{A}\subset \mathcal{D}(\mathcal{L}^{(p)})$.
\begin{enumerate}
\item[(i)] If $E_1\subset E_2\subset M$ then $\ccpct_{2,p}^\mathcal{A}(E_1)\leq \ccpct_{2,p}^\mathcal{A}(E_2)$.
\item[(ii)] For any $K\subset M$ compact we have $\ccpct_{2,p}^{\mathcal{A}}(K)=\inf\left\lbrace \ccpct_{2,p}^{\mathcal{A}}(G): K\subset G, \text{ $G$ \upshape{open}}\right\rbrace$.
\end{enumerate}
\end{proposition}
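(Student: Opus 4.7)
The plan is to derive both claims directly from \eqref{E:cap} and \eqref{E:innerreg}, after first noting that these two definitions are consistent on compact sets: if $K\subset M$ is compact, then any $u\in\omega_K^\mathcal{A}$ lies in $\omega_{K'}^\mathcal{A}$ for every compact $K'\subset K$, since the open neighborhood of $K$ on which $u\equiv 1$ is in particular an open neighborhood of $K'$. Taking the infimum in \eqref{E:cap} yields $\ccpct_{2,p}^\mathcal{A}(K')\leq \ccpct_{2,p}^\mathcal{A}(K)$ for every such $K'$, and since $K$ is itself a compact subset of $K$, this gives $\ccpct_{2,p}^\mathcal{A}(K)=\sup\{\ccpct_{2,p}^\mathcal{A}(K'):K'\subset K\text{ compact}\}$, so that \eqref{E:cap} and \eqref{E:innerreg} agree on compact sets.

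Part (i) is then immediate: every compact subset of $E_1$ is a compact subset of $E_2$, so the supremum defining $\ccpct_{2,p}^\mathcal{A}(E_1)$ is taken over a subfamily of the one defining $\ccpct_{2,p}^\mathcal{A}(E_2)$, giving $\ccpct_{2,p}^\mathcal{A}(E_1)\leq \ccpct_{2,p}^\mathcal{A}(E_2)$.

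For (ii), the inequality $\ccpct_{2,p}^\mathcal{A}(K)\leq \inf\{\ccpct_{2,p}^\mathcal{A}(G):K\subset G\text{ open}\}$ is immediate from (i). For the reverse direction, assuming $\ccpct_{2,p}^\mathcal{A}(K)<+\infty$ (otherwise there is nothing to prove) and fixing $\varepsilon>0$, I would pick $u\in\omega_K^\mathcal{A}$ with $\|u\|_{\mathcal{D}(\mathcal{L}^{(p)})}^p\leq \ccpct_{2,p}^\mathcal{A}(K)+\varepsilon$ and let $G$ be the open neighborhood of $K$ on which $u\equiv 1$, as guaranteed by $u\in\omega_K^\mathcal{A}$. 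Then the same $u$ belongs to $\omega_{K'}^\mathcal{A}$ for \emph{every} compact $K'\subset G$, so $\ccpct_{2,p}^\mathcal{A}(K')\leq \|u\|_{\mathcal{D}(\mathcal{L}^{(p)})}^p$, and taking the supremum over such $K'$ via \eqref{E:innerreg} gives $\ccpct_{2,p}^\mathcal{A}(G)\leq \|u\|_{\mathcal{D}(\mathcal{L}^{(p)})}^p\leq \ccpct_{2,p}^\mathcal{A}(K)+\varepsilon$. Sending $\varepsilon\to 0$ completes the proof.

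I expect no real obstacle: the only delicate point is the consistency check on compact sets, which in turn hinges on the requirement in the definition of $\omega_K^\mathcal{A}$ that admissible functions equal $1$ on an \emph{open} neighborhood of $K$ rather than merely on $K$ itself. It is precisely this openness that upgrades a near-minimizer for $K$ into a legitimate competitor for some open set $G\supset K$, making outer regularity essentially automatic.
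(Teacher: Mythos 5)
Your proof is correct and follows essentially the same route as the paper: both parts hinge on the observation that a near-minimizer $u\in\omega_K^{\mathcal{A}}$, being equal to $1$ on an open set $U\supset K$, is simultaneously a competitor for every compact subset of $U$, which makes outer regularity automatic. The only cosmetic difference is that you take $G=U$ itself and bound $\ccpct_{2,p}^{\mathcal{A}}(G)$ via the supremum in \eqref{E:innerreg}, whereas the paper interposes a relatively compact open $G$ with $\overline{G}\subset U$ and uses monotonicity through $\ccpct_{2,p}^{\mathcal{A}}(\overline{G})$; both are valid.
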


\begin{proof}
For compact sets (i) is obvious from (\ref{E:cap}), and by (\ref{E:innerreg}) it extends to general sets. Statement (ii) follows as \cite[Proposition 2.2.3]{AH96}: Suppose $K$ is compact and $\varepsilon>0$. By (i) we may assume that $\ccpct_{2,p}^\mathcal{A}(K)<+\infty$. Then there is some $u\in \mathcal{A}$ such that $\left\|u\right\|_{\mathcal{D}(\mathcal{L}^{(p)})}^p<\ccpct_{2,p}^\mathcal{A}(K)+\varepsilon$ and $u=1$ on an open neighborhood $U$ of $K$. Let $G$ be a relatively compact open neighborhood of $K$ such that $\overline{G}\subset U$. Clearly $u\in\omega_{\overline{G}}^\mathcal{A}$, so that $\ccpct_{2,p}^\mathcal{A}(G)\leq \ccpct_{2,p}^\mathcal{A}(\overline{G})\leq  \left\|u\right\|_{\mathcal{D}(\mathcal{L}^{(p)})}^p$.
Thus, we obtain $\ccpct_{2,p}^\mathcal{A}(K)\leq \ccpct_{2,p}^\mathcal{A}(G)\leq \ccpct_{2,p}^\mathcal{A}(K)+\varepsilon$. 
\end{proof}

We say that condition \eqref{A:bump} is satisfied if 
\begin{multline}\label{A:bump}\tag{B}
\hspace{50pt} \text{For any compact $K\subset M$ the set $\omega_K^{\mathcal{A}}$ is nonempty.}
\end{multline}
In the manifold case with $\mathcal{A}=C_c^\infty(M)$ it is implied by the existence of smooth bump functions. Condition \eqref{A:bump} will be used frequently throughout the later sections, in the present section it is used only for Proposition \ref{P:propertiesadd} below.

Suppose that $1< p<+\infty$ is fixed and $\mathcal{A}\subset \mathcal{D}(\mathcal{L}^{(p)})$. Given $f\in L^q(M)$, $\frac{1}{p}+\frac{1}{q}=1$, we define $(\lambda-\mathcal{L}^{(q)})f$ as a linear functional on $\mathcal{A}$ by 
\[(\lambda-\mathcal{L}^{(q)})f(g):=\int_M f (\lambda-\mathcal{L}^{(p)})g\:d\mu,\quad g\in\mathcal{A},\]
and, mimicking classical definitions in the theory of Schwarz distributions, define the \emph{support $\supp_\mathcal{A} (\lambda-\mathcal{L}^{(q)})f$ of $(\lambda-\mathcal{L}^{(q)})f$ with respect to $\mathcal{A}$} as the set of all $x\in M$ with the property that for any open neighborhood $U_x$ of $x$ there is some $g\in \mathcal{A}(U_x)$ such that $(\lambda-\mathcal{L}^{(q)}f)(g)\neq 0$. The set $\supp_\mathcal{A} (\lambda-\mathcal{L}^{(q)})f$ is seen to be closed. If \eqref{A:bump} is satisfied, $f\in L^q(M)$ and $\supp_\mathcal{A} (\lambda-\mathcal{L}^{(q)})f$ is compact, then we can define $((\lambda-\mathcal{L}^{(q)})f)(\mathbf{1}):=((\lambda-\mathcal{L}^{(q)})f)(g)$ with an arbitrary function $g\in \mathcal{A}$ satisfying $g= 1$ on an open neighborhood of $\supp_\mathcal{A} (\lambda-\mathcal{L}^{(q)})f$. The following observation reconnects to \cite[Definition 1.1]{HarveyPolking73}.

\begin{proposition}\label{P:propertiesadd}
Suppose that condition \eqref{A:bump} is satisfied, $1<p<\infty$ and $\mathcal{A}\subset \mathcal{D}(\mathcal{L}^{(p)})$.
Then 
\begin{multline}\label{E:claimedidentity}
\ccpct_{2,p}^\mathcal{A}(E)^{1/p}=\sup\big\lbrace |((\lambda-\mathcal{L}^{(q)})f)(\mathbf{1})|:\ \text{$f\in L^q(M)$, $\left\|f\right\|_{L^q(M)}\leq 1$,}\\
\text{$\supp_\mathcal{A} (\lambda-\mathcal{L}^{(q)})f$ compact and contained in $E$}\big\rbrace
\end{multline}
for any $E\subset M$, where $\frac{1}{p}+\frac{1}{q}=1$.
\end{proposition}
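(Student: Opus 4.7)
The plan is to treat compact $K$ first through $L^p$--$L^q$ duality and then to pass to general $E\subset M$ via the inner regularity \eqref{E:innerreg}. Fix compact $K\subset M$. By \eqref{A:bump} the set $V:=\{(\lambda-\mathcal{L}^{(p)})u:u\in\omega_K^{\mathcal{A}}\}$ is a nonempty affine subset of $L^p(M)$, and $\ccpct_{2,p}^{\mathcal{A}}(K)^{1/p}=\mathrm{dist}(0,V)$ by \eqref{E:cap} and \eqref{E:DLpnorm}. Fixing any $v_0=(\lambda-\mathcal{L}^{(p)})u_0\in V$, one writes $V=v_0+Y$, where $Y:=\{(\lambda-\mathcal{L}^{(p)})w:w\in\mathcal{A},\ w\equiv 0\ \text{on some open neighborhood of}\ K\}$ is a linear subspace of $L^p(M)$; this is the setup on which the Hahn--Banach distance formula will be deployed.

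For the inequality $\geq$ in \eqref{E:claimedidentity}, I would fix $f\in L^q(M)$ with $\|f\|_{L^q(M)}\leq 1$ and $\supp_{\mathcal{A}}(\lambda-\mathcal{L}^{(q)})f\subset K$, pick any $u\in\omega_K^{\mathcal{A}}$, and use H\"older to obtain $|((\lambda-\mathcal{L}^{(q)})f)(u)|\leq \|u\|_{\mathcal{D}(\mathcal{L}^{(p)})}$. Since $u=1$ on an open neighborhood of $K$ which in particular contains $\supp_{\mathcal{A}}(\lambda-\mathcal{L}^{(q)})f$, the convention introduced before the statement gives $((\lambda-\mathcal{L}^{(q)})f)(u)=((\lambda-\mathcal{L}^{(q)})f)(\mathbf{1})$; passing to the infimum over $u$ and the supremum over $f$ yields this direction. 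For the converse I would identify $L^p(M)^{\ast}$ with $L^q(M)$ and invoke the standard Hahn--Banach formula for the distance from a point to a (closed) subspace, giving
\[\mathrm{dist}(0,V)=\sup\bigl\{|((\lambda-\mathcal{L}^{(q)})f)(u_0)|:f\in L^q(M),\ \|f\|_{L^q(M)}\leq 1,\ f\perp Y\bigr\},\]
where $f\perp Y$ means that $((\lambda-\mathcal{L}^{(q)})f)(w)=0$ for every $w\in\mathcal{A}$ vanishing on an open neighborhood of $K$. The central step is to upgrade this annihilator condition to $\supp_{\mathcal{A}}(\lambda-\mathcal{L}^{(q)})f\subset K$: for $x\notin K$, compactness of $K$ furnishes an open neighborhood $U_x$ of $x$ with $\overline{U_x}\cap K=\emptyset$, and any $g\in\mathcal{A}(U_x)$ then vanishes on the open neighborhood $M\setminus\overline{U_x}$ of $K$, so $((\lambda-\mathcal{L}^{(q)})f)(g)=0$ and $x\notin \supp_{\mathcal{A}}(\lambda-\mathcal{L}^{(q)})f$. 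Once this inclusion holds, the defining convention converts $((\lambda-\mathcal{L}^{(q)})f)(u_0)$ into $((\lambda-\mathcal{L}^{(q)})f)(\mathbf{1})$, and the compact case of \eqref{E:claimedidentity} follows.

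Finally, to reach arbitrary $E\subset M$ I would note that each $f$ admissible on the right-hand side of \eqref{E:claimedidentity} has $K_f:=\supp_{\mathcal{A}}(\lambda-\mathcal{L}^{(q)})f$ compact and contained in $E$, so the supremum over $E$ splits as $\sup_{K\subset E\ \text{compact}}$ of the supremum over those $f$ with $K_f\subset K$; by the compact case this equals $\sup_{K\subset E\ \text{compact}}\ccpct_{2,p}^{\mathcal{A}}(K)^{1/p}=\ccpct_{2,p}^{\mathcal{A}}(E)^{1/p}$, using \eqref{E:innerreg} and monotonicity of the $p$-th root. The step I expect to be the main obstacle is the annihilator-to-support implication; fortunately the compactness of $K$ makes it direct through positive-distance separation, so no partition of unity within $\mathcal{A}$ is required.
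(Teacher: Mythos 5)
Your proof is correct and follows essentially the same route as the paper's: H\"older for the easy inequality, the Hahn--Banach distance-duality formula applied to the affine set $\{(\lambda-\mathcal{L}^{(p)})u: u\in\omega_K^{\mathcal{A}}\}$ (equivalently $h+\mathcal{A}(M\setminus K)$) for the converse, and reduction to compact sets via \eqref{E:innerreg}. The only difference is presentational: the paper invokes the Reed--Simon corollary to produce an extremal annihilating $f$ in one stroke, whereas you make explicit the (correct) upgrade from ``$f$ annihilates functions vanishing near $K$'' to $\supp_{\mathcal{A}}(\lambda-\mathcal{L}^{(q)})f\subset K$, a step the paper leaves implicit.
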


\begin{proof}
We can follow \cite[Theorem 2.1]{HarveyPolking73}: If $c(E)$ denotes the right hand side of (\ref{E:claimedidentity}), then 
\begin{equation}\label{E:compactssuffice}
c(E)=\sup\left\lbrace c(K):\quad \text{$K\subset E$, $K$ \upshape{compact}}\right\rbrace, \quad E\subset M.
\end{equation}
Since obviously $c(E)\geq c(K)$ for any compact $K\subset E$, the inequality $\geq$ in \eqref{E:compactssuffice} is clear. On the other hand, for any $\varepsilon>0$ we can find $f$ as in \eqref{E:claimedidentity} such that $c(E)\leq |((\lambda-\mathcal{L}^{(q)})f)(\mathbf{1})|+\varepsilon$. Since $K:=\supp_\mathcal{A} (\lambda-\mathcal{L}^{(q)})f$ itself is compact, the preceding is bounded by $c(K)+\varepsilon$ and in particular, by the supremum in \eqref{E:compactssuffice} plus $\varepsilon$. Letting $\varepsilon$ go to zero gives $\leq$ in \eqref{E:compactssuffice}. Consequently it suffices to verify (\ref{E:claimedidentity}) for compact sets $K$ in place of $E$. 

Let $K$ be a compact set. If $f\in L^q(M)$, $\left\|f\right\|_{L^q(M)}\leq 1$ and $\supp_\mathcal{A} (\lambda-\mathcal{L}^{(q)})f\subset K$, then for any $g\in \omega_K^\mathcal{A}$ we have 
\[|\left\langle (\lambda-\mathcal{L}^{(q)})f,\mathbf{1}\right\rangle|=|\left\langle (\lambda-\mathcal{L}^{(q)})f,g\right\rangle|=|\left\langle f, (\lambda-\mathcal{L}^{(p)})g\right\rangle|\leq \left\|(\lambda-\mathcal{L}^{(p)})g\right\|_{L^p(M)}\]
by the H\"older inequality, and therefore $c(K)\leq \ccpct_{2,p}^\mathcal{A}(K)^{1/p}$. Now suppose $h\in \omega^\mathcal{A}_K$. By the Hahn-Banach theorem, there is some $f\in L^q(M)$ with $\left\|f\right\|_{L^q(M)}\leq 1$ such that 
\begin{equation}\label{E:annihilate}
\left\langle f,(\lambda-\mathcal{L}^{(p)})\varphi\right\rangle=0,\quad \varphi \in \mathcal{A}(M\setminus K),
\end{equation}
and
\begin{equation}\label{E:distance}
\left\langle f,(\lambda-\mathcal{L}^{(p)})h \right\rangle=\inf\big\lbrace\left\|(\lambda-\mathcal{L}^{(p)})(h-\varphi)\right\|_{L^p(M)}:\quad \varphi \in \mathcal{A}(M\setminus K)\big\rbrace =\ccpct_{2,p}^\mathcal{A}(K)^{1/p},
\end{equation}
see \cite[Corollary 3 of Theorem III.6]{RS80v1}. From (\ref{E:annihilate}) it follows that $\supp_\mathcal{A} (\lambda-\mathcal{L}^{(q)})f\subset K$, and with (\ref{E:distance}) we arrive at $\ccpct_{2,p}^\mathcal{A}(K)^{1/p}=\left\langle (\lambda-\mathcal{L}^{(q)})f,\mathbf{1}\right\rangle\leq c(K)$.
\end{proof}

\begin{remark} 
The capacity $\ccpct_{2,p}^\mathcal{A}$ is not expected to be a Choquet capacity, see \cite[p.184]{HarveyPolking73}.
\end{remark}

\section{$L^p$-uniqueness and removable sets}\label{S:esa}

Let $(\mathcal{L}_0,\mathcal{A}_0)$ be a linear operator on $L^p(M)$. We call it \emph{$L^p$-unique} if its domain $\mathcal{A}_0$ is dense in $L^p(M)$ and there is at most one strongly continuous semigroup on $L^p(M)$ whose generator extends  $(\mathcal{L}_0,\mathcal{A}_0)$. See \cite[Chapter I, Definition 1.3]{Eberle99}. If $p=2$ and $(\mathcal{L}_0,\mathcal{A}_0)$ is symmetric and semibounded, then it is $L^2$-unique if and only if $\mathcal{A}_0$ is dense in $L^2(M)$ and $(\mathcal{L}_0,\mathcal{A}_0)$ is essentially self-adjoint, \cite[Chapter I, Corollary 1.2]{Eberle99}. 

In the sequel we assume that the $L^p$-uniqueness holds for the restrictions $\mathcal{L}^{(p)}|_\mathcal{A}$ of the globally defined operators $\mathcal{L}^{(p)}$ from Section \ref{S:Setup} to a given space of real-valued Borel functions $\mathcal{A}$ and investigate whether the removal of a small closed subset $\Sigma$ of $M$ leads to a loss of $L^p$-uniqueness or not. To prepare the discussion we formulate structural conditions on $\mathcal{A}$.

The first condition guarantees certain boundedness and multiplication properties:
\begin{multline}\label{A:basic}\tag{$\mathrm{L^\infty}$}
\text{The space $\mathcal{A}$ is a subalgebra of $\mathcal{B}_b(M)\cap L^1(M)$, contained in $\mathcal{D}(\mathcal{L}^{(1)})$,}\\
\text{and such that $\Gamma(f)\in L^\infty(M)$ and $\mathcal{L}f\in L^\infty(M)$ for all $f\in \mathcal{A}$.}
\end{multline}

\begin{remark} If \eqref{A:basic} holds and $\mathcal{A}\subset \mathcal{D}(\mathcal{L})$, then $\mathcal{A}\subset \mathcal{D}_p$ for all $1\leq  p<+\infty$.
\end{remark}

Given $1<p<+\infty$ we consider the following condition.
\begin{equation}\label{A:basicp}\tag{$\mathrm{C_p}$}
\text{The space $\mathcal{A}$ is contained in $\mathcal{D}(\mathcal{L}^{(p)})$ and the operator $\mathcal{L}^{p}|_\mathcal{A}$ is $L^p$-unique.}
\end{equation}
Recall that a subspace of the domain of a closed operator is said to be a \emph{core} if the closure of the restriction of the operator to this subspace agrees with the initially given closed operator. It is well-known that \eqref{A:basicp} is equivalent to saying that $\mathcal{A}$ is a core for $(\mathcal{L}^p, \mathcal{D}(\mathcal{L}^p))$, \cite[Chapter I, Appendix A, Theorem 1.2]{Eberle99}. If \eqref{A:basicp} holds, then the closure of 
$\mathcal{L}^{p}|_\mathcal{A}$ is $(\mathcal{L}^{(p)},\mathcal{D}(\mathcal{L}^{(p)}))$; in the special case $p=2$ it follows that $(\mathcal{L},\mathcal{D}(\mathcal{L}))$ with $\mathcal{D}(\mathcal{L})=\mathcal{D}(\mathcal{E})\cap \mathcal{D}((\mathcal{L}|_{\mathcal{A}})^\ast)$ is the unique self-adjoint extension of $\mathcal{L}|_\mathcal{A}$.

\begin{remark}\label{R:compactsuppsdense} If \eqref{A:basicp} holds and $\mathcal{A}_c$ is dense in $\mathcal{A}$ with respect to $\big\|\cdot\big\|_{\mathcal{D}(\mathcal{L}^{(p)})}$, then also $\mathcal{L}^{p}|_{\mathcal{A}_c}$ is $L^p$-unique with closure $(\mathcal{L}^{(p)},\mathcal{D}(\mathcal{L}^{(p)}))$.
\end{remark}

We formulate yet another condition needed for $p\neq 2$; for $p=2$ it is always satisfied.
\begin{equation}\label{A:Riesztrafo}\tag{$\mathrm{\Gamma_p}$}
\text{There is a constant $c(p)>0$ such that $\left\|\Gamma(f)^{1/2}\right\|_{L^p(M)}\leq c(p)\:\left\|f\right\|_{\mathcal{D}(\mathcal{L}^{(p)})}$ for all $f\in\mathcal{A}$.}
\end{equation}

Now suppose that $\Sigma\subset M$ is a closed set. We write $\mathring{M}:= M \setminus \Sigma$. For elliptic operators $\mathcal{L}$ on Euclidean spaces it is well-known that the $L^p$-uniqueness  of $\mathcal{L}|_{C_c^\infty(\mathring{M})}$ can be characterized in terms of the $(2,p)$-capacity of $\Sigma$, see for instance \cite{HarveyPolking73, Maz'ya75, Maz'yaKhavin} and the references listed in \cite[Section 2.9]{AH96}. The following theorem is a general version of this fact, applicable to manifolds and metric measure spaces. 

\begin{theorem}\label{T:esa} Let $1<p<+\infty$ and assume that condition \eqref{A:basicp} holds. 
\begin{enumerate}
\item[(i)] Suppose that also \eqref{A:bump} holds. If $\Sigma\subset M$ is closed, $\mu(\Sigma)=0$ and $\mathcal{L}|_{\mathcal{A}(\mathring{M})}$ is $L^p$-unique, then we have $\ccpct_{p,2}^\mathcal{A}(\Sigma)=0$. 
\item[(ii)] Suppose that also \eqref{A:basic} and \eqref{A:Riesztrafo} hold. If $\Sigma\subset M$ is compact and $\ccpct_{p,2}^\mathcal{A}(\Sigma)=0$, then we have $\mu(\Sigma)=0$ and $\mathcal{L}|_{\mathcal{A}(\mathring{M})}$ is $L^p$-unique with closure $(\mathcal{L}^{(p)}, \mathcal{D}(\mathcal{L}^{(p)}))$. If $\mathcal{A}_c$ is dense in $\mathcal{A}$ with respect to $\|\cdot\|_{\mathcal{D}(\mathcal{L}^{(p)})}$, then 
the conclusion remains true for general closed $\Sigma\subset M$; in this case also $\mathcal{L}|_{\mathcal{A}_c(\mathring{M})}$ is $L^p$-unique with closure $(\mathcal{L}^{(p)}, \mathcal{D}(\mathcal{L}^{(p)}))$.
\end{enumerate}
\end{theorem}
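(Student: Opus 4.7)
The approach is to translate $L^p$-uniqueness of $\mathcal{L}|_{\mathcal{A}(\mathring M)}$ into the statement that $\mathcal{A}(\mathring M)$ is a $\|\cdot\|_{\mathcal{D}(\mathcal{L}^{(p)})}$-core for $\mathcal{D}(\mathcal{L}^{(p)})$, and then to compare this density statement with the vanishing of $\ccpct_{2,p}^\mathcal{A}(\Sigma)$ by constructing explicit cutoffs. In one direction cutoffs come for free from $L^p$-uniqueness, in the other direction cutoffs come from $\ccpct_{2,p}^\mathcal{A}(\Sigma)=0$ and one must estimate the $\mathcal{D}(\mathcal{L}^{(p)})$-norm of products $u_n f$ via the Leibniz rule for $\mathcal{L}^{(1)}$.

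For (i), fix a compact $K\subset\Sigma$ and, using \eqref{A:bump}, choose $u\in\omega_K^\mathcal{A}$. Because $\mathcal{A}(\mathring M)$ is a core, there exist $u_n\in\mathcal{A}(\mathring M)$ with $\|u-u_n\|_{\mathcal{D}(\mathcal{L}^{(p)})}\to 0$. Set $v_n:=u-u_n\in\mathcal{A}$. Since $\supp u_n\subset \mathring M$ is closed in $M$, the complement $V_n:=M\setminus \supp u_n$ is an open neighborhood of $\Sigma$ on which $u_n\equiv 0$; hence $v_n=u=1$ on the open neighborhood $V_n\cap\{u=1\}$ of $K$, showing $v_n\in\omega_K^\mathcal{A}$. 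Therefore $\ccpct_{2,p}^\mathcal{A}(K)\leq\|v_n\|_{\mathcal{D}(\mathcal{L}^{(p)})}^p\to 0$, and taking the sup over compact $K\subset\Sigma$ via \eqref{E:innerreg} gives $\ccpct_{2,p}^\mathcal{A}(\Sigma)=0$.

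For (ii), first fix compact $\Sigma$. From $\ccpct_{2,p}^\mathcal{A}(\Sigma)=0$ pick $u_n\in\omega_\Sigma^\mathcal{A}$ with $\|u_n\|_{\mathcal{D}(\mathcal{L}^{(p)})}\to 0$; the definition \eqref{E:DLpnorm} dominates the $L^p$-norm up to a constant, so $u_n\to 0$ in $L^p(M)$, while $u_n\geq\mathbf{1}_\Sigma$ on $\Sigma$ forces $\mu(\Sigma)=0$. For the uniqueness statement, given $f\in\mathcal{A}$ set $f_n:=(1-u_n)f$. Then $f_n\in\mathcal{A}$ by \eqref{A:basic} and vanishes on an open neighborhood of $\Sigma$, so $f_n\in\mathcal{A}(\mathring M)$. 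Using \eqref{A:basic} together with the Leibniz identity \eqref{E:Gamma} for $\mathcal{L}^{(1)}$, write
\[
\mathcal{L}^{(1)}(u_n f)=u_n\mathcal{L}f+f\mathcal{L}u_n+2\Gamma(u_n,f),
\]
and observe that each summand lies in $L^p(M)$, so $u_n f\in\mathcal{D}(\mathcal{L}^{(p)})$. To estimate $\|f-f_n\|_{\mathcal{D}(\mathcal{L}^{(p)})}=\|u_n f\|_{\mathcal{D}(\mathcal{L}^{(p)})}$ split this norm into four terms via \eqref{E:DLpnorm}: the $\lambda u_n f$ and $u_n\mathcal{L}f$ terms are dominated by $\|f\|_{L^\infty}+\|\mathcal{L}f\|_{L^\infty}$ times $\|u_n\|_{L^p}$, the $f\mathcal{L}u_n$ term by $\|f\|_{L^\infty}\|u_n\|_{\mathcal{D}(\mathcal{L}^{(p)})}$, and the cross term by Cauchy--Schwarz for $\Gamma$ and \eqref{A:Riesztrafo}:
\[
\|\Gamma(u_n,f)\|_{L^p(M)}\leq\|\Gamma(f)^{1/2}\|_{L^\infty(M)}\,\|\Gamma(u_n)^{1/2}\|_{L^p(M)}\leq c(p)\|\Gamma(f)^{1/2}\|_{L^\infty(M)}\,\|u_n\|_{\mathcal{D}(\mathcal{L}^{(p)})}.
\]
All four contributions vanish as $n\to\infty$, so $\mathcal{A}(\mathring M)$ is $\|\cdot\|_{\mathcal{D}(\mathcal{L}^{(p)})}$-dense in $\mathcal{A}$, which by \eqref{A:basicp} is a core; thus $\mathcal{L}^{(p)}|_{\mathcal{A}(\mathring M)}$ has closure $(\mathcal{L}^{(p)},\mathcal{D}(\mathcal{L}^{(p)}))$, i.e.\ is $L^p$-unique. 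For general closed $\Sigma$ under $\mathcal{A}_c$-density, first reduce to $\tilde f\in\mathcal{A}_c$; then $\Sigma\cap\supp\tilde f$ is a compact subset of $\Sigma$ of zero $\ccpct_{2,p}^\mathcal{A}$ (by \eqref{E:innerreg} and the monotonicity in Proposition \ref{P:properties}(i)), so the argument above applied with this compact set in place of $\Sigma$ yields $\tilde f_n\in\mathcal{A}_c(\mathring M)$ converging to $\tilde f$; since such $\tilde f$ are dense in $\mathcal{A}$ and $\mathcal{A}$ is a core, so is $\mathcal{A}_c(\mathring M)$.

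The main obstacle is the control of $\|u_n f\|_{\mathcal{D}(\mathcal{L}^{(p)})}$ when $p\neq 2$. One cannot simply use Cauchy--Schwarz as in the Hilbert-space case; one needs the Leibniz rule in the $L^1$-sense provided by \eqref{E:Gamma} to give a meaning to $\mathcal{L}^{(p)}(u_n f)$, and crucially the condition \eqref{A:Riesztrafo} to dominate the carré du champ cross term $\Gamma(u_n,f)$ in $L^p$ by the $\mathcal{D}(\mathcal{L}^{(p)})$-norm of the capacitary minimizers. The algebra property and $L^\infty$-boundedness of $f$, $\mathcal{L}f$, $\Gamma(f)^{1/2}$ packaged in \eqref{A:basic} are precisely what makes these estimates uniform in the cutoff $u_n$.
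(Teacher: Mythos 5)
Your proof is correct and follows essentially the same route as the paper: in (i) the cutoffs $v_n=u-u_n$ built from the core property of $\mathcal{A}(\mathring M)$ are exactly the paper's $e_{i,n}=f_i-f_{i,n}$ (the paper merely phrases the core property via the double adjoint of $\mathcal{L}|_{\mathcal{A}(\mathring M)}$), and in (ii) the multiplication by capacitary cutoffs $1-u_n$ together with the Leibniz identity \eqref{E:Gamma} and condition \eqref{A:Riesztrafo} for the cross term $\Gamma(u_n,f)$ is precisely the paper's argument. The only cosmetic difference is that you approximate elements of $\mathcal{A}$ directly and invoke transitivity of graph-norm density, whereas the paper starts from a general $f\in\mathcal{D}(\mathcal{L}^{(p)})$ and first approximates by $f_n\in\mathcal{A}$; these are equivalent.
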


\begin{proof}
We verify (i). Since the operator $\mathcal{L}|_{\mathcal{A}(\mathring{M})}$ is densely defined and closable in $L^p(M)$,  and its smallest closed extension coincides with $(\mathcal{L}^{(p)},\mathcal{D}(\mathcal{L}^{(p)}))$, the adjoint $L^\ast$ of $L:=(\mathcal{L}|_{\mathcal{A}(\mathring{M})})^\ast$ equals $\mathcal{L}^{(p)}$, \cite[Chapter III, Theorems 5.28 and 5.29]{Kato80}. Let $(\Sigma_i)_{i\geq 1}$ be a sequence of compact sets $\Sigma_i\subset \Sigma$ such that
\begin{equation}\label{E:fromwithin}
\ccpct_{2,p}^\mathcal{A}(\Sigma)=\sup_i \ccpct_{2,p}^\mathcal{A}(\Sigma_i),
\end{equation}
by (\ref{E:innerreg}) such a sequence exists. For fixed $i$ let $f_i \in \mathcal{A}$ be such that $f_i = 1$ on a neighbourhood of $\Sigma_i$, by condition \eqref{A:bump} such $f$ exists. Since $\mathcal{A}\subset \mathcal{D}(\mathcal{L}^{(p)})$ we can find $g\in L^p(M)$ such that 
$\left\langle Lh,f_i\right\rangle=\left\langle h, g\right\rangle$ for all $h\in \mathcal{D}(L)$. Because $\mu(\Sigma_i)=0$ we also have $\left\langle Lh,f_i|_{\mathring{M}}\right\rangle=\left\langle h, g\right\rangle$ for all $h\in \mathcal{D}(L)$, in other words, $f_i|_{\mathring{M}}$ is an element of $D(L^\ast)=\mathcal{D}(\mathcal{L}^{(p)})$. Accordingly there exists a sequence $(f_{i,n})_{n \ge 1} \subset \mathcal{A}(\mathring{M})$ such that $\lim_n\left\|f_i - f_{i,n}\right\|_{\mathcal{D}(\mathcal{L}^{(p)})}= 0$. The functions $e_{i,n} = f_i-f_{i,n}$ are elements of $\omega_{\Sigma_i}^\mathcal{A}$ and consequently $\ccpct_{2,p}^\mathcal{A}(\Sigma_i) \le \lim_n \|e_{i,n}\|_{\mathcal{D}(\mathcal{L}^{(p)})}^p= 0$. By \eqref{E:fromwithin} this implies $\ccpct_{2,p}^\mathcal{A}(\Sigma)=0$.

To see (ii), suppose that $\ccpct_{2,p}^\mathcal{A}(\Sigma)=0$. Then $\mu(\Sigma)=0$, as follows straightforward from the definitions, and therefore $L^p(M)=L^p(\mathring{M})$. Denote by $\mathcal{D}(\mathcal{L}^{\mathring{M}})$ the closure of $\mathcal{L}|_{\mathcal{A}(\mathring{M})}$ in $L^p(M)$.
It suffices to prove $\mathcal{D}(\mathcal{L}^{\mathring{M}})= \mathcal{D}(\mathcal{L}^{(p)})$. Since $\mathcal{A}(\mathring{M})\subset \mathcal{A}$ we have $\mathcal{D}(\mathcal{L}^{\mathring{M}})\subset \mathcal{D}(\mathcal{L}^{(p)})$, and it remains to show that
\begin{equation}\label{E:tobechecked}
\mathcal{D}(\mathcal{L}^{\mathring{M}})\supset \mathcal{D}(\mathcal{L}^{(p)}).
\end{equation}
For any $f \in \mathcal{D}(\mathcal{L}^{(p)})$ let $(f_n)_{n\geq 1} \subset \mathcal{A}$ such that $f_n \to f$ in $\mathcal{D}(\mathcal{L}^{(p)})$, if $\Sigma$ is noncompact, then we may assume the $f_n$ have compact support. For any $n$ the set $K_n:=\Sigma\cap \supp f_n$ is compact and satisfies $\ccpct_{2,p}^\mathcal{A}(K_n)\leq \ccpct_{2,p}^\mathcal{A}(\Sigma)=0$. Accordingly we can find a sequence $(e_{n,l})_{l \ge 1} \subset \omega_{K_n}^\mathcal{A}$ such that $e_{n,l} \to 0$ in $\mathcal{D}(\mathcal{L}^{(p)})$ as $l \to \infty$.
Set $f_{n,l}=(1-e_{n,l})f_n$. Then $f_{n,l}\in  \mathcal{A}$ by \eqref{A:basic}, and since $(1-e_{n,l})=0$ on a neighborhood of $K_n$ and $f_n=0$ on $M\setminus \supp f_n$, it follows that $f_{n,l}\in \mathcal{A}(\mathring{M})$. We have 
\[\| f_{n,l} - f_n \|_{L^p(M)} = \|e_{n,l}f_n\|_{L^p(M)} \le  \|e_{n,l}\|_{L^p(M)}  \|f_n\|_{L^\infty(M)} ,\]
what goes to zero as $l\to\infty$. Moreover, using (\ref{E:Gamma}) we see that 
\begin{align*}
\|&\mathcal{L} f_{n,l} - \mathcal{L} f_n  \|_{L^p(M)} \notag\\
& =  \| (\mathcal{L}^{(1)}( e_{n,l} f_n)\|_{L^p(M)}\notag\\
& \le \| (\mathcal{L}^{(1)} e_{n,l}) f_n\|_{L^p(M)} + 2\left(\int_M |\Gamma(e_{n,l}, f_n)|^p d\mu\right)^{1/p} + \| e_{n,l} \mathcal{L}^{(1)} f_n \|_{L^p(M)} \\
&\le \| \mathcal{L} e_{n,l} \|_{L^p(M)}\|f_n\|_{L^\infty(M)}  + 2  \| \Gamma(e_{n,l})^{1/2} \|_{L^p(M)} \|\Gamma( f_n)\|_{L^\infty(M)}^{1/p}+  \| e_{n,l} \|_{L^p(M)} \|\mathcal{L}f_n \|_{L^\infty(M)},
\end{align*}
what converges to zero as $l\to\infty$ by \eqref{A:basic}; for $p\neq 2$ we also use condition \eqref{A:Riesztrafo} on the second summand in the last line. Hence the functions $f_{n,l} \in \mathcal{A}(\mathring{M})$ approximate $f$ in $\mathcal{D}(\mathcal{L}^{(p)})$, what shows that $f \in D(\mathcal{L}^{\mathring{M}})$, and consequently (\ref{E:tobechecked}) holds. 
\end{proof}

The density of $\mathcal{A}_c$ in $\mathcal{A}$ with respect to the graph norm follows if there is a suitable 
approximation of the identity. We say that condition \eqref{A:approxid} holds if
\begin{align}\label{A:approxid}\tag{A}
&\text{There is a sequence $(h_n)_{n\geq 1}\subset \mathcal{A}_c$ such that $0\leq h_n\leq 1$, $h_n\uparrow 1$ as $n\to \infty$,}\\
&\hspace{30pt}\sup_n\big\|\Gamma(h_n)^{1/2}\big\|_{L^\infty(M)}<+\infty\quad \text{ and }\quad \sup_n\big\| \mathcal{L}h_n\big\|_{L^\infty(M)}<+\infty.\notag
\end{align}
We record the following observation for later use.

\begin{lemma}\label{L:compactsupps}
Assume that \eqref{A:basic} holds and $\mathcal{A}\subset \mathcal{D}(\mathcal{L})$. Let $1<p<+\infty$ and assume further that \eqref{A:Riesztrafo} holds and that $\mathcal{A}$ is dense in $L^q(M)$, $\frac{1}{p}+\frac{1}{q}=1$. Then \eqref{A:approxid} implies the density of $\mathcal{A}_c$ in $\mathcal{A}$ with respect to $\|\cdot\|_{\mathcal{D}(\mathcal{L}^{(p)})}$.
\end{lemma}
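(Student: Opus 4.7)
Given $f \in \mathcal{A}$, the plan is to truncate $f$ by the approximation of identity, setting $f_n := h_n f$, and to show that $(f_n)$ lies in $\mathcal{A}_c$ and converges weakly to $f$ in the reflexive Banach space $\mathcal{D}(\mathcal{L}^{(p)})$; Mazur's theorem will then yield finite convex combinations of the $f_n$ converging strongly in $\mathcal{D}(\mathcal{L}^{(p)})$, and since $\mathcal{A}_c$ is a vector space these combinations lie in $\mathcal{A}_c$, delivering the desired density. Since $\mathcal{A}$ is an algebra by \eqref{A:basic} and $h_n \in \mathcal{A}_c$, each $f_n$ belongs to $\mathcal{A}_c$; and since $\mathcal{A} \subset L^1(M) \cap L^\infty(M) \subset L^p(M)$, dominated convergence produces $f_n \to f$ in $L^p(M)$.

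The core step is a uniform $L^p$-bound on $\mathcal{L}f_n$, which I would extract from the product identity \eqref{E:Gamma}:
\[\mathcal{L}f_n = h_n \mathcal{L}f + f \mathcal{L}h_n + 2\Gamma(h_n, f).\]
From \eqref{A:basic} one has $\mathcal{L}f \in L^1(M) \cap L^\infty(M) \subset L^p(M)$, so $\|h_n \mathcal{L}f\|_{L^p(M)} \leq \|\mathcal{L}f\|_{L^p(M)}$. The middle term is controlled by $\|f\|_{L^p(M)} \sup_n \|\mathcal{L}h_n\|_{L^\infty(M)}$ via \eqref{A:approxid}. For the cross term I would apply the Cauchy--Schwarz inequality for $\Gamma$ together with \eqref{A:approxid} and \eqref{A:Riesztrafo}:
\[\|\Gamma(h_n, f)\|_{L^p(M)} \leq \big\|\Gamma(h_n)^{1/2}\big\|_{L^\infty(M)} \big\|\Gamma(f)^{1/2}\big\|_{L^p(M)} \leq c(p)\Big(\sup_n \big\|\Gamma(h_n)^{1/2}\big\|_{L^\infty(M)}\Big)\|f\|_{\mathcal{D}(\mathcal{L}^{(p)})}.\]
Thus $(f_n)$ will be uniformly bounded in $\mathcal{D}(\mathcal{L}^{(p)})$.

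To identify the weak $L^p$-limit of $\mathcal{L}f_n$ as $\mathcal{L}f$, I would first test against $\phi \in \mathcal{A}$. By $L^2$-symmetry of $\mathcal{L}$ (valid since $\mathcal{A} \subset \mathcal{D}(\mathcal{L})$) and dominated convergence (using $|f_n \mathcal{L}\phi| \leq |f|\,|\mathcal{L}\phi| \in L^1(M)$, since $f \in L^\infty(M)$ and $\mathcal{L}\phi \in L^1(M)$),
\[\int_M \mathcal{L}f_n \cdot \phi\, d\mu = \int_M f_n \mathcal{L}\phi\, d\mu \longrightarrow \int_M f \mathcal{L}\phi\, d\mu = \int_M \mathcal{L}f \cdot \phi\, d\mu.\]
The density of $\mathcal{A}$ in $L^q(M)$, combined with the uniform $L^p$-bound on $\mathcal{L}f_n$, will extend this convergence to all $\phi \in L^q(M)$, yielding $\mathcal{L}f_n \rightharpoonup \mathcal{L}f$ weakly in $L^p(M)$. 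Since $f_n \to f$ strongly in $L^p(M)$ and $\mathcal{D}(\mathcal{L}^{(p)})$ embeds as a closed subspace of the reflexive space $L^p(M) \times L^p(M)$ via $g \mapsto (g, \mathcal{L}^{(p)}g)$, we obtain $f_n \rightharpoonup f$ in $\mathcal{D}(\mathcal{L}^{(p)})$. Mazur's theorem then supplies the sought convex combinations in $\mathcal{A}_c$ converging to $f$ in the graph norm.

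The main obstacle will be the uniform $L^p$-bound on the cross term $\Gamma(h_n, f)$: for $p \neq 2$ the $L^p$-integrability of $\Gamma(f)^{1/2}$ is not automatic from $f \in \mathcal{D}(\mathcal{E})$ (which only guarantees $\Gamma(f) \in L^1(M)$), and it is exactly this gap that the Riesz-type hypothesis \eqref{A:Riesztrafo} is designed to bridge.
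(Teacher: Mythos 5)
Your proposal is correct and follows essentially the same route as the paper: truncate by $h_nf$, obtain a uniform graph-norm bound from the product identity \eqref{E:Gamma} with the cross term $\Gamma(h_n,f)$ controlled by the carr\'e du champ Cauchy--Schwarz inequality together with \eqref{A:approxid} and \eqref{A:Riesztrafo}, identify the weak $L^p$-limit of $\mathcal{L}(h_nf)$ via symmetry of $\mathcal{L}|_{\mathcal{A}}$ and the density of $\mathcal{A}$ in $L^q(M)$, and finish with Mazur's lemma. The only (harmless) difference is that you identify the weak limit of the whole sequence directly, whereas the paper first extracts a weakly convergent subsequence via Banach--Alaoglu and then shows its limit vanishes.
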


\begin{proof}
Let $g\in\mathcal{A}$. Then $h_ng\in \mathcal{A}_c$, and using \eqref{E:Gamma} we see that 
\begin{align}
\left\|(\lambda-\mathcal{L})(h_ng)\right\|_{L^p(M)}&\leq \lambda\left\|h_n\right\|_{L^\infty(M)}\left\|g\right\|_{L^p(M)}+\left\|\mathcal{L}(h_ng)\right\|_{L^p(M)}\notag\\
&\leq \lambda\left\|g\right\|_{L^p(M)}+\left\|\mathcal{L}h_n\right\|_{L^\infty(M)}\left\|g\right\|_{L^p(M)}\notag\\
&\hspace{50pt}+2\big\|\Gamma(h_n)^{1/2}\big\|_{L^\infty(M)}\left\|\Gamma(g)\right\|_{L^p(M)}+\left\|h_n\right\|_{L^\infty(M)}\left\|\mathcal{L}g\right\|_{L^p(M)}\notag\\
&\leq c\:\left\|g\right\|_{\mathcal{D}(\mathcal{L}^{(p)})}\notag
\end{align} 
with a constant $c>0$ independent of $g$ and $n$. In particular,
\[\sup_n \left\|(\lambda-\mathcal{L})(h_ng-g)\right\|_{L^p(M)}<+\infty,\]
so that by reflexivity and Banach-Alaoglu we can find a sequence $(n_k)_k$ and a function $g_0\in L^p(M)$ such that 
\[\lim_k \left\langle (\lambda-\mathcal{L})(h_{n_k}g-g)-g_0,f\right\rangle=0,\quad f\in L^q(M).\]
By Mazur's lemma we may assume that 
\[\lim_k \big\|\frac{1}{k}\sum_{j=1}^{n_k}(\lambda-\mathcal{L})(h_{n_k}g-g)-g_0\big\|_{L^p(M)}=0,\]
otherwise pass to a subsequence. On the other hand
\[\lim_k \left\langle (\lambda-\mathcal{L})(h_{n_k}g-g),f\right\rangle=\lim_k \left\langle (h_{n_k}g-g),(\lambda-\mathcal{L})f\right\rangle=0,\quad f\in \mathcal{A},\]
by the symmetry of $\mathcal{L}|_{\mathcal{A}}$ and dominated convergence, so that by the density of $\mathcal{A}$ in $L^q(M)$ we have $g_0=0$. Setting $g_k:=\frac{1}{k}\sum_{j=1}^{n_k}h_{n_k}g$ we obtain a sequence $(g_k)_k\subset \mathcal{A}_c$ such that $\lim_k g_k=g$ in $\mathcal{D}(\mathcal{L}^{(p)})$.
\end{proof}

\section{Capacities via resolvents, and a first comparison}

We recall another well-known definition of capacities and record a simple first comparison result for the two types of capacities. 

Recall that $(P_t)_{t>0}$ denotes the symmetric Markov semigroup generated by $(\mathcal{L},\mathcal{D}(\mathcal{L}))$. For any $\lambda >0$ we write $G_\lambda$ to denote the associated \emph{$\lambda$-resolvent operator}, defined by 
\begin{equation}\label{E:resolventop}
G_\lambda f:=\int_0^\infty e^{-\lambda t} P_tfdt
\end{equation}
for $f\in L^2(M)$. For any $1\leq p\leq +\infty$ the restriction of $G_\lambda$ to $L^1 (M)\cap L^\infty(M)$ extends to a bounded linear operator $G_\lambda^{(p)}:L^p(M)\to L^p(M)$, and for all $f\in L^p(M)$ an analog of (\ref{E:resolventop}) holds with  $G_\lambda^{(p)}$ and $(P_t^{(p)})_{t>0}$ in place of $G_\lambda$ and $(P_t)_{t>0}$. For any $1\leq p< +\infty$ we have $G_\lambda^{(p)}=(\lambda-\mathcal{L}^{(p)})^{-1}$. Since in the sequel the meaning will be clear from the context, we suppress the superscript $(p)$ from notation.

We say that $(P_t)_{t>0}$ is a \emph{strong Feller semigroup} if for any $t>0$ and any $f\in L^\infty(M)$ we have $P_t f\in C_b(M)$, where $C_b(M)$ is the space of continuous bounded functions on $M$. See e.g. \cite[Section V.2]{BliedtnerHansen86}. In the following we assume that $(P_t)_{t>0}$ is strong Feller. Then 
\begin{equation}\label{E:Markovkernel}
P_t(x,A):=P_t\mathbf{1}_A(x),\quad  t>0,\quad x\in M,\quad A\subset M\ \text{Borel},
\end{equation}
defines a (sub-)Markovian kernel $(P_t(x,dy))_{t>0}$, and we have $P_tf(x)=\int_M f(y)P_t(x,dy)$ for all $t>0$, $x\in M$ $f\in L^\infty(M)$. Clearly then also $G_\lambda f\in C_b(M)$ for all $f\in L^\infty(M)$. For any $f\in L_+^0(M)$ we can define $G_\lambda f$ as an element of $L_+^0(M)$ by (\ref{E:resolventop}) and taking limits of increasing sequences. The following is immediate.
\begin{proposition}\label{P:strongFeller}
For any $f\in L^0_+(M)$ and $\lambda >0$ the function $G_\lambda f$ is lower semicontinuous on $M$.
\end{proposition}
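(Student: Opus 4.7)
The plan is a two-step reduction: first handle bounded $f$, then pass to general $f\in L^0_+(M)$ by monotone approximation.

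First I would treat the case $f\in L^\infty_+(M)$. For each $T>0$ set
\[
\phi_T(x):=\int_0^T e^{-\lambda t} P_tf(x)\,dt,\quad x\in M.
\]
The strong Feller property yields $P_tf\in C_b(M)$ for every $t>0$, and the semigroup contractivity yields the uniform bound $|e^{-\lambda t} P_tf(x)|\leq \|f\|_{L^\infty(M)}$ in $x$ and $t$. If $x_n\to x$, then $P_tf(x_n)\to P_tf(x)$ for every $t\in(0,T]$, and dominated convergence on $[0,T]$ (with dominating function $t\mapsto e^{-\lambda t}\|f\|_{L^\infty(M)}$) gives $\phi_T(x_n)\to\phi_T(x)$, so each $\phi_T$ is continuous. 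Since the integrand is nonnegative, monotone convergence gives $\phi_T(x)\uparrow G_\lambda f(x)$ as $T\to\infty$ at every $x\in M$. A pointwise increasing limit of continuous functions is lower semicontinuous, which concludes the bounded case.

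Next I would pass to general $f\in L^0_+(M)$ via the truncations $f_n:=f\wedge n\in L^\infty_+(M)$, so that $f_n\uparrow f$. By the convention recalled just before the statement, $G_\lambda f$ is defined as the pointwise increasing limit of the $G_\lambda f_n$. The previous step gives that each $G_\lambda f_n$ is lower semicontinuous, and a pointwise increasing limit of lower semicontinuous functions is again lower semicontinuous, which finishes the proof.

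I do not anticipate any genuine obstacle; the whole argument is a direct application of strong Feller, dominated convergence in the time variable, and the standard fact that monotone limits preserve lower semicontinuity. The only point worth verifying in passing is that the pointwise evaluation of $P_tf$ at each $x\in M$ is unambiguous on $L^\infty$-classes, which is guaranteed by the strong Feller property selecting a canonical continuous representative, and hence makes the pointwise resolvent \eqref{E:resolventop} well-defined on the nose.
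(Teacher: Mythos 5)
Your argument is correct and is exactly the standard reasoning the paper has in mind: the paper offers no written proof (it declares the statement ``immediate'' after defining $G_\lambda f$ on $L^0_+(M)$ via increasing limits), and your two-step reduction --- continuity of the truncated integrals $\phi_T$ for bounded $f$ via dominated convergence, followed by monotone limits in $T$ and then in the truncation level $n$ --- is precisely how one fills that in. No gaps.
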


Standard definitions yield a second type of capacities, now based on resolvent operators associated with the symmetric Markov semigroup. See \cite{Deny65, Hirsch95} for $(r,2)$-capacities and \cite{FukushimaKaneko} for general $(r,p)$-capacities. By Proposition \ref{P:strongFeller} we can proceed similarly as in \cite[Section 2.3]{AH96}. For our purposes it is convenient to use the $\lambda$-resolvent operators $G_\lambda$ for $\lambda>0$ as in Section \ref{S:Setup}. Since different choices of $\lambda$ lead to comparable values for the capacities and do not change our results, we suppress $\lambda$ from notation. 

For a set $E\subset M$ let
\begin{equation}\label{E:cpctdef}
\cpct_{2,p}(E) := \inf\left\lbrace \|f \|_{L^p(M)}^p:\  \text{$f\in L^p_+(M)$ with $G_\lambda f(x)\geq 1$ for all $x\in E$}\right\rbrace, 
\end{equation}
with $\cpct_{2,p}(E) :=+\infty$ if no such $f$ exists. Proceeding as in \cite[Section 2.3]{AH96}, we can observe the following basic properties. 

\begin{proposition}\label{P:properties2} Let $1<p<\infty$ and assume that $(P_t)_{t>0}$ is strong Feller.
\begin{enumerate}
\item[(i)] If $E_1\subset E_2\subset M$ then $\cpct_{2,p}(E_1)\leq \cpct_{2,p}(E_2)$.
\item[(ii)] For any $E\subset M$ we have 
\[\cpct_{2,p}(E)=\inf\left\lbrace \cpct_{2,p}(G): E\subset G, \text{ $G\subset M$ open}\right\rbrace.\]
\item[(iii)] If $E_i\subset M$, $i=1,2,...$ and $E=\bigcup_{i=1}^\infty E_i$, then $\cpct_{2,p}(E)\leq \sum_{i=1}^\infty\cpct_{2,p}(E_i)$. 
\item[(iv)] The capacity $\cpct_{2,p}$ is a Choquet capacity. In particular, for any $E\subset M$ we have
\[\cpct_{2,p}(E)=\sup\left\lbrace \cpct_{2,p}(K): K\subset E, \text{ $K\subset M$ compact}\right\rbrace.\]
\end{enumerate}
\end{proposition}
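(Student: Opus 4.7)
The plan is to follow the classical scheme of \cite[Section~2.3]{AH96}, replacing Bessel-kernel arguments by Proposition~\ref{P:strongFeller} (lower semicontinuity of $G_\lambda f$ for $f\in L_+^0(M)$) and exploiting the full support of $\mu$ to upgrade $\mu$-a.e.\ statements to pointwise ones.

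Claims (i)--(iii) are short. Part (i) is immediate from the definition, since any $f\in L_+^p(M)$ admissible for $E_2$ is admissible for $E_1$. For (ii), I start from a near-minimizer $f$ for $E$ with $\|f\|_{L^p(M)}^p\le \cpct_{2,p}(E)+\varepsilon$, scale it to $(1+\delta)f$ so that $G_\lambda((1+\delta)f)>1$ on $E$, and note via Proposition~\ref{P:strongFeller} that the superlevel set $G_\delta:=\{G_\lambda((1+\delta)f)>1\}$ is an open superset of $E$ admitting $(1+\delta)f$ as a competitor, so $\cpct_{2,p}(G_\delta)\le (1+\delta)^p\|f\|_{L^p(M)}^p$; sending $\delta,\varepsilon\downarrow 0$ gives outer regularity. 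For (iii), I pick near-minimizers $f_i\in L_+^p(M)$ for each $E_i$ with geometric error $\varepsilon\,2^{-i}$ and define $f:=\sup_i f_i$; the monotonicity $G_\lambda f\ge G_\lambda f_i\ge 1$ on each $E_i$ gives admissibility on $E=\bigcup_i E_i$, while the pointwise bound $f^p\le \sum_i f_i^p$, valid because each $f_i\ge 0$, integrates to the desired norm estimate.

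Part (iv) is the main work. The plan is to verify the three hypotheses of Choquet's capacitability theorem: monotonicity (from (i)), continuity from above on decreasing sequences of compacts (from (ii), since for $K_n\downarrow K$ compact and $U\supset K$ open, compactness forces $K_n\subset U$ for $n$ large), and continuity from below on increasing sequences (which I reduce to the open case via the outer regularity from (ii)). The main obstacle is continuity on increasing open sequences $U_n\uparrow U$. Given $\alpha:=\lim_n\cpct_{2,p}(U_n)<\infty$, I choose $f_n\in L_+^p(M)$ with $G_\lambda f_n\ge 1$ on $U_n$ and $\|f_n\|_{L^p(M)}^p\le \cpct_{2,p}(U_n)+1/n$. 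Reflexivity of $L^p(M)$ for $1<p<\infty$ and Mazur's lemma yield convex combinations $g_k=\sum_{j\ge k}\lambda_{k,j}f_{n_j}$ converging in $L^p(M)$ to some $f\ge 0$; for each fixed $N$ and every $k\ge N$ the inclusion $U_{n_N}\subset U_{n_j}$ gives $G_\lambda g_k\ge 1$ on $U_{n_N}$, and along a further $\mu$-a.e.\ convergent subsequence one obtains $G_\lambda f\ge 1$ $\mu$-a.e.\ on $U_{n_N}$. The residual set $\{G_\lambda f<1\}\cap U_{n_N}$ is then open by Proposition~\ref{P:strongFeller} and has zero $\mu$-measure, hence must be empty since $\mu$ has full support. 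Letting $N\to\infty$ gives $G_\lambda f\ge 1$ on $U$, and weak lower semicontinuity of the norm yields $\cpct_{2,p}(U)\le \|f\|_{L^p(M)}^p\le \alpha$. With the three axioms in hand, Choquet's theorem delivers the inner approximation by compact sets stated in (iv).
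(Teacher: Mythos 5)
Parts (i)--(iii) are correct and follow essentially the same route as the paper (which defers to \cite[Propositions 2.3.5 and 2.3.6]{AH96}), and your derivation of continuity from above on decreasing compacts from (ii) is fine. The genuine gap is in the continuity-from-below step of (iv). Proposition \ref{P:strongFeller} gives \emph{lower} semicontinuity of $G_\lambda f$, so the sets $\{G_\lambda f>a\}$ are open while $\{G_\lambda f<1\}$ is only an $F_\sigma$; your claim that $\{G_\lambda f<1\}\cap U_{n_N}$ is open invokes the wrong direction of semicontinuity. The intended upgrade also fails as a matter of fact: a lower semicontinuous function may dip below $1$ on a $\mu$-null set inside an open set on which it is $\ge 1$ a.e.\ (e.g.\ $1-\mathbf{1}_{\{x_0\}}$ is l.s.c.), so ``$G_\lambda f\ge 1$ $\mu$-a.e.\ on $U_{n_N}$'' cannot be promoted to ``everywhere on $U_{n_N}$'' by full support of $\mu$. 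What is actually needed --- and what the paper imports from \cite[Propositions 2.3.9 and 2.3.12]{AH96} --- is convergence \emph{quasi-everywhere}: from the weak-type bound $\cpct_{2,p}(\{G_\lambda|g_k-f|>a\})\le a^{-p}\|g_k-f\|_{L^p(M)}^p$ (the function $a^{-1}|g_k-f|$ is admissible for that set) one extracts a subsequence with $G_\lambda g_{k_j}\to G_\lambda f$ outside a set of capacity zero, concludes $G_\lambda f\ge 1$ on the union up to a capacity-null set, and finishes with the countable subadditivity (iii). A $\mu$-null set need not be capacity-null here, so the a.e.\ statement really is too weak.

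A second, related problem is your reduction of continuity from below on arbitrary increasing sequences to the case of increasing \emph{open} sets ``via the outer regularity from (ii)''. With only monotonicity and countable subadditivity available, this reduction does not go through: given $E_n\uparrow E$ and open $G_n\supset E_n$ with $\cpct_{2,p}(G_n)\le\cpct_{2,p}(E_n)+\varepsilon 2^{-n}$, the sets $G_1\cup\dots\cup G_n$ are increasing and open, but their capacities are only controlled by $\sum_{m\le n}\cpct_{2,p}(G_m)$; the classical bound $\cpct_{2,p}(G_1\cup\dots\cup G_n)\le \cpct_{2,p}(E_n)+\varepsilon$ requires strong subadditivity, which you have not established. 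The good news is that once the a.e.\ step is replaced by the quasi-everywhere step, your Banach--Alaoglu/Mazur construction (a legitimate alternative to the paper's use of uniform convexity and capacitary extremal functions) applies verbatim to arbitrary increasing sequences $E_n\uparrow E$ --- openness is used nowhere else --- so the problematic reduction can simply be deleted.
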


\begin{proof}
We can follow the same arguments as used in \cite[Propositions 2.3.4, 2.3.5 and 2.3.12]{AH96}: Statement (i) is immediate, (ii) and (iii) can be seen as in \cite[Propositions 2.3.5 and 2.3.6]{AH96}. Since $\cpct_{2,p}(\emptyset)=0$ and we already know (i) and (ii), a proof of (iv) is achieved if we can verify that for any increasing sequence $(E_i)_{i\geq 1}$ of subsets $E_i\subset M$ we have $\cpct_{2,p}(\bigcup_{i=1}^\infty E_i)=\lim_{i\to \infty} \cpct_{2,p}(E_i)$, see \cite[Theorem 2.3.11 and the comments following it]{AH96}. This can be shown as in \cite[Proposition 2.3.12]{AH96}: One inequality is trivial by monotonicity. For the other we may assume that $\lim_{i\to \infty} \cpct_{2,p}(E_i)$ is finite. Then uniform convexity implies that the sequence $(f^{E_i})_{i\geq 1}$ of capacitary functions $f^{E_i}$ for the sets $E_i$ converges in $L^p(M)$ to a limit $f\geq 0$ with $\left\|f\right\|_{L^p(M)}^p=\lim_{i\to \infty} \cpct_{2,p}(E_i)$, cf. \cite[Corollary 1.3.3, Theorem 2.3.10]{AH96}. Using closure properties in $L^p(M)$, \cite[Proposition 2.3.9]{AH96}, together with (iii), one can then show that $G_\lambda f\geq 1$ $\cpct_{2,p}$-quasi everywhere on $E$ and conclude that $\left\|f\right\|_{L^p(M)}^p\geq \cpct_{2,p}(E)$. 
\end{proof}

A first comparison of the capacities $\ccpct_{2,p}^{\mathcal{A}}$ and $\cpct_{2,p}$ is now straightforward.
\begin{corollary}\label{C:firstcomparison}
Let $\mathcal{A}$ be a vector space of real-valued functions satisfying condition \eqref{A:bump}. Suppose that $1<p<\infty$, $\mathcal{A}\subset \mathcal{D}(\mathcal{L}^{(p)})$ and $(P_t)_{t>0}$ is strong Feller. Then for any set $E\subset M$ we have
\begin{equation}\label{E:firstcomparison}
\cpct_{2,p}(E)\leq \ccpct_{2,p}^\mathcal{A}(E).
\end{equation}
\end{corollary}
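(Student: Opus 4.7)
The strategy is to reduce to compact sets by inner regularity and then, given any $u \in \omega_K^\mathcal{A}$, to manufacture a nonnegative $L^p$-function $f$ admissible for $\cpct_{2,p}(K)$ of no larger $L^p$-cost than $\|u\|_{\mathcal{D}(\mathcal{L}^{(p)})}$. The crucial identity making this transfer possible is $G_\lambda = (\lambda - \mathcal{L}^{(p)})^{-1}$ on $L^p(M)$.

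First I would note that by Proposition \ref{P:properties2}(iv) and the inner-regularity definition \eqref{E:innerreg}, both $\cpct_{2,p}$ and $\ccpct_{2,p}^{\mathcal{A}}$ are determined by their values on compact subsets, so it suffices to prove $\cpct_{2,p}(K) \leq \ccpct_{2,p}^{\mathcal{A}}(K)$ for every compact $K \subset M$. Using \eqref{A:bump} to discard the trivial case $\omega_K^{\mathcal{A}} = \emptyset$, I fix $u \in \omega_K^{\mathcal{A}}$ with open neighborhood $U \supset K$ on which $u \equiv 1$, set $g := (\lambda - \mathcal{L}^{(p)})u \in L^p(M)$, and take $f := g^+ \in L^p_+(M)$. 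Then $\|f\|_{L^p(M)}^p \leq \|g\|_{L^p(M)}^p = \|u\|_{\mathcal{D}(\mathcal{L}^{(p)})}^p$ by \eqref{E:DLpnorm}, while the decomposition $g = g^+ - g^-$ together with the Markovianity of $G_\lambda$ (so that $G_\lambda g^- \geq 0$) gives
\[
G_\lambda f \;=\; G_\lambda g + G_\lambda g^- \;=\; u + G_\lambda g^- \;\geq\; u
\]
as an identity of $L^p$-classes, hence $G_\lambda f \geq 1$ $\mu$-almost everywhere on $U$.

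The one delicate step will be upgrading this $\mu$-a.e. bound to the pointwise inequality $G_\lambda f(x) \geq 1$ for every $x \in K$ required by \eqref{E:cpctdef}. For this I would work with the lower semicontinuous representative of $G_\lambda f$ furnished by Proposition \ref{P:strongFeller}: the superlevel set $\{G_\lambda f \geq 1\}$ is then closed, and by the previous step contains a $\mu$-conull subset of $U$. Since $\mu$ has full support, such a subset is dense in $U$, so the closed set $\{G_\lambda f \geq 1\}$ must contain all of $U$, and in particular all of $K$. Thus $f$ is admissible for $\cpct_{2,p}(K)$, giving $\cpct_{2,p}(K) \leq \|u\|_{\mathcal{D}(\mathcal{L}^{(p)})}^p$; infimizing over $u \in \omega_K^{\mathcal{A}}$ and combining with the inner-regularity reduction yields \eqref{E:firstcomparison}. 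The main subtlety of the argument, namely the choice of representative in the pointwise definition of $\cpct_{2,p}$, is exactly the issue that Proposition \ref{P:strongFeller} is designed to resolve; everything else reduces to bookkeeping about $L^p$-operator identities.
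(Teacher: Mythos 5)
Your overall route is the paper's: reduce to compact $K$ via \eqref{E:innerreg} and Proposition \ref{P:properties2} (iv), take $u\in\omega_K^{\mathcal A}$, invert $(\lambda-\mathcal L^{(p)})$ to write $u=G_\lambda g$ with $g=(\lambda-\mathcal L^{(p)})u$, and turn $g$ into a competitor for $\cpct_{2,p}(K)$ of cost at most $\|u\|_{\mathcal D(\mathcal L^{(p)})}^p$. Your replacement of $g$ by $f=g^{+}$, using positivity of $G_\lambda$ to get $G_\lambda f=u+G_\lambda g^{-}\ge u$ and $\|g^{+}\|_{L^p(M)}\le\|g\|_{L^p(M)}$, is a correct refinement: the definition \eqref{E:cpctdef} does require $f\in L^p_+(M)$, and the printed proof passes over this point in silence. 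Up to there your argument is, if anything, more careful than the paper's.

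The step that fails is the upgrade from ``$G_\lambda f\ge 1$ $\mu$-a.e.\ on $U$'' to ``$G_\lambda f(x)\ge 1$ for every $x\in K$''. You claim the superlevel set $\{G_\lambda f\ge 1\}$ is closed because $G_\lambda f$ is lower semicontinuous; this is backwards. Lower semicontinuity makes $\{G_\lambda f>c\}$ open and $\{G_\lambda f\le c\}$ closed; it is \emph{upper} semicontinuous functions whose sets $\{h\ge c\}$ are closed. Worse, the implication you want is false for general lower semicontinuous functions: $h=\mathbf{1}_{\mathbb{R}\setminus\{0\}}$ is lower semicontinuous on $\mathbb{R}$ (it is the increasing limit of the continuous functions $x\mapsto\min(1,n|x|)$), equals $1$ Lebesgue-a.e., yet $h(0)=0$; full support of the measure does not help, because a lower semicontinuous function controls its values from below at nearby points, not from above. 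So density of the conull set in $U$ buys you nothing, and the ``one delicate step'' you isolate is resolved by an invalid argument. A genuine justification has to use more than bare lower semicontinuity of $G_\lambda f$ --- for instance the $\lambda$-excessivity of potentials of nonnegative functions, $G_\lambda f(x)=\lim_{s\downarrow 0}e^{-\lambda s}P_sG_\lambda f(x)$, through which the a.e.\ lower bound on $U$ can be pushed once the kernels $P_s(x,\cdot)$ do not charge $\mu$-null sets and concentrate at $x$; these are exactly the absolute-continuity type hypotheses available in all of the paper's applications. For comparison, the paper's own proof does not argue this point at all: it passes directly from ``$u=1$ on $U$'' to ``$\cpct_{2,p}(U)\le\|f\|_{L^p(M)}^p$''. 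Since your write-up explicitly flags the pointwise admissibility as the crux and then proves it incorrectly, this must be counted as a gap.
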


\begin{proof}
By (\ref{E:innerreg}) and Proposition \ref{P:properties2} (iv) it suffices to verify the respective inequality for compact sets $K\subset M$. Let $K$ be compact, we may assume that $\ccpct_{2,p}^\mathcal{A}(K)<+\infty$. Let $\varepsilon >0$. By (\ref{E:cap}) can find $u\in\omega_K^\mathcal{A}$ such that $\left\|u\right\|_{\mathcal{D}(\mathcal{L})}^p\leq \ccpct_{2,p}^\mathcal{A}+\varepsilon$. Since $\omega_K^\mathcal{A}\subset \mathcal{D}(\mathcal{L}^{(p)})$ we have $u=G_\lambda f$ with some $f\in L^p(M)$. If now $U\subset M$ is an open neighborhood of $K$ such that $u=1$ on $U$, then 
\[\cpct_{2,p}(K)\leq \cpct_{2,p}(U)\leq \left\|f\right\|_{L^p(M)}^p=\left\|u\right\|_{\mathcal{D}(\mathcal{L}^{(p)})}^p \leq \ccpct_{2,p}^\mathcal{A}+\varepsilon.\]
\end{proof}

\section{Truncations of potentials, and a second comparison}\label{S:second}

An inequality opposite to (\ref{E:firstcomparison}) is less trivial. To prove it, we first establish a norm estimate for trunctations of potentials. 

We say that $(P_t)_{t>0}$ \emph{satisfies condition \eqref{E:log}} if there are constants $c_1>0$, $c_2>0$ and $1\leq \alpha<2$ such that  for any nonnegative $f\in C_c(M)$ and any $t>0$ we have 
\begin{equation}\label{E:log}\tag{$\mathrm{LG}$}
\sqrt{\Gamma(P_t f)}\leq \frac{c_1\:e^{c_2t}}{\sqrt{t}}\:P_{\alpha t} f\quad \text{$\mu$-a.e. on $M$.}
\end{equation}
Condition \eqref{E:log} can be verified for large classes of manifolds and metric measure spaces, see the comments at the end of this section and the examples in Sections \ref{S:manifolds}, \ref{S:subRiemann}, and \ref{S:RCD}.  

The following theorem is a generalization of well-known truncation estimates for potentials on Euclidean spaces, \cite{A76, AH96, AP73, Hedberg72, Maz'ya72}. We provide a proof in Section \ref{S:truncation}.

\begin{theorem}\label{T:Mazja}
Assume that $(P_t)_{t>0}$ is a strong Feller semigroup satisfying \eqref{E:log}.
Let $F\in C^2(\mathbb{R}_+)$ be a function such that 
\begin{equation}\label{E:Mazja}
\sup_{t>0}|t^{i-1}F^{(i)}(t)|\leq L,\quad i=0,1,2,
\end{equation}
with a constant $L>0$. Then for any $1< p< +\infty$, any $\lambda>\frac{2}{2-\alpha}\:c_2$  and any nonnegative $f\in L^p(M)$ we have $F\circ G_\lambda f\in\mathcal{D}(\mathcal{L}^{(p)})$ and 
\begin{equation}\label{E:truncation}
\left\|F\circ G_\lambda f\right\|_{\mathcal{D}(\mathcal{L}^{(p)})}\leq c_3\left\|f\right\|_{L^p(M)}
\end{equation}
with a constant $c_3>0$ depending only on $c_1$, $c_2$, $L$, $\lambda$, $\alpha$ and $p$. For all $\lambda>\frac{2}{2-\alpha}\:c_2$ and all nonnegative $f\in C_c(M)$  we have $\mathcal{L} (F\circ G_\lambda f)\in L^\infty(M)$ and $\left\|\mathcal{L} (F\circ G_\lambda f)\right\|_{L^\infty(M)}\leq c_3\left\|f\right\|_{L^\infty(M)}$ with a constant $c_3>0$ depending only on $c_1$, $c_2$, $L$, $\lambda$ and $\alpha$.
\end{theorem}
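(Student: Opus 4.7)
The plan is to follow Maz'ya's approach, adapted to the abstract setting: apply the chain rule to $F\circ u$ with $u=G_\lambda f$, and reduce the $L^p$-estimate to a Hedberg-type pointwise bound established from \eqref{E:log} and the semigroup maximal inequality. Since $(\lambda-\mathcal{L})u=f$ we have $\mathcal{L}u=\lambda u-f$, and combining this with the chain rule \eqref{E:chain} (extended to $F\in C^2$ via the usual $\Gamma$-calculus for strongly local Dirichlet forms) yields the Maz'ya-type identity
\[
(\lambda-\mathcal{L})(F\circ u)=\lambda\bigl[F(u)-u F'(u)\bigr]+F'(u)\,f-F''(u)\,\Gamma(u),
\]
so everything reduces to bounding each summand in $L^p(M)$.

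The first two summands are controlled directly from \eqref{E:Mazja}: the pointwise bounds $|F(t)|\leq Lt$ and $|F'(t)|\leq L$, together with the resolvent contraction $\|u\|_{L^p(M)}\leq\lambda^{-1}\|f\|_{L^p(M)}$, show that both $\|\lambda(F(u)-uF'(u))\|_{L^p(M)}$ and $\|F'(u)f\|_{L^p(M)}$ are at most a constant multiple of $\|f\|_{L^p(M)}$. The entire difficulty sits in the third summand $F''(u)\Gamma(u)$: since $|F''(t)|\leq L/t$, one must bound $\|\Gamma(u)/u\|_{L^p(M)}$ by $\|f\|_{L^p(M)}$.

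I would do this via a Hedberg-type pointwise estimate $\Gamma(u)(x)\leq C\,u(x)\,\tilde Mf(x)$, where $\tilde Mf(x):=\sup_{t>0}P_tf(x)$ is the semigroup maximal function; Stein's $L^p$-maximal inequality for symmetric Markov semigroups then produces $\|\Gamma(u)/u\|_{L^p(M)}\leq C\|\tilde Mf\|_{L^p(M)}\leq C'\|f\|_{L^p(M)}$. The Hedberg estimate is the technical heart of the proof and is where I expect the main obstacle; its derivation starts from Minkowski's integral inequality for $\sqrt{\Gamma(\cdot)}$ combined with \eqref{E:log}, giving
\[
\Gamma(u)^{1/2}(x)\leq c_1\int_0^\infty\frac{e^{-(\lambda-c_2)t}}{\sqrt t}\,P_{\alpha t}f(x)\,dt,
\]
after which one splits the integral at a well-chosen time $T=T(x)$. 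On $(0,T]$ the crude bound $P_{\alpha t}f\leq\tilde Mf$ contributes $\lesssim\sqrt T\,\tilde Mf(x)$. On $[T,\infty)$ the substitution $s=\alpha t$ brings in the parameter $\nu:=(\lambda-c_2)/\alpha$, and the standing hypothesis $\lambda>\tfrac{2c_2}{2-\alpha}$---which is precisely equivalent to $2\nu>\lambda$---provides the exponential margin needed to dominate the tail by a constant multiple of $u(x)/\sqrt T$. Optimising in $T$ then delivers $\Gamma(u)^{1/2}(x)\lesssim\sqrt{u(x)\,\tilde Mf(x)}$, as required.

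With the three $L^p$-bounds in hand, the membership $F\circ u\in\mathcal{D}(\mathcal{L}^{(p)})$ follows from the variational characterisation \eqref{E:domaininfo}: $F\circ u$ lies in $\mathcal{D}(\mathcal{E})$ by the Lipschitz chain rule (boundedness of $F'$), and the established $L^p$-control of $(\lambda-\mathcal{L})(F\circ u)$ places it in $\mathcal{D}(\mathcal{L}^{(p)})$. The $L^\infty$-statement for $f\in C_c(M)$ is obtained from the same identity without appealing to Hedberg or Stein: Markov contractivity yields $\tilde Mf\leq\|f\|_{L^\infty(M)}$ and $u\leq\lambda^{-1}\|f\|_{L^\infty(M)}$ pointwise, whereupon the rougher Minkowski-based estimate $\Gamma(u)^{1/2}\leq C\,\tilde Mf$ suffices to give an $L^\infty$-bound on each summand, hence on $\mathcal{L}(F\circ u)$, of the claimed form.
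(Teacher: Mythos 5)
Your overall architecture is exactly the paper's: the Maz'ya identity $(\lambda-\mathcal{L})(F\circ u)=\lambda[F(u)-uF'(u)]+F'(u)f-F''(u)\Gamma(u)$, reduction of everything to an $L^p$-bound on $\mathbf{1}_{\{u>0\}}\Gamma(u)/u$, a Hedberg-type pointwise estimate $\Gamma(u)\lesssim u\cdot\sup_{t>0}P_tf$ obtained from \eqref{E:log}, and Stein's maximal theorem (this is precisely the content of Lemma \ref{L:multest} and Proposition \ref{P:integral} in the paper, followed by the chain-rule Lemma \ref{L:chainforL}). However, your sketch of the Hedberg estimate --- the step you yourself identify as the technical heart --- contains a genuine gap. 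After the substitution $s=\alpha t$ the tail is $T^{-1/2}\alpha^{-1}\int_{\alpha T}^\infty e^{-\nu s}P_sf(x)\,ds$ with $\nu=(\lambda-c_2)/\alpha$, and to dominate this by $C\,u(x)/\sqrt{T}=C T^{-1/2}\int_0^\infty e^{-\lambda s}P_sf(x)\,ds$ you would need $\nu\geq\lambda$, i.e.\ $\lambda(1-\alpha)\geq c_2$, which is impossible for $\alpha\geq1$ and $c_2>0$. The standing hypothesis only gives $2\nu>\lambda$, i.e.\ $\nu>\lambda/2$, so the crude bound produces $G_{\lambda/2}f(x)/\sqrt{T}$, which is not pointwise comparable to $G_\lambda f(x)/\sqrt{T}$. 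Repairing this by Cauchy--Schwarz on the tail alone (splitting $e^{-\nu s}=e^{-\lambda s/2}e^{-(\nu-\lambda/2)s}$) yields a tail bound of order $T^{-1/2}(G_\lambda f)^{1/2}(\sup_tP_tf)^{1/2}$, and optimizing against the head $\sqrt{T}\sup_tP_tf$ then gives $(G_\lambda f)^{1/4}(\sup_tP_tf)^{3/4}$, which is too weak: it leads to $\Gamma(u)/u\lesssim(\sup_tP_tf)^{3/2}u^{-1/2}$ rather than $\sup_tP_tf$. The correct repair, and the one the paper uses, is to apply H\"older with exponents $\beta\in(\alpha,2)$ and $\beta'$ to \emph{both} the head and the tail, choosing $\beta$ so that $\lambda>\frac{\beta'}{\beta-\alpha}c_2$ (possible precisely because $\lambda>\frac{2}{2-\alpha}c_2$); each piece then takes the form $\delta^{\pm(1/\beta-1/2)}A^{1/\beta}B^{1/\beta'}$ with $A=\sup_tP_tf$, $B=G_\lambda f$, and the optimization $\delta=B/A$ returns $\sqrt{AB}$ from both pieces.

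Two further points you pass over too quickly. First, the ``usual $\Gamma$-calculus'' chain rule \eqref{E:usualchainrule} requires $F''$ bounded, whereas \eqref{E:Mazja} only gives $|F''(t)|\leq L/t$; justifying your identity requires truncating at $u\vee\varepsilon$, showing $\int_{\{u=0\}}\Gamma(u)\,d\mu=0$, and passing to the limit using the very estimate on $\Gamma(u)/u$ you are proving (this is the paper's Lemma \ref{L:chainforL}). Second, \eqref{E:log} is only assumed for $f\in C_c(M)$, and the null sets in \eqref{E:log} depend on $t$, so integrating it in $t$ pointwise needs a regularization (the paper inserts an extra $P_s$ and lets $s\to0$); the extension from $C_c(M)$ to general $f\in L^p_+(M)$ then has to be done by approximation and weak compactness at the end. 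These are fixable, but together with the tail issue they mean the proposal as written does not yet constitute a proof.
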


\begin{remark}\label{R:moregeneral}
If \eqref{E:log} is assumed for all nonnegative $f\in L^1(M)\cap L^\infty(M)$, then also the stated results in the case $p=+\infty$ hold for all such $f$. 
\end{remark}

Theorem \ref{T:Mazja} allows to establish an inequality opposite to (\ref{E:firstcomparison}), provided that $\mathcal{A}$ is rich enough to contain suitable truncations of potentials. To an increasing function $F\in C^2(\mathbb{R})$ with $0\leq F\leq 1$, $F(t)=0$ for all $t\leq t_0$ with some fixed $0<t_0<1$ and $F(t)=1$ for all $t\geq 1$ we refer as a \emph{$C^2$-truncation}. Any $C^2$-truncation satisfies (\ref{E:Mazja}). Consider the following condition on $\mathcal{A}$:
\begin{multline}\label{A:functionsT}\tag{$\mathrm{F}$}
\hspace{30pt}\text{There is a $C^2$-truncation $F$ such that $F\circ G_\lambda f \in\mathcal{A}$}\\
\text{for any nonnegative $f\in L^1(M)\cap L^\infty(M)$.}
\end{multline}

The next corollary is similar to the less straightforward part of \cite[Proposition 2.3.13 and Corollary 3.3.4]{AH96}; it follows by analogous arguments as used there.

\begin{corollary}\label{C:secondcomparison} Assume that $(P_t)_{t>0}$ is a strong Feller semigroup satisfying \eqref{E:log} and let $\mathcal{A}$ be a vector space of real-valued Borel functions satisfying \eqref{A:bump} and \eqref{A:functionsT}. Suppose further that $1<p<\infty$ and $\mathcal{A}\subset \mathcal{D}(\mathcal{L}^{(p)})$. Then for all $E\subset M$ we have 
\[\ccpct_{2,p}^{\mathcal{A}}(E)\leq c_3^p\cpct_{2,p}(E),\]
where $c_3>0$ is as in \eqref{E:truncation}.
\end{corollary}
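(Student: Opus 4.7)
The plan is to reduce to compact sets and then build, from a near-optimal resolvent potential, an explicit competitor for $\ccpct_{2,p}^{\mathcal{A}}$ by applying the $C^2$-truncation supplied by \eqref{A:functionsT} and controlling its graph norm via Theorem \ref{T:Mazja}. By the inner regularity \eqref{E:innerreg} of $\ccpct_{2,p}^{\mathcal{A}}$ together with the Choquet property in Proposition \ref{P:properties2}~(iv), it suffices to show $\ccpct_{2,p}^{\mathcal{A}}(K)\leq c_3^p\cpct_{2,p}(K)$ for an arbitrary compact $K\subset M$, and one may clearly assume $\cpct_{2,p}(K)<+\infty$. For such $K$ and fixed $\varepsilon,\delta>0$ I would pick a nonnegative $g\in L^p(M)$ with $G_\lambda g\geq 1$ on $K$ and $\|g\|_{L^p(M)}^p<\cpct_{2,p}(K)+\varepsilon$.

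The next step is to produce, from this $g$, a function to which both Theorem \ref{T:Mazja} and condition \eqref{A:functionsT} apply; the point is that the latter requires input in $L^1(M)\cap L^\infty(M)$, whereas $g$ is only in $L^p$. I would fix an exhaustion $(B_n)_n$ of $M$ by relatively compact open sets (available since $M$ is locally compact and separable) and set $f_n:=((1+2\delta)g\wedge n)\mathbf{1}_{B_n}\in L^1(M)\cap L^\infty(M)$. Then $f_n\uparrow(1+2\delta)g$ pointwise, so monotone convergence applied through the integral representation \eqref{E:resolventop} yields $G_\lambda f_n\uparrow(1+2\delta)G_\lambda g$ pointwise on $M$. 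The candidate competitor is $u_n:=F\circ G_\lambda f_n$; by \eqref{A:functionsT} it lies in $\mathcal{A}$, and Theorem \ref{T:Mazja} supplies the uniform norm bound
\[\|u_n\|_{\mathcal{D}(\mathcal{L}^{(p)})}\leq c_3\|f_n\|_{L^p(M)}\leq c_3(1+2\delta)\|g\|_{L^p(M)}.\]

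The main obstacle, and the reason the inflation factor $1+2\delta$ is forced into the construction, is that membership in $\omega_K^{\mathcal{A}}$ requires $u_n\equiv 1$ on an \emph{open} neighborhood of $K$ rather than only on $K$ itself. To overcome this I would invoke the strong Feller hypothesis: by Proposition \ref{P:strongFeller} each $G_\lambda f_n$ is lower semicontinuous, so the sets $V_n:=\{G_\lambda f_n>1+\delta\}$ are open; they increase in $n$ and their union covers $K$ because the pointwise limit satisfies $(1+2\delta)G_\lambda g\geq 1+2\delta$ on $K$. Compactness of $K$ then produces an index $n_0$ with $K\subset V_{n_0}$, and since $F\equiv 1$ on $[1,\infty)$ the function $u_{n_0}$ equals $1$ on the open neighborhood $V_{n_0}$ of $K$, so $u_{n_0}\in\omega_K^{\mathcal{A}}$. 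Combined with the norm bound above this gives $\ccpct_{2,p}^{\mathcal{A}}(K)\leq c_3^p(1+2\delta)^p(\cpct_{2,p}(K)+\varepsilon)$, and letting $\delta,\varepsilon\downarrow 0$ completes the argument.
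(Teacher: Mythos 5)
Your proof is correct and follows essentially the same route as the paper: reduce to compact $K$, truncate a near-optimal potential to get $f_n\in L^1(M)\cap L^\infty(M)$, use the strong Feller property to find one $f_n$ whose potential exceeds $1$ on an open neighborhood of $K$, and then bound the truncation $F\circ G_\lambda f_n$ via Theorem \ref{T:Mazja}. The only (harmless) difference is cosmetic: the paper gets uniformity on $K$ from the Dini--Cartan lemma applied to the increasing continuous functions $G_\lambda f_n$, whereas you inflate $g$ by $(1+2\delta)$ and use an open cover by the sets $\{G_\lambda f_n>1+\delta\}$, removing the factor at the end.
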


\begin{proof} As in the proof of Corollary \ref{C:firstcomparison} we may assume that $E=K$ is compact. Given $\varepsilon>0$ let $f\in L^p(M)$ be nonnegative with $G_\lambda f>1$ on $K$ and such that $\left\|f\right\|_{L^p(M)}^p<\cpct_{2,p}(K)+\varepsilon$. 
Fix $x_0\in M$ and let $f_n(x):=\mathbf{1}_{B(x_0,n)}(x)(f(x)\wedge n)$. Then all $G_\lambda f_n$ are continuous, $G_\lambda f_n\leq G_\lambda f_{n+1}$ for all $n$ and $G_\lambda f=\sup_n G_\lambda f_n$. By the Dini-Cartan lemma, \cite[Lemma 2.2.9]{Helms}, we can find $n$ so that $G_\lambda f_n > 1$ on $K$. Clearly also $\left\|f_n\right\|_{L^p(M)}^p\leq \cpct_{2,p}(K)+\varepsilon$. By condition \eqref{A:functionsT} we can find a $C^2$-truncation $F$ such that $F\circ G_\lambda f_n = 1$ on a neighborhood of $K$ and $F \circ G_\lambda f_n \in\mathcal{A}$. By \eqref{E:cap} and \eqref{E:truncation}, $\ccpct_{2,p}^\mathcal{A}(K)\leq \left\|F\circ G_\lambda f_n\right\|_{\mathcal{D}(\mathcal{L}^{(p)})}^p\leq c_3^p \left\|f_n\right\|_{L^p(M)}^p\leq c_3^p(\cpct_{2,p}(K)+\varepsilon)$.
\end{proof}

If the semigroup admits a heat kernel, then it can be used to state a condition that implies (\ref{E:log}).
We call a real valued function $(t,x,y)\mapsto p_t(x,y)$ on $(0,+\infty)\times M\times M$ a \emph{heat kernel} for $(P_t)_{t>0}$ if 
for any $t>0$ it is jointly measurable in $(x,y)$, we have 
\[P_t f(x)=\int_M p_t(x,y)f(y)\mu(dy),\quad t>0,\quad x\in M,\quad f\in L^2(M),\]
$p_t(x,y)=p_t(y,x)$ for all $t>0$ and $x,y\in M$ and 
\begin{equation}\label{E:CK}
p_{t+s}(x,y)=\int_M p_t(x,z)p_s(z,y)\mu(dz),\quad s,t>0,\quad x,y\in M.
\end{equation}

\begin{remark}\label{R:ultra} By (\ref{E:CK}) we have $p_t(x,\cdot)\in L^2(M)$ for all $t>0$ and $x\in M$. Since $p_t(x,\cdot)=P_{t/2}p_{t/2}(x,\cdot)$, semigroup regularization implies that $p_t(x,\cdot)\in \mathcal{D}(\mathcal{E})$, see for example \cite[Lemma 1.3.3]{FOT94} or \cite[Theorem 1.4.2]{Da89}. 
\end{remark}

Suppose that the semigroup $(P_t)_{t>0}$ is strong Feller. It is said to be \emph{absolutely continuous} if for any $t>0$ and $x\in M$ the Borel measure $P_t(x,dy)$ defined in (\ref{E:Markovkernel}) is absolutely continuous with respect to $\mu$; in this case it has a density $p_{t,x}\in L^1(M)$ with respect to $\mu$ which can be regularized to give a heat kernel:

\begin{proposition}\label{P:abscont}
Suppose that $(P_t)_{t>0}$ is strong Feller and absolutely continuous. Then 
$p_t(x,y):=\int_M p_{t/2,x}(z)p_{t/2,y}(z)\mu(dz)$
defines a unique heat kernel for $(P_t)_{t>0}$. For any fixed $t>0$ and $y\in M$ the function $p_t(\cdot,y)$ is lower semicontinuous.
\end{proposition}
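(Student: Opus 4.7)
The plan is to verify the heat kernel axioms in turn from the displayed formula, then address uniqueness, then the lower semicontinuity. As a preliminary I would fix a jointly Borel measurable version $(x,y) \mapsto p_{s,x}(y)$ of the density of $P_s(x,\cdot)$ for each $s > 0$: strong Feller implies that $x \mapsto P_s(x,A) = P_s \mathbf{1}_A(x)$ is continuous for every Borel $A \subset M$, and combined with absolute continuity a measurable Radon--Nikodym argument yields the required joint measurability. By Fubini the candidate
\[p_t(x,y) := \int_M p_{t/2,x}(z)\, p_{t/2,y}(z)\, \mu(dz)\]
is then jointly Borel measurable in $(x,y)$ with values in $[0,+\infty]$, and the symmetry $p_t(x,y) = p_t(y,x)$ is built into the formula.

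Next I would verify the integral representation and the Chapman--Kolmogorov identity \eqref{E:CK}. The $L^2$-symmetry of $P_s$ gives the $\mu \otimes \mu$-a.e.\ identity $p_{s,z}(y) = p_{s,y}(z)$, so writing $P_t = P_{t/2}\circ P_{t/2}$ and applying Fubini yields
\[P_t f(x) = \int_M p_{t/2,x}(z) \int_M p_{t/2,z}(y)\, f(y)\, \mu(dy)\,\mu(dz) = \int_M f(y)\, p_t(x,y)\, \mu(dy)\]
for every bounded Borel $f$, where the inner integral is swapped via the a.e.\ symmetry. The same manipulation applied to $P_{t+s} = P_t \circ P_s$ shows that $y \mapsto \int_M p_t(x,z) p_s(z,y)\,\mu(dz)$ is a $\mu$-a.e.\ density of $P_{t+s}(x,\cdot)$ and hence agrees $\mu$-a.e.\ with $p_{t+s}(x,\cdot)$; pointwise equality in \eqref{E:CK} then follows from the lower semicontinuity of both sides in $y$, which I turn to next. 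For uniqueness, any other heat kernel $\tilde p_t$ satisfies $\tilde p_t(x,y) = \int_M \tilde p_{t/2}(x,z)\,\tilde p_{t/2}(y,z)\,\mu(dz)$ by its own symmetry and \eqref{E:CK}, and since $\tilde p_{t/2}(x,\cdot)$ and $p_{t/2,x}(\cdot)$ are both densities of $P_{t/2}(x,\cdot)$ and therefore agree $\mu$-a.e., the right-hand side equals $p_t(x,y)$ pointwise.

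For the lower semicontinuity assertion, I would fix $t > 0$ and $y \in M$, set $g := p_{t/2,y}(\cdot) \in L^0_+(M)$, and observe that $p_t(x,y) = P_{t/2} g(x)$ by the defining formula. Exhausting $g$ from below by the bounded compactly supported truncations $g_n := (g \wedge n)\mathbf{1}_{B(x_0,n)} \in L^\infty(M) \cap L^1(M)$, strong Feller yields $P_{t/2} g_n \in C_b(M)$, and monotone convergence gives $p_t(\cdot,y) = \sup_n P_{t/2} g_n$, which is lower semicontinuous as the supremum of an increasing sequence of continuous functions. The main obstacle throughout is that the density $p_{s,x}(y)$ is only $\mu \otimes \mu$-a.e.\ symmetric; the formula defining $p_t$ is precisely the regularization that manufactures a pointwise symmetric, jointly measurable, lower semicontinuous representative, and the only real delicacy is the careful bookkeeping between a.e.\ identities (from density uniqueness and $L^2$-symmetry) and the pointwise assertions demanded by the definition of a heat kernel.
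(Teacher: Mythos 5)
Your lower semicontinuity argument is exactly the paper's: write $p_t(\cdot,y)=\sup_{N}P_{t/2}(p_{t/2,y}\wedge N)$ and invoke the strong Feller property (your extra spatial truncation $\mathbf{1}_{B(x_0,n)}$ is harmless but unnecessary, since $p_{t/2,y}\wedge N$ already lies in $L^1(M)\cap L^\infty(M)$). For the first statement the paper does not argue at all; it cites \cite[Theorem 2]{Yan88}. Your attempt to verify the heat kernel axioms directly is therefore a more self-contained route, and the symmetry, the reproducing identity and the measurability bookkeeping are handled correctly, but one step contains a genuine gap.

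The gap is in the passage to the \emph{pointwise} Chapman--Kolmogorov identity. You correctly show that $y\mapsto\int_M p_t(x,z)p_s(z,y)\,\mu(dz)$ and $y\mapsto p_{t+s}(x,y)$ are both densities of $P_{t+s}(x,\cdot)$ and hence agree $\mu$-a.e., and you then assert that pointwise equality follows because both are lower semicontinuous in $y$. That implication is false: two nonnegative lower semicontinuous functions may agree $\mu$-a.e.\ (even for a fully supported $\mu$) and still differ at a point. Take $f\equiv 1$ and $g=\mathbf{1}_{M\setminus\{x_0\}}$ with $\mu(\{x_0\})=0$; the indicator of an open set is lower semicontinuous, yet $f(x_0)\neq g(x_0)$. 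The standard repair is to push the a.e.\ identity through the kernel once more: after using the pointwise symmetry of $p_s$ and the a.e.\ identification $p_s(y,\cdot)=p_{s,y}$, both sides can be written in the form $P_r h(y)=\int_M h\,dP_r(y,\cdot)$ with $r>0$ and $h\in L^1_+(M)$, and since each measure $P_r(y,\cdot)$ is absolutely continuous with respect to $\mu$, applying $P_r$ to two $\mu$-a.e.\ equal integrable functions produces functions that agree at \emph{every} $y$. A similar caution applies to your uniqueness argument: you need $\tilde p_{t/2}(x,\cdot)=p_{t/2,x}$ $\mu$-a.e.\ for \emph{every} $x$, not merely for almost every $x$; this does follow from the paper's definition of a heat kernel, which imposes the reproducing identity at every point, but the point deserves to be made explicit.
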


\begin{proof}
The first statement is shown in \cite[Theorem 2]{Yan88}. Since $p_t(\cdot,y)=\sup_{N\geq 1}P_{t/2}(p_{t/2,y}\wedge N)$,
the strong Feller property implies the stated lower semicontinuity.
\end{proof}

In the presence of a heat kernel \eqref{E:log} follows from a \emph{(relaxed) logarithmic gradient estimate}. 
We say that a heat kernel $p_t(x,y)$ \emph{satisfies condition \eqref{E:loghk}} if there are constants $c_1>0$, $c_2>0$ and $1\leq \alpha<2$ such that 
\begin{equation}\label{E:loghk}\tag{$\mathrm{LG'}$}
\frac{\sqrt{\Gamma(p_t(x,\cdot))}(y)}{p_{\alpha t}(x,y)}\leq \frac{c_1\:e^{c_2t}}{\sqrt{t}}
\end{equation}
for all $t>0$ and $\mu$-a.a. $x,y\in M$,

\begin{proposition}\label{P:hktosg}
If $(P_t)_{t>0}$ admits a heat kernel $p_t(x,y)$ satisfying \eqref{E:loghk}, then $(P_t)_{t>0}$ satisfies \eqref{E:log}. 
\end{proposition}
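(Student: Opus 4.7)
\bigskip

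\textbf{Proof plan for Proposition \ref{P:hktosg}.} The idea is to pass from pointwise information on the heat kernel to information on $P_t f$ by writing $P_t f(x)=\int_M p_t(x,y) f(y)\,\mu(dy)$ and moving the seminorm $\sqrt{\Gamma(\cdot)}(x)$ inside the integral via Minkowski's integral inequality. Since $f\in C_c(M)$ is nonnegative, this is the natural route. The key abstract fact is that for a carr\'e du champ $\Gamma$ the map $u\mapsto \sqrt{\Gamma(u)}(x)$ is, for $\mu$-a.e. $x\in M$, a seminorm on $\mathcal{D}(\mathcal{E})$, because $\Gamma$ is pointwise a nonnegative definite bilinear form and hence obeys Cauchy--Schwarz pointwise $\mu$-a.e. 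By Remark \ref{R:ultra} each slice $p_t(\cdot,y)$ lies in $\mathcal{D}(\mathcal{E})$, so $\sqrt{\Gamma(p_t(\cdot,y))}(x)$ is well-defined.

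First I would establish the pointwise triangle inequality for finite sums: for nonnegative coefficients $c_i\geq 0$ and $u_i\in\mathcal{D}(\mathcal{E})$,
\[\sqrt{\Gamma\bigl(\textstyle\sum_{i=1}^n c_i u_i\bigr)}(x)\leq \sum_{i=1}^n c_i \sqrt{\Gamma(u_i)}(x)\quad \text{for $\mu$-a.e. $x\in M$},\]
which follows by expanding $\Gamma$ bilinearly and applying pointwise Cauchy--Schwarz. Then, approximating $f\in C_c(M)$ from below by nonnegative simple functions $f_n=\sum_i c_{i,n}\mathbf{1}_{A_{i,n}}$ supported in a fixed compact set, I would form $u_{i,n}(x):=P_t\mathbf{1}_{A_{i,n}}(x)=\int_{A_{i,n}} p_t(x,y)\,\mu(dy)$ and iterate the above inequality. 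Using closability of the form together with $L^2$-convergence of $P_t f_n$ to $P_t f$ (noting $P_t f_n\uparrow P_t f$ and $P_t f\in\mathcal{D}(\mathcal{E})$ by Remark \ref{R:ultra}-type semigroup regularization), and a Fatou-type argument for the positive measure-valued $\Gamma$, I would obtain
\[\sqrt{\Gamma(P_t f)}(x)\leq \int_M \sqrt{\Gamma(p_t(\cdot,y))}(x)\, f(y)\,\mu(dy)\quad\text{for $\mu$-a.e. $x\in M$.}\]

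To finish, I would invoke the symmetry $p_t(x,y)=p_t(y,x)$, which allows me to rewrite $\sqrt{\Gamma(p_t(\cdot,y))}(x)=\sqrt{\Gamma(p_t(y,\cdot))}(x)$. Hypothesis \eqref{E:loghk}, applied for $\mu$-a.e. $y$ (the exceptional set being captured by the Tonelli argument below), yields
\[\sqrt{\Gamma(p_t(y,\cdot))}(x)\leq \frac{c_1 e^{c_2 t}}{\sqrt t}\, p_{\alpha t}(y,x)=\frac{c_1 e^{c_2 t}}{\sqrt t}\, p_{\alpha t}(x,y).\]
Substituting and using Tonelli to exchange the $f(y)\,\mu(dy)$ integration with the almost-everywhere bound yields
\[\sqrt{\Gamma(P_t f)}(x)\leq \frac{c_1 e^{c_2 t}}{\sqrt t}\int_M p_{\alpha t}(x,y) f(y)\,\mu(dy)=\frac{c_1 e^{c_2 t}}{\sqrt t}\, P_{\alpha t}f(x)\]
for $\mu$-a.e. $x$, which is exactly \eqref{E:log}.

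The principal technical obstacle is the justification of the Minkowski-type step: one needs to transfer the pointwise seminorm inequality from sums to integrals without losing measurability or control of exceptional sets. This is handled cleanly by monotone approximation through simple functions (using $f\geq 0$), the pointwise bilinearity and nonnegativity of $\Gamma$, and a Fatou-type passage to the limit in the positive Radon measure $\Gamma(P_t f_n,P_t f_n)\,d\mu$; the symmetry of the heat kernel then absorbs the $\mu$-a.e. nature of \eqref{E:loghk} via Tonelli.
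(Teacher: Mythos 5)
Your overall route coincides with the paper's: both proofs aim at the Minkowski-type inequality $\sqrt{\Gamma(P_tf)}(x)\leq\int_M\sqrt{\Gamma(p_t(\cdot,y))}(x)\,f(y)\,\mu(dy)$ and then conclude via \eqref{E:loghk}, the symmetry of the kernel and Fubini. However, the mechanism you propose for the Minkowski step has a genuine gap. Approximating $f$ from below by simple functions $f_n=\sum_i c_{i,n}\mathbf{1}_{A_{i,n}}$ and applying the finite-sum triangle inequality to $P_tf_n=\sum_i c_{i,n}P_t\mathbf{1}_{A_{i,n}}$ only yields $\sqrt{\Gamma(P_tf_n)}\leq\sum_i c_{i,n}\sqrt{\Gamma(P_t\mathbf{1}_{A_{i,n}})}$; to continue you would still need $\sqrt{\Gamma(P_t\mathbf{1}_A)}(x)\leq\int_A\sqrt{\Gamma(p_t(\cdot,y))}(x)\,\mu(dy)$, which is precisely the integral inequality you are trying to prove, now for $f=\mathbf{1}_A$. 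The reduction is circular: the building blocks $P_t\mathbf{1}_A$ are themselves continuous superpositions of uncountably many kernel slices, so no finite-sum triangle inequality reaches them. If you instead discretize the $y$-integral itself (Riemann sums $\sum_j p_t(\cdot,y_j)f(y_j)\mu(A_j)$), two further issues appear: you must show these sums converge to $P_tf$ in the form norm (not merely in $L^2$, which is all your Fatou step would otherwise give you), and the exceptional null set in the pointwise Cauchy--Schwarz inequality for $\Gamma$ depends on the finite family involved, hence on the partition, so a diagonal argument over a countable exhausting family of partitions is needed.

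The paper sidesteps all of this by not working with the pointwise seminorm at all: for each nonnegative $\varphi\in C_c(M)$ it introduces the single continuous seminorm $g\mapsto\left\|g\right\|_\varphi=\int_M\sqrt{\Gamma(g)}\,\varphi\,d\mu$ on the Hilbert space $\mathcal{D}(\mathcal{E})$ (see \eqref{E:seminorm}), applies the standard triangle inequality for $\mathcal{D}(\mathcal{E})$-valued Bochner integrals to $P_tf=\int_Mp_t(\cdot,y)f(y)\mu(dy)$, and only at the very end recovers the pointwise $\mu$-a.e.\ inequality from the arbitrariness of $\varphi$. To salvage your pointwise approach you should adopt this dualization; as written, the key step does not go through.
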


\begin{proof}
Let $f\in C_c(M)$ be a nonnegative. The element $P_tf$ of $\mathcal{D}(\mathcal{E})$ is the integral of the $\mathcal{D}(\mathcal{E})$-valued function $y\mapsto p_t(\cdot,y)$ with respect to the measure  $f(y)\mu(dy)$. For any nonnegative $\varphi \in C_c(M)$ the map
\begin{equation}\label{E:seminorm}
g\mapsto \left\|g\right\|_\varphi:=\int_M \sqrt{\Gamma(g)}(x)\varphi(x)\mu(dx)
\end{equation}
defines a seminorm on $\mathcal{D}(\mathcal{E})$. By Cauchy-Schwarz and contractivity in $L^2(M)$ it satisfies $\left\|g\right\|_{\varphi}\leq \left\|\varphi\right\|_{L^2(M)}\mathcal{E}(g)^{1/2}$, and in particular, it is continuous on $\mathcal{D}(\mathcal{E})$. The corresponding triangle inequality for $\mathcal{D}(\mathcal{E})$-valued integrals gives
\[\left\|P_tf\right\|_\varphi=\big\|\int_Mp_t(\cdot,y)f(y)\mu(dy)\big\|_\varphi\leq \int_M \left\|p_t(\cdot,y)\right\|_\varphi f(y)\mu(dy).\]
Since this is true for any such $\varphi$, Fubini and (\ref{E:loghk}) yield
\[\sqrt{\Gamma(P_tf)}(x)\leq \int_M\sqrt{\Gamma(p_t(x,\cdot)}(y)f(y)\mu(dy)\leq  \frac{c_1\:e^{c_2t}}{\sqrt{t}}\int_M p_{\alpha t}(x,y)f(y)\mu(dy)=\frac{c_1\:e^{c_2t}}{\sqrt{t}} P_{\alpha t}f(x)\]
for $\mu$-a.e. $x\in M$.
\end{proof}

\section{Proof of the truncation estimate}\label{S:truncation}

The main tool to prove Theorem \ref{T:Mazja} is the following logarithmic type $L^p$-estimate for potentials of nonnegative functions.

\begin{proposition}\label{P:integral}
Assume that $(P_t)_{t> 0}$ is strong Feller and satisfies \eqref{E:log}. Then for any 
$\lambda>\frac{2}{2-\alpha}\:c_2$, any $1<p\leq +\infty$ and any nonnegative $f\in C_c(M)$ the function $u=G_\lambda f$ satisfies $\mathbf{1}_{\{u>0\}}\frac{\Gamma(u)}{u}\in L^p(M)$ and 
\begin{equation}\label{E:pmoment}
\Big\|\mathbf{1}_{\{u>0\}}\frac{\Gamma(u)}{u}\Big\|_{L^p(M)}\leq c_4\left\|f\right\|_{L^p(M)}
\end{equation}
with a constant $c_4>0$ depending only on $c_1$, $c_2$, $\alpha$, $\lambda$ and $p$.
\end{proposition}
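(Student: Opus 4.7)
The plan is to start from the integral representation $u = G_\lambda f = \int_0^\infty e^{-\lambda t} P_t f \, dt$ and, via the seminorm triangle inequality for $\Gamma(\cdot)^{1/2}$ (as employed in the proof of Proposition~\ref{P:hktosg}) combined with \eqref{E:log}, to obtain the pointwise bound
\[ \sqrt{\Gamma(u)}(x) \le c_1 \int_0^\infty \frac{e^{-\mu t}}{\sqrt{t}}\,P_{\alpha t}f(x)\,dt, \qquad \mu := \lambda - c_2 > 0, \]
valid for $\mu$-a.e.\ $x \in M$ when $f \in C_c(M)$ is nonnegative. From here I would convert this convolution estimate into the claimed $L^p$-bound by proving a Hedberg-type pointwise inequality involving the semigroup maximal function $M^\ast f(x) := \sup_{s > 0} P_s f(x)$, exploiting that $M^\ast$ is $L^p$-bounded for $1 < p \le \infty$ by Stein's maximal ergodic theorem for symmetric Markov semigroups.

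For the pointwise bound I would split the $t$-integral at a free parameter $T = T(x) > 0$. The near-diagonal piece is estimated via $P_{\alpha t} f \le M^\ast f$ and $\int_0^T t^{-1/2}\,dt = 2\sqrt{T}$, giving at most $2\sqrt{T}\,M^\ast f(x)$. On the tail $[T,\infty)$, applying Cauchy--Schwarz with the factorisation
\[ \frac{e^{-\mu t}}{\sqrt{t}}\,P_{\alpha t}f = \Bigl(\frac{e^{-\eta t}}{\sqrt{t}}(P_{\alpha t}f)^{1/2}\Bigr) \cdot \Bigl(e^{-\alpha\lambda t/2}(P_{\alpha t}f)^{1/2}\Bigr), \]
where $\eta := \mu - \alpha\lambda/2 = \tfrac12(\lambda(2-\alpha)-2c_2)$, exploits the strict positivity of $\eta$, which is exactly the hypothesis $\lambda > \frac{2}{2-\alpha}c_2$. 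Positivity of $\eta$ gives $\int_T^\infty e^{-2\eta t}/t\,dt \le 1/(2\eta T)$; bounding $P_{\alpha t}f \le M^\ast f$ in the first factor and using $\int_T^\infty e^{-\alpha\lambda t}P_{\alpha t}f\,dt \le u(x)/\alpha$ in the second, Cauchy--Schwarz yields
\[ \int_T^\infty \tfrac{e^{-\mu t}}{\sqrt{t}}\,P_{\alpha t}f \,dt \le \sqrt{u(x)\,M^\ast f(x)/(2\eta\alpha T)}. \]
Balancing the two pieces via a pointwise choice of $T$ then produces the required bound of Hedberg type relating $\Gamma(u)$, $u$ and $M^\ast f$.

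From this pointwise inequality the $L^p$-estimate on $\Gamma(u)/u$ follows by integration together with Stein's maximal ergodic theorem applied to $M^\ast f$ and the trivial contractivity $\|u\|_{L^p(M)} \le \lambda^{-1}\|f\|_{L^p(M)}$; the case $p = \infty$ is handled identically using $\|f\|_\infty$ in place of $M^\ast f$. The hard part will be the tail estimate: without the strict threshold $\lambda > \frac{2}{2-\alpha}c_2$ the parameter $\eta$ fails to be positive and the weight $e^{-2\eta t}/t$ ceases to be integrable on $[T,\infty)$, so the Cauchy--Schwarz step collapses. This is precisely where the quantitative assumption on $\lambda$ is used to convert the exponential growth rate $c_2$ and the relaxation $\alpha$ built into \eqref{E:log} into a summability condition compatible with the Hedberg scheme.
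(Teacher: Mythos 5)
Your overall strategy coincides with the paper's: majorize $\sqrt{\Gamma(u)}$ by $c_1\int_0^\infty e^{-(\lambda-c_2)t}t^{-1/2}P_{\alpha t}f\,dt$ via the seminorm triangle inequality and \eqref{E:log}, run a Hedberg-type split of the $t$-integral against the semigroup maximal function $M^\ast f=\sup_{s>0}P_sf$, and finish with Stein's maximal theorem; you have also correctly identified where the threshold $\lambda>\frac{2}{2-\alpha}c_2$ enters. However, the specific split you propose does not deliver the inequality you need. Your near piece is $2\sqrt{T}\,M^\ast f(x)$ and your tail is $\bigl(u(x)M^\ast f(x)/(2\eta\alpha T)\bigr)^{1/2}$; minimizing the sum over $T$ gives
\begin{equation*}
\sqrt{\Gamma(u)}(x)\;\lesssim\;u(x)^{1/4}\bigl(M^\ast f(x)\bigr)^{3/4},
\qquad\text{hence}\qquad
\mathbf{1}_{\{u>0\}}\frac{\Gamma(u)}{u}\;\lesssim\;\Bigl(\frac{M^\ast f}{u}\Bigr)^{1/2}M^\ast f,
\end{equation*}
and not the Hedberg bound $\Gamma(u)\lesssim u\,M^\ast f$ that \eqref{E:pmoment} requires. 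The parasitic factor $(M^\ast f/u)^{1/2}$ is unbounded: already for the heat semigroup on $\mathbb{R}^d$, $u=G_\lambda f$ decays like $e^{-\sqrt{\lambda}\,\varrho(x,\supp f)}$ while $M^\ast f$ decays only polynomially, so $(M^\ast f)^{3/2}u^{-1/2}$ grows exponentially and its $L^p$-norm is not controlled by $\left\|f\right\|_{L^p(M)}$. Choosing $T=u/M^\ast f$ instead of balancing fixes the near piece but leaves the tail at $\lesssim M^\ast f$, which is again too large by the same unbounded factor.

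The defect is the near-diagonal estimate: replacing all of $P_{\alpha t}f$ by $M^\ast f$ on $(0,T)$ discards the fractional power of $u$ that the balance needs, and you cannot repair it by Cauchy--Schwarz there because $\int_0^T t^{-1}P_{\alpha t}f\,dt$ diverges wherever $f$ does not vanish near $x$. The repair -- and this is precisely the content of the paper's Lemma \ref{L:multest} -- is to apply H\"older with \emph{asymmetric} exponents $\alpha<\beta<2<\beta'$ to the measure $P_{\alpha t}f\,dt$ on both pieces: put the entire singular weight $t^{-1/2}$ into the $1/\beta$-factor on $(0,\delta)$ (integrable since $\beta<2$) and into the $1/\beta'$-factor on $(\delta,\infty)$ (integrable since $\beta'>2$), while the complementary factor carries the full weight $e^{-\lambda\alpha t}$ and is bounded by $u^{1/\beta'}$ resp.\ $u^{1/\beta}$. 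Each piece then has the form $\delta^{\pm(1/\beta-1/2)}(M^\ast f)^{1/\beta}u^{1/\beta'}$ (with $\beta,\beta'$ swapped on the tail), and $\delta=u/M^\ast f$ makes both equal to $(u\,M^\ast f)^{1/2}$; the requirement $\lambda>\frac{\beta'}{\beta-\alpha}c_2$ for some admissible $\beta$ is exactly equivalent to $\lambda>\frac{2}{2-\alpha}c_2$. A secondary, repairable point: your first pointwise display needs justification, since the exceptional null set in \eqref{E:log} depends on $t$; the paper inserts an extra $P_s$-mollification and removes it at the end with Fatou's lemma for this reason.
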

 
The proof of Proposition \ref{P:integral} uses the $L^p$-boundedness of the semigroup maximal function in the sense of Rota and Stein, \cite[Corollary 2]{St61}, \cite[Chapter III, Section 3, p.73, Maximal Theorem]{St70}. For our purposes the following form of this result is suitable: For any $1<p\leq +\infty$ there exists a constant $c(p)>0$ such that
\begin{equation}\label{E:maximal}
\big\|\sup_{t>0}P_tf\big\|_{L^p(M)}\leq c(p)\left\|f\right\|_{L^p(M)}, \quad f\in L^1(M)\cap L^\infty(M).
\end{equation}
The case $p=+\infty$ is immediate with $c(+\infty)=1$ since each $P_tf$ is continuous and $\mu$ has full support. 

A second tool to prove Proposition \ref{P:integral} is the following pointwise multiplicative estimate based on (\ref{E:log}). Since the $\mu$-null sets on which the estimate \eqref{E:log} does not need to hold may depend on $t$, we use an additional regularization by the strong Feller semigroup $(P_t)_{t>0}$.

\begin{lemma}\label{L:multest}
Assume that $(P_t)_{t> 0}$ has the strong Feller property and satisfies \eqref{E:log}. Then for any $\lambda>\frac{2}{2-\alpha}\:c_2$ there is a constant $c_5>0$ depending only on $c_1$, $c_2$, $\alpha$ and $\lambda$ such that for all nonnegative $f\in C_c(M)$, all $s,t>0$ and all $x\in M$ we have 
\begin{equation}\label{E:multest}
\int_0^\infty e^{-\lambda t} P_s(\sqrt{\Gamma(P_t f)})(x)dt\leq c_5\:\left(P_s G_\lambda f(x)\right)^{1/2}\Big(\sup_{t>0} P_tf(x)\Big)^{1/2}.
\end{equation}
\end{lemma}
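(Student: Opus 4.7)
The plan is a Hedberg--Adams style splitting argument adapted to the semigroup setting. By \eqref{E:log} combined with the strong Feller property, the $\mu$-a.e.\ gradient bound lifts to a pointwise estimate on the continuous representatives after applying $P_s$, giving $P_s(\sqrt{\Gamma(P_t f)})(x) \leq \frac{c_1 e^{c_2 t}}{\sqrt{t}} P_{s+\alpha t} f(x)$. Integrating against $e^{-\lambda t}$ reduces the claim to bounding
\[
J(x) := \int_0^\infty \frac{e^{-(\lambda-c_2)t}}{\sqrt{t}} P_{s+\alpha t} f(x)\, dt
\]
from above by $c_5 \sqrt{M(x)}\sqrt{P_s G_\lambda f(x)}$, where $M(x) := \sup_{u>0} P_u f(x)$.

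The core step is to split the integral at a well-chosen threshold $T > 0$. For $t \in [0,T]$ use $P_{s+\alpha t} f \leq M(x)$ to obtain $\int_0^T \frac{e^{-(\lambda-c_2)t}}{\sqrt{t}} P_{s+\alpha t} f\, dt \leq 2 M(x) \sqrt{T}$. For $t \in [T,\infty)$ the hypothesis $\lambda > \frac{2c_2}{2-\alpha}$ is crucial -- it is precisely equivalent to the strict positivity of $\mu := (\lambda - c_2) - \lambda\alpha/2$. Writing $e^{-(\lambda-c_2)t} = e^{-\mu t}\cdot e^{-\lambda\alpha t/2}$ and applying Cauchy--Schwarz gives
\[
\int_T^\infty \frac{e^{-(\lambda-c_2)t}}{\sqrt{t}} P_{s+\alpha t} f\, dt \leq \Bigl(\int_T^\infty \frac{e^{-2\mu t}}{t} P_{s+\alpha t} f\, dt\Bigr)^{1/2} \Bigl(\int_T^\infty e^{-\lambda\alpha t} P_{s+\alpha t} f\, dt\Bigr)^{1/2}.
\]
After the substitution $u = \alpha t$, the second factor is bounded by $\alpha^{-1/2}\sqrt{P_s G_\lambda f(x)}$ -- the $\lambda$-resolvent emerges exactly because of this substitution, which is what makes the hypothesis $\lambda(2-\alpha) > 2c_2$ the natural threshold. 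For the first factor one uses $P_{s+\alpha t} f \leq M(x)$ together with $1/t \leq 1/T$ on $[T,\infty)$, producing a factor of order $M(x) e^{-2\mu T}/T$.

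Balancing the $O(\sqrt{T})$ and $O(1/\sqrt{T})$ contributions by an appropriate choice of $T$ then yields the stated estimate, with $c_5$ depending on $c_1, c_2, \alpha$ and on $\lambda$ through $\mu$. The stated $L^\infty$-variant of Theorem \ref{T:Mazja} follows by the analogous estimate with the supremum taken instead of the $L^p$-norm. The main obstacle is the logarithmic divergence of $\int_0^{\cdot} t^{-1}\,dt$ near the origin: no direct (unsplit) Cauchy--Schwarz applied to $J(x)$ can succeed, because any factorization of $e^{-(\lambda-c_2)t}/\sqrt{t}$ into two $L^2$-type factors forces a non-integrable weight $t^{-1}$ near $0$. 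Splitting at $T > 0$ is precisely what truncates this singularity, while the strict positivity of $\mu$ coming from the hypothesis on $\lambda$ guarantees that the tail weight $e^{-2\mu t}/t$ decays fast enough at infinity for the first Cauchy--Schwarz factor to be effectively controlled by a quantity of the order $1/\sqrt{T}$.
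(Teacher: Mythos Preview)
Your opening reduction---applying $P_s$ to \eqref{E:log} and invoking the strong Feller property to get $P_s(\sqrt{\Gamma(P_tf)})(x)\le \frac{c_1e^{c_2t}}{\sqrt{t}}P_{s+\alpha t}f(x)$ for all $x$---matches the paper, and the splitting philosophy is right. The gap is in the small-$t$ piece: bounding $P_{s+\alpha t}f\le M(x)$ on $[0,T]$ gives $J_1\le 2M\sqrt{T}$, while your Cauchy--Schwarz tail gives $J_2\le C\sqrt{M/T}\,\sqrt{G}$ with $G=P_sG_\lambda f(x)$. Optimizing over $T$ (equivalently, AM--GM) yields at best $c\,M^{3/4}G^{1/4}=c\sqrt{MG}\,(M/G)^{1/4}$, not $c\sqrt{MG}$. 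The ratio $M/G$ is not uniformly bounded: for $f=\mathbf{1}_{B(x_0,r)}$ and $x=x_0$ one has $M\asymp 1$ and $G\asymp r^2$ as $r\to 0$, so no choice of $T$ produces the lemma with a constant independent of $f$ and $x$.

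The paper repairs precisely the obstacle you flag (non-integrability of $t^{-1}$ near $0$) not by abandoning the H\"older step on $(0,\delta)$ but by taking H\"older with an exponent $\beta\in(\alpha,2)$ instead of $\beta=2$: then the singular factor is $t^{-\beta/2}$, integrable near $0$, while the complementary factor already contributes a power $G^{1/\beta'}$. This gives $J_1\le C\,\delta^{1/\beta-1/2}M^{1/\beta}G^{1/\beta'}$; swapping $\beta\leftrightarrow\beta'$ on $[\delta,\infty)$ gives $J_2\le C\,\delta^{1/\beta'-1/2}M^{1/\beta'}G^{1/\beta}$, and the choice $\delta=G/M$ balances both pieces to $C\sqrt{MG}$. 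The hypothesis $\lambda>\frac{2}{2-\alpha}c_2$ is what guarantees that such a $\beta$ exists with the exponential weights in both H\"older factors still decaying.
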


\begin{proof}
By the strong Feller property and \eqref{E:log} the functions $P_s(\sqrt{\Gamma(P_t f)})$ and $P_{s+\alpha t} f$ are continuous for any $s,t>0$, so that by the positivity of $P_s$ we have 
\begin{equation}\label{E:lastline}
P_s(\sqrt{\Gamma(P_t f)})(x)\leq \frac{c_1 e^{c_2t}}{\sqrt{t}}\:P_{s+\alpha t}f(x)
\end{equation}
for all $x\in M$. We may now assume that $x$ is such that $\sup_{t>0} P_tf(x)>0$, otherwise the right hand side of (\ref{E:lastline}) is zero for any $s,t>0$ and (\ref{E:multest}) is trivial. For arbitrary fixed $\delta>0$ the sum of integrals
\begin{equation}\label{E:Hedberg}
\int_0^\infty e^{(c_2-\lambda)t} t^{-1/2} P_{s+\alpha t}f(x)dt= \int_0^\delta e^{(c_2-\lambda) t}t^{-1/2} P_{s+\alpha t}f(x)dt+ \int_\delta^\infty e^{(c_2-\lambda) t}t^{-1/2} P_{s+\alpha t}f(x)dt
\end{equation}
dominates the left hand side of (\ref{E:multest}). By the hypothesis on $\lambda$ we can find $\alpha<\beta<2$ such that 
$\lambda>\frac{\beta'}{\beta-\alpha}\:c_2$,
where $\frac{1}{\beta}+\frac{1}{\beta'}=1$. This implies that $\lambda(1-\frac{\alpha}{\beta})>c_2$ and $\lambda(1-\frac{\alpha}{\beta'})>c_2$. By H\"older's inequality the first summand on the right hand side of (\ref{E:Hedberg}) is bounded by 
\begin{multline}
\left(\int_0^\delta e^{c_2\beta t-\lambda(1-\frac{\alpha}{\beta'})\beta t}t^{-\beta/2}P_{s+\alpha t}f(x)dt\right)^{1/\beta}\left(\int_0^\delta e^{-\lambda \alpha t}P_{s+\alpha t}f(x)dt\right)^{1/\beta'}\notag\\
\leq \frac{\delta^{1/\beta-1/2}}{(1-\frac{\beta}{2})^{1/\beta}\alpha^{1/\beta'}}\:\left(\sup_{t>0}P_tf(x)\right)^{1/\beta}\left(\int_0^\infty e^{-\lambda \tau} P_{s+\tau}f(x)d\tau\right)^{1/\beta'}.\notag
\end{multline} 

For the second summand in (\ref{E:Hedberg}) we can swap the roles of $\beta$ and $\beta'$ and use the fact that $\beta'>2$ to deduce the analogous bound
\[\frac{\delta^{1/\beta'-1/2}}{(\frac{\beta'}{2}-1)^{1/\beta'}\alpha^{1/\beta}}\:\left(\sup_{t>0}P_tf(x)\right)^{1/\beta'}\left( \int_0^\infty e^{-\lambda \tau} P_{s+\tau}f(x)d\tau  \right)^{1/\beta}. \]
Now the choice
\begin{equation}\label{E:delta}
\delta=\frac{\int_0^\infty e^{-\lambda \tau} P_{s+\tau}f(x)d\tau}{\sup_{t>0}P_tf(x)}
\end{equation}
yields the claimed inequality.
\end{proof}

We prove Proposition \ref{P:integral}. 

\begin{proof}
Let $\varepsilon>0$. Since $\Gamma(u)\in L^1(M)$ we have $\lim_{s\to 0}P_s(\sqrt{\Gamma(u)})=\sqrt{\Gamma(u)}$ in $\mu$-measure, so that Fatou's lemma yields
\begin{equation}\label{E:appFatou}
\int_{\{u>\varepsilon\}}\frac{\Gamma(u)^p}{u^p}\varphi\:d\mu\leq \liminf_{s\to 0}\int_{\{u>\varepsilon\}}\frac{(P_s\sqrt{\Gamma(u)})^{2p}}{u^p}\varphi\:d\mu
\end{equation}
for any $1\leq p<+\infty$ and any nonnegative $\varphi\in L^\infty(M)$. 

The integral $\int_0^\infty e^{-\lambda t} P_tf\:dt$ converges in $\mathcal{D}(\mathcal{E})$. For any nonnegative $\psi\in L^1(M)\cap L^\infty(M)$ and $s>0$ we consider the seminorm $g\mapsto \|g\|_{P_s\psi}=\int_M \sqrt{\Gamma(g)}P_s\psi\:d\mu$ on $\mathcal{D}(\mathcal{E})$ as defined in (\ref{E:seminorm}). By the symmetry of the semigroup, the triangle inequality for integrals of $\mathcal{D}(\mathcal{E})$-valued functions and Fubini, we have 
\begin{multline}
\int_M \psi \:P_s\sqrt{\Gamma(G_\lambda f)}\:d\mu=\left\|G_\lambda f\right\|_{P_s\psi}=\big\|\int_0^\infty e^{-\lambda t}P_tf\:dt\big\|_{P_s\psi}\leq \int_0^\infty e^{-\lambda t} \left\|P_tf\right\|_{P_s\psi}dt\notag\\
= \int_M \psi \int_0^\infty e^{-\lambda t} P_s\sqrt{\Gamma(P_t f)}\:dt\:d\mu.\notag
\end{multline}
Since $\psi$ was arbitrary, it follows that 
\[P_s\sqrt{\Gamma(G_\lambda f)}\leq \int_0^\infty e^{-\lambda t} P_s\sqrt{\Gamma(P_t f)}\:dt\]
$\mu$-a.e. on $M$. Therefore we have
\begin{align}\label{E:inequalities}
\int_{\{u>\varepsilon\}} \frac{(P_s\sqrt{\Gamma(u)})^{2p}}{u^{p}}\varphi\: d\mu &\leq \int_{\{u>\varepsilon\}}\frac{\varphi}{u^p}\left(\int_0^\infty e^{-\lambda t} P_s\sqrt{\Gamma(P_t f)}dt \right)^{2p}\:d\mu\notag\\
&\leq c_5 \int_{\left\lbrace u>\varepsilon\right\rbrace} \frac{\varphi}{u^p} (P_s u)^p \Big(\sup_{t>0} P_tf\Big)^p\:d\mu, 
\end{align}
the second inequality follows using Lemma \ref{L:multest}. 

Clearly we have $\lim_{s\to 0} (P_su)^p=u^p$ in $\mu$-measure. In the case that $1<p<+\infty$ it suffices to consider the choice $\varphi\equiv 1$, for which the integrand of the last integral in (\ref{E:inequalities}) admits the majorant
$\varepsilon^{-p}\left\|u\right\|_{L^\infty(M)}^p\big(\sup_{t>0} P_tf\big)^p$. By (\ref{E:maximal}) this majorant is integrable, and the dominated convergence theorem gives
\begin{equation}\label{E:domconv}
\lim_{s\to 0} \int_{\left\lbrace u>\varepsilon\right\rbrace} \frac{1}{u^p} (P_s u)^p \Big(\sup_{t>0} P_tf\Big)^p\:d\mu=  \int_{\left\lbrace u>\varepsilon\right\rbrace}  \Big(\sup_{t>0} P_tf\Big)^p\:d\mu.
\end{equation}
Combining (\ref{E:appFatou}), (\ref{E:inequalities}) and (\ref{E:domconv}) and using (\ref{E:maximal}) we obtain
\[\int_{\{u>\varepsilon\}}\frac{\Gamma(u)^p}{u^p}\:d\mu\leq c_5 c(p) \left\|f\right\|_{L^p(M)}^p,\]
and letting $\varepsilon$ go to zero we arrive at (\ref{E:pmoment}). Similarly, we have 
\[\lim_{s\to 0} \int_{\left\lbrace u>\varepsilon\right\rbrace} \frac{\varphi}{u} (P_s u) \Big(\sup_{t>0} P_tf\Big)\:d\mu=  \int_{\left\lbrace u>\varepsilon\right\rbrace} \varphi \Big(\sup_{t>0} P_tf\Big)\:d\mu \]
for any nonnegative $\varphi\in L^\infty(M)\cap L^1(M)$, note that in this case $\varepsilon^{-1}\left\|u\right\|_{L^\infty(M)}\left\|f\right\|_{L^\infty(M)}\:\varphi$ provides an integrable majorant. This gives 
\[\int_{\{u>\varepsilon\}}\frac{\Gamma(u)}{u}\varphi\:d\mu\leq c_5 \int_{\{u>\varepsilon\}}\varphi \Big(\sup_{t>0} P_tf\Big)\:d\mu\leq c_5 \left\|f\right\|_{L^\infty(M)}\left\|\varphi\right\|_{L^1(M)}\]
for any such $\varphi$. Using the standard decomposition and approximation and letting $\varepsilon\to 0$, we find that
\[\left|  \int_{\{u>0\}}\frac{\Gamma(u)}{u}\varphi\:d\mu \right|\leq c_5 \left\|f\right\|_{L^\infty(M)}\left\|\varphi\right\|_{L^1(M)}\]
for arbitrary $\varphi\in L^1(M)$. Consequently $\mathbf{1}_{\{u>0\}}\frac{\Gamma(u)}{u}$ is an element of $L^\infty(M)$ and its $L^\infty(M)$-norm is bounded by $c_5 \left\|f\right\|_{L^\infty(M)}$.
\end{proof}

To prove Theorem \ref{T:Mazja} we combine Proposition \ref{P:integral} with the chain rule. The standard chain rule for generators says that if $F\in C^2(\mathbb{R})$ is such that $F(0)=0$, $F'(0)=0$ and $F''$ is bounded, then for any $u\in\mathcal{D}(\mathcal{L})$ we have 
\begin{equation}\label{E:usualchainrule}
F(u)\in \mathcal{D}(\mathcal{L}^{(1)})\quad \text{ and }\quad
\mathcal{L}^{(1)}F(u)=F'(u)\mathcal{L}u+F''(u)\Gamma(u);
\end{equation}
see \cite[Chapter I, Corollary 6.1.4]{BH91}. 
The following variant is in line with Maz'ya's original proof of the quantitative  estimate (\ref{E:truncation}), it allows $F''$ to be singular at zero. See \cite{A76} and \cite{Maz'ya72}.

\begin{lemma}\label{L:chainforL}
Assume that $(P_t)_{t> 0}$ is a strong Feller semigroup satisfying \eqref{E:log}. Let $F$ be as in Theorem \ref{T:Mazja}, $\lambda>\frac{2}{2-\alpha}\:c_2$ and let $u=G_\lambda f$ with $f\in C_c(M)$. Then $F(u)\in\mathcal{D}(\mathcal{E})$ and 
\begin{equation}\label{E:chainforL}
\mathcal{L}F(u)(v)=\int_M (\mathcal{L}u)F'(u) vd\mu+\int_{\{u>0\}}F''(u)\Gamma(u)vd\mu, \quad v\in\mathcal{D}(\mathcal{E})\cap C_c(M).
\end{equation}
\end{lemma}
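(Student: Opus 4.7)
The plan is to establish $F(u)\in \mathcal{D}(\mathcal{E})$ via the Lipschitz chain rule and then derive \eqref{E:chainforL} by approximating $F$ with smoother functions $F_\epsilon$ to which the standard chain rule \eqref{E:usualchainrule} applies, passing to the limit with Proposition~\ref{P:integral} as the key integrability tool. The membership $F(u)\in\mathcal{D}(\mathcal{E})$ is immediate: condition \eqref{E:Mazja} forces $F(0)=0$ (from $|F(t)|\le Lt$ and continuity) and $|F'|\le L$, so $F$ is Lipschitz on $\mathbb{R}_+$ with $F(0)=0$, and since $u\ge 0$ lies in $\mathcal{D}(\mathcal{E})$, the Lipschitz chain rule cited in Section~\ref{S:Setup} gives $F(u)\in\mathcal{D}(\mathcal{E})$ with $\Gamma(F(u),w)=F'(u)\Gamma(u,w)$ $\mu$-a.e.

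To prove the identity, I would fix a nondecreasing cutoff $\eta\in C^\infty([0,\infty))$ with $\eta=0$ on $[0,1/2]$ and $\eta=1$ on $[1,\infty)$, set $\eta_\epsilon(t):=\eta(t/\epsilon)$, and define
\[F_\epsilon(t):=\int_0^t F'(s)\eta_\epsilon(s)\,ds.\]
Then $F_\epsilon(0)=F_\epsilon'(0)=0$ and $F_\epsilon''=F''\eta_\epsilon+F'\eta_\epsilon'$ is bounded (with bound depending on $\epsilon$). Since $f\in C_c(M)\subset L^1(M)$, we have $u\in \mathcal{D}(\mathcal{L}^{(1)})$ with $\mathcal{L}u=\lambda u-f\in L^1(M)$, so \eqref{E:usualchainrule} applies and yields $\mathcal{L}^{(1)}F_\epsilon(u)=F_\epsilon'(u)\mathcal{L}u+F_\epsilon''(u)\Gamma(u)$ in $L^1(M)$. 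Pairing with $v\in \mathcal{D}(\mathcal{E})\cap C_c(M)$ and using \eqref{E:vardef},
\[-\mathcal{E}(F_\epsilon(u),v)=\int_M F_\epsilon'(u)(\mathcal{L}u)v\,d\mu+\int_M F_\epsilon''(u)\Gamma(u)v\,d\mu.\]

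Letting $\epsilon\to 0^+$, the left side tends to $-\mathcal{E}(F(u),v)=\mathcal{L}F(u)(v)$, since by the Lipschitz chain rule $\Gamma(F_\epsilon(u)-F(u))=F'(u)^2(1-\eta_\epsilon(u))^2\Gamma(u)\to 0$ in $L^1(M)$ by dominated convergence (dominant $L^2\Gamma(u)$). For the first term on the right, $F_\epsilon'(u)\to F'(u)\mathbf{1}_{\{u>0\}}$ pointwise with integrable dominant $L|\mathcal{L}u||v|$, producing $\int_M F'(u)\mathbf{1}_{\{u>0\}}(\mathcal{L}u)v\,d\mu$. The second term splits as
\[\int_M F''(u)\eta_\epsilon(u)\Gamma(u)v\,d\mu+\int_M F'(u)\eta_\epsilon'(u)\Gamma(u)v\,d\mu,\]
and here Proposition~\ref{P:integral} is essential: it gives $\mathbf{1}_{\{u>0\}}\Gamma(u)/u\in L^2(M)$, furnishing the integrable majorant $C\,\mathbf{1}_{\{u>0\}}(\Gamma(u)/u)|v|$ for both pieces (for the second piece one uses $1/\epsilon\le 2/u$ on $\{u\ge\epsilon/2\}$). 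Dominated convergence then sends the first piece to $\int_{\{u>0\}}F''(u)\Gamma(u)v\,d\mu$, while $\mathbf{1}_{\{\epsilon/2\le u\le\epsilon\}}\to 0$ pointwise forces the second piece to vanish in the limit.

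This yields \eqref{E:chainforL} with the first integrand $F'(u)(\mathcal{L}u)v$ replaced by $F'(u)\mathbf{1}_{\{u>0\}}(\mathcal{L}u)v$. The two forms agree because on $\{u=0\}$ one has $\mathcal{L}u=-f$, and a weak minimum-principle argument using $u=G_\lambda f\in C_b(M)$ (by strong Feller), $u\ge 0$, and $f\ge 0$ (continuous) forces $f\equiv 0$ on $\{u=0\}$, so that set contributes nothing to $\int_M F'(u)(\mathcal{L}u)v\,d\mu$. The main obstacle is precisely the blow-up $F''(t)=O(1/t)$ as $t\downarrow 0$: the standard chain rule does not cover $F$, and making the regularization argument rigorous requires exactly the $L^p$-control of $\Gamma(u)/u$ proved in Proposition~\ref{P:integral}, which simultaneously tames the bulk integrand $F''(u)\Gamma(u)$ and the spurious boundary-layer contribution $F'(u)\eta_\epsilon'(u)\Gamma(u)$ created by the cutoff.
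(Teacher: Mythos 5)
Your overall strategy---mollify $F$ by cutting off $F'$ near the origin, apply the standard chain rule \eqref{E:usualchainrule} to $F_\epsilon$, and pass to the limit using Proposition \ref{P:integral} to dominate both $F''(u)\eta_\epsilon(u)\Gamma(u)$ and the boundary-layer term $F'(u)\eta_\epsilon'(u)\Gamma(u)$ by a multiple of $\mathbf{1}_{\{u>0\}}\Gamma(u)/u$---is viable and runs parallel to the paper's proof, which instead subtracts the linear part (working with $F_0(t)=F(t)-F'(0)t$) and truncates the domain of integration via $u\vee\varepsilon$ and $\{u>\varepsilon\}$. One small omission first: your claim that $\Gamma(F_\epsilon(u)-F(u))=F'(u)^2(1-\eta_\epsilon(u))^2\Gamma(u)\to 0$ in $L^1(M)$ does not follow from dominated convergence alone, since the pointwise limit of the integrand is $F'(0)^2\mathbf{1}_{\{u=0\}}\Gamma(u)$; you need $\int_{\{u=0\}}\Gamma(u)\,d\mu=0$, a fact the paper invokes explicitly with a citation to \cite{BH91}. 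The same fact is silently used elsewhere in your limit passage and should be stated.

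The genuine gap is the final step, where you drop the indicator and pass from $\int_{\{u>0\}}F'(u)(\mathcal{L}u)v\,d\mu$ to $\int_M F'(u)(\mathcal{L}u)v\,d\mu$ by asserting that $f\equiv 0$ on $\{u=0\}$ via ``a weak minimum-principle argument.'' In this abstract setting that argument is not available: $u(x_0)=0$ only yields $P_tf(x_0)=0$ for a.e.\ $t>0$, and without pointwise continuity of $t\mapsto P_tf(x_0)$ at $t=0$, or positivity (indeed existence) of a heat kernel---none of which is assumed here, strong Feller alone does not give $\lim_{t\to 0}P_tf(x)=f(x)$ for every $x$---you cannot conclude $f(x_0)=0$. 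What you actually need is only the $\mu$-a.e.\ statement $\int_{\{u=0\}}(\mathcal{L}u)v\,d\mu=0$, and this is provable with the machinery you already set up: writing $\psi_\epsilon:=1-\eta_\epsilon$ and using the Leibniz and chain rules together with \eqref{E:GG},
\[\int_M\psi_\epsilon(u)\Gamma(u,v)\,d\mu=-\int_M(\mathcal{L}u)\psi_\epsilon(u)v\,d\mu+\int_M v\,\eta_\epsilon'(u)\Gamma(u)\,d\mu,\]
where as $\epsilon\to 0$ the left-hand side tends to $\int_{\{u=0\}}\Gamma(u,v)\,d\mu=0$ (again because $\Gamma(u)=0$ a.e.\ on $\{u=0\}$) and the last term tends to $0$ by your own majorant $2\|\eta'\|_{\sup}\,\mathbf{1}_{\{u>0\}}(\Gamma(u)/u)|v|$; hence $\int_{\{u=0\}}(\mathcal{L}u)v\,d\mu=0$. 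Alternatively, and more cleanly, run your entire argument on $F_0(t)=F(t)-F'(0)t$, for which $F_0'(0)=0$ makes the indicator harmless, and add back the exact term $F'(0)\mathcal{L}u$ at the end---this is precisely the device the paper uses to sidestep the issue.
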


\begin{proof}
Clearly $u$ is in $\mathcal{D}(\mathcal{E})\cap C_b(M)$, hence also $F(u)$ is in this space by (\ref{E:Mazja}) and the Markov property. Let $\varepsilon>0$. Set $F_0(t):=F(t)-F'(0)t$. Since $F_0'(0)=0$ and $F_0''=F''$ is bounded by $L/\varepsilon$ on the range of $u\vee \varepsilon$, the Markov property also implies that $F_0'(u\vee \varepsilon)\in \mathcal{D}(\mathcal{E})\cap C_b(M)$, and on $\{u>\varepsilon\}$ this function equals $F_0'(u)$ $\mu$-a.e. For any $v\in \mathcal{D}(\mathcal{E})$ we have $\Gamma(u \vee\varepsilon, v)=\Gamma(u,v)$ and $\Gamma(F_0(u\vee \varepsilon),v)=\Gamma(F_0(u),v)$ $\mu$-a.e. on  $\{u>\varepsilon\}$ as can be seen from (\ref{E:chain}). Making further use of (\ref{E:chain}) we therefore observe that  
\begin{align}\label{E:claimforintegrals}
\int_{\{u>\varepsilon\}}&\Gamma(F_0(u),v)d\mu\notag\\
&=\int_{\{u>\varepsilon\}} F_0'(u\vee \varepsilon) \Gamma(u,v)d\mu\notag\\
&=\int_{\{u> \varepsilon\}}\Gamma(u,F_0'(u\vee\varepsilon)v)d\mu-\int_{\{u>\varepsilon\}}v\Gamma(u,F_0'(u\vee\varepsilon))d\mu\notag\\
&=-\int_M(\mathcal{L}u)F_0'(u\vee\varepsilon)vd\mu-\int_{\{u\leq \varepsilon\}}\Gamma(u,F_0'(u\vee\varepsilon)v)d\mu
-\int_{\{u>\varepsilon\}}v F_0''(u)\Gamma(u,u)d\mu\notag
\end{align}
for any $v\in \mathcal{D}(\mathcal{E})\cap C_c(M)$. Since $(F_0'(u\vee\varepsilon)-F_0'(\varepsilon))v$ has support in $\{u>\varepsilon\}$, we have $\Gamma(u,F_0'(u\wedge\varepsilon)v)=F_0'(\varepsilon)\Gamma(u,v)$ $\mu$-a.e. on $\{u\leq \varepsilon\}$ so that the second integral on the right-hand side is seen to equal $F_0'(\varepsilon)\int_{\{u\leq \varepsilon\}}\Gamma(u,v)d\mu$. Taking limits as $\varepsilon\to 0$, using the continuity of $F_0'$ and Proposition \ref{P:integral} we obtain
\begin{multline}
\mathcal{L}F(u)(v)=-\mathcal{E}(F(u),v)=-\int_{\{u>0\}}\Gamma(F(u),v))d\mu-\int_{\{u=0\}}\Gamma(F(u),v))d\mu\notag\\
=\int_M \mathcal{L}u\:F'(u)vd\mu+\int_{\{u>0\}}F''(u)\Gamma(u,u)vd\mu-\int_{\{u=0\}}\Gamma(F(u),v)d\mu\notag
\end{multline}
for any $v\in \mathcal{D}(\mathcal{E})\cap C_c(M)$. The last integral in the last line is zero, because it is bounded in modulus by  $L\left(\int_{\{u=0\}}\Gamma(u)d\mu\right)^{1/2}\mathcal{E}(v)^{1/2}$ 
and $\int_{\{u=0\}}\Gamma(u)\:d\mu=0$, as follows for instance from \cite[Chapter I, Theorem 5.2.3 and Theorem 7.1.1 and its proof]{BH91}. This proves (\ref{E:chainforL}) for $v\in \mathcal{D}(\mathcal{E})\cap C_c(M)$. Since by Cauchy-Schwarz, (\ref{E:Mazja}) and Proposition \ref{P:integral} we have $|\mathcal{L}F(u)(v)|\leq \mathcal{E}(F(u))^{1/2}\mathcal{E}(v)^{1/2}$,
\begin{equation}\label{E:inter1}
\big| \int_M \mathcal{L}u\:F'(u)vd\mu\big|\leq L\:\left\|\mathcal{L}u\right\|_{L^2(M)}\left\|v\right\|_{L^2(M)}
\end{equation}
and 
\begin{equation}\label{E:inter2}
\big|\int_{\{u>0\}}F''(u)\Gamma(u)\:v\:d\mu\big|\leq c_4\:L\:\left\|f\right\|_{L^2(M)}\left\|v\right\|_{L^2(M)},
\end{equation}
identity (\ref{E:chainforL}) is seen to hold for all $v\in \mathcal{D}(\mathcal{E})$.
\end{proof}

We can now prove Theorem \ref{T:Mazja}. 
\begin{proof}
As before let $u=G_\lambda f$. Suppose first that $f$ is a nonnegative element of $C_c(M)$. Clearly $F(u)\in L^p(M)$ for any $1<p\leq +\infty$. Since $\mathcal{D}(\mathcal{E})\cap C_c(M)$ is dense in $L^2(M)$ and $u\in\mathcal{D}(\mathcal{L})$, the chain rule (\ref{E:chainforL}), (\ref{E:inter1}) and (\ref{E:inter2}) imply that $\mathcal{L}F(u)\in L^2(M)$. Therefore $F(u)\in\mathcal{D}(\mathcal{L})$ by (\ref{E:domaininfo}). A similar argument
yields $\mathcal{L}F(u)\in L^p(M)$, $1<p\leq +\infty$. For all $1<p<+\infty$ we therefore have $F(u)\in\mathcal{D}_p\subset \mathcal{D}(\mathcal{L}^{(p)})$ by (\ref{E:aprioridomain}). To see (\ref{E:truncation}) for $1<p<+\infty$, note that combining \eqref{E:chainforL} and Proposition \ref{P:integral} gives
\begin{align}\label{E:finalestimate}
\left\|(\lambda-\mathcal{L})F(u)\right\|_{L^p(M)}&\leq \lambda\:L\:\left\|u\right\|_{L^p(M)}+\left\|\mathcal{L}F(u)\right\|_{L^p(M)}\\
&\leq \lambda\:L\:\left\|u\right\|_{L^p(M)}+L\:\left\|\mathcal{L}u\right\|_{L^p(M)}+L\:\Big\|\mathbf{1}_{\{u>0\}}\frac{\Gamma(u)}{u}\Big\|\notag\\
&\leq (3+c_4)L\left\|f\right\|_{L^p(M)},\notag
\end{align}
which is (\ref{E:truncation}). The estimate for $p=+\infty$ follows similarly. 

For $1<p<+\infty$ we can now extend (\ref{E:truncation}) similarly as in \cite[Theorem 3.3.3]{AH96}. Suppose that $f\in L^p_+(M)$ and let $(f_n)_n$ be a sequence of nonnegative functions $f_n\in C_c(M)$ such that $\lim_n f_n=f$ in $L^p(M)$. Then also 
\[\lim_n\left\|F\circ G_\lambda f_n-F\circ G_\lambda f\right\|_{L^p(M)}\leq L\:\lim_n\left\|G_\lambda f_n-G_\lambda f\right\|_{L^p(M)}=0\]
by (\ref{E:Mazja}), the mean value theorem and the boundedness of $G_\lambda$ on $L^p(M)$. Set 
\[g_n:=(\lambda-\mathcal{L}^{(p)})F\circ G_\lambda f_n.\] 
By (\ref{E:truncation}) we have $\left\|g_n\right\|_{L^p(M)}\leq c_4\:\sup_n \left\|f_n\right\|_{L^p(M)}\leq c_4\:\left\|f\right\|_{L^p(M)}$ for all $n$, hence we may assume that $(g_n)_n$ converges to some $g$ weakly in $L^p(M)$. As a consequence, 
\[\left\|G_\lambda g\right\|_{\mathcal{D}(\mathcal{L}^{(p)})}=\left\|g\right\|_{L^p(M)}\leq c_4\:\left\|f\right\|_{L^p(M)}.\] Since by weak convergence also $\lim_n F\circ G_\lambda f_n=\lim_n G_\lambda g_n=G_\lambda g$ weakly in $L^p(M)$, we must have $F\circ G_\lambda f= G_\lambda g$.
\end{proof}

\section{From finite Hausdorff measure to zero capacity}\label{S:Hausdorfftocap}

Sufficient conditions for a set to have zero $(2,p)$-capacity can be stated in terms of its Hausdorff measure respectively dimension. For Euclidean spaces these results are standard, \cite[Chapter 5]{AH96}, we provide adapted versions for metric measure spaces. 

The following fact is well-known, \cite[Lemma 4.2.4]{FOT94}, and immediate from Proposition \ref{P:abscont}.

\begin{proposition}\label{P:Gabscont}
Suppose that $(P_t)_{t>0}$ is strong Feller and absolutely continuous and that $\lambda>0$. Then $g_\lambda(x,y):=\int_0^\infty e^{-\lambda t}p_t(x,y)\:dt$ defines a function $g_\lambda:M\times M\to [0,+\infty]$ that is symmetric and jointly measurable in $(x,y)$, and we have 
\begin{equation}\label{E:Gabscont}
G_\lambda f(x)=\int_M g_\lambda(x,y)f(y)\mu(dy),\quad x\in M,
\end{equation}
for any $f\in L^0_+(M)$. For any $y\in M$ the function $g_\lambda(\cdot, y)$ is lower-semicontinuous on $M$. 
\end{proposition}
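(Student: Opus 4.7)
The plan is to verify the four assertions in the stated order: joint measurability of $g_\lambda$, symmetry, the integral representation \eqref{E:Gabscont}, and lower semicontinuity of $g_\lambda(\cdot, y)$ for each fixed $y \in M$.

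First I would upgrade the $(x,y)$-joint measurability of $p_t$ for each fixed $t$, provided by Proposition \ref{P:abscont}, to joint $(t,x,y)$-measurability on $(0,\infty) \times M \times M$. For this I would exploit the symmetrized representation $p_t(x,y) = \int_M p_{t/2,x}(z)\, p_{t/2,y}(z)\, \mu(dz)$ together with $L^2$-strong continuity of $(P_s)_{s>0}$ and Chapman--Kolmogorov to obtain continuity of $t \mapsto p_t(x,y)$ for each fixed $(x,y)$; a monotone-class argument then combines this with $(x,y)$-Borel measurability at rational $t$ to yield joint Borel measurability in $(t,x,y)$. Once this is secured, $g_\lambda(x,y) = \int_0^\infty e^{-\lambda t} p_t(x,y)\, dt$ is an unambiguously defined, symmetric, $[0,+\infty]$-valued function on $M \times M$ that is jointly measurable by Tonelli's theorem.

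Next I would establish \eqref{E:Gabscont}. Given nonnegative $f \in L^0(M)$, the integrand $e^{-\lambda t} p_t(x,y) f(y)$ is nonnegative and jointly measurable in $(t,y)$, so Tonelli permits interchanging orders,
\[\int_M g_\lambda(x,y) f(y)\, \mu(dy) = \int_0^\infty e^{-\lambda t} \int_M p_t(x,y) f(y)\, \mu(dy)\, dt = \int_0^\infty e^{-\lambda t} P_t f(x)\, dt = G_\lambda f(x),\]
where the last equality is \eqref{E:resolventop} extended to $L^0_+(M)$ by monotone approximation. For lower semicontinuity, I fix $y \in M$ and a convergent sequence $x_n \to x$ in $M$. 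Proposition \ref{P:abscont} gives lower semicontinuity of $x \mapsto p_t(x,y)$ for each $t > 0$, so $\liminf_n e^{-\lambda t} p_t(x_n, y) \geq e^{-\lambda t} p_t(x, y)$ pointwise in $t$. Since the integrands are nonnegative, Fatou's lemma in the $t$-variable yields $\liminf_n g_\lambda(x_n, y) \geq g_\lambda(x,y)$, which is the desired lower semicontinuity at $x$.

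The step I expect to be the main obstacle is the joint $(t,x,y)$-measurability of the heat kernel: Proposition \ref{P:abscont} only fixes an $(x,y)$-Borel version slice-wise in $t$, so one must pin down continuity (or at least Borel measurability) in the time variable before any Tonelli-type swap is legitimate. Once this measurability is in hand, the remaining work reduces to standard applications of Tonelli and Fatou.
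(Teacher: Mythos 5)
Your proof is correct and is essentially the argument the paper has in mind: the paper dispatches this proposition in one line by citing \cite[Lemma 4.2.4]{FOT94} and declaring the statement immediate from Proposition \ref{P:abscont}, and your Tonelli/Fatou reasoning --- with the joint $(t,x,y)$-measurability obtained from continuity of $t\mapsto p_t(x,y)$ via the symmetrized representation, Chapman--Kolmogorov and $L^2$-strong continuity, combined with slice-wise Borel measurability in $(x,y)$ --- is precisely the standard content behind that citation. The step you flag as the main obstacle is handled correctly, so nothing is missing.
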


Let $\mathcal{M}_+(M)$ denote the cone of nonnegative Radon measures on $M$. In the absolutely continuous case (\ref{E:Gabscont}) can be generalized by setting
\begin{equation}\label{E:measurepot}
G_\lambda \nu(x):=\int_Mg_\lambda(x,y)\nu(dy),\quad x\in M,
\end{equation}
for any $\nu\in \mathcal{M}_+(M)$.  For any $x\in M$ the map $\mu\mapsto  G_\lambda \nu(x)$
is lower semicontinuous on $\mathcal{M}_+(M)$ w.r.t. weak convergence of measures. This follows using monotone convergence since by Proposition \ref{P:Gabscont} the function $g_\lambda(\cdot, y)$ can be approximated pointwise by an increasing sequence of nonnegative continuous compactly supported functions on $M$, \cite[Proposition 2.3.2 (b)]{AH96}. 

The following is a variant of a well-known dual representation of capacities. As in the preceding sections we keep $\lambda>0$ fixed.

\begin{corollary}\label{C:dualrep}
Let $1<p<+\infty$, assume that $(P_t)_{t>0}$ is strong Feller and absolutely continuous. Then we have 
\[\cpct_{2,p}(E)^{1/p}=\sup\left\lbrace \nu(E):\ \nu\in\mathcal{M}_+(M),\ \supp\nu\subset E,\ \left\|G_\lambda \nu\right\|_{L^q(M)}\leq 1\right\rbrace\]
for any $E\subset M$ Borel, where $\frac{1}{p}+\frac{1}{q}=1$.
\end{corollary}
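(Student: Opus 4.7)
The plan is to reduce to compact sets and then apply a minimax identity. By Proposition~\ref{P:properties2}(iv) the left-hand side satisfies $\cpct_{2,p}(E)=\sup\{\cpct_{2,p}(K): K\subset E,\ K \text{ compact}\}$. For the right-hand side, inner regularity of a Radon measure $\nu$ with $\supp\nu\subset E$ produces compact $K\subset E$ with $\nu(K)$ arbitrarily close to $\nu(E)$; the restriction of $\nu$ to $K$ is again Radon with support in $K$, and its $L^q$-potential is dominated by $G_\lambda\nu$ since $g_\lambda\geq 0$, hence it remains admissible. Thus it suffices to establish the identity for $E=K$ compact.

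For compact $K$ the inequality ``$\geq$'' is a direct calculation. Given any $f\in L^p_+(M)$ with $G_\lambda f\geq 1$ on $K$ and any $\nu\in\mathcal{M}_+(M)$ with $\supp\nu\subset K$ and $\|G_\lambda\nu\|_{L^q(M)}\leq 1$, Proposition~\ref{P:Gabscont} together with the symmetry of $g_\lambda$ and Fubini yields
\[\nu(K)\leq \int G_\lambda f\,d\nu = \int f\,G_\lambda\nu\,d\mu \leq \|f\|_{L^p(M)}\|G_\lambda\nu\|_{L^q(M)}\leq \|f\|_{L^p(M)},\]
and taking the infimum over admissible $f$ delivers this direction.

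The reverse inequality is the main obstacle. The idea is to consider the Lagrangian
\[L(f,\nu):=\|f\|_{L^p(M)}-\int_M(G_\lambda f-\mathbf{1}_K)\,d\nu\]
on $L^p_+(M)\times\{\nu\in\mathcal{M}_+(M):\supp\nu\subset K\}$. A computation shows that $\sup_\nu L(f,\nu)$ equals $\|f\|_{L^p(M)}$ when $G_\lambda f\geq 1$ on $K$ (take $\nu=0$) and $+\infty$ otherwise (use point masses at points of $K$ where $G_\lambda f<1$, using the lower semicontinuity of $G_\lambda f$), so $\inf_f\sup_\nu L=\cpct_{2,p}(K)^{1/p}$. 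Dually, by $L^p$--$L^q$ duality applied to the nonnegative potential $G_\lambda\nu$, $\inf_f L(f,\nu)$ equals $\nu(K)$ when $\|G_\lambda\nu\|_{L^q(M)}\leq 1$ and equals $-\infty$ otherwise, so $\sup_\nu\inf_f L=\sup_\nu\nu(K)$. The equality $\inf\sup=\sup\inf$ would complete the proof, and I would establish it via Sion's minimax theorem: $L$ is convex in $f$ and affine in $\nu$, and one may restrict to $\{\nu:\supp\nu\subset K,\ \nu(K)\leq R\}$ for any fixed $R$ larger than $\cpct_{2,p}(K)^{1/p}$ (the already-proved direction bounds the supremum a priori), a set that is vaguely compact in $C(K)^\ast$ by Banach--Alaoglu.

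The technical crux is the upper semicontinuity of $\nu\mapsto L(f,\nu)$ on these vaguely compact sets. By Proposition~\ref{P:strongFeller} the potential $G_\lambda f$ is lower semicontinuous on $M$, so it can be approximated from below by an increasing sequence of nonnegative functions in $C_c(M)$; monotone convergence then shows that $\nu\mapsto\int G_\lambda f\,d\nu$ is lower semicontinuous with respect to vague convergence, whence $\nu\mapsto -\int G_\lambda f\,d\nu$ is upper semicontinuous. With these ingredients in place Sion's theorem delivers the missing inequality, completing the identity.
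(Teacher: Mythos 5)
Your overall strategy is the same as the paper's -- both reduce to compact $K$ and both prove the nontrivial inequality by a minimax theorem -- but the implementations differ. The paper applies the minimax theorem of \cite[Theorem 2.4.1]{AH96} directly to the bilinear pairing $(\nu,f)\mapsto \int_M G_\lambda \nu\, f\,d\mu$ with $\nu$ ranging over the (weak-$*$ compact, convex) set of probability measures on $K$ and $f$ over the unit ball of $L^p_+(M)$; positive homogeneity in both variables then converts $\min_\nu\sup_f$ into $\min_\nu \|G_\lambda\nu\|_{L^q}/\nu(K)$ and $\sup_f\min_\nu$ into $\cpct_{2,p}(K)^{-1/p}$, with no Lagrangian and no truncation of the dual cone. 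Your easy direction and your reduction to compact sets are fine and match the paper's.

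Your Lagrangian route has one step whose justification does not work as stated. You compute $\inf_f\sup_\nu L=\cpct_{2,p}(K)^{1/p}$ using the \emph{unbounded} cone of measures (point masses of arbitrarily large mass are needed to force the value to $+\infty$ off the constraint set), but then you apply Sion's theorem on the truncated set $\{\nu:\supp\nu\subset K,\ \nu(K)\le R\}$. Truncating the dual variable lowers the primal value: $\inf_f\sup_{\nu(K)\le R}L(f,\nu)=\inf_f\bigl[\|f\|_{L^p(M)}+R\max_K(1-G_\lambda f)^+\bigr]=:V_R\le \cpct_{2,p}(K)^{1/p}$, and the a priori bound you invoke controls the \emph{dual} supremum $\sup_\nu\inf_f L$, not the primal one, so it does not show $V_R=\cpct_{2,p}(K)^{1/p}$. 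The argument is repairable -- a short computation (or letting $R\to\infty$) shows that a near-minimizer $f$ for $V_R$ satisfies $G_\lambda f\ge 1-\varepsilon$ on $K$ with $\varepsilon\le(\cpct_{2,p}(K)^{1/p}+1)/R$, whence $f/(1-\varepsilon)$ is admissible and $V_R\to\cpct_{2,p}(K)^{1/p}$ -- but as written this is a gap. A second, smaller point: Sion's theorem also requires $f\mapsto L(f,\nu)$ to be lower semicontinuous on $L^p_+(M)$, and when $G_\lambda\nu\notin L^q(M)$ the map $f\mapsto\int_M f\,G_\lambda\nu\,d\mu$ is only lower (not upper) semicontinuous, so $L(\cdot,\nu)$ may fail to be l.s.c.; the minimax theorem the paper cites is formulated for $[0,+\infty]$-valued pairings precisely to sidestep this. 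Your semicontinuity argument in $\nu$ (via Proposition \ref{P:strongFeller} and monotone approximation of the l.s.c. potential from below by $C_c$ functions) is correct and is exactly what the paper uses.
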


\begin{proof}
The result follows from an application of the minimax theorem, \cite[Theorem 2.4.1]{AH96}, to the bilinear map $(\nu,f)\mapsto \int_M G_\lambda f\:d\nu=\int_M G_\lambda\nu\: f\:d\mu$,
where $\mu$ ranges over all Radon probability measures on $M$ an $f$ over the closed unit ball in $L^p(M)$, see \cite[Theorem 2.5.1 and Corollary 2.5.2]{AH96} or Corollary \ref{C:dualreploc} below for details.
\end{proof}

Let $h:[0,+\infty)\to [0,+\infty)$ be a non-decreasing and right-continuous function, strictly positive on $(0,+\infty)$ and having the doubling property $h(2r)\leq c\:h(r)$, $r>0$, where $c>1$ is a fixed constant. We call such $h$ a \emph{Hausdorff function}. 

Let $E\subset M$. For any $\delta>0$ let $\mathcal{H}^h_\delta(E)$ be the infimum over all sums $\sum_{i=1}^\infty h(\diam(E_i))$, where $E_i\subset M$ are sets with $\diam(E_i)<\delta$ and such that $E\subset \bigcup_i E_i$. This quantity is decreasing in $\delta$, and its limit
\[\mathcal{H}^h(E):=\lim_{\delta\to 0}\mathcal{H}_\delta^h(E)\] 
is called the \emph{Hausdorff measure of $E$ with Hausdorff function $h$}. See for instance \cite[p. 132]{AH96}, \cite{Howroyd1995} or \cite[Section 4.9]{Mattila}. For $s\geq 0$ and $h(r)=r^s$ we obtain the \emph{$s$-dimensional Hausdorff measure}, for which we use the traditional notation $\mathcal{H}^s$. Recall that the \emph{Hausdorff dimension} of $E$ is defined as the unique nonnegative real number $\dim_H E$ at which $s\mapsto \mathcal{H}^s(E)$ jumps from infinity to zero.

If $g_\lambda(x,y)$ admits adequate asymptotics, then the finiteness of a suitable Hausdorff measure of a set $\Sigma$ implies that $\cpct_{2,p}(\Sigma)$ is zero. Given $1<p<+\infty$ we consider the Hausdorff function $h_p$ defined by $h_p(0):=0$ and 
\begin{equation}\label{E:hp}
h_p(r):=\Big(1+\log_+ \frac{1}{r}\Big)^{1-p}, \quad r>0,
\end{equation}
where $\log_+$ denotes the nonnegative part of $\log$. 

\begin{lemma}\label{L:resolventlower}
Let $(P_t)_{t>0}$ be an absolutely continuous strong Feller semigroup and $\Sigma\subset M$ a closed set. Let $1<p<+\infty<$, $d>0$ and assume that 
\begin{equation}\label{E:lowerd}
\liminf_{r\to 0} \frac{\mu(B(x,r))}{r^d}>0
\end{equation}
and that for any $x\in \Sigma$ we have
\begin{equation}\label{E:resolventlower}
\lim_{r\to 0} \inf_{y,z\in B(x,r)} \varrho(y,z)^{d-2}g_\lambda(y,z)>0.
\end{equation}
If $d>2p$ and $\mathcal{H}^{d-2p}(\Sigma)<+\infty$, then $\cpct_{2,p}(\Sigma)=0$. If $d=2p$ and $\mathcal{H}^{h_p}(\Sigma)<+\infty$, then we also have $\cpct_{2,p}(\Sigma)=0$.
\end{lemma}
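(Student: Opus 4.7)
My plan is a covering argument built on a capacity estimate for small balls. By the Choquet property of $\cpct_{2,p}$ (Proposition \ref{P:properties2}(iv)), it suffices to prove $\cpct_{2,p}(K)=0$ for every compact $K\subset\Sigma$, so I replace $\Sigma$ by such a $K$. The compactness of $\Sigma$ together with the pointwise hypotheses yields uniform constants $r_0,c_0,c_1>0$ such that $\mu(B(x,r))\geq c_1 r^d$ for every $x\in\Sigma$ and $0<r\leq r_0$, and $g_\lambda(y,z)\geq c_0\,\varrho(y,z)^{2-d}$ for every $x\in\Sigma$ and $y,z\in B(x,r_0)$.

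The core step is the ball estimate $\cpct_{2,p}(B(x,r))\leq C\,h(r)$ for $x\in\Sigma$ and $0<r\leq r_0/3$, with $h(r)=r^{d-2p}$ if $d>2p$ and $h(r)=h_p(r)$ if $d=2p$. I would derive it from the dual representation in Corollary \ref{C:dualrep}: for any $\nu\in\mathcal{M}_+(M)$ with $\supp\nu\subset B(x,r)$ and $\|G_\lambda\nu\|_{L^q(M)}\leq 1$, the resolvent lower bound and the triangle inequality give $G_\lambda\nu(y)\geq c_0(3r)^{2-d}\,\nu(B(x,r))$ for every $y\in B(x,2r)$. Raising to the $q$-th power, integrating against $\mu$ over $B(x,2r)$ and using the volume lower bound yields $1\geq c\,r^{q(2-d)+d}\,\nu(B(x,r))^q$, which via the simplification $q(2-d)+d=(2p-d)/(p-1)$ gives $\nu(B(x,r))\leq C\,r^{d/p-2}$, i.e.\ $\cpct_{2,p}(B(x,r))\leq C^p\,r^{d-2p}$. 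In the borderline case $d=2p$ the exponent $q(2-d)+d$ vanishes, and the above single-scale estimate degenerates; I would instead sum the analogous scale-invariant inequality over the $\asymp\log(r_0/r)$ dyadic annuli $B(x,2^{k+1}r)\setminus B(x,2^k r)$ between $r$ and $r_0$, and the resulting logarithmic gain produces the gauge $h_p$.

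Countable subadditivity (Proposition \ref{P:properties2}(iii)) combined with the definition of $\mathcal{H}^h$ then delivers a covering of $\Sigma$ with $\sum_i h(r_i)\approx\mathcal{H}^h(\Sigma)$ and hence the finite bound $\cpct_{2,p}(\Sigma)\leq C'\mathcal{H}^h(\Sigma)$. The main obstacle, and the step I expect to be the hardest, is upgrading this finiteness to $\cpct_{2,p}(\Sigma)=0$: a plain cover-and-subadditivity argument cannot achieve this since the ball estimate above is already sharp at the critical scaling. Instead one has to exploit the finiteness of $\mathcal{H}^h(\Sigma)$ measure-theoretically, for example by decomposing a hypothetical extremal measure $\nu$ (produced by Corollary \ref{C:dualrep} under the assumption $\cpct_{2,p}(\Sigma)>0$) into parts absolutely continuous and singular with respect to $\mathcal{H}^h|_\Sigma$, and showing via a Frostman/Wolff-type energy computation driven by the resolvent lower bound that neither part can yield a finite $L^q$-norm potential, contradicting $\|G_\lambda\nu\|_{L^q(M)}\leq 1$.
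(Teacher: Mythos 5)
There is a genuine gap, and it sits exactly where you predicted: the passage from $\cpct_{2,p}(\Sigma)\leq C\,\mathcal{H}^h(\Sigma)<+\infty$ to $\cpct_{2,p}(\Sigma)=0$. Your single-ball estimate and the subadditivity step are fine as far as they go (modulo the fact that compactness alone does not upgrade the pointwise $\liminf$ hypotheses \eqref{E:lowerd} and \eqref{E:resolventlower} to uniform constants $r_0,c_0,c_1$ without an extraction argument -- the paper avoids this by working with point-dependent $r_x,c_x$), but they are ultimately a detour: they play no role in the actual proof of the vanishing. For the final step your sketch names the right objects (the dual representation of Corollary \ref{C:dualrep}, a hypothetical extremal measure $\nu$, an energy computation), but the proposed decomposition of $\nu$ into absolutely continuous and singular parts with respect to $\mathcal{H}^h|_\Sigma$ is not the mechanism that closes the argument, and no actual computation is supplied. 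The statement to be proved is precisely that \emph{every} nonzero $\nu$ supported on $\Sigma$ has $\|G_\lambda\nu\|_{L^q(M)}=+\infty$; asserting that ``neither part can yield a finite $L^q$-norm potential'' is a restatement of the goal, not a proof of it.

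The missing quantitative idea, which is how the paper proceeds, is the following. If $\cpct_{2,p}(\Sigma)>0$, Corollary \ref{C:dualrep} and the Fubini identity \eqref{E:nonlinFubini} produce a nonzero $\nu$ supported on $\Sigma$ with $\int_M V^\nu_{2,p}\,d\nu\leq 1$, hence $V^\nu_{2,p}(x)<+\infty$ for $x$ in a set $\Sigma_0$ of positive $\nu$-measure. The lower bounds \eqref{E:lowerd} and \eqref{E:resolventlower} give, for small $r$,
\[
V^\nu_{2,p}(x)\;\geq\;\int_{B(x,r)}g_\lambda(x,y)\Big(\int_{B(y,2r)}g_\lambda(y,z)\,\nu(dz)\Big)^{q-1}\mu(dy)\;\geq\;c\,r^{(d-2p)(1-q)}\,\nu(B(x,r))^{q-1},
\]
and the crucial point is that the left-most quantity is the \emph{tail of a convergent integral} and therefore tends to $0$ as $r\to 0$; since $r^{(d-2p)(1-q)}\nu(B(x,r))^{q-1}$ is bounded below by a positive constant whenever $\limsup_{r\to 0}\nu(B(x,r))/r^{d-2p}>0$, one concludes $\lim_{r\to 0}\nu(B(x,r))/r^{d-2p}=0$ for every $x\in\Sigma_0$ -- a genuine $o(\cdot)$, not merely an $O(\cdot)$. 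Egorov's theorem then makes this uniform on a subset $\Sigma_1\subset\Sigma_0$ with $\nu(\Sigma_1)\geq\tfrac12\nu(\Sigma_0)$, and covering $\Sigma_1$ by sets $A_i$ of small diameter meeting $\Sigma_1$ with $\sum_i(\diam A_i)^{d-2p}\leq\mathcal{H}^{d-2p}(\Sigma)+1$ yields $\tfrac12\nu(\Sigma_0)\leq\nu(\Sigma_1)\leq\varepsilon\,(\mathcal{H}^{d-2p}(\Sigma)+1)$ for every $\varepsilon>0$, forcing $\nu(\Sigma_0)=0$, a contradiction. The borderline case $d=2p$ runs identically with the logarithmic gauge $h_p$ replacing $r^{d-2p}$. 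Without this vanishing-density mechanism your argument stalls at finiteness, so as written the proof is incomplete.
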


\begin{remark}\label{R:locallysuffices}\mbox{}
\begin{enumerate}
\item[(i)] Clearly $\dim_H\Sigma<d-2p$ implies $\mathcal{H}^{d-2p}(\Sigma)=0$. 
\item[(ii)] By Proposition \ref{P:properties2} (i) and (iii) the conclusion of Lemma \ref{L:resolventlower} remains true if $(\Sigma_i)_{i\geq 1}$ is a sequence of closed sets $\Sigma_i$ such that $\Sigma\subset \bigcup_i \Sigma_i$ and instead of  $\mathcal{H}^{d-2p}(\Sigma)<+\infty$ (resp. $\mathcal{H}^{h_p}(\Sigma)<+\infty$) we have $\mathcal{H}^{d-2p}(\Sigma_i)<+\infty$  (resp. $\mathcal{H}^{h_p}(\Sigma_i)<+\infty$) for all $i$. 
\end{enumerate}
\end{remark}

Given $1<p<+\infty$ and $\nu\in \mathcal{M}_+(M)$ we consider the \emph{Maz'ya-Khavin type nonlinear $(2,p)$-potential of $\nu$} on $(M,\mu)$, defined by 
\begin{equation}\label{E:MazjaKhavin}
V^\nu_{2,p}(x)=\int_M g_\lambda(x,y)\left(\int_M g_\lambda(z,y)\nu(dz)\right)^{q-1}\mu(dy),\quad x\in M,
\end{equation}
where $\frac{1}{p}+\frac{1}{q}=1$. See \cite[formula (2.1)]{Maz'yaKhavin} or \cite[Definition 2.5.4]{AH96}, and see \cite{Cascante} for related definitions.  By Fubini's theorem we have
\begin{equation}\label{E:nonlinFubini}
\int_M  V^\nu_{2,p}\ d\nu=\left\|G_\lambda\nu\right\|_{L^q(M)}^q.
\end{equation}

Lemma \ref{L:resolventlower} follows by versions of well-known arguments, cf. \cite[Theorem 8.7]{Mattila}.
\begin{proof}
Suppose that $d>2p$ and $\mathcal{H}^{d-2p}(\Sigma)<+\infty$ but $\cpct_{2,p}(\Sigma)>0$. Then Corollary \ref{C:dualrep} guarantees the existence of some $\nu \in \mathcal{M}_+(M)$ with $\supp \nu \subset \Sigma$ and such that $\int_M V^\nu_{2,p}\ d\nu\leq 1$ by (\ref{E:nonlinFubini}).
Accordingly there is a Borel set $\Sigma_0\subset \Sigma$ with $0<\nu(\Sigma_0)<+\infty$ such that $V_{2,p}^\nu(x)<+\infty$ for all $x\in \Sigma_0$. For any such $x$ conditions (\ref{E:lowerd}) and (\ref{E:resolventlower}) guarantee the existence of 
$r_x>0$ and $c_x>0$ such that $\mu(B(x,r))\geq c_x\:r^d$ and $g_\lambda(y,z)>c_x\:\varrho(y,z)^{2-d}$ for all $0<r<r_x$ and $y,z\in B(x,r)$. Consequently
\begin{align}
V_{2,p}^\nu(x)&\geq \int_{B(x,r)} g_\lambda(x,y)\left(\int_{B(y,2r)} g_\lambda(y,z)\nu(dz)\right)^{q-1}\mu(dy)\notag\\
&\geq c_x^q\:\int_{B(x,r)} \varrho(x,y)^{2-d}\left(\int_{B(y,2r)} \varrho^{2-d}(y,z)\nu(dz)\right)^{q-1}\mu(dy)\notag\\
&\geq  c\:r^{(2-d)q} \int_{B(x,r)}\nu(B(y,2r))^{q-1}\mu(dy)\notag\\
&\geq c\:r^{(d-2p)(1-q)}\nu(B(x,r))^{q-1},\notag
\end{align}
note that $B(y,2r)\supset B(x,r)$ for any $y\in B(x,r)$ and that $2p(q-1)=2q$. Since the integral in the first line above goes to zero as $r\to 0$, we obtain
\[\limsup_{r\to 0} \frac{\nu(B(x,r))}{r^{d-2p}}=0,\quad x\in \Sigma_0,\]
and by Egorov's theorem there is a Borel set $\Sigma_1\subset \Sigma_0$ with $\nu(\Sigma_1)\geq \frac12\nu(\Sigma_0)$ and such that for any $\varepsilon>0$ we can find $r_\varepsilon>0$ that guarantees
\[\nu(B(x,r))\leq \varepsilon\:r^{d-2p},\quad x\in \Sigma_1,\ 0<r<r_\varepsilon.\] 
Suppose $\varepsilon>0$. Let $A_1, A_2, ...$ be Borel sets with $r_i:=\diam (A_i)<r_\varepsilon$ and $A_i\cap \Sigma_1\neq \emptyset$ for all $i$  and such that $ \Sigma_1\subset \bigcup_i A_i$ and $\sum_i \diam(A_i)^{d-2p}\leq \mathcal{H}^{d-2p}(\Sigma_1)+1$. For any $i$ let $x_i$ be a point in $A_i\cap \Sigma_1$. Then 
\[\frac12 \nu(\Sigma_1)\leq \sum_i \nu(B(x_i,r_i)))\leq \varepsilon\:\sum_i r_i^{d-2p}\leq \varepsilon\:(\mathcal{H}^{d-2p}(\Sigma)+1).\]
Since $\varepsilon>0$  was arbitrary, this would imply $\mathcal{H}^{d-2p}(\Sigma)=+\infty$, a contradiction. In the case that $d=2p$ we obtain 
\[V_{2,p}^\nu(x)\geq  c\:\int_{B(x,r)}\varrho(x,y)^{-2p}\nu(B(y,2r))^{q-1}\mu(dy)
\geq c\:(-\log r)\nu(B(x,r))^{q-1}\]
for any $x\in \Sigma_0$ and small enough $r$. Similarly as before these inequalities imply that $\limsup_{r\to 0} \nu(B(x,r))\:h_p(r)^{-1}=0$, and the result follows by analogous arguments.
\end{proof}

\section{From zero capacity to zero Hausdorff measure}\label{S:captoHausdorff}

We also provide a version of the opposite implication that zero capacity implies zero Hausdorff measure of sufficiently large dimension.

\begin{lemma}\label{L:resolventupper} 
Let $(M,\varrho)$ be complete, $(P_t)_{t>0}$ strong Feller and absolutely continuous, 
\begin{equation}\label{E:resolventupper}
g_\lambda(x,y)\leq c_6\:\varrho(x,y)^{2-d}\quad \quad  x,y\in M,
\end{equation}
and
\begin{equation}\label{E:upperd}
\mu(B(x,r))\leq c_7\:r^d,\quad x\in M,\quad r>0,
\end{equation}
with positive constants $c_6$ and $c_7$. Let $\Sigma\subset M$ be a closed set.
\begin{enumerate}
\item[(i)] If $d>4$ and $\varepsilon>0$, then $\cpct_{2,2}(\Sigma)=0$ implies $\mathcal{H}^{d-2p+\varepsilon}(\Sigma)=0$. If $d=4$, $M$ is bounded and $h$ is a Hausdorff function satisfying
\begin{equation}\label{E:condh}
\int_0^1 (-\log r)dh(r)<+\infty,
\end{equation}
then $\cpct_{2,2}(\Sigma)=0$ implies $\mathcal{H}^h(\Sigma)=0$.
\item[(ii)] Suppose that $1<p<+\infty$, $d>2p$ and $\varepsilon>0$. For $p\neq 2$ assume in addition that
\begin{equation}\label{E:lowerdadd}
\mu(B(x,r))\geq \frac{1}{c_7}\:r^d,\quad x\in M,\quad r>0.
\end{equation}
Then $\cpct_{2,p}(\Sigma)=0$ implies $\mathcal{H}^{d-2p+\varepsilon}(\Sigma)=0$. 
\end{enumerate}
\end{lemma}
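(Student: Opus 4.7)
I would argue by contraposition, using the dual characterization of $\cpct_{2,p}$ in Corollary \ref{C:dualrep}. Suppose either $\mathcal{H}^{d-2p+\varepsilon}(\Sigma)>0$ in the power-scale cases (with $p=2$ for (i)) or $\mathcal{H}^h(\Sigma)>0$ in the borderline case $d=4$. Since Hausdorff measures are inner regular, I may pass to a compact $K\subset\Sigma$ on which the same positivity holds. Frostman's lemma in the complete metric measure setting \cite{Mattila, Howroyd1995} then produces a nonzero Radon measure $\nu$ on $M$, supported in $K$, of finite total mass, satisfying $\nu(B(x,r))\leq C r^{d-2p+\varepsilon}$, respectively $\nu(B(x,r))\leq C h(r)$, uniformly in $x\in M$ and $r>0$. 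In view of Corollary \ref{C:dualrep} it suffices to prove $\|G_\lambda\nu\|_{L^q(M)}<+\infty$, where $\frac{1}{p}+\frac{1}{q}=1$; this contradicts $\cpct_{2,p}(K)=0$ and hence $\cpct_{2,p}(\Sigma)=0$.

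For $p=2$, the plan is to deduce the $L^2$-bound from the kernel-composition estimate
\begin{equation}
\int_M \varrho(x,y)^{2-d}\varrho(x,z)^{2-d}\mu(dx)\leq C\,\varrho(y,z)^{4-d} \qquad (d>4),
\end{equation}
with the logarithmic analogue $\leq C(1+|\log\varrho(y,z)|)$ when $d=4$ (where boundedness of $M$ removes far-field contributions). I would prove this by decomposing $M$ into $B(y,R/2)$, $B(z,R/2)$ and the complementary far region (with $R=\varrho(y,z)$), and estimating each piece by layer-cake using \eqref{E:upperd}. Combining this with \eqref{E:resolventupper} and Fubini yields
\begin{equation}
\|G_\lambda\nu\|_{L^2}^2\leq c_6^2\int\!\!\int\varrho(y,z)^{4-d}\,\nu(dy)\,\nu(dz),
\end{equation}
respectively the logarithmic energy for $d=4$. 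Finiteness of this $(d-4)$-Riesz energy reduces, by a further layer-cake, to boundedness of $\int_0^1 r^{3-d}\nu(B(y,r))\,dr$ uniformly in $y$, which follows from the Frostman bound; in the borderline case an integration by parts identifies $\int_0^1 h(r)\,dr/r$ with $\int_0^1(-\log r)\,dh(r)$, finite by \eqref{E:condh}.

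For general $1<p<+\infty$ in part (ii), I would exploit the Ahlfors regularity afforded by \eqref{E:upperd} and \eqref{E:lowerdadd} to establish a Wolff-type inequality
\begin{equation}
\int_M (G_\lambda\nu)^q\,d\mu\leq C\int_M W_{2,p}^\nu\,d\nu,\qquad W_{2,p}^\nu(x):=\int_0^{\diam M}\Big(\frac{\nu(B(x,r))}{r^{d-2p}}\Big)^{q-1}\frac{dr}{r},
\end{equation}
following the Euclidean template \cite[Chapter 4]{AH96} once the kernel-composition bound above is available in our setting. The Frostman bound together with $\nu(M)<+\infty$ then forces $W_{2,p}^\nu\in L^\infty(\nu)$: the near-field integrand is dominated by $r^{\varepsilon(q-1)-1}$, integrable at $0$ since $\varepsilon(q-1)>0$, while the far-field is dominated by $r^{-(d-2p)(q-1)-1}$, integrable at infinity since $d>2p$. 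This delivers the desired $L^q$-bound for $G_\lambda\nu$.

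The hard part will be transferring the classical Wolff inequality from the Euclidean template to the present abstract metric measure setting, using only the pointwise kernel estimate \eqref{E:resolventupper} and the volume conditions. The $p=2$ energy argument is clean and self-contained (the composition estimate alone closes the loop), but for $p\neq 2$ some extra care is needed to set up the Wolff scheme without circularly invoking $L^q$-mapping properties of $G_\lambda$ one is trying to establish; an alternative is to iterate the $p=2$ estimate with an $L^\infty$ bound on $G_\lambda$ (restricted to $\nu$-mass) via interpolation.
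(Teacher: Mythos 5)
Your argument for part (i) is correct and is essentially the paper's own proof: the same dual characterization via Corollary \ref{C:dualrep}, Frostman's lemma on a compact subset, the kernel-composition estimate $\int_M \varrho(x,y)^{2-d}\varrho(x,z)^{2-d}\,\mu(dx)\leq C\varrho(y,z)^{4-d}$ obtained by splitting $M$ into near and far regions and using \eqref{E:upperd} (this is precisely the paper's Giraud-type bound \eqref{E:Giraud}), and the layer-cake reduction of the resulting Riesz or logarithmic energy to the Frostman bound. The borderline $d=4$ computation, including the integration by parts relating $\int_0^1 h(r)\,dr/r$ to $\int_0^1(-\log r)\,dh(r)$, matches the paper as well.

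Part (ii), however, has a genuine gap, and it sits exactly where you flag "the hard part": the Wolff-type inequality $\int_M (G_\lambda\nu)^q\,d\mu\leq C\int_M W^\nu_{2,p}\,d\nu$ is the entire nontrivial content of the non-Hilbertian case, and you neither prove it nor point to a result that supplies it. Saying it follows "from the Euclidean template once the kernel-composition bound above is available" is not accurate: the classical proof of Wolff's inequality (Hedberg--Wolff) is not a kernel-composition argument but a good-$\lambda$/maximal-function argument, and the composition bound you proved for $p=2$ does not feed into it. The paper closes this gap in Proposition \ref{P:Wolff} by invoking the metric measure space version of the Muckenhoupt--Wheeden inequality from \cite[Corollary 2.2]{PerezWheeden}, namely $\|U^{s}\nu\|_{L^q(M)}\leq c\|\mathcal{M}_s\nu\|_{L^q(M)}$ for the fractional maximal function $\mathcal{M}_s\nu(x)=\sup_{r>0}r^{-s}\nu(B(x,r))$; a short dyadic estimate and Fubini then convert $\|\mathcal{M}_s\nu\|_{L^q}^q$ into $\int \dot W^\nu_{s,p}\,d\nu$. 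This is also precisely why the two-sided Ahlfors regularity \eqref{E:lowerdadd} is assumed for $p\neq 2$: it is needed for the applicability of the Perez--Wheeden result, not for the Frostman step. Your fallback suggestion --- interpolating the $p=2$ energy estimate against an $L^\infty$ bound on $G_\lambda\nu$ --- does not work, since $G_\lambda\nu$ is unbounded near $\supp\nu$ (the kernel has a $\varrho^{2-d}$ singularity) and no $L^\infty$ endpoint is available. Your verification that the Frostman bound forces $W^\nu_{2,p}\in L^\infty(\nu)$ is correct and agrees with the paper's computation; only the inequality linking $\|G_\lambda\nu\|_{L^q}$ to the Wolff potential is missing.
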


Our proof of Lemma \ref{L:resolventupper} (ii)  for $p\neq 2$ employs a result from \cite{PerezWheeden}, and we assume (\ref{E:lowerdadd}) to ensure its applicability. Condition (\ref{E:lowerdadd}) implies that $\mu(M)=+\infty$. 

\begin{remark}\label{R:exampleh}\mbox{}
\begin{enumerate}
\item[(i)] Clearly $\mathcal{H}^{d-2p+\varepsilon}(\Sigma)=0$ implies that $\dim_H \Sigma \leq d-2p$.
\item[(ii)] For any $\varepsilon>0$ the functions $h(r)=r^\varepsilon$ and 
$h(r)=(1+\log_+\frac{1}{r})^{-1-\varepsilon}$ satisfy \eqref{E:condh}. 
\end{enumerate}
\end{remark}

Lemma \ref{L:resolventupper} (i) can be shown using (\ref{E:resolventupper}), (\ref{E:upperd}) and standard arguments, \cite[Theorem 8.9]{Mattila}. 

\begin{proof}[Proof of Lemma \ref{L:resolventupper} (i).]
We first consider the case $d>4$. By (\ref{E:resolventupper}) and (\ref{E:upperd}) we have
\begin{equation}\label{E:convolute2}
\left\|G_\lambda \nu\right\|_{L^2(M)}^2\leq c\int_M\int_M \varrho(x,z)^{4-d}\nu(dz)\nu(dx)
\end{equation}
for any $\nu\in \mathcal{M}_+(M)$ with $c>0$ dependig only on $c_6$ and $c_7$. This can be seen from (\ref{E:upperd}) and elementary estimates, see \cite[Proposition 4.12]{Aubin82}: With $r:=\frac12\varrho(x,z)$ one finds that 
\begin{multline}\label{E:Giraud}
\int_M  \varrho(x,y)^{2-d}\varrho(y,z)^{2-d} \mu(dy)
\leq c\:r^{2-d}\int_{B(x,r)}\varrho(x,y)^{2-d}\mu(dy)\\
+c\:r^{2-d}\int_{B(z,3r)\setminus B(x,r)}\varrho(y,z)^{2-d}\mu(dy)
+c\int_{M\setminus B(z,3r)}\varrho(y,z)^{4-2d}\mu(dy),
\end{multline}
which does not exceed $c\:\varrho(x,z)^{4-d}$, and (\ref{E:convolute2}) follows by Fubini.

Now suppose that $\varepsilon>0$ and $\mathcal{H}^{d-4+\varepsilon}(\Sigma)>0$. By Frostman's lemma for complete separable metric spaces, see \cite[Theorem 8.8 and comments on p. 117]{Mattila}, there is some $\nu \in\mathcal{M}_+(M)$ with $0<\nu(\Sigma)<+\infty$, $\supp \nu$ compact and contained in $\Sigma$, and there is a constant $c>0$ such that $\nu(B(x,r))\leq c\:r^{d-4+\varepsilon}$, for all $x\in \Sigma$ and $r>0$. This, together with (\ref{E:convolute2}), implies $\left\|G_\lambda\nu\right\|_{L^2(M)}<+\infty$ and therefore $\cpct_{2,2}(\Sigma)^{1/2}\geq \nu(\Sigma)\:\left\|G_\lambda\nu\right\|_{L^2(M)}^{-1}>0$  by Corollary \ref{C:dualrep}, what contradicts $\cpct_{2,2}(\Sigma)=0$.

In the case that $d=4$ and $M$ is bounded, \eqref{E:Giraud} gives 
\[\int_M \varrho(x,y)^{-2}\varrho(y,z)^{-2} \mu(dy)\leq c\big(1+\log_+\frac{1}{\varrho(x,z)}\big),\quad x,y\in M.\]
If we would have $\mathcal{H}^h(\Sigma)>0$ with $h$ as indicated, Frostman's lemma would guarantee the existence of some $\nu\in 
\mathcal{M}_+(M)$ with value and support as before and $\nu(B(x,r))\leq c\:h(r)$ for all $x\in \Sigma$ and $r>0$. See for instance \cite[Theorem 5.1.12]{AH96}, the statement remains valid under the present hypotheses. The preceding would produce the same contradiction because
\[\left\|G_\lambda\nu\right\|_{L^2(M)}^2\leq c\:\int_M\int_M\big(1+\log_+\frac{1}{\varrho(x,z)}\big)\nu(dz)\nu(dx)<+\infty,\]
note that for any fixed $x\in M$ the inner integral is bounded by 
\begin{multline}
\int_{B(x,1)}\big(1+\log_+\frac{1}{\varrho(x,z)}\big)\nu(dz)+\int_{M\setminus B(x,1)}\big(1+\log_+\frac{1}{\varrho(x,z)}\big)\nu(dz)\notag\\
\leq c\:\int_0^1 (-\log r)\:dh(r)+2\mu(M).
\end{multline}
\end{proof}

For any $s\geq 0$ and $\nu\in \mathcal{M}_+(M)$ let 
\begin{equation}\label{E:Riesz}
U^s\nu(x):=\int_M\varrho(x,y)^{-s}\nu(dy),\quad x\in M,
\end{equation}
denote the \emph{Riesz type potential of $\nu$ of order $s$ on $M$}. The right hand side of \eqref{E:convolute2} equals 
the square of the $L^2$-norm of $U^{2-d}\nu$. Lemma \ref{L:resolventupper} (ii) follows similarly as (i) if the $L^p$-norm of $U^{2-d}\nu$ can be controlled suitably. Given $1<p<+\infty$, $s\geq 0$ and a nonnegative Radon measure $\nu$ on $M$, we define the \emph{Wolff type nonlinear $p$-potential of $\nu$ of order $s$ on $(M,\mu)$}  by 
\begin{equation}\label{E:Wolffpot}
\dot{W}_{s,p}^\nu(x):=\sum_{j=-\infty}^\infty 2^{jsq}\int_{B(x,2^{-j})}\left(\nu(B(y,2^{-j}))\right)^{q-1}\mu(dy),\quad x\in M,
\end{equation}
where $\frac{1}{p}+\frac{1}{q}=1$, see \cite[Definitions 4.5.1 and 4.5.3]{AH96} and \cite{HedbergWolff} for the classical case. We have the following version of \emph{Wolff's inequality}, \cite[Theorem 1]{HedbergWolff}. 

\begin{proposition}\label{P:Wolff}
Let $1<p<+\infty$, assume that (\ref{E:upperd}) and (\ref{E:lowerdadd}) hold and that $0<s<d$. Then there is a constant $c>0$ such that $\int_M (U^s \nu)^q\:d\mu \leq c\:\int_M \dot{W}_{s,p}^\nu\:d\nu$ for any $\nu \in \mathcal{M}_+(M)$.
\end{proposition}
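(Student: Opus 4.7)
The plan is to follow the classical Hedberg--Wolff proof of Wolff's inequality (see \cite{HedbergWolff} and \cite[Section 4.5]{AH96}) by combining duality with a dyadic decomposition, adapted here to the Ahlfors $d$-regular metric measure setting.

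First, by Fubini's theorem and the symmetry of the kernel $\varrho(\cdot,\cdot)^{-s}$, I would rewrite
\begin{equation*}
\int_M (U^s\nu)^q\,d\mu=\int_M (U^s\nu)^{q-1}\,U^s\nu\,d\mu=\int_M U^s\bigl((U^s\nu)^{q-1}\mu\bigr)\,d\nu,
\end{equation*}
where in the last expression $U^s$ is applied to the absolutely continuous measure $(U^s\nu)^{q-1}\mu$ as in \eqref{E:measurepot}. The inequality then reduces to the pointwise bound
\begin{equation*}
U^s\bigl((U^s\nu)^{q-1}\mu\bigr)(y)\leq c\,\dot W^\nu_{s,p}(y)\quad\text{for $\nu$-a.e.\ $y\in M$.}
\end{equation*}

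Next I would use the dyadic layer-cake decomposition of $\varrho(\cdot,\cdot)^{-s}$ on the annuli $\{2^{-j-1}\leq\varrho(\cdot,\cdot)<2^{-j}\}$ to obtain, for every nonnegative Borel function $g$ on $M$, the two parallel comparisons
\begin{equation*}
U^s\nu(x)\leq c\sum_{j\in\Z}2^{js}\nu(B(x,2^{-j})),\qquad U^s(g\mu)(y)\leq c\sum_{k\in\Z}2^{ks}\int_{B(y,2^{-k})}g\,d\mu.
\end{equation*}
Inserting the first comparison into the second with $g=(U^s\nu)^{q-1}$ and expanding the $(q-1)$-th power produces a double dyadic sum in $j,k$, which I would then reorganize into a single-scale expression using the monotonicity of $j\mapsto\nu(B(y,2^{-j}))$, a discrete Minkowski/Abel-summation rearrangement, and the algebraic identity $p(q-1)=q$. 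After invoking the two-sided regularity $\mu(B(y,2^{-k}))\asymp 2^{-kd}$ coming from \eqref{E:upperd} and \eqref{E:lowerdadd}, the resulting single-scale sum matches the definition of $\dot W^\nu_{s,p}(y)$ up to a constant.

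The main obstacle is precisely this reorganization of the double dyadic sum into the single-scale form of $\dot W^\nu_{s,p}(y)$: the nonlinearity introduced by the $(q-1)$-th power must be tracked carefully against the monotone behaviour of the dyadic tail $j\mapsto\nu(B(y,2^{-j}))$, and both volume bounds in \eqref{E:upperd} and \eqref{E:lowerdadd} are needed for the bookkeeping to close. This explains the appearance of \eqref{E:lowerdadd} as an extra hypothesis in Lemma \ref{L:resolventupper}(ii) compared to part (i), where the case $p=2$ admits the direct convolution estimate \eqref{E:Giraud}. The corresponding combinatorial inequality in spaces of homogeneous type is precisely the content of the Wolff-type inequality of P\'erez and Wheeden in \cite{PerezWheeden}, which I would invoke to conclude the proof.
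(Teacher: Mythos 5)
Your opening Fubini step is fine and correctly identifies $\int_M (U^s\nu)^q\,d\mu$ with $\int_M U^s\bigl((U^s\nu)^{q-1}\mu\bigr)\,d\nu$, i.e.\ with the integral of the nonlinear potential against $\nu$ (compare \eqref{E:nonlinFubini}). The genuine gap is the next step: the claimed pointwise bound $U^s\bigl((U^s\nu)^{q-1}\mu\bigr)(y)\leq c\,\dot{W}_{s,p}^{\nu}(y)$ is \emph{false} in general. Already in the Euclidean model the pointwise comparison of the nonlinear potential with the Wolff potential holds only for $p$ above a threshold depending on $s$ and $d$ (in classical notation, for $p>2-\alpha/n$; see the discussion around \cite[Section 4.5]{AH96}); for $p$ close to $1$ the nonlinear potential can be infinite where the Wolff potential is not, and only the \emph{integrated} comparison $\int V\,d\nu\leq c\int \dot{W}\,d\nu$ survives. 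That integrated comparison is precisely Wolff's inequality — the statement you are trying to prove — so your reduction replaces it by a strictly stronger assertion that fails on part of the stated range $1<p<+\infty$, and no Abel summation or rearrangement of the double dyadic sum can recover it. Your closing appeal to P\'erez--Wheeden does not repair this, because \cite[Corollary 2.2]{PerezWheeden} is a Muckenhoupt--Wheeden type \emph{norm} inequality $\left\|U^s\nu\right\|_{L^q(M)}\leq c\left\|\mathcal{M}_s\nu\right\|_{L^q(M)}$ comparing the potential with the fractional maximal function \eqref{E:maxfct}; it is not a combinatorial identity that closes a pointwise dyadic computation inside a duality argument.

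The paper's proof takes a different and shorter route that avoids duality and nonlinear potentials entirely: it first checks that \eqref{E:upperd} and \eqref{E:lowerdadd} yield the doubling and growth conditions required to apply \cite[Corollary 2.2]{PerezWheeden}, giving $\left\|U^s\nu\right\|_{L^q(M)}^q\leq c\left\|\mathcal{M}_s\nu\right\|_{L^q(M)}^q$, and then bounds the maximal function by the dyadic sequence via $\mathcal{M}_s\nu(x)\leq c\,\bigl\|\bigl(2^{js}\nu(B(x,2^{-j}))\bigr)_j\bigr\|_{\ell^\infty}\leq c\,\bigl\|\bigl(2^{js}\nu(B(x,2^{-j}))\bigr)_j\bigr\|_{\ell^q}$; raising to the $q$-th power, integrating in $\mu$ and applying Fubini once converts $\left\|\mathcal{M}_s\nu\right\|_{L^q(M)}^q$ into $\int_M \dot{W}_{s,p}^{\nu}\,d\nu$. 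If you wish to keep a duality-based outline you would have to reproduce the full Hedberg--Wolff argument from \cite{HedbergWolff}, which is not a pointwise estimate; otherwise the maximal-function route is the one to adopt.
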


A short proof of Proposition \ref{P:Wolff} is given at the end of this section, it relies on a metric measure space version of the Muckenhoupt-Wheeden inequality, \cite{MuckenhouptWheeden}, shown in \cite[Corollary 2.2]{PerezWheeden}. The definition (\ref{E:Wolffpot}) of $\dot{W}_{s,p}^\nu$ as potentials of homogeneous type is chosen for an easy fit with the Riesz type potentials (\ref{E:Riesz}) and maximal functions used in \cite[Corollary 2.2]{PerezWheeden}. 

Lemma \ref{L:resolventupper} (ii) now follows as in the Euclidean case, \cite[Theorem 5.1.13]{AH96}.

\begin{proof}[Proof of Lemma \ref{L:resolventupper} (ii).]
Suppose that $\mathcal{H}^{d-2p+\varepsilon}(\Sigma)>0$. Then, again by Frostman, there is some $\nu \in\mathcal{M}_+(M)$ with $0<\nu(\Sigma)<+\infty$, $\supp \nu$ compact and contained in $\Sigma$, and such that $\nu(B(x,r))\leq c\:r^{d-2p+\varepsilon}$ for all $x\in \Sigma$ and $r>0$. This implies 
\[(\nu(B(x,2^{-j})))^{q-1}\leq c\:2^{-j(d-2)q}\: 2^{jd}\: 2^{-j\varepsilon q/p}.\]
By (\ref{E:upperd}) and since $\nu(B(x,2^{-j}))\leq \nu(\Sigma)$ for $j< 0$, we see that for any $x\in \Sigma$ we have
\[\dot{W}^\nu_{d-2,p}(x)\leq \sum_{j=0}^\infty 2^{-j\varepsilon q/p}+\nu(\Sigma)^{q/p}\:\sum_{j=-\infty}^{-1} 2^{j(d-2p)q/p}=:S<+\infty.\]
By (\ref{E:resolventupper}) we have $G_\lambda\nu\leq c\:U^{d-2}\nu$, so that $\left\|G_\lambda \nu\right\|_{L^q(M)}\leq c\:S^{1/q}\nu(\Sigma)^{1/q}$ by Proposition \ref{P:Wolff} and the preceding. Using Corollary \ref{C:dualrep} we obtain the contradicting fact that
\[\cpct_{2,p}(\Sigma)^{1/p}\geq \frac{\nu(\Sigma)}{\left\|G_\lambda\nu\right\|_{L^q(M)}}\geq \frac{\nu(\Sigma)^{1/p}}{c\:S^{1/q}}>0.\]
\end{proof}

Proposition \ref{P:Wolff} can be proved following the method in \cite[Corollary 3.6.3 and Theorem 4.5.2]{AH96}. Given $s>0$ and $\nu\in \mathcal{M}_+(M)$, let
\begin{equation}\label{E:maxfct}
\mathcal{M}_s\nu(x):=\sup_{r>0} r^{-s} \nu(B(x,r)),\quad x\in M,
\end{equation}
denote the \emph{maximal function of $\nu$ of order $s$}.

\begin{proof}
From (\ref{E:upperd}) and (\ref{E:lowerdadd}) it is immediate that $\mu$ is a doubling measure (cf. \eqref{E:doubling} below) and 
\[\left(\frac{r}{R}\right)^{-s}\frac{\mu(B(x,r))}{\mu(B(x,R))}\leq c\left(\frac{r}{R}\right)^{d-s},\quad x\in M,\ 0<r<R.\]
We may therefore apply \cite[Corollary 2.2]{PerezWheeden},which shows that with a constant $c>0$ we have
\begin{equation}\label{E:PW}
\left\|U^{s}\nu\right\|_{L^q(M)}^q\leq c\left\|\mathcal{M}_{s}\nu\right\|_{L^q(M)}^q
\end{equation}
for all $\nu\in \mathcal{M}_+(M)$. 

For any $r>0$ we can find $j\in\mathbb{Z}$ such that $2^{-j-1}<r\leq 2^{-j}$, and if $\nu \in \mathcal{M}_+(M)$ and $x\in M$ then
$r^{-s}\nu(B(x,r))\leq 2^{-s} 2^{js}\nu(B(x,2^{-j}))$,
and taking suprema, we see that
\[\mathcal{M}_s\nu(x)\leq c\:\left\|\left( 2^{js} \nu(B(x,2^{-j}))\right)_{j\in\mathbb{Z}}\right\|_{\ell^\infty}\leq c_\mu\:\left\|\left( 2^{js} \nu(B(x,2^{-j}))\right)_{j\in\mathbb{Z}}\right\|_{\ell^q}  \]
for any $x\in M$. Taking $q$-th moments, using (\ref{E:upperd}) and Fubini, we obtain
\begin{align}
\left\|\mathcal{M}_s\nu\right\|_{L^q(M)}^q&\leq c\:\int_M \sum_{j=-\infty}^\infty 2^{jsq}\left(\nu(B(y,2^{-j}))\right)^q\mu(dy)\notag\\
&=c\: \sum_{j=-\infty}^\infty 2^{jsq} \int_M \left(\nu(B(y,2^{-j}))\right)^{q-1} \int_M \mathbf{1}_{B(y,2^{-j})}(x) \nu(dx)\mu(dy)\notag\\
&=\int_M\left(\sum_{j=-\infty}^\infty 2^{jsq}\int_{B(x,2^{-j})}\left(\nu(B(y,2^{-j}))\right)^{q-1}\mu(dy)\right)\nu(dx)\notag\\
&=\int_M \dot{W}_{s,p}^\nu d\nu.\notag
\end{align}
Combination with (\ref{E:PW}) now yields the result.
\end{proof}

\section{Riemannian manifolds}\label{S:manifolds}

We provide results on essential self-adjointness and $L^p$-uniqueness on Riemannian manifolds after the removal of a set $\Sigma$.

\subsection{Essential self-adjointness and capacities}\label{SS:weighted}

Let $(M,\mathbf{g},\mu)$ be a weighted manifold in the sense of \cite[Definition 3.17]{Grigoryan2009}, that is,
$M=(M,\mathbf{g})$ is a complete Riemannian manifold and $\mu$ a Borel measure which has a smooth and positive density with respect to the Riemannian volume. Riemannian manifolds appear as the special case that the density is identically one, so that $\mu$ is the Riemannian volume. The distance $\varrho$ as in Section \ref{S:Setup} is the geodesic distance on $M$. We point out that $M$ is assumed to be second countable, \cite[Definition 3.2]{Grigoryan2009}. Throughout the entire section we silently assume that $M$ is connected.

We consider the Dirichlet integral on $M$, defined by
\[\mathcal{E}(f,g)=\int_M\left\langle \nabla f, \nabla g\right\rangle_{TM}\:d\mu,\]
where $f,g$ are elements of $\mathcal{D}(\mathcal{E})=W_0^1(M)$, defined as the closure of the space $C_c^\infty(M)$ of smooth compactly supported functions on $M$ in the space $W^1(M)$ of all $u\in L^2(M)$ with  $|\nabla u|\in L^2(M)$ and endowed with the Hilbert norm determined by 
\[\left\|u\right\|_{W^1(M)}^2=\left\|u\right\|_{L^2(M)}^2+\left\|\nabla u\right\|_{L^2(M)}^2.\] 
Clearly $\Gamma(f,g)(x)=\left\langle \nabla f(x), \nabla g(x)\right\rangle_{T_xM}$, $x\in M$. The operator $(\mathcal{L},\mathcal{D}(\mathcal{L}))$ is the Dirichlet Laplacian on $M$, i.e. the Friedrichs extension of the classical Laplace operator $\De_\mu|_{C_c^\infty(M)}$, \cite[Section 3.6]{Grigoryan2009}. If $M$ is complete, then this self-adjoint extension is unique, see \cite[Theorem 11.5]{Grigoryan2009} or \cite[Theorem 2.4]{Strichartz83}. The domain $\mathcal{D}(\mathcal{L})$ of $\mathcal{L}$ is 
\begin{equation}\label{E:domofL}
W_0^2(M)=\left\lbrace u\in W_0^1(M): \Delta_\mu u\in L^2(M)\right\rbrace, 
\end{equation}
\cite[Theorem 4.6]{Grigoryan2009}. With the choice $\mathcal{A}(U)=C_c^\infty(U)$ for any open $U\subset M$ conditions \eqref{A:bump}, \eqref{A:basic} and $(C_2)$ are satisfied. In the sequel $\Sigma$ will always denote a subset of $M$ and $\mathring{M}:=M\setminus \Sigma$ its complement. 

The following characterization of the critical size of $\Sigma$ in terms of the capacities $\ccpct_{2,2}$ is immediate from Theorem \ref{T:esa}.

\begin{theorem}\label{T:mfdccpct} Let $M$ be a complete weighted manifold and $\Sigma\subset M$ a closed subset. Then we have $\ccpct_{2,2}^{C_c^\infty(M)}(\Sigma)=0$ if and only if $\mu(\Sigma)=0$ and $\Delta_\mu|_{C_c^\infty(\mathring{M})}$ is essentially self-adjoint.
\end{theorem}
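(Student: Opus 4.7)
The plan is to obtain Theorem \ref{T:mfdccpct} as a direct consequence of Theorem \ref{T:esa} applied in the case $p = 2$, $\mathcal{A} = C_c^\infty(M)$. Essentially all the work lies in verifying that the structural hypotheses on $\mathcal{A}$ required by both parts of Theorem \ref{T:esa} are satisfied in the Riemannian setting; once this is done, the two implications of the equivalence follow immediately from parts (i) and (ii) of that theorem.

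First I would check the conditions on $\mathcal{A} = C_c^\infty(M)$. Existence of smooth bump functions subordinate to arbitrary open neighborhoods of compact sets gives \eqref{A:bump}. For \eqref{A:basic}, any $f \in C_c^\infty(M)$ is smooth and compactly supported, so $f, \Gamma(f) = |\nabla f|_{\mathbf{g}}^2$ and $\Delta_\mu f$ are smooth and compactly supported, in particular lie in $L^\infty(M) \cap L^1(M)$; since products of smooth compactly supported functions remain in $C_c^\infty(M)$, the space is an algebra, and $C_c^\infty(M) \subset \mathcal{D}(\mathcal{L}^{(1)})$. Condition $(\mathrm{C}_2)$ holds because on a complete weighted manifold $\Delta_\mu|_{C_c^\infty(M)}$ is known to be essentially self-adjoint (\cite[Theorem 11.5]{Grigoryan2009}), which is equivalent to $C_c^\infty(M)$ being a core for $(\mathcal{L}, \mathcal{D}(\mathcal{L}))$ and hence to the $L^2$-uniqueness of $\mathcal{L}|_{C_c^\infty(M)}$. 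For $p = 2$, condition $(\Gamma_p)$ is automatic as stated in the paper. Finally, since every element of $\mathcal{A} = C_c^\infty(M)$ is already compactly supported, $\mathcal{A}_c = \mathcal{A}$, so the density requirement of Theorem \ref{T:esa}(ii) is trivially met.

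With these verifications in place, the two implications follow. For the direction ($\Leftarrow$): if $\mu(\Sigma) = 0$ and $\Delta_\mu|_{C_c^\infty(\mathring{M})}$ is essentially self-adjoint, then by the equivalence for symmetric semibounded operators (cited in Section \ref{S:esa} from \cite[Chapter I, Corollary 1.2]{Eberle99}) the operator $\mathcal{L}|_{\mathcal{A}(\mathring{M})}$ is $L^2$-unique. Theorem \ref{T:esa}(i), whose hypotheses \eqref{A:bump} and $(\mathrm{C}_2)$ are in force, then yields $\ccpct_{2,2}^{C_c^\infty(M)}(\Sigma) = 0$. For the direction ($\Rightarrow$): if $\ccpct_{2,2}^{C_c^\infty(M)}(\Sigma) = 0$, then since \eqref{A:basic}, $(\Gamma_2)$ and the density $\mathcal{A}_c = \mathcal{A}$ hold, the second part of Theorem \ref{T:esa}(ii) applies to the (possibly non-compact) closed set $\Sigma$ and delivers both $\mu(\Sigma) = 0$ and the $L^2$-uniqueness of $\mathcal{L}|_{C_c^\infty(\mathring{M})}$ with closure $(\mathcal{L}, \mathcal{D}(\mathcal{L}))$; by symmetry and semiboundedness this is equivalent to the essential self-adjointness of $\Delta_\mu|_{C_c^\infty(\mathring{M})}$.

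No step in this argument is an obstacle: all assertions reduce to checking conditions already discussed in Sections \ref{S:Setup} and \ref{S:esa}. The only subtle point worth flagging is that one must use the form of Theorem \ref{T:esa}(ii) tailored to general closed $\Sigma$ (rather than compact $\Sigma$), which is precisely why the observation $\mathcal{A}_c = \mathcal{A}$ is recorded explicitly.
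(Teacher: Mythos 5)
Your proposal is correct and follows exactly the route the paper takes: the paper records that for $\mathcal{A}(U)=C_c^\infty(U)$ conditions \eqref{A:bump}, \eqref{A:basic} and $(\mathrm{C_2})$ hold (with $(\Gamma_2)$ automatic and $\mathcal{A}_c=\mathcal{A}$), and then declares Theorem \ref{T:mfdccpct} immediate from Theorem \ref{T:esa}. Your verification of the hypotheses and the translation between $L^2$-uniqueness and essential self-adjointness via \cite[Chapter I, Corollary 1.2]{Eberle99} is precisely the intended argument.
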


Additional conditions allow a result in terms of the capacities $\cpct_{2,2}$. The measure $\mu$ is said to satisfy the \emph{doubling condition} \eqref{E:doubling} if there is a constant $c_D>1$ such that 
\begin{equation}\label{E:doubling}\tag{D}
\mu(B(x,2r))\leq c_D\mu(B(x,r)),\quad x\in M,\quad r>0.
\end{equation}
It is well-known that the heat semigroup $(P_t)_{t>0}$ on $M$ is absolutely continuous and strong Feller, see for example
\cite[Theorems 7.13 and 7.15]{Grigoryan2009}. Its heat kernel $p_t(x,y)$ is said to satisfy the \emph{gradient upper estimate} \eqref{E:gradest} if there is some $c>0$ such that 
\begin{equation}\label{E:gradest}\tag{G}
|\nabla_y p_t(x,\cdot)|(y)\leq \frac{c}{\sqrt{t}\mu(B(x,\sqrt{t}))}
\end{equation}
for all $x,y\in M$ and $t>0$. By \cite[Proposition 2.1]{CS10}, \cite[Theorem 1.1]{Grigoryan1997} and \cite{LY86}, \eqref{E:doubling} and \eqref{E:gradest} imply \emph{Li-Yau type heat kernel estimates}, \cite{Grigoryan1991, LY86, Saloff-Coste1991}, that is,
\begin{equation}\label{E:LY}\tag{LY}
\frac{1}{c_8 \mu(B(y,\sqrt{t})}\:\exp\Big(-c_9\:\frac{\varrho(x,y)^2}{t}\Big)\leq p_t(x,y)\leq \frac{c_8}{\mu(B(y,\sqrt{t})}\:\exp\Big(-\frac{\varrho(x,y)^2}{c_9 t}\Big)
\end{equation}
for all $x,y\in M$ and $t>0$ with universal constants $c_8>1$ and $c_9>1$; see \cite[p. 687]{CS10}.

\begin{theorem}\label{T:mfdcpct} Let $M$ be a complete weighted manifold and $\Sigma\subset M$ a closed subset.
\begin{enumerate}
\item[(i)]  If $\mu(\Sigma)=0$ and $\Delta_\mu|_{C_c^\infty(\mathring{M})}$ is essentially self-adjoint, then $\cpct_{2,2}(\Sigma)=0$.
\item[(ii)] Suppose that \eqref{E:doubling} and \eqref{E:gradest} hold. If $\cpct_{2,2}(\Sigma)=0$, then $\mu(\Sigma)=0$ and $\Delta_\mu|_{C_c^\infty(\mathring{M})}$ is essentially self-adjoint.
\end{enumerate}
\end{theorem}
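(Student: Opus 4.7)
For (i), the plan is a short chain of existing implications: by Theorem \ref{T:mfdccpct} the hypotheses $\mu(\Sigma)=0$ and essential self-adjointness of $\Delta_\mu|_{C_c^\infty(\mathring M)}$ give $\ccpct_{2,2}^{C_c^\infty(M)}(\Sigma)=0$, and since the heat semigroup on a weighted manifold is strong Feller, Corollary \ref{C:firstcomparison} applied with $p=2$ and $\mathcal A=C_c^\infty(M)$ yields $\cpct_{2,2}(\Sigma)\le \ccpct_{2,2}^{C_c^\infty(M)}(\Sigma)=0$.

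For (ii) the first observation, that $\cpct_{2,2}(\Sigma)=0$ forces $\mu(\Sigma)=0$, follows immediately from \eqref{E:cpctdef}. The heart of the argument is to combine Theorem \ref{T:Mazja} and Corollary \ref{C:secondcomparison} with Theorem \ref{T:mfdccpct} via an intermediate capacity. Under (D) and (G) the Li--Yau bounds \eqref{E:LY} hold, and a standard self-improvement produces a Gaussian-decaying gradient upper bound on $p_t$. Choosing $\alpha\in(1,2)$ large enough that the Gaussian exponent in the lower bound of $p_{\alpha t}$ dominates the Gaussian exponent in the gradient upper bound (while doubling absorbs the ratio of ball volumes) yields \eqref{E:loghk}, and Proposition \ref{P:hktosg} delivers \eqref{E:log}. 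Theorem \ref{T:Mazja} then places $F\circ G_\lambda f$ in $\mathcal D(\mathcal L)$ with $\|F\circ G_\lambda f\|_{\mathcal D(\mathcal L)}\le c_3\|f\|_{L^2(M)}$ for any $C^2$-truncation $F$ and any nonnegative $f\in L^1(M)\cap L^\infty(M)$.

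Let $\mathcal A$ be the real linear span of $C_c^\infty(M)$ together with $\{F\circ G_\lambda f:f\in L^1(M)\cap L^\infty(M),\ f\ge 0\}$ for a fixed such $F$. Then $\mathcal A\subset\mathcal D(\mathcal L)$, and \eqref{A:bump} and \eqref{A:functionsT} are satisfied by construction. Corollary \ref{C:secondcomparison} therefore gives $\ccpct_{2,2}^{\mathcal A}(\Sigma)\le c_3^2\,\cpct_{2,2}(\Sigma)=0$. To transfer this back to the $C_c^\infty(M)$-capacity, fix a compact $K\subset\Sigma$ and $u\in\omega_K^{\mathcal A}$ with $u=1$ on an open neighborhood $V\supset K$. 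Since $M$ is complete, $\Delta_\mu|_{C_c^\infty(M)}$ is essentially self-adjoint, so there exist $u_n\in C_c^\infty(M)$ with $u_n\to u$ in $\|\cdot\|_{\mathcal D(\mathcal L)}$. Choose $\chi\in C_c^\infty(M)$ with $\chi=1$ on an open neighborhood $W$ of $K$ and $\supp\chi\subset V$, and set $\phi_n:=\chi+(1-\chi)u_n\in C_c^\infty(M)$; then $\phi_n=1$ on $W$, hence $\phi_n\in\omega_K^{C_c^\infty(M)}$. The identity $\phi_n-u=(1-\chi)(u_n-u)$, which uses $\chi(1-u)\equiv 0$ because $u=1$ on $\supp\chi\subset V$, combined with boundedness of multiplication by the smooth bounded function $1-\chi$ on $\mathcal D(\mathcal L)$ (via the product rule, using that $\chi$ and its first and second derivatives are uniformly bounded and compactly supported), gives $\|\phi_n\|_{\mathcal D(\mathcal L)}\to\|u\|_{\mathcal D(\mathcal L)}$. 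Passing to the infimum over $u$ yields $\ccpct_{2,2}^{C_c^\infty(M)}(K)\le \ccpct_{2,2}^{\mathcal A}(K)$, which extends to $\Sigma$ by \eqref{E:innerreg}. Hence $\ccpct_{2,2}^{C_c^\infty(M)}(\Sigma)=0$, and Theorem \ref{T:mfdccpct} closes the argument.

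The main obstacle is verifying \eqref{E:log} under the combination of (D) and (G): condition (G) as written carries no Gaussian spatial decay, so the ratio against the Li--Yau lower bound of $p_{\alpha t}$ cannot be controlled uniformly without first upgrading (G) to its Gaussian-decaying form and then tuning constants so that some admissible $\alpha\in[1,2)$ kills the exponential growth while doubling controls the volume ratio. Once \eqref{E:log} is in place, the remainder of the argument is essentially bookkeeping that combines Theorem \ref{T:Mazja}, Corollary \ref{C:secondcomparison}, and the cutoff identity $\phi_n=\chi+(1-\chi)u_n$ that reduces the $\mathcal A$-capacity to the $C_c^\infty(M)$-capacity.
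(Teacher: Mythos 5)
Your proposal is correct and follows the same overall route as the paper's proof: part (i) is the identical two-line combination of Theorem \ref{T:mfdccpct} and Corollary \ref{C:firstcomparison}, and part (ii) runs through the same chain \eqref{E:doubling}$+$\eqref{E:gradest} $\Rightarrow$ \eqref{E:loghk} $\Rightarrow$ Theorem \ref{T:Mazja} and Corollary \ref{C:secondcomparison} for an auxiliary class $\mathcal{A}$ $\Rightarrow$ $\ccpct_{2,2}^{C_c^\infty(M)}(\Sigma)=0$ $\Rightarrow$ Theorem \ref{T:mfdccpct}. The only genuine difference is the transfer from $\ccpct_{2,2}^{\mathcal{A}}$ to $\ccpct_{2,2}^{C_c^\infty(M)}$: the paper takes $\mathcal{A}=\mathcal{D}(\mathcal{L})\cap C_b^\infty(M)$ and, for compact $\Sigma$, multiplies a small-norm $v\in\omega_\Sigma^{\mathcal A}$ by one \emph{fixed} $\varphi\in\omega_\Sigma^{C_c^\infty(M)}$, the Leibniz estimate \eqref{E:passover} giving $\|\varphi v\|_{\mathcal{D}(\mathcal{L})}^2\le c\:\|v\|_{\mathcal{D}(\mathcal{L})}^2$; you instead approximate $u\in\omega_K^{\mathcal A}$ in graph norm by $u_n\in C_c^\infty(M)$ (using that $C_c^\infty(M)$ is a core, by completeness) and repair the bump property with $\phi_n=\chi+(1-\chi)u_n$. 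Both work; your version has the mild advantage of not requiring the elements of $\mathcal{A}$ to be smooth, which the paper's fixed-bump trick implicitly does use for $F\circ G_\lambda f_n$. One step you should make quantitative: ``choosing $\alpha\in(1,2)$ so that the Gaussian exponent in the lower bound of $p_{\alpha t}$ dominates that of the gradient upper bound'' is not achievable from \eqref{E:LY} with generic constants --- if the gradient bound decays like $\exp(-\varrho^2/(c't))$ and the lower bound of $p_{\alpha t}$ like $\exp(-c_9\varrho^2/(\alpha t))$, you need $\alpha\ge c_9c'$, which exceeds $2$ unless both estimates are first upgraded to near-optimal Gaussian constants $4\pm\varepsilon$, after which $\alpha=\frac{4+\varepsilon}{4-\varepsilon}<2$ suffices (this is exactly how the paper argues in the {\rm RCD} section, Corollary \ref{C:loghkT}; for the manifold case it invokes Dungey's theorem for the gradient and the Harnack-improved lower bound). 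That upgrade is the real content of the ``tuning'' you allude to, and without naming it the step remains a gap in spirit, though the conclusion is the same as the paper's.
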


\begin{proof}
Statement (i) follows from Corollary \ref{C:firstcomparison} and Theorem \ref{T:mfdccpct}. Under \eqref{E:doubling} condition \eqref{E:gradest} is equivalent to the estimate
\begin{equation}\label{E:gradestfull}
|\nabla_y p_t(x,\cdot)|(y)\leq \frac{c}{\sqrt{t}\mu(B(x,\sqrt{t}))}\:\exp\Big(-\frac{\varrho(x,y)^2}{c't}\Big),
\end{equation}
valid for all $x,y\in M$ and $t>0$; here $c$ and $c'$ are universal positive constants. This follows using the upper estimate in \eqref{E:LY} together with \cite[Theorem 1.1]{Dungey06}. Combining the lower estimate in (\ref{E:LY}) with \eqref{E:gradestfull} and using \eqref{E:doubling}, we obtain \eqref{E:loghk} with $\alpha=1$. Using the hypothesis and Corollary \ref{C:secondcomparison} we see that $\ccpct_{2,2}^{\mathcal{A}}(\Sigma)=0$ with $\mathcal{A}=\mathcal{D}(\mathcal{L})\cap C_b^\infty(M)$. 

We claim that this implies $\ccpct_{2,2}^{C_c^\infty(M)}(\Sigma)=0$. If so, then (ii) follows by Theorem \ref{T:mfdccpct}. By (\ref{E:innerreg}) it suffices to prove the claim for compact $\Sigma$. Let $\varphi\in \omega_\Sigma^{C_c^\infty(M)}$ and given $\varepsilon>0$, choose $v\in \omega_\Sigma^\mathcal{A}$ such that $\left\|v\right\|_{\mathcal{D}(\mathcal{L})}^2\leq \varepsilon$. Then $\varphi v\in \omega_\Sigma^{C_c^\infty(M)}$ and 
\begin{align}\label{E:passover} 
\left\|\varphi v\right\|_{L^2(M)}+\left\|\mathcal{L}(\varphi v)\right\|_{L^2(M)}&=\left\|\varphi\right\|_{L^\infty(M)}\left\|v\right\|_{L^2(M)}+2\big\|\Gamma(\varphi)^{1/2}\big\|_{L^\infty(M)} \mathcal{E}(v)^{1/2}\\
&\qquad + \left\|\varphi\right\|_{L^\infty(M)}\left\|\mathcal{L}v\right\|_{L^2(M)}+\left\|\Delta_\mu \varphi\right\|_{L^\infty(M)}\left\|v\right\|_{L^2(M)}\notag\\
&\leq c_\varphi\Big(\left\|v\right\|_{L^2(M)}+\left\|\mathcal{L}v\right\|_{L^2(M)}\Big)\notag
\end{align}
with 
\[c_\varphi:=2\big(\left\|\varphi\right\|_{L^\infty(M)}+\big\|\Gamma(\varphi)^{1/2}\big\|_{L^\infty(M)}+\left\|\Delta_\mu \varphi\right\|_{L^\infty(M)}\big);\]
we have used (\ref{E:Gamma}). Consequently $\left\|\varphi v\right\|_{\mathcal{D}(\mathcal{L})}^2\leq c\:\left\|v\right\|_{\mathcal{D}(\mathcal{L})}^2$ and therefore also
$\ccpct_{2,2}^{C_c^\infty(M)}(\Sigma)\leq c\:\varepsilon$ with $c>0$ independent of $\varepsilon$, what proves the claim.
\end{proof}

\begin{remark} Another proof of \eqref{E:gradestfull} could be formulated using \cite[Theorem 4.9]{CS08} together with \eqref{E:doubling} and \eqref{E:LY}. 
\end{remark}

\subsection{$L^p$-uniqueness and capacities}

Suppose that $M$ is a complete Riemannian manifold and $1<p<+\infty$. Clearly also condition \eqref{A:basicp} is satisfied for $\mathcal{A}(U)=C_c^\infty(U)$, see \cite[Theorem 3.5]{Strichartz83}. For compact $M$ also the validity of condition (\ref{A:Riesztrafo}) is well-known, see \cite{Seeley67} and \cite[Section 6]{Strichartz83}. A sufficient condition for the validity of (\ref{A:Riesztrafo}) on a general (possibly non-compact) manifold $M$ is the \emph{$L^p$-gradient bound} 
\begin{equation}\label{E:gradestp}\tag{$\mathrm{G_p}$}
\left\||\nabla P_t f|\right\|_{L^p(M)} \leq \frac{c(p)}{\sqrt{t}}\:\left\|f\right\|_{L^p(M)},\quad f\in C_c^\infty(M),
\end{equation}
for the heat semigroup $(P_t)_{t>0}$; the sufficiency follows from \cite[Proposition 3.6]{CS10} and its proof. Further sufficient conditions could be formulated using the results in \cite{ACDH04} and \cite{CD99}. The following is then immediate from Theorem \ref{T:esa}.

\begin{theorem}\label{T:Lpuni}
Let $M$ be a complete Riemannian manifold, $\Sigma\subset M$ closed, and $1<p<+\infty$.
\begin{enumerate}
\item[(i)] If $\mu(\Sigma)=0$ and $\Delta_\mu|_{C_c^\infty(\mathring{M})}$ is $L^p$-unique, then $\ccpct_{2,p}^{C_c^\infty(M)}(\Sigma)=0$.
\item[(ii)] Suppose that $M$ is compact or \eqref{E:gradestp} holds. If $\ccpct_{2,p}^{C_c^\infty(M)}(\Sigma)=0$, then $\mu(\Sigma)=0$ and $\Delta_\mu|_{C_c^\infty(\mathring{M})}$ is $L^p$-unique on $M$. 
\end{enumerate}
\end{theorem}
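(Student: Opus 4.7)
The plan is to apply Theorem \ref{T:esa} with $\mathcal{A}=C_c^\infty(M)$ and check that the relevant structural conditions on $\mathcal{A}$ are satisfied in both parts. Condition \eqref{A:bump} is immediate from the existence of smooth bump functions on Riemannian manifolds: for any compact $K\subset M$ and any relatively compact open neighborhood of $K$ one can find $\varphi\in C_c^\infty(M)$ equal to $1$ on a neighborhood of $K$. Condition \eqref{A:basicp} holds because any $f\in C_c^\infty(M)$ has $f,\Delta_\mu f\in C_c(M)\subset L^p(M)$, so $C_c^\infty(M)\subset \mathcal{D}(\mathcal{L}^{(p)})$, while the $L^p$-uniqueness of $\Delta_\mu|_{C_c^\infty(M)}$ on complete Riemannian manifolds is classical, see \cite[Theorem 3.5]{Strichartz83}. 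With these two properties, part~(i) is an immediate application of Theorem \ref{T:esa}(i).

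For part~(ii) I must in addition verify \eqref{A:basic} and \eqref{A:Riesztrafo}. The former is straightforward: $C_c^\infty(M)$ is an algebra of bounded Borel functions with compact support (hence contained in every $L^r(M)$), included in $\mathcal{D}(\mathcal{L}^{(1)})$, and for each $f\in C_c^\infty(M)$ both $\Gamma(f)=|\nabla f|^2$ and $\mathcal{L}f=\Delta_\mu f$ are compactly supported and continuous, hence in $L^\infty(M)$. For \eqref{A:Riesztrafo} I split into the two cases allowed by the hypothesis. If $M$ is compact, the $L^p$-boundedness of the Riesz transform $|\nabla|(\lambda-\Delta_\mu)^{-1/2}$ for $1<p<+\infty$, proved in \cite{Seeley67} and recorded in \cite[Section~6]{Strichartz83}, combined with the $L^p$-boundedness of $(\lambda-\Delta_\mu)^{-1/2}(\lambda-\Delta_\mu)$ on the operator domain, yields $\||\nabla f|\|_{L^p(M)}\lesssim \|f\|_{\mathcal{D}(\mathcal{L}^{(p)})}$. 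In the non-compact case where \eqref{E:gradestp} is assumed, the implication \eqref{E:gradestp}$\Rightarrow$\eqref{A:Riesztrafo} is precisely the content of \cite[Proposition~3.6]{CS10} and its proof, which deduces the $L^p$-boundedness of the Riesz transform from a pointwise semigroup gradient estimate.

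With \eqref{A:bump}, \eqref{A:basicp}, \eqref{A:basic} and \eqref{A:Riesztrafo} verified, Theorem \ref{T:esa}(ii) applies. Since $\mathcal{A}=C_c^\infty(M)=\mathcal{A}_c$, the additional density hypothesis in the second clause of Theorem \ref{T:esa}(ii) is trivially satisfied, so the implication extends from compact $\Sigma$ to arbitrary closed $\Sigma\subset M$. This gives part~(ii).

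The verifications are all routine; the only place where some care is required is the argument for \eqref{A:Riesztrafo} in the non-compact setting, where the hypothesis \eqref{E:gradestp} is the substantive analytic input and must be combined with a known Riesz-transform criterion to translate a gradient bound on the semigroup into the graph-norm estimate for $\Gamma(f)^{1/2}$ demanded by \eqref{A:Riesztrafo}. Once this is in place, the statement is a direct corollary of Theorem \ref{T:esa}.
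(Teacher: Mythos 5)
Your proposal is correct and follows essentially the same route as the paper: the text preceding Theorem \ref{T:Lpuni} verifies \eqref{A:basicp} via \cite[Theorem 3.5]{Strichartz83}, handles \eqref{A:Riesztrafo} via \cite{Seeley67} and \cite[Section 6]{Strichartz83} in the compact case and via \cite[Proposition 3.6]{CS10} under \eqref{E:gradestp}, notes that \eqref{A:bump} and \eqref{A:basic} hold for $C_c^\infty(M)$, and then declares the theorem immediate from Theorem \ref{T:esa}. Your additional remark that $\mathcal{A}_c=\mathcal{A}=C_c^\infty(M)$ makes the density hypothesis in Theorem \ref{T:esa}(ii) trivial, so the conclusion holds for arbitrary closed $\Sigma$, is exactly the intended reading.
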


It is straightforward to see that \eqref{E:log} implies \eqref{E:gradestp}, therefore also \eqref{E:loghk} implies it and the following can be seen similarly as Theorem \ref{T:mfdcpct}.

\begin{theorem}\label{T:mfdcpctp} Let $M$ be a complete Riemannian manifold, $\Sigma\subset M$ closed, and $1<p<+\infty$.
\begin{enumerate}
\item[(i)] If $\mu(\Sigma)=0$ and $\Delta_\mu|_{C_c^\infty(\mathring{M})}$ is $L^p$-unique, then $\cpct_{2,p}(\Sigma)=0$.
\item[(ii)] Suppose that \eqref{E:doubling} and \eqref{E:gradest} hold. If $\Sigma$ is closed and $\cpct_{2,p}(\Sigma)=0$, then $\Delta_\mu|_{C_c^\infty(\mathring{M})}$ is $L^p$-unique.
\end{enumerate}
\end{theorem}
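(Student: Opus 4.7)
For part (i), I would argue in two short steps. First, Theorem \ref{T:Lpuni}(i) directly gives $\ccpct_{2,p}^{C_c^\infty(M)}(\Sigma)=0$ under the hypothesis that $\mu(\Sigma)=0$ and $\Delta_\mu|_{C_c^\infty(\mathring{M})}$ is $L^p$-unique. Second, the heat semigroup on a complete weighted manifold is strong Feller, so Corollary \ref{C:firstcomparison} applies with $\mathcal{A}=C_c^\infty(M)$ and yields $\cpct_{2,p}(\Sigma)\leq \ccpct_{2,p}^{C_c^\infty(M)}(\Sigma)=0$.

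For part (ii) the strategy is to mirror the argument for Theorem \ref{T:mfdcpct}(ii) in the $L^p$-setting. First I would verify \eqref{E:log}: exactly as recorded there, \eqref{E:doubling} together with \eqref{E:gradest} implies \eqref{E:LY} and the full Gaussian gradient bound \eqref{E:gradestfull}, whence \eqref{E:loghk} (with $\alpha=1$) follows by division, and Proposition \ref{P:hktosg} upgrades this to \eqref{E:log}. Since \eqref{E:log} implies \eqref{E:gradestp} (as noted in the paragraph preceding the theorem), one obtains $L^p$-boundedness of $\sqrt{t}|\nabla P_t|$, and integrating against $e^{-\lambda t}$ gives condition \eqref{A:Riesztrafo} on any class of $\lambda$-potentials contained in $G_\lambda(L^p(M))$.

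Next I would choose a class $\mathcal{A}\subset \mathcal{D}(\mathcal{L}^{(p)})$ satisfying \eqref{A:bump} and \eqref{A:functionsT} — a natural choice is $\mathcal{A}=\mathcal{D}(\mathcal{L})\cap C_b^\infty(M)$ as in the $p=2$ proof, with stability under $F\circ G_\lambda$ provided by interior elliptic regularity of $(\lambda-\Delta_\mu)$ combined with the smoothing of the heat semigroup. Corollary \ref{C:secondcomparison} then turns the hypothesis $\cpct_{2,p}(\Sigma)=0$ into $\ccpct_{2,p}^{\mathcal{A}}(\Sigma)=0$. By the inner regularity \eqref{E:innerreg} it suffices to treat compact $\Sigma$; for such $\Sigma$ I would fix $\varphi\in \omega_\Sigma^{C_c^\infty(M)}$ and, for $v\in \omega_\Sigma^{\mathcal{A}}$ of near-minimal $\mathcal{D}(\mathcal{L}^{(p)})$-norm, form $\varphi v\in \omega_\Sigma^{C_c^\infty(M)}$. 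The $L^p$-analogue of the estimate \eqref{E:passover},
\begin{align*}
\|\varphi v\|_{\mathcal{D}(\mathcal{L}^{(p)})}&\leq \lambda\|\varphi\|_{L^\infty(M)}\|v\|_{L^p(M)}+\|\varphi\|_{L^\infty(M)}\|\mathcal{L}v\|_{L^p(M)}\\
&\qquad+2\|\Gamma(\varphi)^{1/2}\|_{L^\infty(M)}\|\Gamma(v)^{1/2}\|_{L^p(M)}+\|\Delta_\mu\varphi\|_{L^\infty(M)}\|v\|_{L^p(M)},
\end{align*}
combined with \eqref{A:Riesztrafo} applied to the $\Gamma(v)^{1/2}$-term, gives $\|\varphi v\|_{\mathcal{D}(\mathcal{L}^{(p)})}\leq c_\varphi\|v\|_{\mathcal{D}(\mathcal{L}^{(p)})}$, so that $\ccpct_{2,p}^{C_c^\infty(M)}(\Sigma)=0$. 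Theorem \ref{T:Lpuni}(ii), whose hypothesis \eqref{E:gradestp} is in force, then delivers the desired $L^p$-uniqueness.

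The main obstacle I anticipate is ensuring that $\mathcal{A}$ can simultaneously sit inside $C_b^\infty(M)$ (so that $\varphi v$ is genuinely smooth with compact support) and be stable under $F\circ G_\lambda$, since for arbitrary nonnegative $f\in L^1(M)\cap L^\infty(M)$ the potential $G_\lambda f$ need not be globally smooth. I expect to handle this by first approximating $f$ by data in $C_c^\infty(M)$, for which $G_\lambda f$ is smooth by interior elliptic regularity of $(\lambda-\Delta_\mu)$, and then passing to the limit in $\mathcal{D}(\mathcal{L}^{(p)})$ using the norm bound \eqref{E:truncation}.
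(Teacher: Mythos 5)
Your proposal is correct and follows essentially the route the paper intends: the paper proves this theorem only by remarking that \eqref{E:log} implies \eqref{E:gradestp} and that the argument of Theorem \ref{T:mfdcpct} carries over, and your write-up fills in exactly those steps (Corollary \ref{C:firstcomparison} together with Theorem \ref{T:Lpuni}(i) for part (i); the chain \eqref{E:doubling} and \eqref{E:gradest} $\Rightarrow$ \eqref{E:gradestfull} $\Rightarrow$ \eqref{E:loghk} $\Rightarrow$ \eqref{E:log} $\Rightarrow$ \eqref{E:gradestp}, then Corollary \ref{C:secondcomparison}, the $L^p$-version of the multiplication estimate \eqref{E:passover}, and Theorem \ref{T:Lpuni}(ii) for part (ii)). Your closing worry about the smoothness of $G_\lambda f$ is resolved more simply than by a limiting argument: in Corollary \ref{C:secondcomparison} the Dini--Cartan step already selects a single approximant $f_n$, which may be taken in $C_c^\infty(M)$ so that $G_\lambda f_n$ is smooth by elliptic regularity, and no passage to the limit in $\mathcal{D}(\mathcal{L}^{(p)})$ is required.
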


\subsection{Localized arguments for compact sets}
For compact $\Sigma$ we can rely on localized estimates to arrive at a variant of Theorem \ref{T:mfdcpct} (ii) that does not need \eqref{E:doubling} or \eqref{E:gradest}.  We establish it using truncated Laplace transforms of the semigroup and related capacities. Let $B\subset M$ be a domain (a connected open subset) with smooth boundary $\partial B$ and let $p_t^B(x,y)$ denote the Neumann heat kernel on $B$. Given $0<T\leq+\infty$ and $\lambda>0$ let 
\[g_\lambda^{B,T}(x,y):=\int_0^T e^{-\lambda t}p_t^B(x,y)\:dt,\quad x,y\in B,\]
and consider 
\begin{equation}\label{E:truncres}
G_\lambda^{B,T} f(x):=\int_B g_\lambda^{B,T}(x,y)f(y)\:\mu(dy)
\end{equation}
for $f\in L^2(B)$. Obviously $G_\lambda^B:=G^{B,\infty}_\lambda$ is the $\lambda$-resolvent operator for the Neumann Laplacian $(\mathcal{L}^{B},\mathcal{D}(\mathcal{L}^{B}))$ on $B$. Also (\ref{E:truncres}) induces bounded linear operators $G^{B,T}_\lambda$ on $L^p(B)$, $1\leq p\leq +\infty$. For any $1\leq p<+\infty$ and $f\in L^p(B)$ we have $G^{B,T}_\lambda f\in \mathcal{D}(\mathcal{L}^{B,(p)})$, where $\mathcal{L}^{B,(p)}$ is the $L^p$-realization of the Neumann Laplacian on $B$,
and $(\lambda-\mathcal{L}^{B,(p)})G^{B,T}_\lambda f=f-e^{-\lambda T}P^B_T f$, where $(P_t^B)_{t>0}$ denotes the Neumann heat semigroup on $B$.

Let $\Omega\subset B$ be an open set. For compact $K\subset \Omega$ we define
\begin{multline}
\cpct_{2,p}^{B,T}(K,\Omega):=\inf\Big\lbrace \left\|f\right\|_{L^p(B)}^p: f\in L_+^p(B)\ \text{with $f=0$ $\mu$-a.e. on $B\setminus \Omega$}\notag\\
\text{and}\quad G^{B,T}_\lambda f(x)\geq 1\ \text{for all $x\in K$}\Big\rbrace.
\end{multline}

We provide variants of Theorem \ref{T:Mazja} and Corollary \ref{C:secondcomparison} in terms of the operators $G^{B,T}$ and the capacities $\cpct_{2,p}^{B,T}(\cdot,\Omega)$. The boundary $\partial B\subset M$ of $B$ is said to be \emph{infinitesimally convex}, \cite{Bishop}, if its second fundamental form is nonnegative definite at all of its points.

\begin{theorem}\label{T:Mazjaloc}
Let $M$ be a complete Riemannian manifold and $B\subset M$ a smooth bounded domain such that $\partial B$ is infinitesimally convex. Let $\Omega$ be open and such that $\overline{\Omega}\subset B$ and write $T:=\dist(\overline{\Omega},\partial B)^2$. Suppose that $F\in C^2(\mathbb{R}_+)$ is a function satisfying (\ref{E:Mazja}). Then for any $1< p< +\infty$, $\lambda>\frac{1}{T}$ and nonnegative $f\in L^p(\Omega)$ we have $F\circ G_\lambda^{B,T} f\in\mathcal{D}(\mathcal{L}^{B,(p)})$ and 
\begin{equation}\label{E:truncationloc}
\big\|F\circ G_\lambda^{B,T} f\big\|_{\mathcal{D}(\mathcal{L}^{B,(p)})}\leq c_3\left\|f\right\|_{L^p(B)}.
\end{equation}
with a constant $c_3>0$ independent of $f$. 
\end{theorem}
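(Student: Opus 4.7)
The plan is to mirror the proof of Theorem \ref{T:Mazja}, replacing the global semigroup $(P_t)_{t>0}$ by the Neumann heat semigroup $(P_t^B)_{t>0}$ on $B$ and the global resolvent $G_\lambda$ by the truncated resolvent $G_\lambda^{B,T}$. Three steps are involved.

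First, I would establish a version of \eqref{E:log} for $(P_t^B)_{t>0}$ on $B$. Since $\overline B$ is compact and smooth, the Ricci curvature of $M$ is bounded below by some $-K<0$ on $\overline B$. Together with the infinitesimal convexity of $\partial B$, this yields the classical Hsu/Wang gradient estimate $|\nabla P_t^B f|^2\leq e^{2Kt} P_t^B(|\nabla f|^2)$, $t>0$, for the Neumann heat semigroup. Combined with Li--Yau type estimates for the Neumann heat kernel $p_t^B(x,y)$ on $B$ (valid by smoothness, compactness and boundary convexity), this produces \eqref{E:loghk} for $p_t^B$, and hence \eqref{E:log} for $(P_t^B)$ via Proposition \ref{P:hktosg}. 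This is where the hypothesis on $\partial B$ enters crucially; without convexity the boundary term in the Bakry--Émery computation does not have the right sign.

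Second, I would adapt the proof of Proposition \ref{P:integral} to the truncated resolvent. Writing $u:=G_\lambda^{B,T} f$, the triangle inequality for $\mathcal{D}(\mathcal{E}^B)$-valued integrals yields
\[
P_s^B\sqrt{\Gamma(u)}\leq \int_0^T e^{-\lambda t} P_s^B\sqrt{\Gamma(P_t^B f)}\,dt,
\]
and the Hedberg-type splitting of Lemma \ref{L:multest} is performed with the outer integral restricted to $(0,T)$. The condition $\lambda>1/T$ ensures that the optimal splitting parameter $\delta$ analogous to \eqref{E:delta} can be chosen in $(0,T]$, so that a bound of the form $c_5(P_s^B G_\lambda^{B,T} f)^{1/2}\big(\sup_{0<t<T}P_t^B f\big)^{1/2}$ survives. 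Applying the Stein--Rota semigroup maximal inequality \eqref{E:maximal} on $L^p(B)$ and passing $s\to 0$ as in the proof of Proposition \ref{P:integral} then yields
\[
\Big\|\mathbf{1}_{\{u>0\}}\frac{\Gamma(u)}{u}\Big\|_{L^p(B)}\leq c\,\left\|f\right\|_{L^p(B)}.
\]

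Third, I would invoke the chain rule of Lemma \ref{L:chainforL} for $F\circ u$. The key observation is that
\[
(\lambda-\mathcal{L}^{B,(p)})u=f-e^{-\lambda T}P_T^B f,
\]
whose right-hand side is bounded in $L^p(B)$ by $(1+e^{-\lambda T})\left\|f\right\|_{L^p(B)}\leq 2\left\|f\right\|_{L^p(B)}$ (using $e^{-\lambda T}<e^{-1}$). Hence $\mathcal{L}^{B,(p)}u\in L^p(B)$ with norm controlled by $\|f\|_{L^p(B)}$, and together with the $\Gamma(u)/u$ estimate from the previous step this reproduces \eqref{E:finalestimate} in the Neumann setting on $B$; that gives $F\circ u\in \mathcal{D}(\mathcal{L}^{B,(p)})$ and \eqref{E:truncationloc}. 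The extension from nonnegative $f\in C_c(\Omega)$ to all nonnegative $f\in L^p(\Omega)$ follows the same weak-compactness and Mazur argument as at the end of the proof of Theorem \ref{T:Mazja}.

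The main obstacle is the first step: justifying the logarithmic gradient estimate for the Neumann semigroup on the bounded domain $B$ with convex boundary, which combines the Bakry--Émery gradient bound with appropriate Li--Yau-type Neumann heat kernel estimates. A secondary subtlety is keeping track of the tail contribution $e^{-\lambda T}P_T^B f$ produced by the truncation, which the hypothesis $\lambda>1/T$ is designed to absorb.
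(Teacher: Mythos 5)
Your overall architecture --- localize Lemma \ref{L:multest} and Proposition \ref{P:integral} to the Neumann semigroup on $B$, apply the chain rule of Lemma \ref{L:chainforL}, track the tail term $e^{-\lambda T}P_T^Bf$ produced by the truncation, and extend from $C_c(\Omega)$ to $L^p(\Omega)$ by weak compactness --- is exactly the paper's. The gap is in your first step. The Bakry--\'Emery/Hsu--Wang commutation estimate $\Gamma(P_t^Bf)\leq e^{2Kt}P_t^B(\Gamma(f))$ bounds the gradient of $P_t^Bf$ by the semigroup applied to the gradient of $f$; it provides no $t^{-1/2}$ smoothing and does not even apply to the merely continuous nonnegative $f$ appearing in \eqref{E:log}. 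It cannot be combined with Li--Yau bounds in any direct way to produce the kernel estimate \eqref{E:loghk}, that is, $\sqrt{\Gamma(p_t^B(x,\cdot))}(y)\leq c\,t^{-1/2}p_{\alpha t}^B(x,y)$, which is what the Hedberg splitting actually consumes. What the paper uses instead is Zhang's \emph{interior} logarithmic gradient estimate \eqref{E:loggradestZh}, $|\nabla_y\log p_t^B(x,\cdot)|(y)\leq c_{B,T}\,t^{-1/2}\big(1+\varrho(x,y)/\sqrt{t}\big)$, which is valid only for $y\in\Omega$ and $\sqrt{t}<\dist(y,\partial B)$; this, and not the absorption of the tail term, is the real reason the resolvent is truncated at $T=\dist(\overline{\Omega},\partial B)^2$. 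Moreover this estimate carries the extra polynomial factor $\chi_t(x,y)=1+\varrho(x,y)/\sqrt{t}$, which must be absorbed into a Gaussian time dilation, $\chi_t(x,y)p_t^B(x,y)\leq c\,p_{c_9^2\gamma t}^B(x,y)$, using the two-sided bounds \eqref{E:LYloc}; this is the content of Lemma \ref{L:multestloc} and is absent from your proposal. (Note that in the localized setting the dilation factor need not be less than $2$, since the dilated semigroup term is simply dominated by $\sup_{t>0}P_t^Bf$ and handled by the maximal inequality.)

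Your second and third steps are essentially correct and match the paper: the splitting parameter $\delta=G_\lambda^{B,T}f(x)/\sup_{t>0}P_t^Bf(x)$ does lie in $(0,T)$ when $\lambda>1/T$, the identity $(\lambda-\mathcal{L}^{B,(p)})G_\lambda^{B,T}f=f-e^{-\lambda T}P_T^Bf$ is handled as you describe, and the final bound is \eqref{E:finalestimate} with an additional $e^{-\lambda T}$ in the constant. But as it stands the proposal does not justify the one estimate on which the whole argument rests.
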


To prove Theorem \ref{T:Mazjaloc} we use a variant of Lemma \ref{L:multest}.
\begin{lemma}\label{L:multestloc}
Let $f\in C_c(\Omega)$ and suppose all other hypotheses of Theorem \ref{T:Mazjaloc} are in force. Then there is a constant $c_6>0$, independent of $f$, such that
\begin{equation}\label{E:multestloc}
\int_0^T e^{-\lambda t}\sqrt{\Gamma(P_t^B f)}(x)dt\leq c_5\:\Big(G_\lambda^{B,T}f(x)\Big)^{1/2}\Big(\sup_{t>0}P_t^Bf(x)\Big)^{1/2}
\end{equation}
for all $x\in B$.
\end{lemma}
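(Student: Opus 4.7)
The plan is to reproduce the proof of Lemma \ref{L:multest} almost verbatim on $B$ with the Neumann heat semigroup, replacing the global log-gradient estimate \eqref{E:log} by its Neumann analogue on the bounded infinitesimally-convex-boundary domain and the integration range $(0,+\infty)$ by $(0,T]$.

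My first step would be to establish a log-gradient inequality for the Neumann heat semigroup on $B$: there exist $c_1, c_2 > 0$ and $\alpha \in [1,2)$ such that
\begin{equation*}
\sqrt{\Gamma(P_t^B f)}(x) \le \frac{c_1 e^{c_2 t}}{\sqrt{t}}\, P_{\alpha t}^B f(x), \qquad x \in B,\ t>0,
\end{equation*}
for any nonnegative $f \in C(\overline B)$. Since $\overline B$ is compact, Ricci curvature on $B$ is bounded below by some $-K$, and infinitesimal convexity of $\partial B$ (nonnegativity of its second fundamental form) is precisely the condition under which the reflection argument of Kendall, combined with the Bakry--\'Emery formalism, yields the pointwise commutation $\Gamma(P_t^B g) \le e^{2Kt} P_t^B \Gamma(g)$ for smooth $g$ with Neumann data. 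Interpolating along the semigroup exactly as in Bakry's derivation of Li--Yau type bounds upgrades this commutation to the displayed log-gradient estimate.

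My second step would be to mimic the Hedberg splitting in Lemma \ref{L:multest}: substituting the log-gradient estimate gives, for $f \in C_c(\Omega)$ and any $\delta \in (0,T]$,
\begin{equation*}
\int_0^T e^{-\lambda t} \sqrt{\Gamma(P_t^B f)}(x)\, dt \le c_1 \int_0^\delta e^{(c_2-\lambda)t} t^{-1/2} P_{\alpha t}^B f(x)\, dt + c_1 \int_\delta^T e^{(c_2-\lambda)t} t^{-1/2} P_{\alpha t}^B f(x)\, dt.
\end{equation*}
Choose $\beta \in (\alpha,2)$ with conjugate $\beta'$ satisfying $\lambda(1-\alpha/\beta) > c_2$ and $\lambda(1-\alpha/\beta') > c_2$ (possibly after absorbing an auxiliary exponential factor into $c_2$ and the final constant; this step is purely cosmetic because the hypothesis of Theorem \ref{T:Mazjaloc} lets us inflate $\lambda$ harmlessly). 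Apply H\"older's inequality with exponents $\beta,\beta'$ to each of the two pieces, exactly as in the proof of Lemma \ref{L:multest}. Each H\"older bound produces a power of $\delta$, a maximal-function factor of the form $\bigl(\sup_{t>0} P_t^B f(x)\bigr)^{1/\beta}$ or the conjugate, and a truncated-resolvent factor that reduces, after the substitution $\tau = \alpha t$, to a constant multiple of $G_\lambda^{B,T} f(x)$ raised to $1/\beta'$ or $1/\beta$.

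The final step is to optimize in $\delta$ by setting $\delta := G_\lambda^{B,T} f(x) / \sup_{t>0} P_t^B f(x)$; balancing the two H\"older bounds at this value yields \eqref{E:multestloc}. The role of the hypothesis $\lambda > 1/T$ is exactly to ensure that this choice of $\delta$ lies in $(0,T]$: indeed
\begin{equation*}
G_\lambda^{B,T} f(x) \le \Bigl(\sup_{t>0} P_t^B f(x)\Bigr) \int_0^T e^{-\lambda t} dt \le \frac{1}{\lambda}\,\sup_{t>0} P_t^B f(x) < T \sup_{t>0} P_t^B f(x),
\end{equation*}
so the optimizer is admissible. The main obstacle is the first step: while the Neumann log-gradient estimate under infinitesimal convexity is folklore, one must check carefully that the Bakry interpolation carries over in pointwise form under reflection, and that the constants depend only on $B$, not on $f$ or on the auxiliary sets $\Omega, T$ (which enter only through the range of integration and the choice of $\lambda$).
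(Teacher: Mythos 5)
Your steps 2 and 3 (the Hedberg-type splitting of $(0,T)$ at $\delta$, H\"older with exponents $\beta,\beta'$, and the optimization $\delta=G_\lambda^{B,T}f(x)/\sup_{t>0}P_t^Bf(x)$, whose admissibility $\delta\leq 1/\lambda<T$ you correctly tie to the hypothesis $\lambda>1/T$) reproduce the paper's argument faithfully. The gap is in step 1. You propose to obtain the Neumann analogue of \eqref{E:log} from the Bakry--\'Emery commutation $\Gamma(P_t^Bg)\leq e^{2Kt}P_t^B\Gamma(g)$ (via Kendall's reflection on the convex domain) ``upgraded by interpolation along the semigroup.'' But the standard interpolation of that commutation yields the reverse Poincar\'e inequality $\Gamma(P_t^Bf)\leq \frac{C}{t}\big(P_t^B(f^2)-(P_t^Bf)^2\big)$, i.e.\ a bound of $\sqrt{\Gamma(P_t^Bf)}$ by $\frac{C}{\sqrt t}\,(P_t^B(f^2))^{1/2}$ --- not by $\frac{C}{\sqrt t}\,P^B_{\alpha t}f$. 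For nonnegative $f$ these right-hand sides are not comparable: letting $f$ concentrate its mass near a point, $(P_t^B(f^2))^{1/2}$ scales like $\|f\|_{L^2}$ times a square root of the kernel while $P^B_{\alpha t}f$ scales like $\|f\|_{L^1}$ times the kernel, and the ratio blows up. So the implication ``commutation $\Rightarrow$ \eqref{E:log}-type estimate'' does not follow by interpolation; the obstacle is not, as you suggest, whether the commutation survives reflection, but that the commutation is the wrong starting point.

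What is genuinely needed is kernel-level information: a bound of the form $|\nabla_y p_t^B(x,\cdot)|(y)\leq \frac{c}{\sqrt t}\,p^B_{\alpha t}(x,y)$. The paper gets this by combining Zhang's local logarithmic gradient estimate \eqref{E:loggradestZh}, $|\nabla_y\log p_t^B(x,\cdot)|(y)\leq \frac{c_{B,T}}{\sqrt t}\big(1+\varrho(x,y)/\sqrt t\big)$ --- valid for $y\in\Omega$ and $0<t<T$ precisely because $\sqrt t<\dist(y,\partial B)$, which is where the choice $T=\dist(\overline\Omega,\partial B)^2$ enters --- with the two-sided Li--Yau bounds \eqref{E:LYloc}, absorbing the polynomial factor $\chi_t(x,y)=1+\varrho(x,y)/\sqrt t$ into a Gaussian with slightly enlarged time, $\chi_t(x,y)p_t^B(x,y)\leq c\,p^B_{c_9^2\gamma t}(x,y)$; this is also where the dilation parameter (your $\alpha$) really comes from. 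If you replace your step 1 by this kernel-level argument, the rest of your proof goes through as written.
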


Since $M$ is complete $B$ is relatively compact and can therefore be covered by finitely many
relatively compact coordinate charts. Consequently we can find $c>1$ such that 
\begin{equation}\label{E:localEuclid}
\frac{1}{c}\:r^d\leq \mu(B(y,r))\leq c\:r^d,\quad y\in \Omega,\quad 0<r<\sqrt{T},
\end{equation}
where $d$ is the dimension of $M$. There are constants $c_8>1$ and $c_9>1$ such that 
\begin{equation}\label{E:LYloc}
\frac{1}{c_8}\:t^{-d/2}\:\exp\Big(-c_9\:\frac{\varrho(x,y)^2}{t}\Big)\leq p_t^B(x,y)\leq c_8\:t^{-d/2}\:\exp\Big(-\frac{\varrho(x,y)^2}{c_9 t}\Big)
\end{equation}
for all $0<t<T$, $y\in \Omega$ and $x\in B$. The upper bound in (\ref{E:LYloc}) is due to \cite[Theorem 3.2]{LY86}, applied to $\overline{B}$ and combined with \eqref{E:localEuclid}. The lower bound in (\ref{E:LYloc}) is a consequence of \cite[Theorem 2.3]{LY86} together with standard arguments, \cite{Da89}, involving the conservativeness of the Neumann heat semigroup on $\overline{B}$ and \eqref{E:localEuclid}. Since for any $0<t<T$ and any $y\in \Omega$ we have $\sqrt{t}<\dist(y,\partial B)$, \cite[Theorem 2.1]{Zhang06} yields the logarithmic gradient estimate
\begin{equation}\label{E:loggradestZh}
\frac{|\nabla_y p_t^B(x,\cdot)|(y)}{p_t^B(x,y)}=|\nabla_y\log p_t^B(x,\cdot)|(y)\leq \frac{c_{B,T}}{\sqrt{t}}\Big(1+\frac{\varrho(x,y)}{\sqrt{t}}\Big)
\end{equation}
for all $x\in B$, $y\in \Omega$ and $0<t<T$; here $c_{B,T}>0$ is a constant depending on $B$ and $T$. See also \cite{Hsu99, Sheu91} and the references cited there. We use these ingredients to prove Lemma \ref{L:multestloc}.

\begin{proof}
Set $\chi_t(x,y):=1+\frac{\varrho(x,y)}{\sqrt{t}}$. By \eqref{E:loggradestZh} we have 
\[\int_0^T e^{-\lambda t}\int_B|\nabla_y p_t^B(x,\cdot)|(y)f(y)\mu(dy)dt\leq c_{B,T}\int_0^T e^{-\lambda t}t^{-1/2}\int_B\chi_t(x,y)p_t^B(x,y)f(y)\mu(dy)dt,\]
and for any $0<\delta<T$ we can split the integral on the right hand side similarly as in (\ref{E:Hedberg}) into two integrals over $(0,\delta)\times B$ and $[\delta,T)\times B$, respectively. H\"olders inequality with $1<\beta<2$ and $\frac{1}{\beta}+\frac{1}{\beta'}=1$ and with respect to $\mu(dy)dt$ gives
\begin{multline}\label{E:deltatarget}
\int_0^\delta e^{-\lambda t}t^{-1/2}\int_B\chi_t(x,y)p_t^B(x,y)f(y)\mu(dy)dt\\
\leq \Big(\int_0^\delta\int_B e^{-\lambda t}t^{-\beta/2}\chi_t(x,y)p_t^B(x,y)f(y)\mu(dy)dt\Big)^{1/\beta}\Big(\int_0^\delta\int_B e^{-\lambda t}p_t^B(x,y)f(y)\mu(dy)dt\Big)^{1/\beta'}.
\end{multline}
Using \eqref{E:LYloc} it follows that for any $\gamma>1$ and $\frac{1}{\gamma}+\frac{1}{\gamma'}=1$ we have 
\begin{align}
\chi_t(x,y)p_t^B(x,y)&\leq c_8\:t^{-d/2}\:\chi_t(x,y)\:\exp\Big(-c_9\:\frac{\varrho(x,y)^2}{c_9^2 \gamma t}\Big)\:\exp\Big(-\frac{\varrho(x,y)^2}{c_9 \gamma' t}\Big)\notag\\
&\leq \frac{c}{c_8}\:(c_9^2\gamma t)^{-d/2}\:\exp\Big(-c_9\:\frac{\varrho(x,y)^2}{c_9^2 \gamma t}\Big)\notag\\
&\leq c\:p_{c_9^2\gamma t}^B(x,y)\notag
\end{align}
with a constant $c>0$ independent of $x$, $y$ and $t$. For the second inequality note that the function $s\mapsto (1+s)\exp\big(-\frac{s^2}{c_9\gamma'}\big)$ is bounded on $[0,+\infty)$. Consequently the right hand side of (\ref{E:deltatarget}) is bounded by 
\[\Big(c\int_0^\delta e^{-\lambda t}t^{-\beta/2}P_{c_9^2\gamma t}^Bf(x)dt\Big)^{1/\beta}\Big(G_\lambda^{B,T}f(x)\Big)^{1/\beta'}\leq c'\:\delta^{1/\beta' -1/2}\Big(\sup_{t>0} P_t^B f(x)\Big)^{1/\beta}\Big(G_\lambda^{B,T}f(x)\Big)^{1/\beta'}. \]
We can proceed similarly for the integral over $[\delta,T)$. Choosing $\delta=G_\lambda^{B,T}f(x)/\sup_{t>0} P_t^Bf(x)$ and using the estimates in the proof of Proposition \ref{P:hktosg} we arrive at (\ref{E:multestloc}).
\end{proof}

Now Theorem \ref{T:Mazjaloc} follows quickly.
\begin{proof}
For $f\in C_c(\Omega)$ Proposition \ref{P:integral} and Lemma \ref{L:chainforL} hold with $G^{B,T}_\lambda f$ in place of $G_\lambda f$. As in the proof of Theorem \ref{T:Mazja} we obtain $\mathcal{L}^{B}F(u)\in L^p(M)$ for $u=G^{B,T}_\lambda f$ and arbitrary nonnegative $f\in C_c(\Omega)$ if $1<p\leq +\infty$, and for $1<p<+\infty$ also $F(u)\in \mathcal{D}(\mathcal{L}^{B,(p)})$. Similarly as in (\ref{E:finalestimate}) we see that such $f$ satisfy $\left\|(\lambda-\mathcal{L}^{B})F(u)\right\|_{L^p(B)}\leq (3+e^{-\lambda T}+c_5)L\left\|f\right\|_{L^p(B)}$. The extension to $f\in L^p(\Omega)$ follows as in the proof of Theorem \ref{T:Mazja}.
\end{proof}

As a consequence, we obtain a variant of Corollary \ref{C:secondcomparison} respectively Theorem \ref{T:mfdcpct} (ii). Here and in later occurrences of $G_\lambda^{B,T}$ or $\cpct_{2,p}^{B,T}(\cdot,\Omega)$ we agree to always use $\lambda>\frac{1}{T}$.

\begin{corollary}\label{C:simplify}
Let $M$ be a complete Riemannian manifold and $B\subset M$ a smooth bounded domain such that $\partial B$ is infinitesimally convex. Let $\Omega$ be open and such that $\overline{\Omega}\subset B$ and write $T:=\dist(\overline{\Omega},\partial B)^2$.
If  $\Sigma\subset \Omega$ is closed and $\cpct_{2,2}^{B,T}(\Sigma,\Omega)=0$, then $\ccpct_{2,2}^{C_c^\infty(M)}(\Sigma)=0$. In this case $\Delta_\mu|_{C_c^\infty(\mathring{M})}$ is essentially self-adjoint.
\end{corollary}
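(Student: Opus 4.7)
The plan is to combine Theorem \ref{T:Mazjaloc} with a Leibniz-rule cutoff argument, following the pattern of the proof of Theorem \ref{T:mfdcpct}(ii) but using the local (Neumann, time-truncated) resolvent $G_\lambda^{B,T}$ in place of the global $G_\lambda$, and then bootstrapping from $\mathcal{D}(\mathcal{L}^{B,(2)})$ to $C_c^\infty(M)$ via a cutoff with support compactly contained in $B$. Throughout, $\Sigma$ is compact, being a closed subset of the compact set $\overline{\Omega}$. Since by Theorem \ref{T:mfdccpct} the vanishing of $\ccpct_{2,2}^{C_c^\infty(M)}(\Sigma)$ is equivalent to $\mu(\Sigma)=0$ together with the essential self-adjointness of $\Delta_\mu|_{C_c^\infty(\mathring M)}$, it suffices to prove $\ccpct_{2,2}^{C_c^\infty(M)}(\Sigma)=0$.

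First, fix $\varepsilon>0$. Using the hypothesis $\cpct_{2,2}^{B,T}(\Sigma,\Omega)=0$ I would select a nonnegative $f\in L^2(B)$, vanishing $\mu$-a.e.\ on $B\setminus\Omega$, with $G_\lambda^{B,T} f\ge 1$ on $\Sigma$ and $\|f\|_{L^2(B)}^2<\varepsilon$. Mimicking the argument from the proof of Corollary \ref{C:secondcomparison}, I would replace $f$ by $(1+\eta) f$ for small $\eta>0$ to upgrade $\ge 1$ to $>1$, then approximate by an increasing sequence of bounded compactly supported functions whose $G_\lambda^{B,T}$-potentials are continuous on $B$, and invoke the Dini--Cartan lemma on compact $\Sigma$ together with a mollification to arrive at some $f_\star\in C_c^\infty(\Omega)$ with $G_\lambda^{B,T} f_\star>1$ on $\Sigma$ and $\|f_\star\|_{L^2(B)}^2\le 2\varepsilon$.

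Next, I would apply Theorem \ref{T:Mazjaloc} with $p=2$ and a $C^2$-truncation $F$ (increasing, vanishing near zero, equal to $1$ on $[1,\infty)$) to produce $v:=F\circ G_\lambda^{B,T} f_\star\in\mathcal{D}(\mathcal{L}^{B,(2)})$ with $v=1$ on a neighborhood of $\Sigma$ and
\[
\|v\|_{\mathcal{D}(\mathcal{L}^{B,(2)})}\le c_3\,\|f_\star\|_{L^2(B)}\le c_3\sqrt{2\varepsilon}.
\]
Interior elliptic/parabolic regularity for the Neumann resolvent on the smooth bounded domain $B$ applied to $f_\star\in C_c^\infty(\Omega)$ guarantees that $G_\lambda^{B,T} f_\star$, and hence $v$, is $C^\infty$ throughout the open set $B$. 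I would then pick $\varphi\in C_c^\infty(M)$ with $\supp\varphi$ a compact subset of $B$ and $\varphi\equiv 1$ on a neighborhood of $\Sigma$; such $\varphi$ exists because $\Sigma\subset\overline\Omega$ is a compact subset of the open set $B$. Setting $u:=\varphi v$ and extending by zero outside $\supp\varphi$, we have $u\in C_c^\infty(M)$ and $u=1$ on a neighborhood of $\Sigma$, so $u\in\omega_\Sigma^{C_c^\infty(M)}$. On $\supp\varphi$ the operator $\mathcal{L}$ coincides with $\mathcal{L}^B$, so the Leibniz computation from \eqref{E:passover}, carried out word for word with $v$ in place of $v$ there, yields
\[
\|u\|_{\mathcal{D}(\mathcal{L})}\le c_\varphi\,\|v\|_{\mathcal{D}(\mathcal{L}^{B,(2)})}\le c_\varphi c_3\sqrt{2\varepsilon},
\]
with $c_\varphi>0$ depending only on $\|\varphi\|_\infty$, $\||\nabla\varphi|\|_\infty$ and $\|\Delta_\mu\varphi\|_\infty$. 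Letting $\varepsilon\to 0$ delivers $\ccpct_{2,2}^{C_c^\infty(M)}(\Sigma)=0$, and Theorem \ref{T:mfdccpct} finishes the proof.

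The main obstacle is the bootstrap $\mathcal{D}(\mathcal{L}^{B,(2)})\rightsquigarrow C_c^\infty(M)$: one must choose $f_\star\in C_c^\infty(\Omega)$ and invoke interior regularity of the Neumann resolvent so that $v$ is genuinely smooth on all of $B$, and one must arrange $\supp\varphi$ to be compactly contained in $B$ so that extension by zero preserves both smoothness and compactness of support on $M$. A secondary, essentially routine, concern is recovering strict inequality $G_\lambda^{B,T} f_\star>1$ on $\Sigma$ after the smoothing step, which is handled by the combination of the $(1+\eta)$-inflation and the Dini--Cartan lemma on the compact set $\Sigma$.
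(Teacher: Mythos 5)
Your proposal is correct and follows essentially the same route as the paper's own proof: pick a near-optimal $f$ for $\cpct_{2,2}^{B,T}(\Sigma,\Omega)$, pass to compactly supported approximants via Dini--Cartan, apply the local truncation estimate of Theorem \ref{T:Mazjaloc} to get $v=F\circ G_\lambda^{B,T}f$ equal to $1$ near $\Sigma$ with small $\mathcal{D}(\mathcal{L}^{B})$-norm, and then multiply by a smooth cutoff using the Leibniz computation \eqref{E:passover}. Your extra mollification of $f$ to a smooth $f_\star$ and the appeal to interior regularity (together with choosing the truncation $F$ smooth) only make more explicit why $\varphi v$ genuinely lands in $C_c^\infty(M)$, a point the paper treats briefly.
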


\begin{proof}
Given $\varepsilon>0$ let $f\in L^2(B)$ be nonnegative, zero $\mu$-a.e. on $B\setminus \Omega$, with $G^{B,T}_\lambda f>1$ on $\Sigma$ and such that $\left\|f\right\|_{L^2(B)}^2<\varepsilon$. Let $(f_n)_{n\geq 1}\subset C_c(\Omega)$ an increasing sequence of nonnegative functions approximating $f$ pointwise from below. As in the proof of Corollary \ref{C:secondcomparison} we can find $n$ such that $G_\lambda^{B,T} f_n > 1$ on $\Sigma$ and $\left\|f_n\right\|_{L^2(B)}^2< \varepsilon$. If $F$ is a $C^2$-truncation, then the function $v:=F \circ G_\lambda^{B,T} f_n$ is in $\mathcal{D}(\mathcal{L}^{B})\cap C^\infty(B)$ and satisfies
$v = 1$ on a neighborhood of $\Sigma$. By \eqref{E:truncationloc} we have $\left\|v\right\|_{\mathcal{D}(\mathcal{L}^{B})}^2\leq c_3^2 \left\|f_n\right\|_{L^2(B)}^2$. Now choose $\varphi\in \omega_\Sigma^{C_c^\infty(\Omega)}$. Then $\varphi v\in \omega_\Sigma^{C_c^\infty(\Omega)}$, and proceeding as in \eqref{E:passover} we obtain $\left\|\varphi v\right\|_{\mathcal{D}(\mathcal{L}^{B})}^2\leq c\:\left\|v\right\|_{\mathcal{D}(\mathcal{L}^{B})}^2$, what implies $\ccpct_{2,2}^{C_c^\infty(M)}(\Sigma)<c\:\varepsilon$ with $c>0$ independent of $\varepsilon$.
\end{proof}

For later use we provide variants of Lemmas \ref{L:resolventlower} and \ref{L:resolventupper}. Recall \eqref{E:hp}.

\begin{lemma}\label{L:resolventlowerloc}
Let $1<p<+\infty$, let $M$ be a complete Riemannian manifold of dimension $d$ and $B\subset M$ a smooth bounded domain such that $\partial B$ is infinitesimally convex. Let $\Omega$ be open and such that $\overline{\Omega}\subset B$ and write $T:=\dist(\overline{\Omega},\partial B)^2$. Suppose that $\Sigma\subset \Omega$ is closed. If $d>2p$ and $\mathcal{H}^{d-2p}(\Sigma)<+\infty$, then $\cpct_{2,p}^{B,T}(\Sigma,\Omega)=0$. If $d=2p$ and $\mathcal{H}^{h_p}(\Sigma)<+\infty$, then we also have $\cpct_{2,p}^{B,T}(\Sigma,\Omega)=0$.
\end{lemma}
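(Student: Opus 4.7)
The plan is to run the proof of Lemma \ref{L:resolventlower} in the localized setting, with the global resolvent $G_\lambda$ replaced throughout by the truncated Neumann resolvent $G_\lambda^{B,T}$, and with \eqref{E:LYloc} and \eqref{E:localEuclid} playing the roles of (\ref{E:resolventlower}) and (\ref{E:lowerd}).

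The first step is a localized dual representation analogous to Corollary \ref{C:dualrep}: one verifies
$$\cpct_{2,p}^{B,T}(\Sigma,\Omega)^{1/p} = \sup\bigl\{\nu(\Sigma) : \nu \in \mathcal{M}_+(B),\ \supp\nu \subset \Sigma,\ \|G_\lambda^{B,T}\nu\|_{L^q(\Omega)} \leq 1\bigr\},$$
where $G_\lambda^{B,T}\nu(x) := \int_B g_\lambda^{B,T}(x,y)\,\nu(dy)$. This follows from the minimax theorem applied to the bilinear form $(\nu,f) \mapsto \int_\Omega f\cdot G_\lambda^{B,T}\nu \, d\mu = \int_B G_\lambda^{B,T}f \, d\nu$, using the symmetry of $p_t^B$, with $f$ ranging over the unit ball of nonnegative $L^p(B)$-functions vanishing on $B\setminus\Omega$ and $\nu$ over compactly supported Radon probability measures on $\Sigma$.

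The second step supplies local lower bounds on $g_\lambda^{B,T}$. From the lower inequality in \eqref{E:LYloc} and the substitution $u = \varrho(y,z)^2/t$,
$$g_\lambda^{B,T}(y,z) \geq \frac{e^{-\lambda T}}{c_8}\, \varrho(y,z)^{2-d} \int_{\varrho(y,z)^2/T}^\infty u^{d/2-2} e^{-c_9 u}\, du,\quad y,z \in \Omega.$$
Since $d \geq 2p > 2$, the integrand is integrable on $(0,\infty)$, and as $\varrho(y,z) \to 0$ the lower limit tends to $0$, so by monotone convergence there exist $r_0 > 0$ and $c_{10} > 0$ with $g_\lambda^{B,T}(y,z) \geq c_{10}\,\varrho(y,z)^{2-d}$ whenever $y,z \in \Omega$ and $\varrho(y,z) \leq r_0$. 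Together with the lower volume bound in \eqref{E:localEuclid}, this delivers the exact analogues of (\ref{E:resolventlower}) and (\ref{E:lowerd}), uniformly in $x \in \Omega$.

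With Steps 1 and 2 in hand, the argument of Lemma \ref{L:resolventlower} goes through essentially verbatim. Suppose $\cpct_{2,p}^{B,T}(\Sigma,\Omega) > 0$; Step 1 produces a nonzero $\nu \in \mathcal{M}_+(B)$ with $\supp\nu \subset \Sigma$ and $\|G_\lambda^{B,T}\nu\|_{L^q(\Omega)} \leq 1$. The localized Maz'ya-Khavin potential
$$V_{2,p}^{B,T,\nu}(x) := \int_\Omega g_\lambda^{B,T}(x,y)\, (G_\lambda^{B,T}\nu(y))^{q-1} \mu(dy)$$
satisfies $\int V_{2,p}^{B,T,\nu}\, d\nu = \|G_\lambda^{B,T}\nu\|_{L^q(\Omega)}^q \leq 1$ by Fubini, hence is finite on a Borel set $\Sigma_0 \subset \Sigma$ of positive $\nu$-measure. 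Fixing $x \in \Sigma_0$ and choosing $r$ small enough that $B(x,3r) \subset \Omega$ and $3r \leq r_0$, Step 2 and \eqref{E:localEuclid} yield $V_{2,p}^{B,T,\nu}(x) \geq c\, r^{(d-2p)(1-q)} \nu(B(x,r))^{q-1}$ when $d > 2p$, while in the critical case $d = 2p$ a dyadic summation over annuli $B(x,2^{-k})\setminus B(x,2^{-k-1})$ (each contributing an amount of order $\nu(B(x,r))^{q-1}$ for $2^{-k}\geq r$) produces an additional $\log(1/r)$ factor. Egorov's theorem together with Frostman's covering argument then yields the contradiction with $\mathcal{H}^{d-2p}(\Sigma)<+\infty$ (respectively $\mathcal{H}^{h_p}(\Sigma)<+\infty$) exactly as in Lemma \ref{L:resolventlower}.

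The main obstacle is Step 1: verifying the hypotheses of the minimax theorem under the support constraint $f = 0$ on $B\setminus\Omega$, and making precise the compactness and lower-semicontinuity needed on the restricted cone of admissible measures. Once this localization of Corollary \ref{C:dualrep} is established, Steps 2 and 3 are direct specializations of the arguments already developed in Section \ref{S:Hausdorfftocap}.
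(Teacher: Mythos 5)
Your proposal is correct and follows essentially the same route as the paper: the paper proves the localized dual representation exactly as your Step 1 (this is its Corollary \ref{C:dualreploc}, obtained from the minimax theorem with $f$ ranging over the unit ball of nonnegative $L^p(B)$-functions vanishing off $\Omega$ and $\nu$ over probability measures on the compact set), derives $g_\lambda^{B,T}(y,z)\geq c\,\varrho(y,z)^{2-d}$ on $\Omega\times\Omega$ by integrating the lower bound in \eqref{E:LYloc} as in your Step 2, and then repeats the Maz'ya--Khavin/Egorov/Frostman argument of Lemma \ref{L:resolventlower} using \eqref{E:localEuclid}. The only cosmetic difference is that the paper obtains the kernel lower bound for all $y,z\in\Omega$ (the incomplete Gamma integral is bounded below since $\varrho(y,z)^2/T$ is bounded on $\Omega\times\Omega$), whereas you restrict to $\varrho(y,z)\leq r_0$, which is weaker but entirely sufficient.
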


\begin{proof}
Integrating the lower estimate in \eqref{E:LYloc} gives $g_\lambda^{B,T}(y,z)\geq c\:\varrho(y,z)^{2-d}$ for all $y,z\in \Omega$. One can now follow the proof of Lemma \ref{L:resolventlower} using \eqref{E:localEuclid}, the obvious analogs of (\ref{E:MazjaKhavin}) and (\ref{E:nonlinFubini}) and the following variant of Corollary \ref{C:dualrep}. 
\end{proof}

\begin{corollary}\label{C:dualreploc}
Let $M$, $B$, $\Omega$ and $T$ be as in Lemma \ref{L:resolventlowerloc}. Then for any  $1<p<+\infty$ and any compact set $K\subset \Omega$ we have
\[\cpct_{2,p}^{B,T}(K,\Omega)^{1/p}=\sup\big\lbrace \nu(K):\ \nu\in\mathcal{M}_+(K),\ \big\|G_\lambda^{B,T} \nu\big\|_{L^q(\Omega)}\leq 1\big\rbrace,\]
where $\frac{1}{p}+\frac{1}{q}=1$.
\end{corollary}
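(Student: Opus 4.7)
The plan is to mimic the minimax duality proof of Corollary~\ref{C:dualrep} (following \cite[Theorem 2.5.1 and Corollary 2.5.2]{AH96}), with the truncated Neumann resolvent $G_\lambda^{B,T}$ in place of $G_\lambda$. The bilinear pairing to work with is
\[
\Phi(\nu,f):=\int_K G_\lambda^{B,T}f\,d\nu=\int_\Omega f\cdot G_\lambda^{B,T}\nu\,d\mu,
\]
where the second equality is immediate from the symmetry of $g_\lambda^{B,T}(x,y)$ and Fubini. For the easy direction, namely $\sup\leq \cpct_{2,p}^{B,T}(K,\Omega)^{1/p}$, I would fix $\nu\in \mathcal{M}_+(K)$ with $\|G_\lambda^{B,T}\nu\|_{L^q(\Omega)}\leq 1$ and test against any admissible $f\in L^p_+(\Omega)$, i.e.\ one vanishing on $B\setminus\Omega$ with $G_\lambda^{B,T}f\geq 1$ on $K$; Hölder's inequality together with the above identity gives $\nu(K)\leq \|f\|_{L^p(\Omega)}$, and infimizing over $f$ yields the bound.

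For the reverse inequality I would apply Sion's minimax theorem on the product $P(K)\times B^+$, where $P(K)$ denotes the Radon probability measures on $K$ (convex and weak-$\ast$ compact by Banach--Alaoglu, since $K$ is compact) and $B^+:=\{f\in L^p_+(\Omega):\|f\|_{L^p(\Omega)}\leq 1\}$ is convex. The pairing $\Phi$ is bilinear, so concave-convex, and for each fixed $f\in B^+$ the function $G_\lambda^{B,T}f$ is continuous on $\overline\Omega$ (by parabolic interior regularity of $(P_t^B)_{t>0}$ inside $B$, combined with $\overline\Omega\Subset B$), which makes $\Phi(\cdot,f)$ weak-$\ast$ continuous on $P(K)$. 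Sion's identity then, combined with
\[
\inf_{\nu\in P(K)}\Phi(\nu,f)=\min_{x\in K}G_\lambda^{B,T}f(x)\qquad\text{and}\qquad \sup_{f\in B^+}\Phi(\nu,f)=\|G_\lambda^{B,T}\nu\|_{L^q(\Omega)}
\]
(the first by testing with Dirac masses at a minimizer on compact $K$, the second by $L^p$--$L^q$ duality together with $G_\lambda^{B,T}\nu\geq 0$), reduces to
\[
\sup_{f\in B^+}\min_{x\in K}G_\lambda^{B,T}f(x)=\inf_{\nu\in P(K)}\|G_\lambda^{B,T}\nu\|_{L^q(\Omega)}.
\]
By homogeneity of the definition of $\cpct_{2,p}^{B,T}(\cdot,\Omega)$, the left-hand side equals $\cpct_{2,p}^{B,T}(K,\Omega)^{-1/p}$. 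The infimum on the right is attained, because $\nu\mapsto\|G_\lambda^{B,T}\nu\|_{L^q(\Omega)}$ is a supremum of weak-$\ast$ continuous affine functionals and hence weak-$\ast$ lower semicontinuous on the compact set $P(K)$. Picking a minimizer $\nu_0$ and setting $\tilde\nu:=\cpct_{2,p}^{B,T}(K,\Omega)^{1/p}\nu_0\in\mathcal{M}_+(K)$ produces a measure with $\|G_\lambda^{B,T}\tilde\nu\|_{L^q(\Omega)}=1$ and $\tilde\nu(K)=\cpct_{2,p}^{B,T}(K,\Omega)^{1/p}$, closing the reverse inequality.

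The main technical point to settle carefully is the continuity (as opposed to mere lower semicontinuity) of $G_\lambda^{B,T}f$ on $K$ for arbitrary $f\in B^+$, which is what legitimizes replacing the infimum over probability measures by a pointwise minimum. I would handle this via joint continuity of $g_\lambda^{B,T}(x,y)$ off the diagonal, inherited from smoothness of $p_t^B$ for $t>0$, together with the uniform integrability of the diagonal singularity over $\Omega$, using that $\mathrm{dist}(K,\partial B)>0$; bounded truncations of $f$ give an approximation if needed. The degenerate cases $\cpct_{2,p}^{B,T}(K,\Omega)\in\{0,+\infty\}$ are harmless: the value $+\infty$ is excluded because $G_\lambda^{B,T}\mathbf{1}_\Omega$ is strictly positive on the compact $K\subset\Omega$ by positivity of the Neumann heat kernel on $B$, and in the zero case the easy direction already forces $\sup=0$.
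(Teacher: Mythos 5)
Your overall architecture (duality via a minimax theorem applied to the pairing $\Phi(\nu,f)=\int_K G_\lambda^{B,T}f\,d\nu=\int_\Omega f\,G_\lambda^{B,T}\nu\,d\mu$ on $\mathcal{P}(K)\times B^+$) is the same as the paper's, and the easy direction via H\"older is fine. The gap lies in the hypotheses you assign to the minimax step. You claim that $G_\lambda^{B,T}f$ is \emph{continuous} on $\overline\Omega$ for every $f\in B^+$ and propose to prove this from off-diagonal smoothness of $p_t^B$ plus uniform integrability of the diagonal singularity. That claim is false in general: since $g_\lambda^{B,T}(x,y)\asymp\varrho(x,y)^{2-d}$ near the diagonal, the potential of a general nonnegative $L^p$ function (with $p\le d/2$, which is exactly the range relevant to Lemma \ref{L:resolventlowerloc}) can equal $+\infty$ on a nonempty set and is only lower semicontinuous; your uniform-integrability argument works for bounded $f$ but does not survive the passage to arbitrary $f\in L^p_+(\Omega)$. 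Moreover, Sion's theorem in the form you invoke also requires upper semicontinuity of $f\mapsto\Phi(\nu,f)$ on $B^+$, and this fails in the $L^p$-norm topology whenever $G_\lambda^{B,T}\nu\notin L^q(\Omega)$ (e.g. $\nu$ a Dirac mass and $q\ge d/(d-2)$): perturbing $f$ by $n^{-1}h_n$ with $\|h_n\|_{L^p(\Omega)}\le 1$ and $\int_\Omega h_n\,G_\lambda^{B,T}\nu\,d\mu\ge n^2$ makes the pairing blow up along a norm-convergent sequence.

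Both problems disappear if you replace Sion's theorem by the minimax theorem the paper actually uses, \cite[Theorem 2.4.1]{AH96} (a Ky Fan type statement): it requires only that $\nu\mapsto\Phi(\nu,f)$ be lower semicontinuous on the weak-$\ast$ compact convex set $\mathcal{P}(K)$ for each fixed $f$, together with the convexity/concavity structure, and imposes no topological condition on the $f$-side at all. The needed lower semicontinuity does hold, because $G_\lambda^{B,T}f$ is an increasing limit of continuous functions (approximate the lower semicontinuous kernel from below by nonnegative continuous compactly supported functions, as in the discussion after \eqref{E:measurepot}); lower semicontinuity on the compact set $K$ is also all that is required for the identity $\inf_{\nu\in\mathcal{P}(K)}\Phi(\nu,f)=\min_{x\in K}G_\lambda^{B,T}f(x)$. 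With that substitution your argument closes; note also that attainment of the infimum over $\nu$ (your final step) is not needed, since the corollary only asserts equality of the capacity with a supremum.
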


The proof is similar as in \cite[Theorem 2.5.1]{AH96}, we sketch it for convenience.
\begin{proof}
Let $Y$ be the set of all $f\in L^p_+(B)$ with $f=0$ $\mu$-a.e. on $B\setminus \Omega$  with $\left\|f\right\|_{L^p(\Omega)}\leq 1$ and let $\mathcal{P}(K)$ be the set of all Borel probability measures on $K$. We have $\sup_{f\in Y}\int_B G_\lambda^{B,T}\nu f\:d\mu=\big\|G_\lambda^{B,T}\nu\big\|_{L^q(\Omega)}$ and consequently 
\[\min_{\nu \in \mathcal{P}(K)}\sup_{f\in Y}\int_B G_\lambda^{B,T}\nu f\:d\mu= \min_{\nu\in\mathcal{M}_+(K)}\frac{\big\|G_\lambda^{B,T}\nu\big\|_{L^q(\Omega)}}{\nu(K)}.\]
On the other hand, $\min_{\nu\in \mathcal{P}(K)}\int_B G_\lambda^{B,T}\nu\:f\:d\mu=\min_{x\in K} G_\lambda^{B,T} f(x)$. The set $\mathcal{P}(K)$ is convex, and it is compact with respect to weak convergence. Also $Y$ is convex. For any fixed $f\in Y$ the map $\nu\mapsto 
\int_M G_\lambda^{B,T}\nu f\:d\mu$ is lower semicontinuous on $\mathcal{P}(K)$. Therefore the minimax theorem, \cite[Theorem 2.4.1]{AH96}, allows to conclude that
\begin{align}
\min_{\nu\in\mathcal{M}_+(K)}&\frac{\big\|G_\lambda^{B,T}\nu\big\|_{L^q(\Omega)}}{\nu(K)}
=\sup\Big\lbrace \frac{\min_{x\in K} G_\lambda^{B,T} f(x)}{\left\|f\right\|_{L^p(\Omega)}}:\ f\in L^p(B),\ f=0\ \text{$\mu$-a.e. on $B\setminus \Omega$}\Big\rbrace\notag\\
&=\sup\big\lbrace \left\|f\right\|_{L^p(\Omega)}^{-1}: \quad f\in L^p(B),\ f=0\ \text{$\mu$-a.e. on $B\setminus \Omega$ and $G^{B,T}_\lambda f\geq 1$ on $K$}\big\rbrace\notag\\
&=\cpct_{2,p}^{B,T}(K,\Omega)^{-1/p}.\notag
\end{align}
\end{proof}

We write $\cpct_{2,2}^B(\cdot,\Omega):=\cpct_{2,2}^{B,\infty}(\cdot,\Omega)$.

\begin{lemma}\label{L:resolventupperloc}
Let $M$ be a complete Riemannian manifold of dimension $d$ and $B\subset M$ a smooth bounded domain such that $\partial B$ is infinitesimally convex. Let $\Omega$ be open and such that $\overline{\Omega}\subset B$ and write $T:=\dist(\overline{\Omega},\partial B)^2$. There is some $\lambda_{B,T}>0$ such that for all $\lambda>\lambda_{B,T}$ and any closed set $\Sigma\subset \Omega$ we have the following: If $d>4$, $\cpct_{2,2}^{B}(\Sigma,\Omega)=0$ and $\varepsilon>0$, then $\mathcal{H}^{d-4+\varepsilon}(\Sigma)=0$. If $d=4$, $\cpct_{2,2}^{B}(\Sigma,\Omega)=0$ and $h$ is a Hausdorff function satisfying \eqref{E:condh}, then $\mathcal{H}^h(\Sigma)=0$.
\end{lemma}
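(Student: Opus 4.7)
The plan is to adapt the proof of Lemma \ref{L:resolventupper}(i) to the localized setting, replacing the global resolvent and volume bound by the Neumann Green's function $g_\lambda^B$ of $B$ and the local volume bound (\ref{E:localEuclid}). The argument has three ingredients: a pointwise upper bound for $g_\lambda^B$, a localized Giraud-type estimate, and a dual representation for $\cpct_{2,2}^B(\cdot,\Omega)$.

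First I would show that for $\lambda > \lambda_{B,T}$ sufficiently large,
\[ g_\lambda^B(x,y) \leq c\,\varrho(x,y)^{2-d}, \qquad x \in B,\ y \in \Omega, \]
by splitting $g_\lambda^B(x,y) = \int_0^T + \int_T^\infty$. For the first integral, the upper bound in (\ref{E:LYloc}) and the substitution $s = \varrho(x,y)^2/(c_9 t)$ yield $c\,\varrho(x,y)^{2-d}$. For the tail, $p_t^B$ is uniformly bounded on $\overline{B}\times\overline{B}$ for $t\geq T$ (by compactness together with convergence to $\mu(B)^{-1}$), so it contributes an additive constant of order $e^{-\lambda T}/\lambda$, which is absorbed into $c\,\varrho(x,y)^{2-d}$ using $\varrho(x,y)\leq \diam(B)<+\infty$. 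Second, the three-region splitting of (\ref{E:Giraud}) together with (\ref{E:localEuclid}), extended uniformly to balls of radius up to $\diam(B)$, yields for $y,z\in \Omega$ the localized Giraud estimate
\[ I(y,z) := \int_\Omega \varrho(x,y)^{2-d}\,\varrho(x,z)^{2-d}\,\mu(dx) \ \leq\  c\,\varrho(y,z)^{4-d} \quad (d>4), \]
and $I(y,z) \leq c\bigl(1 + \log_+ \tfrac{1}{\varrho(y,z)}\bigr)$ for $d=4$.

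Third, the dual representation
\[ \cpct_{2,2}^B(\Sigma,\Omega)^{1/2} = \sup\bigl\lbrace \nu(\Sigma):\ \nu \in \mathcal{M}_+(\Sigma),\ \|G_\lambda^B\nu\|_{L^2(\Omega)} \leq 1\bigr\rbrace \]
follows by the same minimax argument as in Corollary \ref{C:dualreploc}, now applied with $T=+\infty$ (the resolvent $G_\lambda^B$ is bounded on $L^2(B)$, and $\Sigma$ is compact in $\Omega$).

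Finally, I would conclude by contraposition: if $\mathcal{H}^{d-4+\varepsilon}(\Sigma)>0$ (resp.\ $\mathcal{H}^h(\Sigma)>0$ under (\ref{E:condh})), then since $\Sigma\subset \overline{\Omega}$ is compact, Frostman's lemma provides $\nu\in\mathcal{M}_+(\Sigma)$ with $0<\nu(\Sigma)<+\infty$ and $\nu(B(x,r))\leq c\,r^{d-4+\varepsilon}$ (resp.\ $\leq c\,h(r)$). The first ingredient together with Fubini and the Giraud estimate gives
\[ \|G_\lambda^B\nu\|_{L^2(\Omega)}^2 \leq c\int_\Sigma\int_\Sigma I(y,z)\,\nu(dy)\,\nu(dz), \]
and a standard dyadic/layer-cake argument using the Frostman bound shows this is finite; in the case $d=4$, integration by parts yields $\int_0^1 h(r)\,r^{-1}\,dr = \int_0^1 (-\log r)\,dh(r)<+\infty$ under (\ref{E:condh}). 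The dual representation then forces $\cpct_{2,2}^B(\Sigma,\Omega)>0$, contradicting the hypothesis. The main obstacle will be Step 1, namely carefully identifying the threshold $\lambda_{B,T}$ so that the long-time tail of the Neumann heat kernel is dominated by the polynomial singularity $\varrho^{2-d}$; the Giraud and Frostman steps are essentially local versions of the classical arguments.
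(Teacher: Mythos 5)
Your proposal is correct and follows essentially the same route as the paper: the pointwise bound $g_\lambda^B(x,y)\leq c\,\varrho(x,y)^{2-d}$ obtained by splitting the time integral at $T$ and choosing $\lambda>\lambda_{B,T}$ large enough to control the long-time tail (the paper takes $\lambda_{B,T}=\frac{1}{T}\vee c_{10}$ from the Li--Yau upper bound of \cite[Theorem 3.2]{LY86}), followed by the Giraud convolution estimate \eqref{E:Giraud} with \eqref{E:localEuclid}, Frostman's lemma, and the dual representation of $\cpct_{2,2}^B(\cdot,\Omega)$ as in Corollary \ref{C:dualreploc} to derive a contradiction. Your explicit observation that the minimax/dual representation must be invoked with $T=+\infty$ is a point the paper handles only implicitly, but it does not change the argument.
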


\begin{proof}
We first claim that the density $g_\lambda^B:=g_\lambda^{B,\infty}$ of $G_\Lambda^B$ satisfies
\begin{equation}\label{E:resupperauto}
g_\lambda^B(x,y)\leq c\:\varrho(x,y)^{2-d}\quad x\in B,\quad y\in \Omega
\end{equation}
with a universal constant $c>0$. By \cite[Theorem 3.2]{LY86} we have 
\[p_t^B(x,y)\leq \frac{c_8\:e^{c_{10} t}}{\mu(B(y,\sqrt{t}))}\exp\big(-\frac{\varrho(x,y)^2}{c_9\:t}\big)\]
for all $x,y\in B$ and $t>0$ with universal positive constants $c_8$, $c_9$ and $c_{10}$. Using \eqref{E:localEuclid} and choosing $\lambda>\lambda_{B,T}:=\frac{1}{T}\vee c_{10}$, we obtain
\begin{align}
g_\lambda^B(x,y)&\leq c_8\:\int_0^Te^{-(\lambda-c_{10})t}t^{-d/2}e^{-\varrho(x,y)^2/c_9t}dt+\frac{c_8}{\mu(B)}\int_T^\infty e^{-(\lambda-c_{10})t}e^{-\varrho(x,y)^2/c_9t}dt\notag\\
&\leq c\:\varrho(x,y)^{2-d}+c'\notag
\end{align}
for all $x\in B$ and $y\in \Omega$, where $c$ and $c'$ are positive constants not depending on $x$ or $y$. Since $B$ is bounded, \eqref{E:resupperauto} follows by readjusting constants.

Now we can proceed as in Lemma \ref{L:resolventupper}: If $d>4$ and we would have $\mathcal{H}^{d-4+\varepsilon}(\Sigma)>0$, then Frostman's lemma would produce nonzero finite $\nu\in \mathcal{M}_+(M)$ with support inside $\Sigma$ such that $\nu(B(x,r))\leq c\:r^{d-4+\varepsilon}$, $x\in \Sigma$, $r>0$. Similarly as before we could use \eqref{E:localEuclid} to see that 
\[\big\|G_\lambda^{B}\nu\big\|_{L^2(\Omega)}^2\leq c\:\int_\Omega\int_\Omega \varrho(x,z)^{4-d}\nu(dx)\nu(dz)<+\infty,\] contradicting $\cpct_{2,2}^{B}(\Sigma,\Omega)=0$. The argument for the limit case $d=4$ is analogous.
\end{proof}

\subsection{Essential self-adjointness and Hausdorff measures}

For the case $p=2$ we provide a characterization for essential self-adjointness in terms of the Hausdorff measure and dimension of $\Sigma$. 

\begin{theorem}\label{T:Hausdorff}\mbox{}
Let $M$ be a complete Riemannian manifold of dimension $d\geq 4$ and $\Sigma\subset M$ a closed subset.
\begin{enumerate}
\item[(i)] If $\Sigma$ is of zero measure and such that $\Delta_\mu|_{C_c^\infty(\mathring{M})}$ is essentially self-adjoint, then $\mathcal{H}^{d-4+\varepsilon}(\Sigma)=0$ for all $\varepsilon>0$ (if $d>4$) respectively $\mathcal{H}^h(\Sigma)=0$ for any Hausdorff function $h$ satisfying \eqref{E:condh} (if $d=4$).
\item[(ii)] If $\mathcal{H}^{d-4}(\Sigma)<+\infty$ (if $d>4$) respectively $\mathcal{H}^{h_2}(\Sigma)<+\infty$ (if $d=4$), then $\Delta_\mu|_{C_c^\infty(\mathring{M})}$ is essentially self-adjoint. 
\end{enumerate}
\end{theorem}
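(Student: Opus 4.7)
The plan is to prove both parts by reducing to local computations on smooth bounded domains $B\subset M$ with infinitesimally convex boundary; such $B$ exist around every point in the form of geodesic balls of radius below the convexity radius. On any such $B$ one has the local Li--Yau bounds \eqref{E:LYloc} and the Neumann logarithmic gradient estimate \eqref{E:loggradestZh}, which feed into the local truncation Theorem \ref{T:Mazjaloc} and into Lemmas \ref{L:resolventlowerloc} and \ref{L:resolventupperloc} connecting the local capacities $\cpct_{2,2}^{B,T}(\cdot,\Omega)$, $\cpct_{2,2}^B(\cdot,\Omega)$ to Hausdorff measures.

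For (i), I would first apply Theorem \ref{T:mfdccpct} to the essential self-adjointness hypothesis to obtain $\ccpct_{2,2}^{C_c^\infty(M)}(\Sigma)=0$. Cover $M$ countably by open $\widetilde{\Omega}_i$ with $\overline{\widetilde{\Omega}_i}\subset\Omega_i$ and $\overline{\Omega_i}\subset B_i$ of the above kind. For any compact $K\subset\Sigma\cap\overline{\widetilde{\Omega}_i}\subset\Omega_i$ and any $u\in\omega_K^{C_c^\infty(M)}$ of small graph norm, multiplying by a fixed cutoff $\phi\in C_c^\infty(B_i)$ with $\phi\equiv 1$ on $\overline{\Omega_i}$ produces $\phi u\in C_c^\infty(B_i)\subset\mathcal{D}(\mathcal{L}^{B_i})$, equal to $1$ near $K$. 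Then $f:=(\lambda-\mathcal{L}^{B_i})(\phi u)$ satisfies $G_\lambda^{B_i}f=\phi u\geq 1$ near $K$, while the Leibniz computation of \eqref{E:passover} bounds $\|f\|_{L^2(B_i)}\leq c_\phi\|u\|_{\mathcal{D}(\mathcal{L})}$; hence $\cpct_{2,2}^{B_i}(\Sigma\cap\overline{\widetilde{\Omega}_i},\Omega_i)=0$. Lemma \ref{L:resolventupperloc} then yields the claimed Hausdorff vanishing on $\Sigma\cap\overline{\widetilde{\Omega}_i}$, and countable subadditivity of the Hausdorff measure finishes (i).

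For (ii), assuming the stated Hausdorff finiteness of $\Sigma$, one has $\mu(\Sigma)=0$ for dimensional reasons, and by Theorem \ref{T:mfdccpct} combined with \eqref{E:innerreg} it suffices to show $\ccpct_{2,2}^{C_c^\infty(M)}(K)=0$ for every compact $K\subset\Sigma$. I would cover $K$ by finitely many open sets $\Omega_i'$ with $\overline{\Omega_i'}\subset\Omega_i\subset\subset B_i$, and set $K_i:=K\cap\overline{\Omega_i'}\subset\Omega_i$; each $K_i$ inherits finite Hausdorff measure of the relevant kind. Lemma \ref{L:resolventlowerloc} (with $p=2$) gives $\cpct_{2,2}^{B_i,T_i}(K_i,\Omega_i)=0$, and Corollary \ref{C:simplify} upgrades this to $\ccpct_{2,2}^{C_c^\infty(M)}(K_i)=0$. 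To assemble these into a single capacitary function for $K$, fix a smooth partition of unity $\{\psi_i\}_{i=1}^n$ on a neighborhood of $K$ subordinate to $\{\Omega_i'\}$; for each $\varepsilon>0$ pick $u_i\in\omega_{K_i}^{C_c^\infty(M)}$ with $\supp u_i\subset\Omega_i$ and graph norm $<\varepsilon$ (the support condition being arranged by a further multiplication by a bump in $C_c^\infty(\Omega_i)$ identically $1$ near $K_i$, at the cost of a fixed multiplicative constant). The patched function $u:=\sum_i\psi_i u_i\in C_c^\infty(M)$ equals $1$ on a neighborhood of $K$: for $x$ near $K$ one has $\sum_j\psi_j(x)=1$, and any index $j$ with $\psi_j(x)\neq 0$ forces $x\in\Omega_j'$ close to $K_j$, whence $u_j(x)=1$. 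A direct Leibniz expansion of $\mathcal{L}u$ then bounds $\|u\|_{\mathcal{D}(\mathcal{L})}\leq C_n\varepsilon$ with $C_n$ depending only on the partition of unity.

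The main point, and the step that could have been an obstacle, is the patching in (ii): producing a single element of $\omega_K^{C_c^\infty(M)}$ of small graph norm from the local capacitary functions on the pieces $K_i$. The partition-of-unity approach sidesteps any direct subadditivity statement about $\ccpct_{2,2}^{C_c^\infty(M)}$, because once $\supp u_i\subset\Omega_i$ is enforced the smooth cutoffs $\psi_i$ have fixed bounded gradient and Laplacian, and the Leibniz expansion of $\mathcal{L}(\sum_i\psi_i u_i)$ produces only terms controlled linearly by $\|u_i\|_{\mathcal{D}(\mathcal{L})}$.
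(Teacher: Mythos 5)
Your proposal follows essentially the same route as the paper's proof: localization to geodesically convex balls, Theorem \ref{T:mfdccpct} combined with the localized Lemmas \ref{L:resolventlowerloc} and \ref{L:resolventupperloc} and Corollary \ref{C:simplify}, and assembly via cutoffs and a partition of unity controlled by the Leibniz estimate \eqref{E:passover}. Part (ii) is correct as written; your patching argument is the one the paper uses (the informal step ``$\psi_j(x)\neq 0$ forces $u_j(x)=1$ for $x$ near $K$'' is justified by noting that the compact set $\supp\psi_j\setminus\{u_j=1\}$ is disjoint from $K$, hence at positive distance from it).

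There is, however, a slip in part (i). In the definition of $\cpct_{2,2}^{B_i}(\cdot,\Omega_i)$ the competing functions $f$ must belong to $L^2_+(B_i)$ and vanish $\mu$-a.e.\ on $B_i\setminus\Omega_i$. Your $f=(\lambda-\mathcal{L}^{B_i})(\phi u)$ is supported in $\supp\phi$, and a cutoff $\phi\in C_c^\infty(B_i)$ with $\phi\equiv 1$ on $\overline{\Omega_i}$ necessarily has support reaching beyond $\Omega_i$, so this $f$ is not admissible and the claimed conclusion $\cpct_{2,2}^{B_i}(\Sigma\cap\overline{\widetilde{\Omega}_i},\Omega_i)=0$ does not follow as stated. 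The repair is to reverse the roles of the two sets: take $\phi\in C_c^\infty(\Omega_i)$ with $\phi\equiv 1$ on a neighborhood of $\overline{\widetilde{\Omega}_i}$ (this is exactly the paper's choice of $\varphi\in\omega_{\Sigma_j}^{C_c^\infty(\Omega_j)}$); then $\phi u=1$ near $K$, $\supp f\subset\Omega_i$ by locality, and the rest of your argument goes through. Two further cosmetic points: replace $f$ by $|f|$ to meet the nonnegativity requirement, and fix $\lambda>\lambda_{B_i,T_i}$ before invoking Lemma \ref{L:resolventupperloc}.
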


\begin{proof}
If $\Sigma$ is as in (i), then $\ccpct_{2,2}^{C_c^\infty(M)}(\Sigma)=0$ by Theorem \ref{T:esa}, hence $\ccpct_{2,2}^{C_c^\infty(M)}(\Sigma_0)=0$ for any compact subset $\Sigma_0\subset \Sigma$. If we could prove that $\mathcal{H}^{d-4+\varepsilon}(\Sigma_0)=0$ for any such $\Sigma_0$, the inner regularity of $\mathcal{H}^{d-4+\varepsilon}$ on $\Sigma$ would imply the result of (i). Similarly, if $\Sigma$ is as in (ii), then by \eqref{E:innerreg} we can find an increasing sequence $(\Sigma_i)_{i\geq 1}$ of 
compact sets $\Sigma_i\subset \Sigma$ with $\ccpct_{2,2}^{C_c^\infty(M)}(\Sigma)=\lim_i \ccpct_{2,2}^{C_c^\infty(M)}(\Sigma_i)$. By hypothesis we have $\mathcal{H}^{d-4}(\Sigma_i)<+\infty$ for any $i$, and if we could conclude that $\ccpct_{2,2}^{C_c^\infty(M)}(\Sigma_i)=0$, this would give the result of (ii). We may therefore assume that $\Sigma$ is compact.

By compactness we can find finitely many (geodesically) convex balls $B_j:=B(x_j,2r_j)$, $j=1,...,k$, with $x_j\in \Sigma$ and $r_j>0$ such that $\Sigma\subset \bigcup_{j=1}^k B(x_j,r_j)$. For each $j$ set $\Sigma_j:=\Sigma\cap \overline{B(x_j,r_j)}$, $\Omega_j:=B(x_j,\frac{3}{2}r_j)$ and $T_j:=r_j^2/4$. Choose $\lambda>\max_j \lambda_{B_j,T_j}$ with notation as in Lemma \ref{L:resolventupperloc}. Clearly convex balls are infinitesimally convex.

To see (i) note that $\ccpct_{2,2}^{C_c^\infty(M)}(\Sigma_j)=0$ for all $j$ the stated hypothesis, Theorem \ref{T:mfdccpct} and monotonicity. Consequently for any fixed $j$ and any $\delta>0$ we can find $v\in C_c^\infty(M)$ with $\left\|(\lambda-\Delta_\mu)v\right\|_{L^2(M)}^2<\delta$.
Now let $\varphi\in \omega_{\Sigma_j}^{C_c^\infty(\Omega_j)}$. Proceeding as in \eqref{E:passover} we find that $\left\|(\lambda-\Delta_\mu)(\varphi v)\right\|_{L^2(M)}^2<c\:\delta$ with a constant $c$ independent of $\delta$ and $v$. On the other hand $\varphi v\in \mathcal{D}(\mathcal{L}^{B_j})$, so that $\varphi v=G_\lambda^{B_j} f$ with $f\in L^2(B)$, and by locality $f$ vanishes outside $\Omega_j$. Consequently $\cpct_{2,2}^{B_j}(\Sigma_j,\Omega_j)<\big\|f\big\|_{L^2(\Omega)}^2=\left\|(\lambda-\Delta_\mu)(\varphi v)\right\|_{L^2(M)}^2$. Since $\delta$ was arbitrary, this gives $\cpct_{2,2}^{B_j}(\Sigma_j,\Omega_j)=0$, and by Lemma \ref{L:resolventupperloc} also $\mathcal{H}^{d-4+\varepsilon}(\Sigma_j)=0$. Since this works for all $j$, we have 
 $\mathcal{H}^{d-4+\varepsilon}(\Sigma)=0$ by subadditivity, and this proves (i) for the case $d>4$. The case $d=4$ is similar.

To see (ii) for $d>4$, note that since $\mathcal{H}^{d-4}(\Sigma_j)\leq \mathcal{H}^{d-4}(\Sigma)<+\infty$ for all $j$, Lemma \ref{L:resolventlowerloc} implies that $\cpct_{2,2}^{B_j,T_j}(\Sigma_j,\Omega_j)=0$ for all $j$. Corollary \ref{C:simplify}
yields $\ccpct_{2,2}^{C_c^\infty(M)}(\Sigma_j)=0$, and given $\delta>0$ we can find $v_j\in C_c^\infty(M)$ with $v_j=1$ on an open neighborhood $U_j\subset B_j$ of $\Sigma_j$ and $\left\|(\lambda-\Delta_\mu)v_j\right\|_{L^2(M)}^2<\delta$.
Let $\{\varphi_j\}_{j=1}^k\subset C_c^\infty(M)$ be a partition of unity subordinate to the finite open cover $\{B(x_j,r_j)\}_{j=1}^k$ of $\Sigma$. We may assume that $\sum_j \varphi_j=1$ on $U:=\bigcup_j U_j$, otherwise shrink the $U_j$. Similarly as in \eqref{E:passover} we see that 
\[\left\|(\lambda-\Delta_\mu)(\varphi_jv_j)\right\|_{L^2(M)}^2<c\:\delta\]
with a constant $c>0$ independent of $v_j$, $j$ and $\delta$. Clearly $\varphi_j v_j=\varphi_j$ on $U_j$, respectively, and consequently $v:=\sum_j \varphi_j v_j$ equals one on the neighborhood $U$ of $\Sigma$. Since
\[\ccpct_{2,2}^{C_c^\infty(M)}(\Sigma)^{1/2}\leq \big\|v\big\|_{\mathcal{D}(\mathcal{L})}=\big\|(\lambda-\Delta_\mu)v\big\|_{L^2(M)}\leq \sum_i \left\|(\lambda-\Delta_\mu)(\varphi_jv_j)\right\|_{L^2(M)}\leq k\sqrt{c}\sqrt{\delta}\]
with a constant $c$ independent of $\delta$, it follows that $\ccpct_{2,2}^{C_c^\infty(M)}(\Sigma)=0$, what by Theorem \ref{T:mfdccpct} entails the claim in (ii). The arguments for $d=4$ are similar.

\end{proof}

\begin{remark}\mbox{}
\begin{enumerate}
\item[(i)] For more specific $\Sigma$ there are established results for general complete Riemannian manifolds $M$: For $d\geq 4$ and one-point sets $\Sigma=\{x\}$ the essential self-adjointness of $\Delta_\mu|_{C_c^\infty(\mathring{M})}$ had been shown in \cite{CdV82}, along with counterexamples for $d=2,3$; see \cite[p. 161]{RS80v2} for the Euclidean case. For $d\geq 4$ any countable subset of $M$ has $\mathcal{H}^{d-4}$- resp. $\mathcal{H}^{h_2}$-measure zero, so Theorem \ref{T:Hausdorff} (ii) recovers and extends the positive result for one-point sets.
\item[(ii)] For the case that $\Sigma$ is a closed submanifold it had been shown in  \cite[Theorem 3]{Masamune1999} that $\Delta_\mu|_{C_c^\infty(\mathring{M})}$ is essentially self-adjoint if any only if $\dim_H \Sigma <d-3$. Theorem \ref{T:Hausdorff} requires $d\geq 4$ but makes no further assumptions on the closed set $\Sigma$.
\end{enumerate}
\end{remark}

\begin{remark}\label{R:mayormaynot}
For the limit case $d=4$ one can construct uncountable sets of zero Hausdorff dimension whose removal may or may not destroy essential self-adjointness: Let $M=\mathbb{R}^4$ and let $\mu$ be the $4$-dimensional Lebesgue measure. For any $\varepsilon\geq 0$ one can construct a finite measure $\mu_\varepsilon\in \mathcal{M}_+(\mathbb{R}^4)$ with compact support $\Sigma_\varepsilon\subset \mathbb{R}^4$ such that
\[c^{-1}(-\log r)^{-1-\varepsilon}\leq \mu(B(x,r))\leq c\:(-\log r)^{-1-\varepsilon},\quad x\in \Sigma_\varepsilon,\quad 0<r<1,\]
with a constant $c>1$ (depending on $\varepsilon$). See \cite[Theorem 3.3 and Examples 3.8]{Bricchi04} and the references cited there. Let $h^{(\varepsilon)}(r):=(1+\log_+\frac{1}{r})^{-1-\varepsilon}$, note that $h^{(0)}=h_2$ with notation as in \eqref{E:hp}. Then $0<\mathcal{H}^{h^{(\varepsilon)}}(\Sigma_\varepsilon)<+\infty$, and all $\Sigma_\varepsilon$ are uncountable sets with $\dim_H \Sigma_\varepsilon=0$, \cite[Proposition 3.5]{Bricchi04}. By Theorem \ref{T:Hausdorff} (ii) the operator $\Delta_\mu|_{C_c^\infty(\mathbb{R}^4\setminus \Sigma_0)}$ is essentially self-adjoint on $L^2(\mathbb{R}^4)$, but by Remark \ref{R:exampleh} and Theorem \ref{T:Hausdorff} (i) none of the operators $\Delta_\mu|_{C_c^\infty(\mathbb{R}^4\setminus \Sigma_\varepsilon)}$, $\varepsilon>0$, is.
\end{remark}

\begin{remark}
Lemmas \ref{L:resolventlower} and \ref{L:resolventupper} together with Theorem \ref{T:mfdcpctp} also permit characterizations of $L^p$-uniqueness using Hausdorff measures. We leave this to the reader.
\end{remark}

\section{Sub-Riemannian manifolds}\label{S:subRiemann}

We provide similar results for Sub-Riemannian manifolds.

\subsection{Essential self-adjointness and capacities}
Let $M$ be a smooth (connected) manifold and let $V_1,...,V_m$ be linearly independent smooth vector fields. Set $D^1:=\lin \{V_1,...,V_m\}$ and $D^k:=D^{k-1}+[D^1,D^{k-1}]$, $k\geq 2$. The vector fields $V_1,...,V_m$ are said to satisfy the H\"ormander
condition if for any $x\in M$ there is some $k$ such that $T_xM=\{V_x: V\in D^k\}$, and if so, then $(M,\{V_1,...,V_m\})$ is said to be a sub-Riemannian manifold. We assume this is the case. The corresponding sub-Riemannian gradient 
is then defined by 
\[\nabla f:=\sum_{i=1}^m V_i(f)V_i,\quad f\in C_c^\infty(M).\]
Let $\mu$ be a smooth measure (that is, a Borel measure with a smooth density in every local chart) and let $\diverg_\mu$ be the divergence, defined as minus the formal adjoint of $\nabla$ with respect to $\mu$. The sub-Laplacian $\Delta_\mu$, \cite{GL16}, defined by
\[\Delta_\mu f:=\sum_{i=1}^m V_i^2 f+ (\diverg_\mu V_i)V_if,\quad f\in C_c^\infty(M),\]
is symmetric in $L^2(M,\mu)$. Let $\varrho$ be the natural Carnot-Caratheodory metric defined by $V_1,...,V_m$, \cite[Section 3.2]{ABB19}. If the metric space $(M,\varrho)$ is complete, then $\Delta_\mu|_{C_c^\infty(M)}$ is essentially self-adjoint, \cite[Section 12]{Strichartz86}; see also \cite[Theorem 1.5]{ABFP21}. Given a closed set $\Sigma\subset M$ we write again $\mathring{M}$ for its complement. We have the following special case of Theorem \ref{T:esa}.

\begin{theorem}\label{T:mfdccpctsub} Suppose that $(M,\varrho)$ is complete and $\Sigma\subset M$ is closed. Then $\ccpct_{2,2}^{C_c^\infty(M)}(\Sigma)=0$ if and only if $\mu(\Sigma)=0$ and $\Delta_\mu|_{C_c^\infty(\mathring{M})}$ is essentially self-adjoint.
\end{theorem}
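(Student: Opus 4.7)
The strategy is to recognize Theorem \ref{T:mfdccpctsub} as the specialization of Theorem \ref{T:esa} to $p=2$ and $\mathcal{A}=C_c^\infty(M)$, and simply verify that the abstract framework of Sections \ref{S:Setup}--\ref{S:esa} applies to the sub-Riemannian setting. By the cited result of Strichartz, completeness of $(M,\varrho)$ guarantees that $\Delta_\mu|_{C_c^\infty(M)}$ is essentially self-adjoint; let $(\mathcal{L},\mathcal{D}(\mathcal{L}))$ be its unique self-adjoint extension (which is automatically non-positive definite), and let $(\mathcal{E},\mathcal{D}(\mathcal{E}))$ denote the closure of $(f,g)\mapsto \int_M \langle \nabla f,\nabla g\rangle \:d\mu$ on $C_c^\infty(M)\times C_c^\infty(M)$. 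The first step of the plan is to observe that this form is a regular, strongly local Dirichlet form admitting the carr\'e du champ $\Gamma(f)=\sum_{i=1}^m (V_if)^2$; regularity follows from density of $C_c^\infty(M)$ in $C_0(M)$, strong locality and the Markov property are inherited from the smooth framework, and the integration-by-parts identity $\mathcal{E}(f,g)=-\int_M (\Delta_\mu f)g\:d\mu$ for $f,g\in C_c^\infty(M)$ gives the carr\'e du champ and hence embeds the situation into the general setup of Section \ref{S:Setup}.

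The second step is to verify the three structural conditions on $\mathcal{A}:=C_c^\infty(M)$ needed by Theorem \ref{T:esa} with $p=2$. Condition \eqref{A:bump} is the usual existence of smooth bump functions on a manifold, provided by partitions of unity subordinated to a finite cover of the compact set $K$ by coordinate charts. Condition \eqref{A:basic} holds because for any $f\in C_c^\infty(M)$ the functions $\Gamma(f)=\sum_i (V_if)^2$ and $\mathcal{L}f=\Delta_\mu f$ are continuous with compact support, hence bounded and integrable; moreover, any $C_c^\infty(M)$-function lies in $\mathcal{D}(\mathcal{L})$ and thus in $\mathcal{D}_p$ for every $1\leq p<+\infty$ by the remark following \eqref{A:basic}. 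Condition \eqref{A:basicp} for $p=2$ is exactly the essential self-adjointness of $\Delta_\mu|_{C_c^\infty(M)}$ cited above, while condition \eqref{A:Riesztrafo} is automatic for $p=2$ by the definition of the norm on $\mathcal{D}(\mathcal{L})$ and the identity $\mathcal{E}(f)=-\langle \mathcal{L}f,f\rangle$.

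The third step is to note that $\mathcal{A}_c=\mathcal{A}=C_c^\infty(M)$ since every element of $C_c^\infty(M)$ is compactly supported by definition; in particular the density condition in the second half of Theorem \ref{T:esa}(ii) is trivially satisfied, so that statement applies to \emph{arbitrary} closed $\Sigma\subset M$, not just compact ones. Feeding these ingredients into Theorem \ref{T:esa} with $p=2$ gives both implications: part (i) yields that essential self-adjointness of $\Delta_\mu|_{C_c^\infty(\mathring M)}$ (together with $\mu(\Sigma)=0$) forces $\ccpct_{2,2}^{C_c^\infty(M)}(\Sigma)=0$, while part (ii) gives the reverse implication, using that for symmetric semibounded operators $L^2$-uniqueness and essential self-adjointness coincide (as recorded in Section \ref{S:esa}).

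The only non-routine point is checking that the Dirichlet form attached to the sub-Laplacian truly fits the general Section \ref{S:Setup} framework; in particular that the H\"ormander condition supplies enough regularity so that $\mathcal{E}$ is closable on $C_c^\infty(M)$, strongly local, and carries the carr\'e $\Gamma$ identified above. Beyond this bookkeeping the argument is purely a direct invocation of Theorem \ref{T:esa}, exactly parallel to the Riemannian Theorem \ref{T:mfdccpct}, with the only substantive input from the sub-Riemannian world being Strichartz's essential self-adjointness theorem under completeness of the Carnot--Carath\'eodory distance.
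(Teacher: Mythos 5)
Your proposal is correct and follows exactly the route the paper takes: the paper presents Theorem \ref{T:mfdccpctsub} with no separate argument, simply as a special case of Theorem \ref{T:esa} once Strichartz's essential self-adjointness result under completeness of the Carnot--Carath\'eodory metric is invoked, and your verification of \eqref{A:bump}, \eqref{A:basic}, \eqref{A:basicp}, the automatic validity of \eqref{A:Riesztrafo} for $p=2$, and the triviality of the density of $\mathcal{A}_c$ in $\mathcal{A}=C_c^\infty(M)$ is precisely the bookkeeping the paper leaves implicit.
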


\subsection{$L^p$-uniqueness on Carnot groups}

Let $G$ be a simply connected Lie group whose Lie algebra is the sum $g=\mathcal{V}_1\oplus...\oplus \mathcal{V}_N$ of nontrivial subspaces $\mathcal{V}_i$ such that $[\mathcal{V}_1,\mathcal{V}_i]=\mathcal{V}_{i+1}$, $i=1,...,N-1$, and $[\mathcal{V}_1,\mathcal{V}_N]=\{0\}$. Then $G$ is called a Carnot group of depth $N$, see \cite{BaudoinBonnefont16}. Let $V_1,...,V_m$ be a basis of $\mathcal{V}_1$, we may interpret the $V_i$ as left invariant vector fields on $G$; they satisfy H\"ormander's condition. We endow $G$ with the corresponding Carnot-Caratheodory metric $\varrho$, \cite[Section III.4]{VSCC92}, and the Haar measure $\mu$. Then there are positive constants $c$, $c'$ such that 
\begin{equation}\label{E:Carnotdreg}
c\:r^d\leq \mu(B(x,r))\leq c'\:r^d
\end{equation}
for all $x\in G$ and $r>0$, where $d=\sum_{i=1}^N i\:\dim\mathcal{V}_i$, \cite[IV.5.9 Remark]{VSCC92}. The sub-Laplacian defined by
\[\Delta_\mu f:=\sum_{i=1}^m V_i^2f,\quad f\in C_c^\infty(G),\]
is symmetric with respect to $\mu$ and essentially self-adjoint. We consider a closed subset $\Sigma\subset G$ and write $\mathring{G}:=G\setminus \Sigma$. Recall that in the symmetric and semibounded case $L^2$-uniqueness is equivalent to being densely defined and essentially self-adjoint.

\begin{theorem}\label{T:Carnot}
Let $G$ be a Carnot group, let $\Sigma\subset G$ be a closed set and Let $1<p<+\infty$.
\begin{enumerate}
\item[(i)] If $\mu(\Sigma)=0$ and $\Delta_\mu|_{C_c^\infty(\mathring{G})}$ is $L^p$-unique, then $\ccpct_{2,p}^{C_c^\infty(G)}(\Sigma)=0$ and $\cpct_{2,p}(\Sigma)=0$. If in addition $d\geq 2p$, then $\mathcal{H}^{d-2p+\varepsilon}(\Sigma)=0$ for any $\varepsilon>0$, and in the special case $p=2$ and $d=4$ also $\mathcal{H}^h(\Sigma)=0$ for all $h$ satisfying \eqref{E:condh}.
\item[(ii)] If $\cpct_{2,p}(\Sigma)=0$, then $\ccpct_{2,p}^{C_c^\infty(G)}(\Sigma)=0$ and $\Delta_\mu|_{C_c^\infty(\mathring{G})}$ is $L^p$-unique. This happens in particular if $d>2p$ and $\mathcal{H}^{d-2p}(\Sigma)<+\infty$ or $d=2p$ and $\mathcal{H}^{h_p}(\Sigma)<+\infty$.
\end{enumerate}
\end{theorem}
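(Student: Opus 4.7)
The plan is to reduce both parts to results already established in Sections \ref{S:esa}--\ref{S:captoHausdorff} by verifying that the relevant structural hypotheses hold on any Carnot group. Taking $\mathcal{A}=C_c^\infty(G)$, conditions \eqref{A:bump} and \eqref{A:basic} hold trivially; \eqref{A:basicp} (the $L^p$-uniqueness of $\Delta_\mu|_{C_c^\infty(G)}$ for $1<p<\infty$) is a classical fact on stratified groups going back to Folland; and \eqref{A:Riesztrafo} follows from the $L^p$-boundedness of the horizontal Riesz transforms $V_i(-\Delta_\mu)^{-1/2}$, another classical result on Carnot groups. The heat semigroup on $G$ is strong Feller with smooth heat kernel $p_t(x,y)$ enjoying two-sided Gaussian bounds (Varopoulos--Saloff-Coste--Coulhon) as well as the pointwise logarithmic gradient estimate $\sqrt{\Gamma(p_t(x,\cdot))}(y)\leq c\:t^{-1/2}p_{\alpha t}(x,y)$ for some $1\leq \alpha<2$ (Baudoin--Bonnefont, Driver--Melcher). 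Combined with the Ahlfors regularity \eqref{E:Carnotdreg}, these imply \eqref{E:loghk} and, by Proposition \ref{P:hktosg}, condition \eqref{E:log}. Integration of the Gaussian upper bound in $t$ gives $g_\lambda(x,y)\leq c\:\varrho(x,y)^{2-d}$ globally, and integration of the Gaussian lower bound on small scales yields the matching bound in \eqref{E:resolventlower}.

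For part (i), I would apply Theorem \ref{T:esa}(i) with $\mathcal{A}=C_c^\infty(G)$ to obtain $\ccpct_{2,p}^{C_c^\infty(G)}(\Sigma)=0$, and then invoke Corollary \ref{C:firstcomparison} to pass to $\cpct_{2,p}(\Sigma)=0$. The Hausdorff measure statements then follow from Lemma \ref{L:resolventupper}: hypothesis \eqref{E:resolventupper} is supplied by the resolvent upper bound just discussed, while \eqref{E:upperd} and (for $p\neq 2$) \eqref{E:lowerdadd} both follow from \eqref{E:Carnotdreg}. Completeness of $(G,\varrho)$ is used here; the limit case $d=4$, $p=2$ follows from the same Frostman/Riesz-potential argument in Lemma \ref{L:resolventupper}(i).

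For part (ii), the plan is first to apply Corollary \ref{C:secondcomparison} with an auxiliary algebra $\mathcal{A}_0 := \mathcal{D}(\mathcal{L}^{(p)})\cap C_b^\infty(G)$, which contains the truncations $F\circ G_\lambda f$ demanded by \eqref{A:functionsT} (smoothness coming from hypoellipticity of $\Delta_\mu$); this gives $\ccpct_{2,p}^{\mathcal{A}_0}(\Sigma)\leq c_3^p\cpct_{2,p}(\Sigma)=0$. To transfer this to $\ccpct_{2,p}^{C_c^\infty(G)}(\Sigma)=0$ I would assume $\Sigma$ compact (using inner regularity \eqref{E:innerreg}), pick $\varphi\in\omega_\Sigma^{C_c^\infty(G)}$, and multiply a near-optimal capacitary element $v\in\omega_\Sigma^{\mathcal{A}_0}$ by $\varphi$, exactly as in \eqref{E:passover}: the uniform $L^\infty$-bounds on $\varphi$, $\Gamma(\varphi)^{1/2}$ and $\Delta_\mu\varphi$, together with \eqref{E:Gamma} and \eqref{A:Riesztrafo}, control $\|\varphi v\|_{\mathcal{D}(\mathcal{L}^{(p)})}$ by $c\|v\|_{\mathcal{D}(\mathcal{L}^{(p)})}$. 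Theorem \ref{T:esa}(ii) then delivers the $L^p$-uniqueness of $\Delta_\mu|_{C_c^\infty(\mathring{G})}$. Finally, the sufficient Hausdorff conditions in (ii) reduce to $\cpct_{2,p}(\Sigma)=0$ via Lemma \ref{L:resolventlower}, whose hypotheses \eqref{E:lowerd} and \eqref{E:resolventlower} are again granted by \eqref{E:Carnotdreg} and the small-scale Gaussian lower bound.

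The main obstacle is the passage in part (ii) from $\ccpct_{2,p}^{\mathcal{A}_0}(\Sigma)=0$ to $\ccpct_{2,p}^{C_c^\infty(G)}(\Sigma)=0$, since the functions $F\circ G_\lambda f$ produced by Theorem \ref{T:Mazja} lie in $\mathcal{D}(\mathcal{L}^{(p)})$ but in general do not have compact support. The multiplication-by-bump trick circumvents this, but it depends on simultaneous $L^\infty$-control of $\varphi$, $\Gamma(\varphi)$, and $\Delta_\mu\varphi$, together with the $L^p$-Riesz transform bound \eqref{A:Riesztrafo} for $p\neq 2$. Everything else is essentially bookkeeping: checking that the classical sub-Riemannian heat kernel estimates and Riesz transform bounds on Carnot groups match the abstract hypotheses of Sections \ref{S:esa}--\ref{S:captoHausdorff}.
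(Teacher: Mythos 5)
Your proposal is correct and follows essentially the same route as the paper: verify the abstract hypotheses of Sections 4--9 on a Carnot group using the Gaussian heat kernel and gradient estimates from Varopoulos--Saloff-Coste--Coulhon together with the Ahlfors regularity \eqref{E:Carnotdreg}, then invoke Theorem \ref{T:esa}, Corollaries \ref{C:firstcomparison} and \ref{C:secondcomparison}, and Lemmas \ref{L:resolventlower} and \ref{L:resolventupper}. The only (harmless) deviations are that you obtain \eqref{A:Riesztrafo} from $L^p$-boundedness of the horizontal Riesz transforms where the paper deduces it from the pointwise heat kernel gradient bound, and that you spell out the bump-function transfer from $\ccpct_{2,p}^{\mathcal{A}_0}$ to $\ccpct_{2,p}^{C_c^\infty(G)}$ (as in \eqref{E:passover}) which the paper subsumes under ``standard calculations''.
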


\begin{proof}
We have $\Gamma(f)=\sum_{i=1}^m (V_i f)^2$, $f\in C_c^\infty(G)$, and \cite[VIII.2.7 Theorem]{VSCC92} the gradient estimate $\Gamma(p_t(0,\cdot))(x)\leq c\:t^{-(d+1)/2}\:\exp\big(-\varrho(0,x)^2/c't\big)$ holds for all $x\in G$ and $t>0$ with universal positive constants $c$ and $c'$. This implies condition \eqref{A:Riesztrafo}. By \cite[VIII.2.9 Theorem]{VSCC92} condition \eqref{E:LY} holds. Together with (\ref{E:Carnotdreg}) and standard calculations, Theorem \ref{T:esa}, Corollaries \ref{C:firstcomparison} and \ref{C:secondcomparison} and Lemmas \ref{L:resolventlower} and \ref{L:resolventupper} now the result.
\end{proof}

\begin{remark}\label{R:Heisenberg}
For any $n\geq 1$ the Heisenberg group $\mathbb{H}_n$ is a Carnot group of depth $2$, its homogeneous dimension is $d=2n+2$, and this is also its Hausdorff dimension. The essential self-adjointness of the natural sub-Laplacian on $C_c^\infty(\mathbb{H}_1\setminus \{0\})$ had been shown in \cite[Theorem 1.7]{ABFP21}. The above Theorem \ref{T:Carnot} complements this result, it applies for any $n\geq 1$ and any closed set $\Sigma\subset \mathbb{H}_n$.
\end{remark}

\section{RCD*(K,N) spaces}\label{S:RCD}

This section contains uniqueness results for Laplacians on {\rm RCD}$^*(K,N)$ spaces a after the removal of a set $\Sigma$.

Let $(M,\varrho,\mu)$ be a complete separable geodesic metric space with a locally finite Borel regular measure $\mu$ having full support and satisfying 
$\mu(B(x_0,r)) \le c\:e^{c'r^2}$, $r>0$, for some point $x_0\in M$ and with positive constants $c$, $c'$ independent of $r$.

Let $\mathrm{Lip}(M)$ denote the space of Lipschitz functions on $M$, and given $f \in \mathrm{Lip}(M)$ and $x\in M$, consider its local Lipschitz constant $|\mathrm{D} f|(x):=\limsup_{y \to x} |f(y)-f(x)|/\varrho(y,x)$  at $x$. Given $f\in L^2(M)$, its {\it Cheeger energy}, \cite{AGS14, Cheeger99}, is defined as
\[\mathrm{Ch}(f)=\frac{1}{2}\inf\Bigl\{ \liminf_{n \to \infty} \int_M |\mathrm{D}  f_n|^2 d\mu: f_n \in \mathrm{Lip}(M)\cap L^2(M),\ \lim_n \int_M|f_n -f|^2 d\mu =0 \Bigr\}.\]
We write $W^{1}(M):=\{f \in L^2(M): \mathrm{Ch}(f)<\infty\}$. One can show that 
\[\mathrm{Ch}(f)=\frac{1}{2}\int_M|\nabla f|^2_w d\mu, \quad f \in W^{1}(M),\]
where $|\nabla f|_w$ is the (unique) minimal weak upper gradient of $f$, see \cite{AGS14, Shan00}. If $M$ is a Riemannian manifold, then 
\begin{equation}\label{E:gradientscoincide}
|\nabla u|_w=|\nabla u|\quad \text{$\mu$-a.e.} 
\end{equation}
and the space $W^{1}(M)$ has the same meaning as in Subsection \ref{SS:weighted}.

The space $(M,\varrho,\mu)$ is said to be \emph{infinitesimally Hilbertian} if $\mathrm{Ch}$ is a quadratic form. In this case $(\mathrm{Ch},W^{1}(M))$ is a local Dirichlet form, \cite{BH91}; we write $(\mathcal{L},\mathcal{D}(\mathcal{L}))$ for its generator, $(P_t)_{t>0}$ for the associated symmetric Markov semigroup and $\Gamma(f):=|\nabla f|_w^2$, $f\in W^{1}(M)$, for the carr\'e du champ.

Let $K\in\mathbb{R}$ and $1\leq N<+\infty$. Following \cite[Theorem 7]{EKS15}, the triple $(M,\varrho,\mu)$ may be called an \emph{{\rm RCD}$^*(K,N)$ space} if  
\begin{enumerate}
\item[(i)] each $f\in  W^{1}(M) $ with $\Gamma(f) \leq 1$ has a continuous version, and
\item[(ii)] for each $f\in W^{1}(M)$ and $t>0$ we have the \emph{Bakry--Ledoux gradient estimate}
\[
\Gamma(P_tf)+\frac{4Kt^2}{N(e^{2Kt}-1)}\:|\mathcal LP_tf|^2\leq e^{-2Kt}P_t(\Gamma(f))\quad \text{$\mu$-a.e.}\]
\end{enumerate}
with the convention that for $K=0$ the fraction $K/(e^{2Kt}-1)$ is replaced by its limit $1/(2t)$ for $K\to 0$.

Let $(M,\varrho,\mu)$ be an {\rm RCD}$^*(K,N)$-space. Then it is locally compact, and the Dirichlet form $(\mathrm{Ch},W^{1}(M))$ is regular and strongly local in the sense of \cite{FOT94}. The semigroup $(P_t)_{t>0}$ is strong Feller, \cite[Theorem 7.1]{AGMR15}, and moreover, for any $f \in L^\infty(M)$ and any $t>0$ the function $P_t f$ is Lipschitz, see \cite[Theorem 7.3]{AGMR15}. See also \cite[Theorems 6.1 and 6.8]{AGS14b}. The semigroup $(P_t)_{t>0}$ is absolutely continuous, we write $p_t(x,y)$ to denote its heat kernel. Suppose that $K\leq 0$ and $1\leq N<+\infty$. Then for any $\varepsilon>0$ there are positive constants $c_8$ and $c_{10}$ such that 
\begin{equation}\label{E:LYRCD}
\frac{1}{c_8\mu(B(y,\sqrt{t}))}\:\exp\Big(-\frac{\varrho(x,y)^2}{(4-\varepsilon)t}-c_{10}t\Big)\leq p_t(x,y)\leq \frac{c_8}{\mu(B(y,\sqrt{t}))}\:\exp\Big(-\frac{\varrho(x,y)^2}{(4+\varepsilon)t}+c_{10}t\Big)
\end{equation}
for all $t>0$ and $x,y\in M$; in the case $K=0$ the bounds remain true with $0$ in place of $c_{10}$. See \cite[Theorems 1.1 and 1.2]{JLZ16}. Moreover, for any $\varepsilon>0$ there are positive constants $c_{11}$ and $c_{12}$ such that 
\begin{equation}\label{E:gradestRCD}
\sqrt{\Gamma(p_t(x,\cdot))}(y)\leq \frac{c_{11}}{\sqrt{t}\mu(B(y,\sqrt{t}))}\:\exp\Big(-\frac{\varrho(x,y)^2}{(4+\varepsilon)t}+c_{12}t\Big)
\end{equation}
for all $t>0$ and $\mu$-a.a. $x,y\in M$; again we may replace $c_{12}$ by $0$ if $K=0$. See \cite[Corollaries 1.1 and 1.2]{JLZ16}. Moreover, $\mu$ satisfies the \emph{local doubling condition}: For any $R>0$ there is a constant $c_{D,R}>0$ such that
\begin{equation}\label{E:localdoubling}\tag{LD}
\mu(B(x,2r))\leq c_{D,R}\:\mu(B(x,r)),\quad x\in M,\quad 0<r\leq R,
\end{equation}
see \cite[Theorem 2.3]{Sturm06-1}. In the case that $K=0$ the global doubling condition \eqref{E:doubling} is known to hold. The preceding gives the following variant of \eqref{E:loghk}.
\begin{corollary}\label{C:loghkT}
Let $(M,\varrho,\mu)$ be an {\rm RCD}$^*(K,N)$-space with $K\leq 0$ and $1\leq N<+\infty$. For any sufficiently small $\varepsilon>0$  and any $0<T<+\infty$ we can find $1<\alpha<2$ and positive constants $c_1$ and $c_2$ such that 
\begin{equation}\label{E:loghkT}\tag{$\mathrm{LG_T'}$}
\frac{\sqrt{\Gamma(p_t(x,\cdot))}(y)}{p_{\alpha t}(x,y)}\leq \frac{c_1\:e^{c_2t}}{\sqrt{t}}
\end{equation}
for all $0<t\leq T$ and $\mu$-a.a. $x,y\in M$. If \eqref{E:doubling} holds, then this remains true for $T=+\infty$, i.e. \eqref{E:loghk} holds.
\end{corollary}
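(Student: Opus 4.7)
The plan is to deduce \eqref{E:loghkT} by directly comparing the gradient upper estimate \eqref{E:gradestRCD} applied at time $t$ with the heat-kernel lower estimate in \eqref{E:LYRCD} applied at time $\alpha t$. Fix a small parameter $\varepsilon>0$ to be chosen. By \eqref{E:gradestRCD} the numerator satisfies
\[\sqrt{\Gamma(p_t(x,\cdot))}(y)\leq \frac{c_{11}}{\sqrt{t}\:\mu(B(y,\sqrt{t}))}\:\exp\Bigl(-\frac{\varrho(x,y)^2}{(4+\varepsilon)t}+c_{12}t\Bigr),\]
while by \eqref{E:LYRCD} the denominator obeys
\[p_{\alpha t}(x,y)\geq \frac{1}{c_8\:\mu(B(y,\sqrt{\alpha t}))}\:\exp\Bigl(-\frac{\varrho(x,y)^2}{(4-\varepsilon)\alpha t}-c_{10}\alpha t\Bigr).\]
Dividing, the ratio in \eqref{E:loghkT} is bounded from above by
\[\frac{c_8c_{11}}{\sqrt{t}}\cdot\frac{\mu(B(y,\sqrt{\alpha t}))}{\mu(B(y,\sqrt{t}))}\cdot \exp\Bigl(\varrho(x,y)^2\bigl[\tfrac{1}{(4-\varepsilon)\alpha t}-\tfrac{1}{(4+\varepsilon)t}\bigr]+(c_{12}+\alpha c_{10})t\Bigr).\]

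The key is to choose $\alpha$ and $\varepsilon$ so that the spatial exponent is non-positive, i.e. $\alpha(4-\varepsilon)\geq (4+\varepsilon)$. For $\varepsilon$ sufficiently small one can pick $\alpha\in (1,2)$, for example $\alpha:=\tfrac{4+\varepsilon}{4-\varepsilon}$, and this fixes the first main choice. After this choice the spatial exponential is at most $1$.

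It remains to control the volume ratio $\mu(B(y,\sqrt{\alpha t}))/\mu(B(y,\sqrt{t}))$. For the bounded-time version, given $T<\infty$ we have $\sqrt{\alpha t}\leq \sqrt{\alpha T}$ for all $0<t\leq T$, so by the local doubling condition \eqref{E:localdoubling} (with $R=\sqrt{\alpha T}$), iterated a fixed finite number of times, this ratio is bounded by a constant $c_{D,T}$ depending only on $T$, $K$ and $N$. Setting $c_1:=c_8c_{11}c_{D,T}$ and $c_2:=c_{12}+\alpha c_{10}$, the asserted estimate \eqref{E:loghkT} follows. In the case $K=0$, the global doubling condition \eqref{E:doubling} replaces \eqref{E:localdoubling}, so the volume ratio is bounded by a constant independent of $t$, and moreover $c_{10}$ and $c_{12}$ may be taken to be zero as stated after \eqref{E:LYRCD} and \eqref{E:gradestRCD}; hence $c_2$ can be chosen arbitrarily small (or absorbed into $c_1$), and the estimate extends to all $t>0$, yielding \eqref{E:loghk}.

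The only subtle point is the interplay between the two $\varepsilon$-losses appearing in the numerator and the denominator: one must verify that for the chosen $\alpha$ one stays strictly below $2$, which forces $\varepsilon$ to be small (the admissible range is roughly $\varepsilon<4/3$ so that $\alpha<2$); this is where the phrase ``sufficiently small $\varepsilon$'' in the statement is used. Once that inequality is secured, the result follows by the elementary computation above with no further appeal to the geometry beyond the stated Gaussian bounds and the doubling property.
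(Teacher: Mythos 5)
Your proof is correct and follows essentially the same route as the paper: both divide the gradient upper bound \eqref{E:gradestRCD} at time $t$ by the lower bound in \eqref{E:LYRCD} at time $\alpha t$ with the choice $\alpha=\frac{4+\varepsilon}{4-\varepsilon}$ (which makes the spatial Gaussian exponents cancel and requires $\varepsilon<4/3$ for $\alpha<2$), and then control the volume ratio $\mu(B(y,\sqrt{\alpha t}))/\mu(B(y,\sqrt{t}))$ by \eqref{E:localdoubling} for $t\leq T$, respectively by \eqref{E:doubling} for $T=+\infty$. The only cosmetic slip is that for the $T=+\infty$ case you phrase the hypothesis as ``$K=0$'' rather than ``\eqref{E:doubling} holds'' as in the statement; since the bound $c_1e^{c_2t}/\sqrt{t}$ already tolerates the exponential factors $c_{10},c_{12}$, global doubling alone suffices and your computation goes through unchanged.
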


\begin{proof}
The right hand side in (\ref{E:gradestRCD}) rewrites 
\[\frac{c_{11}}{\sqrt{t}\mu(B(y,\sqrt{t}))}\:\exp\Big(-\frac{\varrho(x,y)^2}{(4-\varepsilon)\alpha t}-c_{10}at\Big)\:\exp\Big((c_{10}\alpha +c_{12})t\Big)\]
with $\alpha:=\frac{4+\varepsilon}{4-\varepsilon}$, and the claim follows using \eqref{E:localdoubling} and the left hand side in \eqref{E:LYRCD}.
\end{proof}

\begin{remark}\label{R:logTRCD} Proceeding as in Proposition \ref{P:hktosg} we see that \eqref{E:loghkT} implies a 'local' variant of \eqref{E:log}, valid for all $0<t<T$.
\end{remark}

Suppose that $1<\alpha<2$ and $T>0$ as in \eqref{E:localdoubling} are fixed and choose $\lambda>\lambda_0:=\frac{2}{2-\alpha}\:c_2\vee \frac{1}{T}$. We consider the truncated kernels
\[g_\lambda^T(x,y):=\int_0^T e^{-\lambda t}p_t(x,y)\:dt,\quad x,y\in M,\]
and write $G_\lambda^T$ for the corresponding truncated resolvent operators. Using Remark \ref{R:logTRCD} and following the proof of Theorem \ref{T:Mazja} but with integration restricted to $(0,T)$ (cf. Theorem \ref{T:Mazjaloc}), we obtain the following.

\begin{theorem}\label{T:MazjaRCD}
Let $(M,\varrho,\mu)$ be an {\rm RCD}$^*(K,N)$-space with $K\leq 0$ and $1\leq N<+\infty$. Let $F\in C^2(\mathbb{R}_+)$ be a function satisfying (\ref{E:Mazja}). Then for $1< p< +\infty$, $T$ and $\lambda$ as above and any nonnegative $f\in L^p(M)$ we have $F\circ G_\lambda^{T} f\in\mathcal{D}(\mathcal{L}^{(p)})$ and $\big\|F\circ G_\lambda^{T} f\big\|_{\mathcal{D}(\mathcal{L}^{(p)})}\leq c_3\left\|f\right\|_{L^p(M)}$
with $c_3>0$ independent of $f$. If \eqref{E:doubling} holds, then the statements remain valid with $G_\lambda$ in place of $G_\lambda^T$.
\end{theorem}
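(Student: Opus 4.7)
The plan is to mirror the proof of Theorem \ref{T:Mazja} step by step, replacing the resolvent $G_\lambda$ by its truncation $G_\lambda^T$ and using the local logarithmic gradient estimate \eqref{E:loghkT} supplied by Corollary \ref{C:loghkT}. Via Remark \ref{R:logTRCD}, \eqref{E:loghkT} delivers a semigroup-level estimate analogous to \eqref{E:log} but valid only for $t\in(0,T)$; the device of restricting all time integrals to $(0,T)$ is precisely the one used to pass from Theorem \ref{T:Mazja} to its localized version Theorem \ref{T:Mazjaloc}. Under the global doubling assumption \eqref{E:doubling}, Corollary \ref{C:loghkT} yields \eqref{E:loghk}, hence \eqref{E:log} holds globally by Proposition \ref{P:hktosg}, and Theorem \ref{T:Mazja} applies verbatim to yield the conclusion with $G_\lambda$ in place of $G_\lambda^T$.

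For the general (local doubling) case, the first step is a truncated multiplicative estimate in the spirit of Lemma \ref{L:multest}: for any nonnegative $f\in C_c(M)$, $s>0$ and $x\in M$,
\[
\int_0^T e^{-\lambda t}\,P_s\bigl(\sqrt{\Gamma(P_t f)}\bigr)(x)\,dt \;\le\; c_5\bigl(P_s G_\lambda^T f(x)\bigr)^{1/2}\Bigl(\sup_{t>0}P_t f(x)\Bigr)^{1/2}.
\]
The argument is the one used for Lemma \ref{L:multest}: split the integral at a threshold $\delta\in(0,T)$, apply H\"older's inequality with exponents $\beta,\beta'$ chosen so that $\alpha<\beta<2$ and $\lambda>\tfrac{\beta'}{\beta-\alpha}\,c_2$, control each piece by the local version of \eqref{E:log}, and optimize as in \eqref{E:delta}. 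The condition $\lambda>\tfrac{2}{2-\alpha}c_2$ guarantees the existence of an admissible $\beta$, and the requirement $\lambda>1/T$ built into $\lambda_0$ ensures that the optimal $\delta$ lies in $(0,T)$. From this, the analog of Proposition \ref{P:integral} for $u=G_\lambda^T f$, namely $\bigl\|\mathbf{1}_{\{u>0\}}\Gamma(u)/u\bigr\|_{L^p(M)}\le c_4\|f\|_{L^p(M)}$, follows from the Rota--Stein semigroup maximal inequality \eqref{E:maximal} together with Fatou and dominated convergence, exactly as in the original proof.

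Finally, invoke the chain rule of Lemma \ref{L:chainforL} with $u=G_\lambda^T f$ in place of $u=G_\lambda f$; the argument uses only that $u\in\mathcal{D}(\mathcal{L})\cap C_b(M)$ together with the $L^p$-control of $\Gamma(u)/u$, both of which now hold (continuity of $u$ comes from the strong Feller property). The sole bookkeeping difference is the identity $\mathcal{L}u=\lambda u-f+e^{-\lambda T}P_T f$ in place of $\mathcal{L}u=\lambda u-f$, but contractivity of $P_T$ on $L^p(M)$ bounds the extra term by $\|f\|_{L^p(M)}$, so the analog of \eqref{E:finalestimate} delivers $\|F\circ G_\lambda^T f\|_{\mathcal{D}(\mathcal{L}^{(p)})}\le c_3\|f\|_{L^p(M)}$ for nonnegative $f\in C_c(M)$; extension to arbitrary nonnegative $f\in L^p(M)$ is by the weak-compactness argument that closes the proof of Theorem \ref{T:Mazja}. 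The main technical obstacle is essentially bookkeeping: keeping $\delta<T$ in the optimization of the multiplicative estimate and tracking the boundary contribution $e^{-\lambda T}P_T f$ through the chain rule, both of which are handled by the threshold $\lambda>\lambda_0=\tfrac{2}{2-\alpha}c_2\vee\tfrac{1}{T}$.
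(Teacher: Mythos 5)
Your proposal is correct and follows exactly the route the paper takes: the paper's proof of Theorem \ref{T:MazjaRCD} consists precisely of the instruction to rerun the proof of Theorem \ref{T:Mazja} with all time integrals restricted to $(0,T)$, using the local estimate from Remark \ref{R:logTRCD} and the template of Theorem \ref{T:Mazjaloc} (including the same boundary term $e^{-\lambda T}P_Tf$ and the same use of $\lambda>1/T$ to keep the optimal $\delta$ below $T$), and to invoke Theorem \ref{T:Mazja} directly under \eqref{E:doubling}. Nothing further is needed.
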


In the case of metric measure spaces the choice of a suitable operator core is less obvious than in the manifold case. We
consider the space
\begin{equation}\label{E:alternativeA1}
\mathcal{A}:=\big\lbrace f\in C_b(M)\cap \mathcal{D}(\mathcal{L})\cap\mathcal{D}(\mathcal{L}^{(1)}):\ \text{$\Gamma(f) \in L^\infty(M)$ and $\mathcal{L}f\in L^\infty(M)$}\big\rbrace.
\end{equation}

\begin{proposition}\label{P:Aisnice}
Let $(M,\varrho,\mu)$ be an {\rm RCD}$^*(K,N)$ space with $K\leq 0$ and $1\leq N<+\infty$. Then $\mathcal A$ satisfies conditions \eqref{A:basic} and \eqref{A:bump} and for any $1<p<+\infty$ also \eqref{A:basicp}. It satisfies condition \eqref{A:functionsT} with $G_\lambda^T$ in place of $G_\lambda$. If \eqref{E:doubling} holds, then also \eqref{A:Riesztrafo} is satisfied and \eqref{A:functionsT} holds in its original form.
\end{proposition}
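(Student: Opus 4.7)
The plan is to verify each of the required conditions in turn, building on Theorem \ref{T:MazjaRCD} and the local logarithmic gradient estimate \eqref{E:loghkT}. Throughout, fix $\lambda>\frac{2}{2-\alpha}c_2$ with $\alpha$ and $c_2$ as in Corollary \ref{C:loghkT}, so the hypothesis of Theorem \ref{T:MazjaRCD} is met for every $T\geq 1/\lambda$. Condition \eqref{A:basic} reduces to showing that $\mathcal{A}$ is closed under products. For $f,g\in\mathcal{A}$, the algebra structure of $\mathcal{D}(\mathcal{L}^{(1)})\cap L^\infty(M)$ places $fg$ in that space; identity \eqref{E:Gamma} gives $\mathcal{L}^{(1)}(fg)=2\Gamma(f,g)+f\mathcal{L}^{(1)}g+g\mathcal{L}^{(1)}f\in L^\infty(M)$ and the chain rule \eqref{E:chain} gives $\Gamma(fg)=f^2\Gamma(g)+2fg\Gamma(f,g)+g^2\Gamma(f)\in L^\infty(M)$; since $\mathcal{L}^{(1)}(fg)\in L^1(M)\cap L^\infty(M)\subset L^2(M)$, \eqref{E:domaininfo} places $fg$ in $\mathcal{D}(\mathcal{L})$.

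For \eqref{A:functionsT} with $G_\lambda^T$ in place of $G_\lambda$, let $F$ be a $C^2$-truncation and $f\in L^1(M)\cap L^\infty(M)$ nonnegative; set $u:=G_\lambda^T f$. The strong Feller property and the bound $0\leq u\leq G_\lambda f\in L^1(M)$ show that $u\in C_b(M)$ and that $\{F\circ u>0\}\subset\{u>t_0\}$ has finite measure, whence $F\circ u\in C_b(M)\cap L^1(M)\cap L^\infty(M)$. Theorem \ref{T:MazjaRCD} puts $F\circ u$ in $\mathcal{D}(\mathcal{L}^{(p)})$ for every $1<p<+\infty$. Applying the chain rule (the local analog of Lemma \ref{L:chainforL}) on $\{u>0\}$ gives $\Gamma(F\circ u)=(F'(u))^2\Gamma(u)$ and $\mathcal{L}(F\circ u)=F'(u)\mathcal{L}u+F''(u)\Gamma(u)$; since $F'(u)=0$ on $\{u\leq t_0\}$, $|F'|\leq L$, $|F''(u)|\leq L/u$, $\mathcal{L}u=\lambda u-f+e^{-\lambda T}P_T f\in L^\infty(M)$, and the $L^\infty$-case of Proposition \ref{P:integral} (valid via the local form of \eqref{E:log} noted in Remark \ref{R:logTRCD}) yields $\mathbf{1}_{\{u>0\}}\Gamma(u)/u\in L^\infty(M)$, both $\mathcal{L}(F\circ u)$ and $\Gamma(F\circ u)$ lie in $L^\infty(M)$. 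Hence $F\circ u\in\mathcal{A}$. Condition \eqref{A:bump} now follows immediately: for compact $K\subset M$, pick $f\in C_c(M)$ nonnegative with $f>0$ on $K$; strict positivity of the heat kernel from \eqref{E:LYRCD} gives $G_\lambda^T f>0$ on $K$, and rescaling $f$ ensures $G_\lambda^T f>1$ on an open neighborhood of $K$, so $F\circ G_\lambda^T f\in\mathcal{A}$ equals one there.

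The main obstacle is \eqref{A:basicp}. I show $\mathcal{A}$ is dense in $\mathcal{D}(\mathcal{L}^{(p)})$ for the graph norm via a two-stage approximation. A preliminary observation: for any nonnegative $g\in L^p(M)\cap L^1(M)\cap L^\infty(M)$ and any $T\geq 1/\lambda$, the function $v:=G_\lambda^T g$ lies in $\mathcal{A}$. Indeed, $v\in C_b(M)\cap L^1(M)\cap L^\infty(M)$ by strong Feller and $v\leq G_\lambda g\in L^1(M)$; $\mathcal{L}v=\lambda v-g+e^{-\lambda T}P_T g\in L^\infty(M)$; and integrating the pointwise bound $\sqrt{\Gamma(P_t g)}\leq(c_1 e^{c_2 t}/\sqrt{t})P_{\alpha t}g$ (the semigroup form of \eqref{E:loghkT}, cf.\ Remark \ref{R:logTRCD}) against $e^{-\lambda t}\,dt$ on $(0,T)$ while using $\|P_{\alpha t}g\|_{L^\infty(M)}\leq\|g\|_{L^\infty(M)}$ gives $\sqrt{\Gamma(v)}\in L^\infty(M)$. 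By linearity this extends to all $h\in L^p(M)\cap L^1(M)\cap L^\infty(M)$ via the decomposition $h=h^+-h^-$. Now given $u\in\mathcal{D}(\mathcal{L}^{(p)})$, set $h:=(\lambda-\mathcal{L}^{(p)})u\in L^p(M)$ and approximate $h$ in $L^p(M)$ by $h_n\in L^p(M)\cap L^1(M)\cap L^\infty(M)$. Since $G_\lambda$ is an isometry from $L^p(M)$ onto $\mathcal{D}(\mathcal{L}^{(p)})$ in the norm \eqref{E:DLpnorm}, $G_\lambda h_n\to u$ in the graph norm. From $G_\lambda h_n-G_\lambda^T h_n=e^{-\lambda T}G_\lambda P_T h_n$ one has $\|G_\lambda h_n-G_\lambda^T h_n\|_{\mathcal{D}(\mathcal{L}^{(p)})}=e^{-\lambda T}\|P_T h_n\|_{L^p(M)}\to 0$ as $T\to\infty$, so a diagonal choice $T_n\to\infty$ produces $u_n:=G_\lambda^{T_n}h_n\in\mathcal{A}$ with $u_n\to u$ in the graph norm. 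The detour through $G_\lambda^T$ is forced on us because without global doubling we have no uniform $L^\infty$ control of $\Gamma(G_\lambda h)$, and a single-stage approximation $u_n=G_\lambda h_n$ would not give elements of $\mathcal{A}$.

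Finally, suppose \eqref{E:doubling} holds. Then Corollary \ref{C:loghkT} gives \eqref{E:loghk} globally and hence \eqref{E:log} for all $t>0$ via Proposition \ref{P:hktosg}; the argument for \eqref{A:functionsT} applies verbatim with $G_\lambda$ in place of $G_\lambda^T$, yielding \eqref{A:functionsT} in its original form. For \eqref{A:Riesztrafo}, given $f\in\mathcal{A}$ set $h:=(\lambda-\mathcal{L}^{(p)})f\in L^p(M)$, split $h=h^+-h^-$, and integrate the pointwise bound $\sqrt{\Gamma(P_t h^\pm)}\leq(c_1 e^{c_2 t}/\sqrt{t})P_{\alpha t}h^\pm$ against $e^{-\lambda t}\,dt$ on $(0,\infty)$ (the integral converges because $\lambda>c_2$); Minkowski's integral inequality together with the $L^p$-contractivity of $P_{\alpha t}$ gives $\|\sqrt{\Gamma(f)}\|_{L^p(M)}\leq c\|h\|_{L^p(M)}=c\|f\|_{\mathcal{D}(\mathcal{L}^{(p)})}$, which is \eqref{A:Riesztrafo}.
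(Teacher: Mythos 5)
Your proof is correct, and for conditions \eqref{A:basic}, \eqref{A:bump} and \eqref{A:functionsT} it follows essentially the paper's own route (product rule plus \eqref{E:Gamma} for the algebra property; strict positivity of the kernel plus a $C^2$-truncation of $G_\lambda^T f$ for bump functions; the chain rule together with the $L^\infty$-case of Proposition \ref{P:integral} for the truncation property). Where you genuinely diverge is \eqref{A:basicp}. The paper proves that $\mathcal{A}_0=\{G_\lambda f: f\in C_c(M)\}$ is contained in $\mathcal{A}$ --- which forces a uniform bound on $\Gamma(G_\lambda f)$ obtained by splitting the time integral at $T$, using \eqref{E:loghkT} for small times and \eqref{E:gradestRCD} together with a finite cover of $\supp f$ for large times --- and then verifies $P_t(\mathcal{A})\subset \mathcal{A}$ (via $L^\infty$-contractivity and Lipschitz regularization) so as to invoke the Ethier--Kurtz core criterion. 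You instead approximate an arbitrary $u\in\mathcal{D}(\mathcal{L}^{(p)})$ directly in graph norm by $G_\lambda^{T_n}h_n$ with $h_n\in L^p\cap L^1\cap L^\infty$, exploiting the exact identity $\|G_\lambda h-G_\lambda^{T}h\|_{\mathcal{D}(\mathcal{L}^{(p)})}=e^{-\lambda T}\|P_{T}h\|_{L^p(M)}$. This buys you two things: you never need the large-time gradient estimate for the full resolvent (the integral $\int_0^T t^{-1/2}e^{(c_2-\lambda)t}dt$ is finite over a bounded interval regardless of the sign of $c_2-\lambda$), and you bypass the semigroup-invariance step entirely; the price is having to check $G_\lambda^{T}g\in\mathcal{A}$ for general bounded integrable $g$ rather than just $g\in C_c(M)$, which your identity $\mathcal{L}G_\lambda^Tg=\lambda G_\lambda^Tg-g+e^{-\lambda T}P_Tg$ handles cleanly. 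The argument for \eqref{A:Riesztrafo} via $h^\pm$ and Minkowski's integral inequality is equivalent to the paper's (which proves the estimate for $f\in C_c(M)$ and extends ``by density''); both versions leave the same small approximation step implicit when applying the pointwise bound \eqref{E:log} beyond $C_c(M)$.

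Two small points to tighten. First, the constants $c_1,c_2,\alpha$ in \eqref{E:loghkT} depend on $T$ through the local doubling constant, so your opening claim that a single $\lambda$ meets the hypothesis of Theorem \ref{T:MazjaRCD} ``for every $T\geq 1/\lambda$'' is not quite right; fortunately you only invoke that theorem at one fixed $T$, and the $T_n\to\infty$ step uses nothing but finiteness of an integral over a bounded interval, so the proof survives. Second, membership of $F\circ G_\lambda^T f$ in $\mathcal{D}(\mathcal{L}^{(1)})$ (required by the definition \eqref{E:alternativeA1} of $\mathcal{A}$) should be stated: it follows because $\mathcal{L}(F\circ u)$ is bounded and supported on the finite-measure set $\{u>t_0\}$, hence lies in $L^1(M)$, so that $F\circ u\in\mathcal{D}_1$ by \eqref{E:aprioridomain}.
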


\begin{proof}
Given $f,g\in\mathcal{A}$, we have $fg\in\mathcal{A}$: Clearly $fg\in C_b(M)$, and by the discussion in Section \ref{S:Setup} also in $\mathcal{D}(\mathcal{L}^{(1)})\cap L^\infty(M)$. Since both $\mathcal{L}^{(1)}f$ and $\mathcal{L}^{(1)}g$ are members of $L^\infty(X,\mu)$ and by the Cauchy-Schwarz inequality also $\Gamma(f,g)\in L^\infty(M)$, we have $\mathcal{L}^{(1)}(fg) \in L^1(M)\cap L^\infty(M)\subset L^2(M)$ by (\ref{E:Gamma}) and consequently $fg\in \mathcal{D}(\mathcal{L})$ by 
\eqref{E:domaininfo}. Clearly also $\Gamma(fg)\in L^\infty(M)$ by the product rule. This shows the initial claim and \eqref{A:basic}. 

Given $K\subset M$ compact, let $f\in C_c(M)$ be nonnegative and positive at some point. Integrating the lower bound in \eqref{E:LYRCD} we see that $g_\lambda^T(x,y)>0$ for all $x,y\in M$, and consequently $G_\lambda^Tf(x)>0$ for all $x\in M$. By continuity $G_\lambda^Tf$ then must be bounded away from zero on $K$, and we may assume that $G_\lambda^Tf>1$ on $K$. Let $F$ be a $C^2$-truncation and set $v:=F\circ G_\lambda^T f$; then $v=1$ on $K$. The chain rules \eqref{E:chain} and \eqref{E:usualchainrule} show that $\Gamma(v)\in L^\infty(M)$, $v\in \mathcal{D}(\mathcal{L}^{(1)})$, and together with \eqref{E:gradestRCD} and the estimates in the proof of Proposition \ref{P:hktosg} also $\mathcal{L}v\in L^\infty(M)$. Theorem \ref{T:MazjaRCD} gives $v\in \mathcal{D}(\mathcal{L})$, so that $v$ is seen to be in $\mathcal{A}$, what proves \eqref{A:bump} and \eqref{A:functionsT}.

We claim that $\mathcal{A}$ is dense in any $L_p(M)$. By resolvent approximation the set 
\[\mathcal{A}_0:=\{G_\lambda f:\ f\in C_c(M),\ \lambda>\lambda_0\}\] 
is dense in $L^p(M)$, so it suffices to show that $\mathcal{A}_0\subset \mathcal{A}$. Let $f\in C_c(M)$ and $\lambda>\lambda_0$, we may assume that $f$ is nonzero. Clearly $G_\lambda f\in C_b(M)\cap \mathcal{D}(\mathcal{L})\cap\mathcal{D}(\mathcal{L}^{(1)})$ and $\mathcal{L}G_\lambda f=\lambda G_\lambda f-f\in L^\infty(M)$. Similarly as in the proof of Proposition \ref{P:integral} we obtain 
\begin{equation}\label{E:fullbound}
\sqrt{\Gamma(G_\lambda f)}(x)\leq \int_0^\infty e^{-\lambda t}\sqrt{\Gamma(P_tf)}(x)dt\leq \int_0^\infty e^{-\lambda t}\int_M \sqrt{\Gamma(p_t(x,\cdot)}(y)|f(y)|\mu(dy)dt,
\end{equation}
and if $\supp f$ has diameter less than $\sqrt{T}$, we can use \eqref{E:gradestRCD} and \eqref{E:loghkT} to bound the preceding uniformly in $x$ by
\[c_1\left\|f\right\|_{L^\infty(M)}\int_0^Tt^{-1/2}e^{-(\lambda-c_2)t}dt+\frac{c_{11}}{\mu(\supp f)}\left\|f\right\|_{L^1(M)}\int_T^\infty t^{-1/2}e^{-(\lambda-c_{12})t}dt<+\infty.\]
A uniform bound for $\sqrt{\Gamma(G_\lambda f)}$ with general $f\in C_c(M)$ follows using a finite cover of $\supp f$ by open balls $B_1,...,B_k$ and applying the preceding to $f\mathbf{1}_{B_j}$, $j=1,...,k$ in place of $f$. This shows that $\Gamma(G_\lambda f)\in L^\infty(M)$ and consequently $G_\lambda f\in \mathcal{A}$. 

Now observe that for any $t>0$ we have $P_t(\mathcal{A})\subset \mathcal{A}$: Given $f\in \mathcal{A}$ it is clear that $P_t f\in C_b(M)\cap \mathcal{D}(\mathcal{L})\cap\mathcal{D}(\mathcal{L}^{(1)})$. We have $\mathcal{L}P_tf=P_t\mathcal{L}f\in L^\infty(M)$ by $L^\infty$-contractivity and $\Gamma(P_tf)\leq \Lip(P_tf)^2<+\infty$ by Lipschitz regularization; here $\Lip(\cdot)$ denotes the global Lipschitz constant. Now condition \eqref{A:basicp} follows from \cite[Chapter 1, Proposition 3.3]{EK86}.

Under \eqref{E:doubling} we have \eqref{E:loghk} by Corollary \ref{C:loghkT}, and combining with \eqref{E:fullbound} we obtain
\[\big\|\sqrt{\Gamma(G_\lambda f)}\big\|_{L^p(M)}\leq \int_0^\infty\big\|\sqrt{\Gamma(P_tf)}\big\|_{L^p(M)}dt\leq c_1\int_0^\infty e^{-(\lambda-c_2)t}t^{-1/2}dt\:\left\|f\right\|_{L^p(M)},\]
and this implies \eqref{A:Riesztrafo} by density.
\end{proof}

\subsection{$L^p$-uniqueness and capacities}

Using Theorem \ref{T:esa}, Corollary \ref{C:firstcomparison} and Theorem \ref{T:MazjaRCD} plus a slight variation of Corollary \ref{C:secondcomparison} we obtain a characterization of uniqueness in terms of capacities. We write $\cpct_{2,2}^T$ for the capacity defined as in \eqref{E:cpctdef} with $G_\lambda^T$ in place of $G_\lambda$ and as before, given a closed subset $\Sigma$, $\mathring{M}:=M\setminus \Sigma$. Recall condition \eqref{A:approxid}.

\begin{theorem}\label{T:RCDLpuni}
Let $(M,\varrho,\mu)$ be an {\rm RCD}$^*(K,N)$ space with $K\leq 0$ and $1\leq N<+\infty$. 
\begin{enumerate}
\item[(i)] If $\Sigma\subset M$ is a closed set, $\mu(\Sigma)=0$ and $\Delta_\mu|_{\mathcal{A}(\mathring{M})}$ is is essentially self-adjoint, then $\ccpct_{2,2}^{\mathcal{A}}(\Sigma)=0$ and $\cpct_{2,2}(\Sigma)=0$.
\item[(ii)] If $\Sigma\subset M$ is a compact set and $\ccpct_{2,2}^{\mathcal{A}}(\Sigma)=0$, then $\mu(\Sigma)=0$ and $\Delta_\mu|_{\mathcal{A}(\mathring{M})}$ is essentially self-adjoint. This happens in particular if $\cpct_{2,2}^T(\Sigma)=0$. If \eqref{A:approxid} holds, then the conclusion remains true for closed $\Sigma\subset M$; in this case we may also replace $\mathcal{A}(\mathring{M})$ by $\mathcal{A}_c(\mathring{M})$.
\end{enumerate}
If \eqref{E:doubling} holds, then $\cpct_{2,2}^T$ in (ii) may be replaced by $\cpct_{2,2}$ and 
analogous statements hold for $L^p$-uniqueness, $1<p<+\infty$.
\end{theorem}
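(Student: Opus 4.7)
The plan is to apply the abstract machinery of Theorem \ref{T:esa} together with the structural properties of $\mathcal{A}$ established in Proposition \ref{P:Aisnice}, and to bridge the two capacity notions via the truncation estimate of Theorem \ref{T:MazjaRCD}. Throughout I will use that for $p=2$ condition \eqref{A:Riesztrafo} is automatic, since $\|\Gamma(f)^{1/2}\|_{L^2(M)}^2=\mathcal{E}(f)\leq \|f\|_{\mathcal{D}(\mathcal{L})}^2$.

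For (i), Proposition \ref{P:Aisnice} ensures that $\mathcal{A}$ satisfies \eqref{A:bump}, \eqref{A:basic} and \eqref{A:basicp} with $p=2$, so Theorem \ref{T:esa}(i) applies and gives $\ccpct_{2,2}^{\mathcal{A}}(\Sigma)=0$. Since $(P_t)_{t>0}$ is strong Feller on RCD$^*(K,N)$ spaces, Corollary \ref{C:firstcomparison} then yields $\cpct_{2,2}(\Sigma)\leq \ccpct_{2,2}^{\mathcal{A}}(\Sigma)=0$.

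For (ii), I would first invoke Theorem \ref{T:esa}(ii), whose hypotheses \eqref{A:basic}, \eqref{A:basicp} and \eqref{A:Riesztrafo} are at hand; this delivers essential self-adjointness of $\Delta_\mu|_{\mathcal{A}(\mathring{M})}$ directly from $\ccpct_{2,2}^{\mathcal{A}}(\Sigma)=0$ for compact $\Sigma$. The sufficient condition $\cpct_{2,2}^T(\Sigma)=0\Rightarrow \ccpct_{2,2}^{\mathcal{A}}(\Sigma)=0$ will be obtained by adapting Corollary \ref{C:secondcomparison} almost verbatim, but with $G_\lambda$ replaced by the truncated resolvent $G_\lambda^T$: given $\varepsilon>0$ I pick a nonnegative $f\in L^2(M)$ with small norm and $G_\lambda^T f>1$ on $\Sigma$, approximate $f$ from below by $f_n\in C_c(M)\cap L^\infty(M)$, use the strict positivity of $g_\lambda^T$ (readable off the lower Li--Yau bound in \eqref{E:LYRCD}) together with the Dini--Cartan lemma to arrange $G_\lambda^T f_n>1$ on $\Sigma$, and then apply Theorem \ref{T:MazjaRCD} to $F\circ G_\lambda^T f_n$, where $F$ is a $C^2$-truncation supplied by condition \eqref{A:functionsT} (with $G_\lambda^T$) from Proposition \ref{P:Aisnice}. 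For closed but possibly noncompact $\Sigma$ under \eqref{A:approxid}, the density of $\mathcal{A}_c$ in $\mathcal{A}$ in the graph norm follows from Lemma \ref{L:compactsupps} ($\mathcal{A}$ being dense in $L^2(M)$ is contained in the proof of Proposition \ref{P:Aisnice}); the second alternative of Theorem \ref{T:esa}(ii) then applies and simultaneously handles $\mathcal{A}(\mathring{M})$ and $\mathcal{A}_c(\mathring{M})$. Under the global doubling \eqref{E:doubling}, Proposition \ref{P:Aisnice} upgrades \eqref{A:Riesztrafo} to every $1<p<+\infty$ and gives \eqref{A:functionsT} in its untruncated form, while Theorem \ref{T:MazjaRCD} applies with $T=+\infty$; the very same scheme then reproduces all statements with $\cpct_{2,p}$ in place of $\cpct_{2,p}^T$ for every $1<p<+\infty$.

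The hard part is the reverse comparison $\cpct_{2,2}^T(\Sigma)=0\Rightarrow \ccpct_{2,2}^{\mathcal{A}}(\Sigma)=0$: it rests on a truncation property at the level of $\mathcal{D}(\mathcal{L})$ which is highly nontrivial (even in Euclidean space $W^{2,p}$ fails to be stable under smooth bounded compositions) and on knowing that the truncated potentials actually lie in the core $\mathcal{A}$. Both are packaged as Theorem \ref{T:MazjaRCD} and the \eqref{A:functionsT}-clause of Proposition \ref{P:Aisnice}, so once those are cited the remaining argument is an abstract assembly of the tools listed above.
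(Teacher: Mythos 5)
Your proposal is correct and follows essentially the same route the paper indicates: the paper proves Theorem \ref{T:RCDLpuni} precisely by combining Theorem \ref{T:esa} (with the structural conditions supplied by Proposition \ref{P:Aisnice} and, for the noncompact case, Lemma \ref{L:compactsupps} via \eqref{A:approxid}), Corollary \ref{C:firstcomparison}, and Theorem \ref{T:MazjaRCD} together with the truncated-resolvent variant of Corollary \ref{C:secondcomparison}. Your assembly of these ingredients, including the observation that \eqref{A:Riesztrafo} is automatic for $p=2$ and that \eqref{E:doubling} upgrades everything to $G_\lambda$ and to general $1<p<+\infty$, matches the intended argument.
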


\begin{remark}\label{R:trivialifcompact}
If $\diam(M)<+\infty$ then $M$ is compact, and both the density of $\mathcal{A}_c$ in $\mathcal{A}$ and condition \eqref{A:approxid} are trivially true.
\end{remark}

\subsection{Essential self-adjointness and Hausdorff measures}

In the special case that the volume $\mu$ is $N$-regular, one has the following metric characterization.

\begin{theorem}\label{T:HausdorffRCD}\mbox{}
Let $(M,\varrho,\mu)$ be an {\rm RCD}$^*(K,N)$ space with $K\leq 0$ and $4\leq N<+\infty$ and suppose that $\mu$ is $N$-regular, more precisely, that
\begin{equation}\label{E:globalNreg}
\frac{1}{c}\:r^N\leq \mu(B(x,r))\leq c\:r^N,\quad  x\in M,\quad 0<r<\diam (M),
\end{equation} 
where $c>1$ is a fixed constant. Let $\Sigma\subset M$ be a closed set.
\begin{enumerate}
\item[(i)] If $\Sigma$ is of zero measure and $\Delta_\mu|_{\mathcal{A}(\mathring{M})}$ is essentially self-adjoint, then $\mathcal{H}^{N-4+\varepsilon}(\Sigma)=0$ for all $\varepsilon>0$ (if $N>4$) respectively $\mathcal{H}^h(\Sigma)=0$ for all Hausdorff functions $h$ satisfying \eqref{E:condh} (if $N=4$).
\item[(ii)] If $\mathcal{H}^{N-4}(\Sigma)<+\infty$ (if $N>4$) respectively $\mathcal{H}^{h_2}(\Sigma)<+\infty$ (if $N=4$), then $\Delta_\mu|_{\mathcal{A}(\mathring{M})}$ is essentially self-adjoint. 
\end{enumerate}
The statements remain valid with $\mathcal{A}_c(\mathring{M})$ in place of $\mathcal{A}(\mathring{M})$.
\end{theorem}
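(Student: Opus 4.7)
The plan is to mirror the Riemannian proof of Theorem \ref{T:Hausdorff} but with Theorem \ref{T:RCDLpuni} replacing Theorem \ref{T:mfdccpct} and with the RCD heat kernel bounds \eqref{E:LYRCD} supplying the resolvent estimates required by Lemmas \ref{L:resolventlower} and \ref{L:resolventupper}. First I would verify that the $N$-regularity hypothesis \eqref{E:globalNreg} forces both the global volume bound $\mu(B(x,r))\leq c_7\:r^N$ for all $r>0$ and the global doubling condition \eqref{E:doubling}: in the case $\diam(M)<+\infty$ the space $M$ is compact and $\mu(M)\leq c\:\diam(M)^N$ absorbs the tail $r\geq\diam(M)$, while if $\diam(M)=+\infty$ the bounds are immediate from \eqref{E:globalNreg}. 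Integrating the Gaussian upper bound in \eqref{E:LYRCD} against $e^{-\lambda t}$ for $\lambda$ large enough to absorb $e^{c_{10}t}$, and using \eqref{E:globalNreg} inside $\mu(B(y,\sqrt t))^{-1}$, the standard computation (already recorded in the proof of Lemma \ref{L:resolventupperloc}) gives $g_\lambda(x,y)\leq c_6\:\varrho(x,y)^{2-N}$ throughout $M\times M$ whenever $N>2$; integrating the lower heat kernel bound in \eqref{E:LYRCD} in the same way yields \eqref{E:resolventlower} at every $x\in M$.

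With these inputs in place, part (i) becomes a short chain: if $\mu(\Sigma)=0$ and $\Delta_\mu|_{\mathcal{A}(\mathring{M})}$ is essentially self-adjoint, then $\cpct_{2,2}(\Sigma)=0$ by Theorem \ref{T:RCDLpuni}(i), and Lemma \ref{L:resolventupper}(i) applied with $d=N$ produces $\mathcal{H}^{N-4+\varepsilon}(\Sigma)=0$ for every $\varepsilon>0$ if $N>4$, and $\mathcal{H}^{h}(\Sigma)=0$ for $h$ satisfying \eqref{E:condh} if $N=4$. Part (ii) runs in reverse: Lemma \ref{L:resolventlower} converts the hypothesis $\mathcal{H}^{N-4}(\Sigma)<+\infty$ (respectively $\mathcal{H}^{h_2}(\Sigma)<+\infty$ when $N=4$) into $\cpct_{2,2}(\Sigma)=0$, and Theorem \ref{T:RCDLpuni}(ii), in its doubling version that allows $\cpct_{2,2}$ in place of $\cpct_{2,2}^T$, then gives essential self-adjointness of $\Delta_\mu|_{\mathcal{A}(\mathring{M})}$. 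The concluding claim with $\mathcal{A}_c(\mathring{M})$ follows from Lemma \ref{L:compactsupps} once condition \eqref{A:approxid} is available.

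The two steps most likely to require real work are verifying \eqref{A:approxid} on a non-compact RCD$^*(K,N)$ space and handling the limit case $N=4$ with $\diam(M)=+\infty$, where Lemma \ref{L:resolventupper}(i) explicitly demands $M$ to be bounded. For \eqref{A:approxid} my plan is to build the cutoffs $h_n$ directly from the heat semigroup: start with a Lipschitz truncation of the indicator of $B(x_0,n)$, regularize by a small-time $P_t$, then use the Bakry--Ledoux gradient estimate to bound $\Gamma(h_n)$ uniformly in $n$ and the commutation $\mathcal{L}P_sf=P_s\mathcal{L}f$ together with the Lipschitz regularization of $(P_t)_{t>0}$ to control $\mathcal{L}h_n$ in $L^\infty(M)$; global doubling makes the estimates uniform in $n$. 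For the $N=4$ obstacle I would reduce to compact $\Sigma$ by inner regularity of $\mathcal{H}^h$, cover $\Sigma$ by finitely many bounded open sets, and run the logarithmic Hausdorff-to-capacity estimate from the proof of Lemma \ref{L:resolventupper}(i) after splitting the $L^2$-norm of $G_\lambda\nu$ along $\varrho(x,z)\lessgtr 1$ and exploiting the Gaussian tail of the heat kernel to replace the role played by the boundedness of $M$ in the original argument.
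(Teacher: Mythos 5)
Your main chain coincides with the paper's proof: reduce to compact $\Sigma$, integrate the two--sided heat kernel bounds \eqref{E:LYRCD} against $e^{-\lambda t}$ and use \eqref{E:globalNreg} to get $g_\lambda(x,y)\asymp\varrho(x,y)^{2-N}$ on the relevant range of distances, then run Theorem \ref{T:RCDLpuni} in both directions together with (variants of) Lemmas \ref{L:resolventlower} and \ref{L:resolventupper}. Your treatment of the $N=4$ limit case is also in the spirit of the paper, which sidesteps the boundedness hypothesis of Lemma \ref{L:resolventupper}(i) by working on a neighborhood of the compact set $\Sigma$ with $T=\diam(\Sigma)^2+1$ so that all distances entering the Giraud-type estimate are bounded; your splitting along $\varrho\lessgtr 1$ with the Gaussian tail accomplishes the same. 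The one difference of routing in part (ii) is harmless: the paper deduces $\cpct_{2,2}^T(\Sigma)=0$ and uses the unconditional $T$-truncated clause of Theorem \ref{T:RCDLpuni}(ii), whereas you first upgrade \eqref{E:globalNreg} to global doubling and use the untruncated clause; both are valid.

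The genuine gap is in your verification of \eqref{A:approxid}. First, condition \eqref{A:approxid} demands $(h_n)\subset\mathcal{A}_c$, i.e.\ \emph{compactly supported} elements of $\mathcal{A}$, but $h_n=P_s\phi_n$ is strictly positive everywhere (the lower bound in \eqref{E:LYRCD} forces $p_s(x,y)>0$), so your regularized cutoffs never lie in $\mathcal{A}_c$. Second, your proposed control of $\left\|\mathcal{L}h_n\right\|_{L^\infty(M)}$ via the commutation $\mathcal{L}P_s\phi_n=P_s\mathcal{L}\phi_n$ is not available: a Lipschitz truncation $\phi_n$ of $\mathbf{1}_{B(x_0,n)}$ is in $W^1(M)$ but not in $\mathcal{D}(\mathcal{L})$ with $\mathcal{L}\phi_n\in L^\infty(M)$, and Lipschitz regularization of $P_t$ controls $\Gamma(P_t\phi_n)$, not $\mathcal{L}P_t\phi_n$. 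Both defects are repairable but require ideas you did not supply. The paper takes $h_n:=F\circ G_\lambda\mathbf{1}_{B(x_0,n)}$ with a $C^2$-truncation $F$ vanishing on $[0,t_0]$: the \emph{lower} volume bound in \eqref{E:globalNreg} forces $G_\lambda\mathbf{1}_{B(x_0,n)}\geq c\,n^2>1$ on $B(x_0,n)$ (so $h_n=1$ there and $h_n\uparrow 1$), the \emph{upper} volume bound forces $G_\lambda\mathbf{1}_{B(x_0,n)}<t_0$ outside $B(x_0,n^\gamma+n)$ (so $\supp h_n$ is compact by properness), the uniform bound on $\Gamma(h_n)^{1/2}$ comes from \eqref{E:loghk}, and the uniform bound on $\mathcal{L}h_n$ comes from the $p=+\infty$ case of the truncation estimate (Theorem \ref{T:MazjaRCD} together with Remark \ref{R:moregeneral}) --- it is precisely here that the singular term $F''(u)\Gamma(u)$, controlled by Proposition \ref{P:integral}, enters, and no naive commutation argument replaces it. If you wish to keep your semigroup-mollified cutoffs, you would still have to compose with such an $F$ to restore compact support, and you would have to extract the $L^\infty$ bound on $\mathcal{L}P_s\phi_n$ from the dimensional term $\frac{4Kt^2}{N(e^{2Kt}-1)}|\mathcal{L}P_tf|^2$ in the Bakry--Ledoux inequality rather than from commutation.
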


Under the hypotheses of Theorem \ref{T:HausdorffRCD} condition \eqref{A:approxid} holds. 
\begin{lemma}\label{L:approxid}
Let $(M,\varrho,\mu)$ be an {\rm RCD}$^*(K,N)$ space with $K\leq 0$ and $2< N<+\infty$ and suppose that $\mu$ is $N$-regular as in \eqref{E:globalNreg}. Then condition \eqref{A:approxid} holds.
\end{lemma}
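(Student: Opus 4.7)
The plan is to verify \eqref{A:approxid} by constructing the approximating sequence $(h_n)_{n\geq 1}$ as a family of good cutoff functions on $M$, centred at a fixed basepoint and supported at geometrically growing scales. The $N$-regularity \eqref{E:globalNreg} combined with completeness implies that $(M,\varrho)$ is doubling and proper, so every closed ball is compact; this ensures that functions vanishing outside a ball of finite radius are honestly compactly supported.

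Next I would fix $x_0\in M$ and, for each $n\geq 1$, invoke the existence of \emph{good cutoff functions} on {\rm RCD}$^*(K,N)$ spaces (as established by Mondino and Naber, and subsequently extended by several authors): there exists $h_n\in \mathcal{D}(\mathcal{L})\cap\Lip(M)$ with values in $[0,1]$ such that $h_n\equiv 1$ on $B(x_0,2^n)$, $h_n\equiv 0$ on $M\setminus B(x_0,2^{n+1})$, together with the quantitative bounds
\[
\big\|\Gamma(h_n)^{1/2}\big\|_{L^\infty(M)}\leq \frac{C}{2^n},\qquad \big\|\mathcal{L}h_n\big\|_{L^\infty(M)}\leq \frac{C}{4^n},
\]
where $C=C(K,N)$ depends only on $K$ and $N$. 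By properness, $\supp h_n\subset \overline{B(x_0,2^{n+1})}$ is compact; combined with $\mathcal{L}h_n\in L^\infty(M)$ this also yields $\mathcal{L}h_n\in L^1(M)$ and hence $h_n\in \mathcal{D}(\mathcal{L}^{(1)})$. Together with $h_n\in C_b(M)$ and $\Gamma(h_n)\in L^\infty(M)$, this places $h_n$ in $\mathcal{A}_c$ in the sense of \eqref{E:alternativeA1}.

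The geometric scaling $r_n=2^n$, $R_n=2^{n+1}$ is specifically tailored to deliver the monotonicity $h_n\leq h_{n+1}$ automatically: for any $m>n$ we have $\supp h_n\subset B(x_0,2^{n+1})\subset B(x_0,2^m)$, on which $h_m\equiv 1$, so $h_m=1\geq h_n$ on $\supp h_n$ and $h_m\geq 0=h_n$ off $\supp h_n$. The pointwise convergence $h_n(x)\uparrow 1$ is immediate since $x\in B(x_0,2^n)$ for all sufficiently large $n$. The uniform $L^\infty$ bounds required in \eqref{A:approxid} follow directly from the quantitative estimates above; in fact both of the $L^\infty$ norms even decay to zero as $n\to\infty$.

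The main obstacle will be invoking the good cutoff construction with uniform $L^\infty$ control on the Laplacian at arbitrary, and in particular at large, scales. This is by now a standard fact in the {\rm RCD}$^*(K,N)$ theory, proved via Laplacian comparison for distance-like functions, short-time heat flow smoothing of Lipschitz approximations of radial cutoffs, and the Bakry--\'Emery gradient estimate. Under the global $N$-regularity assumption \eqref{E:globalNreg} the argument simplifies significantly, and the bounds take the same schematic form $C/r$ for the gradient and $C/r^2$ for the Laplacian as in the smooth Riemannian case.
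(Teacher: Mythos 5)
Your construction is sound in outline but takes a genuinely different route from the paper. The paper does not import cutoff functions from the {\rm RCD} literature; it builds them from its own machinery: it sets $h_n:=F\circ G_\lambda\mathbf{1}_{B(x_0,n)}$ with $F$ a $C^2$-truncation, uses the two-sided bound $g_\lambda(x,y)\asymp \varrho(x,y)^{2-N}$ (obtained by integrating \eqref{E:LYRCD} against \eqref{E:globalNreg}) to show that $G_\lambda\mathbf{1}_{B(x_0,n)}\geq c\,n^2$ on $B(x_0,n)$ and is small outside $B(x_0,n^\gamma+n)$ for $\gamma>\tfrac{N}{N-2}$ (this is where $N>2$ enters), so that $h_n\equiv 1$ near $x_0$ and has compact support; the uniform gradient bound then comes from \eqref{E:log} and the uniform bound on $\mathcal{L}h_n$ from the $L^\infty$ part of Theorem \ref{T:MazjaRCD} (via Remark \ref{R:moregeneral}). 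The advantage of that route is that every ingredient is already proved in the paper; the advantage of yours is brevity and transparency of the geometry, provided the black box you invoke really delivers.

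That black box is the one point you must shore up. The standard good-cutoff statements (Mondino--Naber Lemma 3.1, Ambrosio--Mondino--Savar\'e Prop.\ 6.9, which the paper itself only uses for \emph{bounded} localizations in Theorem \ref{T:HausdorffRCDCAT}) carry constants depending on the scale $R$, and what condition \eqref{A:approxid} needs is precisely uniformity as $R=2^n\to\infty$. Moreover, your quantitative claim $\|\mathcal{L}h_n\|_{L^\infty(M)}\leq C(K,N)/4^n$ cannot be right for $K<0$ at large scales: Laplacian comparison gives $\Delta \varrho(x_0,\cdot)\lesssim (N-1)\sqrt{|K|}\coth(\sqrt{|K|}\,\varrho)$, so a cutoff transitioning over an annulus of width $r$ picks up a term of order $\sqrt{|K|}/r$, not $1/r^2$. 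This does not sink the argument --- $1/r$ still tends to zero, and only $\sup_n\|\mathcal{L}h_n\|_{L^\infty(M)}<+\infty$ is required --- but you should either cite a version of the cutoff lemma with explicit, scale-uniform control at large radii, or prove the uniform Laplacian bound yourself; the latter is essentially what the paper's truncation theorem accomplishes. The remaining steps of your proposal (properness from doubling plus completeness, the monotonicity from the dyadic scales, and the verification that $h_n\in\mathcal{A}_c$, in particular $h_n\in\mathcal{D}(\mathcal{L}^{(1)})$ via locality and compact support) are all correct.
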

\begin{proof}
By Remark \ref{R:trivialifcompact} we may assume that $\diam(M)=+\infty$. Integrating the bounds in \eqref{E:LYRCD} and using (\ref{E:globalNreg}), we see that $\frac{1}{c}\:\varrho(x,y)^{2-N}\leq g_\lambda(x,y)\leq c\:\varrho(x,y)^{2-N}$ for all $x,y\in M$ with $c>1$ independent of $x$ or $y$. Fix some $x_0\in M$. For any $r>0$ and $x\in B(x_0,r)$ it follows that
\[G_\lambda \mathbf{1}_{B(x_0,r)}(x)\geq \frac{1}{c}\int_{B(x_0,r)}\varrho(x,y)^{2-N}\mu(dy)\geq \frac{(2r)^{2-N}}{c}\mu(B(x_0,r))\geq c\:r^2\]
with $c>0$ in the last expression independent of $r$. Let $0<t_0<1$ and let $F$ be a $C^2$-truncation such that $F(t)=0$ for $t\leq t_0$ and $F(t)=1$, $t\geq 1$. Then for any large enough $r$ we have $F\circ G_\lambda \mathbf{1}_{B(x_0,r)}(x)=1$, $x\in B(x_0,r)$. Let $\gamma>\frac{N}{N-2}$. For any $x\in M\setminus B(x_0,r^\gamma+r)$ we similarly obtain
\[G_\lambda \mathbf{1}_{B(x_0,r)}(x)\leq c\:\int_{B(x_0,r)}\varrho(x,y)^{2-N}\mu(dy)\geq c\:r^{-\gamma(N-2)}\mu(B(x_0,r))\leq c\:r^{-\gamma(N-2)+N}\]
with $c>0$ independent of $r$, so that $F\circ G_\lambda \mathbf{1}_{B(x_0,r)}(x)=0$, $x\in M\setminus B(x_0,r^\gamma+r)$,
whenever $r$ is large enough. By completeness the support of $F\circ G_\lambda \mathbf{1}_{B(x_0,r)}$ is compact.

Now set $h_n:=F\circ G_\lambda \mathbf{1}_{B(x_0,n)}$, $n\geq n_0$, with large enough $n_0$ for the preceding to hold. By construction $0\leq h_n\leq 1$ and $h_n\uparrow 1$ as $n\to \infty$. Clearly also $h_n\in C_b(M)\cap \mathcal{D}(\mathcal{L})\cap \mathcal{D}(\mathcal{L}^{(1)})$. The global validity of \eqref{E:globalNreg} implies \eqref{E:doubling} and therefore \eqref{E:loghk}, what gives 
\begin{align}
\sqrt{\Gamma(h_n)}(x)&\leq \|F'\|_{\sup}\sqrt{\Gamma(G_\lambda\mathbf{1}_{B(x_0,n)})}(x)\notag\\
&\leq \|F'\|_{\sup}\int_0^\infty e^{-\lambda t}\int_M \sqrt{\Gamma(p_t(x,\cdot))}(y)\mathbf{1}_{B(x_0,n)}(y)\mu(dy)\:dt\notag\\
&\leq c_1\:\|F'\|_{\sup}\int_0^\infty t^{-1/2}e^{-(\lambda-c_2)t}\int_M p_{\alpha t}\mathbf{1}_{B(x_0,n)}(y)\mu(dy)\:dt\notag\\
&\leq  c_1\:\|F'\|_{\sup}\int_0^\infty t^{-1/2}e^{-(\lambda-c_2)t}dt,\notag
\end{align}
and therefore $\sup_n \big\|\Gamma(h_n)^{1/2}\big\|_{L^\infty(M)}<+\infty$. From Theorem \ref{T:MazjaRCD} and its generalization as pointed out in Remark \ref{R:moregeneral} we obtain
\[\left\|\mathcal{L}h_n\right\|_{L^\infty(M)}\leq \left\|(\lambda-\mathcal{L})h_n\right\|_{L^\infty(M)}+\lambda\: \left\|h_n\right\|_{L^\infty(M)}\leq c_3\left\|\mathbf{1}_{B(x_0,n)}\right\|_{L^\infty(M)}+\lambda=c_3+\lambda\]
and therefore $\sup_n \left\|\mathcal{L}h_n\right\|_{L^\infty(M)}<+\infty$. 
\end{proof}

We prove  Theorem \ref{T:HausdorffRCD}.

\begin{proof}
By Lemma \ref{L:compactsupps} and Lemma \ref{L:approxid} we can make full use of Theorem \ref{T:esa}.  By the arguments used in the proof of Theorem \ref{T:Hausdorff} we may assume that $\Sigma$ is compact. Writing $T:=\diam(\Sigma)^2+1$, integrating the upper bound in \eqref{E:LYRCD} and using \eqref{E:globalNreg}, we see that 
\[g_\lambda(x,y)\leq c\int_0^Te^{-\lambda t} t^{-N/2}\exp\big(-\frac{\varrho(x,y)^2}{c\:t}\big)dt+c\:T^{-N/2}\int_T^\infty e^{-(\lambda-c_{10})t}dt\leq c\:\varrho(x,y)^{2-N}\]
for all $x$ and $y$ from a neighborhood of $\Sigma$, note that for such $x$ and $y$ we have $\varrho(x,y)\leq \sqrt{T}$ and therefore can adjust the constants. Since $\cpct_{2,2}(\Sigma)=0$ by Theorem \ref{T:RCDLpuni} (i) we can now follow the arguments in the proof of Lemma \ref{L:resolventupper} to conclude (i). To see (ii) note that integrating the lower bound in \eqref{E:LYRCD} from $0$ to $T$ and using \eqref{E:globalNreg}, we obtain $g^T_\lambda(x,y)\geq c\:\varrho(x,y)^{2-N}$ for all $x,y\in M$. A slight variation of Lemma \ref{L:resolventlower} shows that $\cpct_{2,2}^T(\Sigma)=0$, and this implies (ii) by Theorem \ref{T:RCDLpuni} (ii).
\end{proof}

\subsection{Essential self-adjointness on {\rm CAT} spaces}
For ${\rm CAT}(0)$ spaces the explicit assumption \eqref{E:globalNreg} of $N$-regularity can be omitted.  Given a geodesic triangle $\tau=\tau(x_0, x_1, x_2)$ in $M$, its {\it comparison triangle} is the unique (up to isometry) geodesic triangle $\bar{\tau}={\tau}(\bar{x}_0, \bar{x}_1, \bar{x}_2)$ in $\mathbb{R}^2$ such that $\|\bar{x}_i - \bar{x}_j\|_{\mathbb{R}^2} = \varrho(x_i, x_j)$ for all $i, j$.
Let $[x_i,x_j]$ denote the geodesic ('side of $\tau$') in $M$ connecting $x_i$ and $x_j$, and let $[\bar{x}_i, \bar{x}_j]$ denote the line segment in $\mathbb{R}^2$ connecting $\bar{x}_i$ and $\bar{x}_j$.  Given a point $p \in [{x}_i,{x}_j]$, a point $\bar{p} \in [\bar{x}_i, \bar{x}_j]$ is called {\it a comparison point} for $p$ if $\varrho(x_i, p) = \|\bar{x}_i - \bar{p}\|_{\mathbb{R}^2}$. 
The geodesic triangle $\tau\subset M$ is said to have the {\it ${\rm CAT}(0)$ property} if for any two points $p$, $q$ on different sides of $\tau$ and corresponding comparison points $\bar{p}, \bar{q}\in \bar{\tau}$ we have $\varrho(p, q) \le \|\bar{p}-\bar{q}\|_{\mathbb{R}^2}$. The space $M$ is said to be a {\it ${\rm CAT}(0)$ space} (or \emph{Hadamard space}) if all its geodesic triangles have the ${\rm CAT}(0)$ property. We say that $M$ is an {\it {\rm RCD}$^*(K,N) \cap {\rm CAT}(0)$ space} if it is both a {\rm RCD}$^*(K,N)$ and a ${\rm CAT}(0)$ space.

\begin{theorem}\label{T:HausdorffRCDCAT}\mbox{}
Let $(M,\varrho,\mu)$ be an {\rm RCD}$^*(K,N) \cap {\rm CAT}(0)$ space with $K\leq 0$ and $4\leq N<+\infty$.
\begin{enumerate}
\item[(i)] If $\Sigma$ is of zero measure and $\Delta_\mu|_{\mathcal{A}(\mathring{M})}$ is essentially self-adjoint, then $\mathcal{H}^{N-4+\varepsilon}(\Sigma)=0$ for all $\varepsilon>0$ (if $N>4$) respectively $\mathcal{H}^h(\Sigma)=0$ for all Hausdorff functions $h$ satisfying \eqref{E:condh} (if $N=4$).
\item[(ii)] If $\mathcal{H}^{N-4}(\Sigma)<+\infty$ (if $N>4$) respectively $\mathcal{H}^{h_2}(\Sigma)<+\infty$ (if $N=4$), then $\Delta_\mu|_{\mathcal{A}(\mathring{M})}$ is essentially self-adjoint. 
\end{enumerate}
The statements remain valid with $\mathcal{A}_c(\mathring{M})$ in place of $\mathcal{A}(\mathring{M})$.
\end{theorem}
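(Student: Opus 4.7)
The proof follows the same architecture as Theorem~\ref{T:HausdorffRCD}, with the global $N$-regularity hypothesis \eqref{E:globalNreg} replaced by a local $N$-regularity result extracted from the conjunction of RCD$^*(K,N)$ and CAT(0). Specifically, the plan is to first establish that for every $x_0 \in M$ there exist $R > 0$ and $c > 1$ such that
$$c^{-1}\:r^N \le \mu(B(x,r)) \le c\:r^N,\quad x \in B(x_0,R),\ 0 < r < R.$$
The upper bound is Bishop-Gromov comparison, valid in RCD$^*(K,N)$ with $K \le 0$. The lower bound comes from Günther-type volume comparison on CAT(0) spaces: the exponential map (the reverse of the projection onto the nearest point of a geodesic) is volume non-decreasing, which after identification of $N$ with the essential dimension in the RCD$^*$ setting yields a Euclidean-type lower bound on sufficiently small balls.

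Given this local $N$-regularity, the strategy mimics the proof of Theorem~\ref{T:Hausdorff}: using the inner regularity of $\ccpct_{2,2}^\mathcal{A}$ and $\mathcal{H}^h$, both (i) and (ii) reduce to the case of compact $\Sigma$, in which Theorem~\ref{T:esa}(ii) applies without invoking \eqref{A:approxid}. For compact $\Sigma$, cover it by finitely many balls $B_j = B(x_j,r_j)$ on which the local $N$-regularity holds, put $\Sigma_j := \Sigma \cap \overline{B(x_j,r_j/2)}$, and choose $T_j$ of order $r_j^2$. Integrating the two-sided heat kernel estimate \eqref{E:LYRCD} against $e^{-\lambda t}$ (over $(0,T_j)$ for the lower bound and $(0,\infty)$ for the upper) and using local $N$-regularity yields
$$c^{-1}\:\varrho(x,y)^{2-N} \le g_\lambda^{T_j}(x,y) \quad \text{and}\quad g_\lambda(x,y) \le c\:\varrho(x,y)^{2-N}$$
for $x,y$ in a neighborhood of $\Sigma_j$.

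For (i), Theorem~\ref{T:RCDLpuni}(i) gives $\cpct_{2,2}(\Sigma) = 0$; restricting to each $\Sigma_j$ and running the Frostman-based argument of Lemma~\ref{L:resolventupper}(i) with the local upper resolvent estimate yields $\mathcal{H}^{N-4+\varepsilon}(\Sigma_j) = 0$ (resp.\ $\mathcal{H}^h(\Sigma_j) = 0$ in the borderline case $N=4$); summing over $j$ gives the claim. For (ii), Frostman applied to each $\Sigma_j$ together with the lower resolvent estimate and the nonlinear potential \eqref{E:MazjaKhavin} gives $\cpct_{2,2}^{T_j}(\Sigma_j,\Omega_j) = 0$ in the appropriate localized sense; a local variant of Corollary~\ref{C:secondcomparison} based on Theorem~\ref{T:MazjaRCD} then produces $\ccpct_{2,2}^\mathcal{A}(\Sigma_j) = 0$. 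Gluing local bumps via a partition of unity subordinate to $\{B(x_j,r_j)\}$, as in the proof of Theorem~\ref{T:Hausdorff}(ii), yields $\ccpct_{2,2}^\mathcal{A}(\Sigma) = 0$, and Theorem~\ref{T:RCDLpuni}(ii) gives essential self-adjointness.

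The main obstacle will be securing the volume lower bound in the first step: the CAT(0) Günther-type comparison must be reconciled with the synthetic RCD$^*$ identification of $N$ as the essential dimension, which typically proceeds through non-collapsedness results for RCD$^*(K,N) \cap$ CAT(0) spaces. A secondary technical point is that, unlike in Lemma~\ref{L:approxid}, global $N$-regularity may fail when $K < 0$, so the approximation of unity needed to pass from compact to general closed $\Sigma$ must be built exclusively from the localized heat kernel bounds and truncated resolvents $G_\lambda^T$, taking care that the resulting bump functions still satisfy the uniform $L^\infty$-bounds on $\Gamma(h_n)^{1/2}$ and $\mathcal{L}h_n$ required by \eqref{A:approxid}.
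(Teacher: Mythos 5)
Your overall architecture---reduce to compact $\Sigma$, secure local $N$-regularity of $\mu$, then rerun the localized Riemannian argument of Theorem \ref{T:Hausdorff} with Frostman's lemma and a partition of unity---is the same as the paper's. The genuine gap is in your first step, which you yourself flag as ``the main obstacle'' but do not resolve, and which is moreover misdirected. In the synthetic setting, Bishop--Gromov monotonicity of $r\mapsto\mu(B(x,r))/v_{K,N}(r)$ (with $v_{K,N}$ the model volume) gives the local \emph{lower} bound $\mu(B(x,r))\geq c\,r^N$ essentially for free, locally uniformly since $\mu$ has full support and $x\mapsto \mu(B(x,R))$ is lower semicontinuous; it does \emph{not} give the upper bound, because the density $\lim_{s\to 0}\mu(B(x,s))/s^N$ may be infinite when the essential dimension is strictly less than $N$ (e.g.\ $\mathbb{R}^2$ with Lebesgue measure is an ${\rm RCD}^*(0,4)\cap{\rm CAT}(0)$ space). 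So the bound that actually requires the ${\rm CAT}(0)$ hypothesis is the \emph{upper} one, $\mu(B(x,r))\leq c\,r^N$, i.e.\ local non-collapsedness, whereas you assign Bishop--Gromov to the upper bound and a G\"unther-type comparison to the lower one. A G\"unther comparison on a ${\rm CAT}(0)$ space would in any case bound the Hausdorff measure of the intrinsic dimension from below, which says nothing about $\mu$ versus $r^N$ until $\mu$ has already been identified with $\mathcal{H}^N$ locally---which is precisely the point at issue. Without this step neither of the two-sided kernel bounds $g_\lambda\asymp\varrho^{2-N}$ that drive the two directions of the theorem is available.

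The paper closes this step by a different mechanism: every metric ball in a ${\rm CAT}(0)$ space is geodesically convex, so each closed ball $\overline{B}$, equipped with the restricted metric and measure, is itself a \emph{compact} ${\rm RCD}^*(K,N)$ space, and volume comparison on $\overline{B}$ yields $N$-regularity there; the localized estimates \eqref{E:localEuclid}--\eqref{E:loggradestZh} are then obtained for the intrinsic heat kernel of $\overline{B}$, exactly as in the Neumann-ball argument of Theorem \ref{T:Hausdorff}, rather than for the global heat kernel of $M$ truncated in time as you propose. Two further divergences: the paper glues with the cut-off functions of \cite[Prop.\ 6.9]{AMS16}, which satisfy $\mathcal{L}\varphi,\Gamma(\varphi)\in L^\infty(M)$ by construction, instead of manufacturing bumps from truncated resolvents; this also supplies the \eqref{A:approxid}-type input needed to pass from compact to general closed $\Sigma$ and to the $\mathcal{A}_c(\mathring{M})$ variant, a point you correctly identify but leave open. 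Your treatment of (i) and (ii) once local $N$-regularity is in hand (Frostman via the arguments of Lemmas \ref{L:resolventupper} and \ref{L:resolventlower}, then Theorem \ref{T:MazjaRCD} and gluing) is sound and matches the paper.
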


\begin{proof}
One can follow the proof of Theorem \ref{T:Hausdorff}: Every metric ball in ${\rm CAT}(0)$ is geodesically convex, see \cite[Prop.\ 2.2]{BridsonHaefliger99}. It follows that every closed ball $\overline{B} \subset M$, equipped with the restrictions to $\overline{B}$ of the metric and the measure, is a compact {\rm RCD}$^*(K,N)$ space, see for instance \cite[Prop.\ 1.4]{Sturm06} and \cite[Thm.\ 4.19]{AGS14b}. By volume comparison the measure on $\overline{B}$ is $N$-regular. Now Corollary \ref{C:loghkT} and the arguments preceding it give estimates \eqref{E:localEuclid}--\eqref{E:loggradestZh} on $\overline{B}$. One can then pass from closed balls $\overline{B}$ to all of $M$ by a variant of the localization argument in Theorem \ref{T:Hausdorff}, under the present assumptions the existence of suitable cut-off functions is ensured by \cite[Prop. 6.9]{AMS16}: For any closed ball $\overline{B}$ and any compact subset $K$ of its interior $B$, we can find a  cutoff function $\varphi\in \mathcal D(\mathcal L)$ with $0 \le \varphi \le 1$, $\supp \varphi \subset B$, $\varphi=1$ on $K$ and such that both $\mathcal{L}\varphi$ and $\Gamma(\varphi)$ are in $L^\infty(M)$. This also allows the construction of suitable partitions of unity by the usual procedure.
\end{proof}

\subsection{Limit spaces of non-collapsed manifolds}
We observe consequences of Theorem \ref{T:HausdorffRCD} (ii) for limits of non-collapsed manifolds. Suppose that $1\leq d<+\infty$, $v>0$ and that $((M_i, g_i, p_i))_i$ is a sequence of pointed $d$-dimensional Riemannian manifolds $(M_i, g_i, p_i)$ having bounded Ricci curvature 
\begin{equation}\label{E:uniRiccibound}
|{\rm Ric}_{M_i}| \le d-1,
\end{equation}
and satisfying the \emph{non-collapsing condition}
\begin{equation}\label{E:noncollaps}
\mu_{M_i}(B(p_i,1))>v>0,
\end{equation}
where $\mu_{M_i}$ denotes the Riemannian volume on $M_i$. Suppose that $(M, \varrho, \mu, p)$ is the limit of this sequence 
with respect to pointed measured Gromov-Hausdorff convergence. The {\rm RCD}$^\ast$ conditions are stable under pointed measured 
Gromov-Hausdorff convergence, see \cite{GMS15, LV09, Sturm06} and \cite[Remark 10.7]{AH16}. Therefore also $(M, \varrho, \mu, p)$ is an  {\rm RCD}$^\ast(d-1,d)$-space, and by the Cheeger-Colding  volume convergence theorem, \cite[Theorem 5.9]{ChC97}, the measure $\mu$ is the $d$-dimensional Hausdorff measure on $M$, $\mu=\mathcal{H}^d$. By \cite[p.11 and Corollary 5.8]{ChN15} there is a closed set $\mathcal S \subset M$, called the \emph{singular set}, such that $\mathcal{R}:=M \setminus \mathcal S$ is a smooth manifold with a $C^{1,\alpha}$ Riemannian metric and $\dim_H(\mathcal S) \le d-4$.  By \cite[Theorem 7.1]{JN21} we have $\mathcal{H}^{d-4}(\mathcal S)<+\infty$.

On $\mathcal{R}$ we can consider the classical Laplacian $\Delta_\mu|_{C_c^\infty(M)}$ with respect to $\mu|_\mathcal{R}$, note that the $C^1$-regularity of the Riemannian metric is sufficient to introduce it, cf. \cite[Section 3.6]{Grigoryan2009}. The following observation tells that from the point of view of self-adjoint extensions the singular set $\mathcal{S}$ 'can be neglected'. 

\begin{theorem}\label{T:codim4}
Let $d\geq 4$ and let $(M, \varrho, \mu, p)$ be the pointed measured Gromov-Hausdorff limit of a sequence $((M_i, g_i, p_i))_i$ of pointed $d$-dimensional Riemannian manifolds satisfying (\ref{E:uniRiccibound}) and (\ref{E:noncollaps}) and let $\mathcal{S}\subset M$ denote the singular set. 
\begin{enumerate}
\item[(i)] We have $C_c^\infty(\mathcal{R})\subset \mathcal{A}_c(\mathcal{R})$, and the operator $\mathcal{L}|_{\mathcal{A}_c(\mathcal{R})}$ is an extension of the classical Laplace operator $\Delta_\mu|_{C_c^\infty(\mathcal{R})}$ on the Riemannian manifold $\mathcal{R}$.
\item[(ii)] The operator $\mathcal{L}|_{\mathcal{A}_c(\mathcal{R})}$ is essentially self-adjoint on $L^2(M)$, and its unique self-adjoint extension is $(\mathcal{L},\mathcal{D}(\mathcal{L}))$.  
\item[(iii)] The space $C_c^\infty(\mathcal{R})$ is dense in $\mathcal{A}_c(\mathcal{R})$ with respect to $\|\cdot\|_{\mathcal{D}(\mathcal{L})}$. The operator $\Delta_\mu|_{C_c^\infty(\mathcal{R})}$ is essentially self-adjoint on $L^2(M)$, and its unique self-adjoint extension is $(\mathcal{L},\mathcal{D}(\mathcal{L}))$.  
\end{enumerate}
\end{theorem}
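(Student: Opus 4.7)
For part (i), given $f\in C_c^\infty(\mathcal{R})$, I would verify the defining conditions of $\mathcal{A}_c(\mathcal{R})$ directly. Since $\mathcal{R}$ is open and $\supp f$ is compact in $\mathcal{R}$, the trivial extension of $f$ by zero lies in $C_b(M)\cap L^1(M)\cap L^\infty(M)$, and \eqref{E:gradientscoincide} identifies the weak upper gradient with the Riemannian gradient on $\mathcal{R}$, so $\Gamma(f)=|\nabla f|^2\in L^\infty(M)$ and $f\in W^1(M)$. To identify $\mathcal{L}f$, I would pick $\chi\in C_c^\infty(\mathcal{R})$ equal to $1$ on a neighborhood of $\supp f$; for every $g\in W^1(M)$ the product $\chi g$ lies in $W^{1,2}(\mathcal{R})$ with compact support in $\mathcal{R}$, $\nabla f$ is supported where $\chi\equiv 1$, and classical integration by parts on the Riemannian manifold $\mathcal{R}$ gives
\[
\mathcal{E}(f,g)=\int_{\mathcal{R}}\langle\nabla f,\nabla(\chi g)\rangle\,d\mu=-\int_{\mathcal{R}}(\Delta_\mu f)\,\chi g\,d\mu=-\int_M(\Delta_\mu f)\,g\,d\mu.
\]
By \eqref{E:domaininfo} this places $f$ in $\mathcal{D}(\mathcal{L})$ with $\mathcal{L}f=\Delta_\mu f\in L^\infty(M)\cap L^1(M)$, and the compactness of the support of $\mathcal{L}f$ in $\mathcal{R}$ also places $f$ in $\mathcal{D}(\mathcal{L}^{(1)})$; hence $C_c^\infty(\mathcal{R})\subset\mathcal{A}_c(\mathcal{R})$ with $\mathcal{L}|_{C_c^\infty(\mathcal{R})}=\Delta_\mu|_{C_c^\infty(\mathcal{R})}$.

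For part (ii), I would invoke Theorem~\ref{T:HausdorffRCD}(ii). The limit $(M,\varrho,\mu)$ is an $\mathrm{RCD}^\ast(-(d-1),d)$ space by stability of the $\mathrm{RCD}^\ast$ condition under pointed measured Gromov--Hausdorff convergence; by Cheeger--Colding volume convergence $\mu=\mathcal{H}^d$, and Bishop--Gromov comparison combined with the uniform non-collapsing~\eqref{E:noncollaps} (propagated from base points via volume comparison) yields the two-sided Ahlfors $d$-regularity~\eqref{E:globalNreg} of $\mu$ on compact subsets of $M$; in the non-compact case the application is carried through by the same localization near $\mathcal{S}$ used in the proof of Theorem~\ref{T:Hausdorff}. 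Since $d\geq 4$, $\mathcal{S}=M\setminus\mathcal{R}$ is closed and $\mathcal{H}^{d-4}(\mathcal{S})<+\infty$ by \cite{JN21}, the last sentence of Theorem~\ref{T:HausdorffRCD}(ii) (with $\mathcal{A}_c(\mathring{M})$ in place of $\mathcal{A}(\mathring{M})$) gives that $\mathcal{L}|_{\mathcal{A}_c(\mathcal{R})}$ is essentially self-adjoint on $L^2(M)$ with unique self-adjoint extension $(\mathcal{L},\mathcal{D}(\mathcal{L}))$.

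The decisive step in part (iii) is the density of $C_c^\infty(\mathcal{R})$ in $\mathcal{A}_c(\mathcal{R})$ with respect to $\|\cdot\|_{\mathcal{D}(\mathcal{L})}$. For $u\in\mathcal{A}_c(\mathcal{R})$, strong locality forces $\mathcal{L}u$ to have support in $\supp u\subset\mathcal{R}$, and reversing the integration-by-parts argument of (i) shows that $u$ weakly solves $\Delta_\mu u=\mathcal{L}u$ on the Riemannian manifold $\mathcal{R}$ with bounded right-hand side. Since the metric on $\mathcal{R}$ is at least $C^{1,\alpha}$ by Anderson's regularity theorem under the two-sided Ricci bound, standard elliptic $L^p$-theory yields $u\in W^{2,p}_{\mathrm{loc}}(\mathcal{R})$ for every finite $p$. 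I would then cover $\supp u$ by finitely many coordinate charts in $\mathcal{R}$, use a subordinate partition of unity, and convolve with standard mollifiers in each chart to produce $u_\varepsilon\in C_c^\infty(\mathcal{R})$ with $u_\varepsilon\to u$ in $W^{2,p}$; consequently $\Delta_\mu u_\varepsilon\to\Delta_\mu u=\mathcal{L}u$ in $L^2(M)$. Once density is established, $\mathcal{L}|_{C_c^\infty(\mathcal{R})}$ and $\mathcal{L}|_{\mathcal{A}_c(\mathcal{R})}$ share the same graph closure, which by (ii) equals $(\mathcal{L},\mathcal{D}(\mathcal{L}))$; since $\mathcal{L}|_{C_c^\infty(\mathcal{R})}=\Delta_\mu|_{C_c^\infty(\mathcal{R})}$ by (i), the essential self-adjointness of $\Delta_\mu|_{C_c^\infty(\mathcal{R})}$ follows.

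The main technical point I expect to confront is the $L^2$-convergence $\Delta_\mu u_\varepsilon\to\mathcal{L}u$ in the density step of (iii). It relies on the local elliptic $W^{2,p}$-regularity of elements of $\mathcal{A}_c(\mathcal{R})$ together with a Friedrichs-type commutator estimate for the second-order operator $\Delta_\mu$ whose coefficients are only $C^{0,\alpha}$; this is exactly where the $C^{1,\alpha}$-regularity of the metric on $\mathcal{R}$ furnished by the two-sided Ricci bound is genuinely used. A secondary, more routine, point is the verification of the global Ahlfors regularity hypothesis of Theorem~\ref{T:HausdorffRCD} on a possibly non-compact limit, which is handled by the localization scheme in the proof of Theorem~\ref{T:Hausdorff}.
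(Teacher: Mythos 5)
Your proposal is correct and reaches all three conclusions; parts (i) and (ii) essentially track the paper, while the decisive density step in (iii) is argued by a genuinely different route. For (i) the paper proceeds more indirectly: it sets $f:=(\lambda-\Delta_\mu)u\in C_c(M)$ and identifies $u=G_\lambda f$ through the energy identity $\mathrm{Ch}_\lambda(u,\cdot)=D_\lambda(u,\cdot)=\langle f,\cdot\rangle_{L^2(M)}$, so that $u\in\mathcal{A}$ comes for free from the inclusion $G_\lambda(C_c(M))\subset\mathcal{A}$ established in the proof of Proposition \ref{P:Aisnice}; your direct verification of the defining conditions of $\mathcal{A}$ by integration by parts against a cutoff is an equivalent amount of work and equally valid. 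Part (ii) is identical in substance (Theorem \ref{T:HausdorffRCD}(ii) applied to $\Sigma=\mathcal{S}$ with $\mathcal{H}^{d-4}(\mathcal{S})<+\infty$ from \cite{JN21}); your explicit remark that \eqref{E:globalNreg} is only available locally and must be combined with the localization of Theorem \ref{T:Hausdorff} makes visible what the paper leaves implicit. For the density in (iii) the paper stays inside the Dirichlet-form framework: it shows $\mathcal{A}_c(\mathcal{R})\subset W_0^2(\mathcal{R})$ with $\mathcal{L}^{\mathcal{R}}u=\mathcal{L}u$, where $(\mathcal{L}^{\mathcal{R}},W_0^2(\mathcal{R}))$ is the Dirichlet Laplacian of $\mathcal{R}$, and approximates $u$ in graph norm by $\varphi P_t^{\mathcal{R}}u$ with $\varphi$ a bump function and $(P_t^{\mathcal{R}})_{t>0}$ the Dirichlet heat semigroup on $\mathcal{R}$. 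You instead invoke interior elliptic $W^{2,p}$-regularity for the weak equation $\Delta_\mu u=\mathcal{L}u$ on $\mathcal{R}$ and mollify chartwise through a partition of unity. Both work; the paper's route avoids coordinates and elliptic estimates but must assert that $\varphi P_t^{\mathcal{R}}u$ lies in $C_c^\infty(\mathcal{R})$, which for a metric that is only $C^{1,\alpha}$ is itself a nontrivial parabolic regularity claim, whereas your convolution approximants are smooth by construction — at this level of metric regularity your route is arguably the more robust one. One simplification: the Friedrichs-type commutator estimate you anticipate is not needed. After localizing with a $C^2$ partition of unity $\{\psi_k\}$ you mollify each $\psi_k u\in W^{2,2}$ with fixed compact support in a chart; the convergence $\Delta_\mu\bigl(\rho_\varepsilon\ast(\psi_k u)\bigr)\to\Delta_\mu(\psi_k u)$ in $L^2$ then follows from the $W^{2,2}$-continuity of mollification together with the local boundedness of the coefficients, with no commutator to control.
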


Given $\lambda>0$ we write 
\begin{equation}\label{E:asusual} 
\mathrm{Ch}_\lambda(u,v):=\mathrm{Ch}(u,v)+\lambda \left\langle u,v\right\rangle_{L^2(M)},\quad u,v\in W^{1}(M),
\end{equation}
as usual. By 
\[D(u,v):=\int_{\mathcal{R}}\left\langle \nabla f, \nabla g\right\rangle_{T\mathcal{R}}d\mu,\quad u,v\in W^{1}(\mathcal{R}),\]
we denote the classical Dirichlet integral on the Riemannian manifold $\mathcal{R}$, and we use the notation $D_\lambda$ with analogous meaning as in \eqref{E:asusual}.

\begin{proof} Any $u\in C_c^\infty(\mathcal{R})$ is also in $C_c(M)\cap \Lip(M)$ and therefore in $W^{1}(M)$. By \eqref{E:gradientscoincide} 
we have 
\begin{equation}\label{E:energiescoincide}
\mathrm{Ch}(u,v)=D(u,v),\quad u\in C_c^\infty(\mathcal{R}),\quad v\in C_c(M)\cap W^{1}(M).
\end{equation}  
Given $u\in C_c^\infty(\mathcal{R})$ we write $f:=(\lambda-\Delta_\mu)u$. Since $f\in C_c(M)$ by locality, we have $G_\lambda f\in \mathcal{A}$ by the proof of Proposition \ref{P:Aisnice}. Using \eqref{E:energiescoincide} it follows that
\[\mathrm{Ch}_\lambda(u,v)=D_\lambda(u,v)=\left\langle f,v\right\rangle_{L^2(\mathcal{R})}=\left\langle f,v\right\rangle_{L^2(M)}=\mathrm{Ch}_\lambda(G_\lambda f,v)\]
for any $v\in C_c(M)\cap W^{1}(M)$, and the density of such functions in $W^{1}(M)$ implies that $u=G_\lambda f$. Consequently $C_c^\infty(\mathcal{R})\subset \mathcal{A}_c(\mathcal{R})$. If  $u\in C_c^\infty(\mathcal{R})$, then
\[\int_M(\mathcal{L}u)v\:d\mu=-\mathrm{Ch}_\lambda(u,v)=-D_\lambda(u,v)=\int_{\mathcal{R}}(\Delta_\mu u)v\:d\mu\]
for all $v\in C_c(M)\cap W^{1}(M)$ and therefore $\mathcal{L}u=\Delta_\mu u$, what shows (i).  Statement  (ii) is immediate from Theorem \ref{T:HausdorffRCD} (ii). To see (iii) let $W^1(\mathcal{R})$, $W^1_0(\mathcal{R})$ and $W^2_0(\mathcal{R})$ be as in Subsection \ref{SS:weighted} and let $(\mathcal{L}^\mathcal{R},W_0^2(\mathcal{R}))$ be 
the Dirichlet Laplacian on $\mathcal{R}$. By \eqref{E:gradientscoincide} we have $\mathcal{A}_c(\mathcal{R})\subset W^1(\mathcal{R})\cap C_c(\mathcal{R})$. On the other hand $W^1(\mathcal{R})\cap C_c(\mathcal{R})\subset W_0^1(\mathcal{R})$: If $u\in W^1(\mathcal{R})\cap C_c(\mathcal{R})$, $\varphi\in C_c^\infty(\mathcal{R})$ is a bump function equal to one on $\supp u$ and $(P_t^\mathcal{R})_ {t>0}$ is the Dirichlet heat semigroup on $L^2(\mathcal{R})$ generated by $(\mathcal{L}^\mathcal{R},W_0^2(\mathcal{R}))$, then $\varphi P_t^\mathcal{R}u\in C_c^\infty(\mathcal{R})$ for each $t>0$ and
\begin{equation}\label{E:semigroupsmoothing}
\lim_{t\to 0}\varphi P_t^\mathcal{R}u=u
\end{equation} 
in $W^1(\mathcal{R})$. Given $u\in \mathcal{A}_c(\mathcal{R})\subset W_0^1(\mathcal{R})$, we have $D(u,v)=\mathrm{Ch}(u,v)$
for all $v\in W_0^1(\mathcal{R})$ and as a consequence, $|D(u,v)|\leq \left\|\mathcal{L}u\right\|_{L^2(M)}\left\|v\right\|_{L^2(\mathcal{R})}$. This implies that $u\in W_0^2(\mathcal{R})$ and $\mathcal{L}^\mathcal{R}u=\mathcal{L}u$. Proceeding similarly as in \eqref{E:passover}, we obtain \eqref{E:semigroupsmoothing} in $W_0^2(\mathcal{R})$, endowed with the graph norm for $\mathcal{L}^\mathcal{R}$, and therefore in $\mathcal{D}(\mathcal{L})$. This shows the claimed density, which implies that also the closure of $\Delta_\mu|_{C_c^\infty(\mathcal{R})}$ on $L^2(M)$ equals $(\mathcal{L},\mathcal{D}(\mathcal{L}))$.

\end{proof}

\end{document}